\newtheorem{Df}{Definition}
\newtheorem{definition}[Df]{Definition}
\newtheorem{theorem}[Df]{Theorem}
\newtheorem{prop}[Df]{Proposition}
\newtheorem{proposition}[Df]{Proposition}
\newtheorem{lemma}[Df]{Lemma}
\newtheorem{remark}[Df]{Remark}
\newtheorem{corollary}[Df]{Corollary}
\newtheorem{conjecture}[Df]{Conjecture}
\newcommand{\ev}{\ensuremath{d}}
\newcommand{\tev}{d'}
\newcommand{\diag}{\operatorname{diag}}
\newcommand{\pp}[1]{{\left(#1\right)}}
\newcommand{\ro}{{r}}
\newcommand{\rk}{{n}}
\newcommand{\Uqg}{\ensuremath{\mathcal U}}
\newcommand{\UqgH}{\ensuremath{\Uqg^{H}}}
\newcommand{\UK}{\ensuremath{\Uqg_{0}}}
\newcommand{\UH}{\ensuremath{\Uqg_{\h}}}
\newcommand{\Uqn}[1]{\Uqg_{#1}}
\newcommand{\ZU}{{\mathcal Z}}
\newcommand{\ZUo}{{\mathcal Z^{0}}}
\newcommand{\cat}{\mathcal{C}}
\newcommand{\catt}{{\mathcal{C}}}
\newcommand{\catU}{\mathcal{D}}
\newcommand{\catH}{\mathcal{D}^{H}}
\newcommand{\catHr}{\mathcal{D}^{\theta}}
\newcommand{\RH}{{\mathcal H}}
\newcommand{\Hh}{H\hspace{-1ex}\,H^h}
\newcommand{\qR}{\check{R}}
\newcommand{\oR}{{\mathcal R}}
\newcommand{\uH}{{u_{\mathcal H}}}
\newcommand{\qu}{{u}}
\newcommand{\ou}{{\nu}}
\newcommand{\La}{{L_W}}
\newcommand{\Proj}{{\mathsf{Proj}}}
\newcommand{\st}{\varphi} 
\newcommand{\Int}{\operatorname{Int}}
\newcommand{\tet}{\mathcal{T}}
\newcommand{\ang}[1]{{\left\langle{#1}\right\rangle}}
\newcommand{\angg}[1]{{\left({#1}\right)}}
\renewcommand{\SS}{\Sigma}
\newcommand{\QQ}{Q}
\newcommand{\slop}[1]{\Lop^{#1\frac12}}
\newcommand{\srop}[1]{\Rop^{#1\frac12}}
\newcommand{\scop}[1]{C^{#1\frac12}}
\newcommand{\Y}{{Y}}
\newcommand{\YY}{{\mathcal Y}}
\newcommand{\Tb}{\mathcal{T}^{\partial}}
\newcommand{\col}{{\Phi}}
\newcommand{\Cob}{{\mathcal Cob^\Delta}}
\newcommand{\Vect}{{\mathcal Vect}}
\newcommand{\os}{\eta}
\newcommand{\Cm}{{\mathcal Cob}}
\newcommand{\ma}{{\mathsf m}}
\newcommand{\Cy}{{\mathsf C}}
\newcommand{\Eq}{{\mathcal E}}
\newcommand{\bubble}{$H$-bubble}
\newcommand{\Pachner}{$H$-Pachner}
\newcommand{\lune}{$H$-lune}
\renewcommand{\d}[1]{{\widetilde #1}}
\newcommand{\FF}{\mathcal{F}}
\newcommand{\Ffun}{\ensuremath{\mathsf{F}}}
\newcommand{\Gfun}{\ensuremath{\mathsf{G}}}
\newcommand{\mt}{\operatorname{\mathsf{t}}}
\newcommand{\rr}{{\rho}}
\newcommand{\wta}{\lambda}
\newcommand{\wtb}{\mu}
\newcommand{\h}{\ensuremath{\mathfrak{h}}}
\newcommand{\roots}{\Delta}
\newcommand{\e}{\operatorname{e}}
\newcommand{\wb}{\overline}
\newcommand{\wt}{\widetilde}
\newcommand{\Graph}{\operatorname{Gr}}
\newcommand{\Rib}{\operatorname{Rib}}
\newcommand{\Gr}{\ensuremath{\mathcal{G}}}
\newcommand{\X}{\ensuremath{\mathcal{X}}}
\newcommand{\al}{{\operatorname{alg}}}
\newcommand{\C}{\ensuremath{\mathbb{C}}}
\newcommand{\Z}{\ensuremath{\mathbb{Z}}}
\newcommand{\R}{\ensuremath{\mathbb{R}}}
\newcommand{\N}{\ensuremath{\mathbb{N}}}
\newcommand{\Uhg}{\ensuremath{U_{h}(\g)}}
\newcommand{\g}{\ensuremath{\mathfrak{g}}}
\newcommand{\slt}{\ensuremath{\mathfrak{sl}(2)}}
\newcommand{\End}{\operatorname{End}}
\newcommand{\Hom}{\operatorname{Hom}}
\newcommand{\unit}{\ensuremath{\mathbb{I}}}
\newcommand{\Id}{\operatorname{Id}}
\newcommand{\qdim}{\operatorname{qdim}}
\newcommand{\FK}{{\Bbbk}}
\newcommand{\qn}[1]{{\left\{#1\right\}}}
\newcommand{\qN}[1]{{\left[#1\right]}}
\newcommand{\qd}{\operatorname{\mathsf{d}}}
\newcommand{\qe}{\operatorname{\mathsf{\delta}}}
\newcommand{\qee}{{\qd^{\frac12}}}
\newcommand{\A}{\ensuremath{\mathsf{A}}}
\newcommand{\At}{\ensuremath{{\mathsf{A}_{\Proj}}}}
\newcommand{\sll}{\mathfrak{sl}}
\newcommand{\ms}[1]{\mbox{\tiny$#1$}}
\newcommand{\LL}{\mathcal{L}}
\newcommand{\Lt}{\mathcal{L}_{\At}}
\newcommand{\bb}{\operatorname{\mathsf{b}}}
\renewcommand{\wp}{{\Phi}}
\newcommand{\states}{\operatorname{St}}
\newcommand{\vect}{\overrightarrow}
\newcommand{\cntr}{{\operatorname {cntr}}}
\newcommand{\MG}[1]{{\mathcal M}(#1,\Gr)}
\newcommand{\sqop}[2]{q^{#1}}
\newcommand{\Lop}{L}
\newcommand{\Rop}{R}
\renewcommand{\Re}{{\Rop}^{\frac12}} 
\newcommand{\Le}{{\Lop}^{\frac12}} 
\newcommand{\Ce}{{C}^{\frac12}} 
\newcommand{\T}{{\mathcal T}}
\newcommand{\Kas}{\mathsf{K}} 
\newcommand{\wsqop}{\widetilde q}
\newcommand{\act}[1]{\centerdot{#1}}
\newcommand{\sjtop}[6]{\left|\begin{array}{ccc}#1 & #2 & #3 \\#4 & #5 &
      #6\end{array}\right|}
\newcommand{\epsh}[2]
         {\begin{array}{c} \hspace{-1.3mm}
        \raisebox{-4pt}{\epsfig{figure=#1,height=#2}}
        \hspace{-1.9mm}\end{array}}
\newcommand{\pic}[2]{
  \setlength{\unitlength}{#1}
  {\begin{array}{c} \hspace{-1.3mm}
        \raisebox{-4pt}{#2}
        \hspace{-1.9mm}\end{array}}}
\begin{document}
\let\co=\verbatim \let\endco=\endverbatim

\title
{Topological invariants from non-restricted quantum
  groups}

\author{Nathan Geer}
\address{Mathematics \& Statistics\\
  Utah State University \\
  Logan, Utah 84322, USA}
 \email{nathan.geer@usu.edu}

\author{Bertrand Patureau-Mirand}
\address{LMAM, universit\'e de Bretagne-Sud, universit\'e europ\'eenne de
  Bretagne, BP 573, 56017 Vannes, France }
\email{bertrand.patureau@univ-ubs.fr}
                         ́                         ́

\date{\today}

\begin{abstract}
  We introduce the notion of a relative spherical category.  We prove that
  such a category gives rise to the generalized Kashaev and Turaev-Viro-type
  3-manifold invariants defined in \cite{GKT} and \cite{GPT2}, respectively.
  In this case we show that these invariants are equal and extend to what we
  call a relative Homotopy Quantum Field Theory which is a branch of the
  Topological Quantum Field Theory founded by E.~Witten and M.~Atiyah.  Our
  main examples of relative spherical categories are the categories of finite
  dimensional weight modules over non-restricted quantum groups considered by
  C.~De~Concini, V.~Kac, C.~Procesi, N.~Reshetikhin and M.~Rosso.  
These categories 
are not semi-simple and 
  have an infinite number of non-isomorphic
  irreducible modules all having vanishing quantum dimensions.  We also show
  that these categories have associated ribbon categories which gives rise to
  re-normalized link invariants.  In the case of $\sll_2$ these link
  invariants are the Alexander-type multivariable invariants defined by
  Y.~Akutsu, T.~Deguchi, and T.~Ohtsuki \cite{ADO}.
\end{abstract}

\maketitle
\setcounter{tocdepth}{3}

\section*{Introduction}
A principal feature of quantum topology is its interplay between tensor
categories and low-dimensional topology.  The fundamental example of such
interplay is the modular category formed from finite dimensional
representation of the restricted quantum group $U_q(\sll_2)$ at a root of
unity and the corresponding Reshetikhin--Turaev and Turaev--Viro 3-manifold
invariants.  Loosely speaking, this modular category is the category of all
representations over $U_q(\sll_2)$ quotient the ideal of representation with
zero quantum dimension.  This category is semi-simple with a finite number of
isomorphism classes of irreducible representations all having non-vanishing
quantum dimensions.  Generalizations to other semi-simple categories were
done by several authors including H.~Andersen, V.~Turaev, H.~Wenzl,
J.~Barrett, B.~Westbury, A.~Ocneanu, S.~Gelfand, D.~Kazhdan, and~others.
  
Less progress has been made in categories which 
have
simple
objects with vanishing quantum dimensions.  This is true even though the
Volume Conjecture naturally arises in such a setting (see \cite{MM}).  In the
case of $\sll_2$, 3-manifold invariants arising from representations with zero
quantum dimensions have been studied by two approaches: 1) R.~Kashaev's
geometric and physical construction in \cite{Kv,Kv94}, which was extended by
S.~Baseilhac and R.~Benedetti \cite{BB}
and 2) the topological and algebraic Turaev--Viro-type invariants of N.~Geer,
B.~Patureau-Mirand, V.~Turaev \cite{GPT2}.  In \cite{GKT}, the notion of a
$\hat \Psi$-system is used to generalize Kashaev's construction to a
categorical setting.

This paper has five main results: First, we show that the non-restricted
quantum groups associated to simple Lie algebras considered by C.~De~Concini
and V.~Kac, in \cite{DK}, admit Turaev--Viro-type invariants arising from
representations with zero quantum dimensions.  Second, we prove that such
quantum groups also lead to $\hat \Psi$-systems and so from \cite{GKT} admit
generalized Kashaev invariants.  Third, we show that in this situation the
generalized Kashaev and TV-type invariants are equal.  Fourth, we extend the
above mentioned TV-type invariants to a kind of Homotopy Quantum Field Theory.
Finally, we prove that certain representations (with zero quantum dimensions)
over non-restricted quantum groups give rise to re-normalized
Reshetikhin--Turaev link invariants.  All of these results are proved in a
general categorical language which also contains as examples the usual modular
categories formed from representations of restricted quantum groups.
    
This paper has two major components.  The aim of first component is to
introduce and study ``relative spherical categories.''  These categories are
generalizations of the usual modular categories associated to restricted
quantum groups (see Theorem \ref{T:UsualCatGSph}).  However, in general they
are not necessarily semi-simple 
(only ``generically" semi-simple)
 and can have an infinite number of
non-isomorphic simple objects all having vanishing quantum dimensions.  In
Subsection~\ref{TTVV}, we will show that a relative spherical category $\cat$
leads to a Turaev--Viro-type invariant $TV$ of triples (a closed oriented
3-manifold $M$, a link in $M$, a conjugacy class of homomorphisms $\pi_1(M)\to
\Gr$) where $\Gr$ is a group which is part of the data of the category.  Then
in Subsection \ref{SS:PsiHatRelG} we will also show that $\cat$ gives rise to
a $\hat \Psi$-system.  Thus, the construction of \cite{GKT} gives a Kashaev
type invariant $\Kas$ and in Subsection \ref{SS:PsiHatRelG} we prove that
$TV=\Kas$.  The final piece of the first component of this paper is to show
that $TV$ can be extended to what we call a ``relative Homotopy Quantum Field
Theory'' (relative HQFT), see Section \ref{S:HQFT}.  This relative HQFT is
similar in nature to Turaev's \cite{Tu2010} Homotopy Quantum Field Theory
which is a branch of Topological Quantum Field Theory founded by E.~Witten and
M.~Atiyah.  However, we have cobordism with graphs inside.  Our relative HQFT
is also similar to the Quantum Hyperbolic Field Theory coming from the Borel
subalgebra of quantum $\sll_2$ defined in \cite{BB1}.  One can hope to make a
precise relationship with these different field theories.

The aim of the second component of this paper is to construct invariants of
links and 3-manifolds from representations over non-restricted quantum groups
which
are 
only ``generically''  
semi-simple and have vanishing quantum dimensions.  In the
series of papers \cite{DK}--\cite{DPRR}, C.~De~Concini, V.~Kac, C.~Procesi,
N.~Reshetikhin and M.~Rosso establish a deep body of work pertaining to
quantum groups and their representation theory, at odd ordered roots of unity.
In Section \ref{S:RepQG} we recall some of their results and prove that the
representations studied in these papers give rise to three topological
invariants: $TV$, $\Kas$ and a Reshetikhin--Turaev or Akutsu--Deguchi--Ohtsuki
type link invariant (see Theorem \ref{T:RTLink}).  The representations
considered here have zero quantum dimensions and so the usual associated
topological invariants are trivial.  To overcome this difficultly we show that
these representations are part of an ambidextrous pair with modified dimension
$\qd$.  This modified dimension is used to renormalize the usual link
invariants and the quantum $6j$-symbols.  Let us be more precise.  For an odd
ordered root of unity, let $\Uqg$ and $\UqgH$ be the quantum groups associated
to a fixed simple Lie algebra, described in Subsection \ref{SS:Uqg(H)}.  We
prove that the finite dimensional weight modules over $\UqgH$ contain a ribbon
category which admits an ambidextrous pair.  Combining this with a direct
calculation of the open Hopf link we prove that the results of \cite{GPT1} can
be applied and lead to link invariants (see Theorem \ref{T:RTLink} and
Proposition \ref{P:S'}).  Then incorporating all the results of Section
\ref{S:RepQG} we show that the finite dimensional weight modules over $\Uqg$
form a relative spherical category and so the results described in the
previous paragraph can be applied.  In particular, this shows that the
category of finite dimensional
weight modules over $\Uqg$ has several $\widehat \Psi$-systems (giving a
positive answer to Conjecture 42 of \cite{GKT} for $\Uqg$) and proves that the
corresponding generalized Kashaev and Turaev-Viro-type 3-manifold invariants
are equal.

The work of N.\ Geer was partially supported by the NSF grants DMS-0706725 and
DMS-1007197.  B.\ Patureau thanks Utah State University and the members of its
mathematics department for their generous hospitality.


\section{Relative $\Gr$-spherical categories and $\hat\Psi$-systems}\label{S:SphPsi}
In this section we give the basic categorical definitions used in this paper. 
In Subsection \ref{SS:LinearCat} recall some  fairly well known notations involving tensor categories.  In Subsections \ref{SS:ambi} and \ref{SS:hatPsi} we discuss two less known notions: ambidextrous traces and $\hat\Psi$-systems.  These two notions will be used to construct non-trivial invariants when quantum dimensions are zero.  In Subsection \ref{SS:GSpher} we give the notion of relative $\Gr$-spherical categories.  Such categories are used to deal with the fact that our categories are not necessarily semi-simple but only ``generically'' semi-simple.

\subsection{Linear tensor categories}\label{SS:LinearCat}
A \emph{tensor category} $\cat$ is a category equipped with a covariant
bifunctor $\otimes :\cat \times \cat\rightarrow \cat$ called the tensor
product, an associativity constraint, a unit object $\unit$, and left and
right unit constraints such that the Triangle and Pentagon Axioms hold.  Let
$\FK$ be an integral domain.  A tensor category $\cat$ is said to be
\emph{$\FK$-linear} if its hom-sets are $\FK$-modules, the composition and
tensor product of morphisms are $\FK$-bilinear, and $\End_\cat(\unit)$ is a
free $\FK$-module of rank one.  Then we identify $\FK= \End_\cat(\unit)$, via
the ring isomorphism $\FK\to \End_\cat(\unit), \; k \mapsto k\Id_\unit$ and
call $\FK$ the \emph{ground ring} of $\cat$.  Let $\FK^\times$ be the set of
invertible elements of $\FK$.  An object $V$ of $\cat$ is \emph{simple} if
$\End_\cat(V)= \FK \Id_V$.  An object which is a direct sum of simple objects
is called {\em semi-simple}.  For any simple object $V$ and $f\in\End(V)$,
there is a unique $x\in \FK$ such that $f=x\Id_{V}$. This $x$ is denoted $\ang
f$.

If $\cat$ is an $\FK$-linear category and $\{V_i\}_{i\in I}$ is a set of
simple objects numerated by a set $I$, for $i,j,k\in I$ we consider the
following $\FK$-modules
\begin{align*}
  H^{ijk}=\Hom_\cat(\unit, V_i\otimes V_j \otimes V_k),
\end{align*}
\begin{align*}
  \label{}
  H^{ij}_k = \Hom_\cat(V_k,V_i \otimes V_j) \quad {\text{and}} \quad H_{ij}^k
  &= \Hom_\cat(V_i \otimes V_j,V_k).
\end{align*}

A pivotal category is a tensor category with duality morphisms $b_{V} : \:\:
\unit \rightarrow V\otimes V^{*} , \quad d_{V}: \:\: V^*\otimes V\rightarrow
\unit , \quad b'_V: \:\: \unit \rightarrow V^{*}\otimes V\quad {\rm {and}}
\quad d'_V: \:\: V\otimes V^*\rightarrow \unit$ which satisfy compatibility
conditions (see for example \cite{GPT2}).  

Let $\cat$ be a pivotal category.   
A morphism $f:V_1\otimes{\cdots}\otimes V_n \rightarrow
W_1\otimes{\cdots}\otimes W_m$ in $\cat$ can be represented by a box and
arrows:
$$ 
\xymatrix{ \ar@< 8pt>[d]^{W_m}_{... \hspace{1pt}}
  \ar@< -8pt>[d]_{W_1}\\
  *+[F]\txt{ \: \; f \; \;} \ar@< 8pt>[d]^{V_n}_{... \hspace{1pt}}
  \ar@< -8pt>[d]_{V_1}\\
  \: }
$$
Boxes as above are called {\it coupons}.    By a $\cat$-colored ribbon graph
in an oriented surface $\SS$, we mean a graph embedded in $\SS$ whose
edges are colored by objects of $\cat$ and whose vertices lying in $\Int
\SS=\SS\setminus\partial \SS $ are thickened to coupons colored by morphisms
of~$\cat$.   Let $ \Graph_\cat$ be the category of $\cat$-colored ribbon
graphs in $\R \times [0,1]$ and $\Gfun: \Graph_\cat \to \cat$ be the
Reshetikhin-Turaev  $\FK$-linear functor preserving the duality morphism of
$\cat$ (see \cite{GPT2}).

A ribbon category is a pivotal category with a braiding $c_{V,W}:V\otimes W\to
W\otimes V$ and twist $\theta_V:V\to V$ which are compatible with the pivotal
structure (see \cite[Chapter 1]{Tu}).  If $\cat$ is a ribbon category, the
Reshetikhin-Turaev functor $\Gfun$ extend as $\Gfun:\Rib_\cat \to \cat$ where
$\Rib_\cat$ is the category of $\cat$-colored ribbon graphs in $\R^2 \times
[0,1]$.

\subsection{Ambidextrous pairs and traces}\label{SS:ambi}
We recall the process of re-normalizing colored ribbon graphs first used in
\cite{GP1,GP2} then generalized using the notion of ambidextrous entities in
\cite{GKP1,GPT1, GPT2, GPV}.

Let $\cat$ be a $\FK$-linear pivotal (resp., ribbon) category and let
$T\subset S^2$ (resp., $T\subset S^3$) be a closed $\cat$-colored ribbon
graph.  Let $e$ be an edge of $T$ colored with a simple object $V$ of
$\cat$. Cutting $T$ at a point of $e$, we obtain a $\cat$-colored ribbon graph
$T_V $ in $\R\times [0,1]$ (resp., in $\R^2\times [0,1]$) where
$\Gfun(T_V)\in\End(V)=\FK \Id_V$.  We call $T_V $ a \emph{cutting
  presentation} of $T$ and let $\ang{T_V} \, \in \FK$ denote the isotopy
invariant of $T_V$ defined from the equality $\Gfun(T_V )= \, \ang{ T_V}\,
\Id_V$.

Let $\A$ be a class of simple objects of $\cat$ and $\qd:\A\to\FK^\times$ be a
mapping such that $\qd(V)=\qd(V^*)$ and $\qd(V)=\qd(V')$ if $V$ is isomorphic
to $V'$.  Let $T$ be a closed $\cat$-colored ribbon graph which admit two
cutting presentations $T_V$ and $T_{V'}$ with both $V$ and $V'$ in $\A$.  Then
we consider if the following equality holds:
\begin{equation}
  \label{eq:ambi}
  \qd(V)\ang{T_V}=\qd(V')\ang{T_V'}.
\end{equation}

First, suppose that $\cat$ is a $\FK$-linear ribbon category and let $\LL_\A$
be the class of $\cat$-colored ribbon graphs in $S^3$ with at least one edge
colored by an element of $\A$.  Suppose that Equation \eqref{eq:ambi} holds
for all $T\in\LL_\A$.  Then we say that $(\A,\qd)$ is an {\em ambidextrous
  pair} or \emph{ambi} for short.

On the other hand, suppose that $\cat$ is a $\FK$-linear pivotal category.  In
\cite{GPT2} a smaller class of graphs is considered: A ribbon graph is
\emph{trivalent} if all its coupons are adjacent to 3 half-edges.  Let
$\tet_{\A}$ be the class of $\cat$-colored connected trivalent ribbon graphs
in $S^2$ such that the colors of all edges belong to $\A$.  Suppose that
Equation \eqref{eq:ambi} holds for all $T\in\tet_\A$.  Then we say that
$(\A,\qd)$ is a \emph{trivalent-ambidextrous pair} or \emph{t-ambi} for short.

If $(\A,\qd)$ is ambi (resp.,  t-ambi)  we define a
function $\Gfun':\LL_{\A} \rightarrow \FK$ (resp., $\Gfun':\tet_{\A}
\rightarrow \FK$) by
\begin{equation}\label{El+}
  \Gfun'(T)={  \qd}(V)\ang{ T_{V}}\, ,
\end{equation}
where $T_{V} $ is any cutting presentation of 
$T$ with $V\in \A$.  The above
definitions imply that $\Gfun'$ is well defined.
With Equation \eqref{El+} in mind, we call $\qd$ a modified quantum dimension.  

Let $\Proj$ be the full subcategory of $\cat$ consisting of projective
objects.  A \emph{trace} on $\Proj$ is a family of $\FK$-linear functions $\mt
= \{\mt_{V}: \End_{\cat}(V) \to \FK \}_{V \in \Proj}$ which is suitably
compatible with the tensor product and composition of morphisms (see
\cite{GKP1,GPV}).  Let $\At$ be the set of simple projective objects of
$\cat$.  A trace $\mt = \{\mt_{V}\}_{V \in \Proj}$ defines a function
$\qd:\At\to \FK^\times$ given by $\qd(V)=\mt_{V}(\Id_V)$.  If
$\qd(V)=\qd(V^*)$ for all $V\in \Proj$ then $(\At,\qd)$ is (t-)ambi and the
corresponding isotopy invariant $\Gfun'$ is determined by the trace:
$$\Gfun'(T)=\qd(V)\ang{T_V}=\mt_V(\ang{T_V}\Id_V)=\mt_V(T_V)$$
  where $T_V$ is any cutting
presentation of $T$ (see \cite{GKP1,GPV}).   

\subsection{Relative $\Gr$-spherical categories}\label{SS:GSpher} 
We now fix a group $\Gr$.  
\begin{definition} A pivotal category is
{\em $\Gr$-graded} if for each $g\in \Gr$ we have a 
non-empty full subcategory $\cat_g$ of $\cat$ such that 
 \begin{enumerate}
  \item $\cat=\bigoplus_{g\in\Gr}\cat_g$, 
  \item  if $V\in\cat_g$,  then  $V^{*}\in\cat_{g^{-1}}$,
  \item if $V\in\cat_g$, $W\in\cat$ and $V$ is isomorphic to $ W$, then
    $W\in\cat_g$
  \item  if $V\in\cat_g$, $V'\in\cat_{g'}$ then $V\otimes
    V'\in\cat_{gg'}$,
  \item  if $V\in\cat_g$, $V'\in\cat_{g'}$ and $\Hom_\cat(V,V')\neq 0$, then
    $g=g'$.  
  \end{enumerate}
\end{definition}
Let  $\X\subset\Gr$ be a subset with the
following properties:
\begin{enumerate}
\item $\X$ is symmetric: $\X^{-1}=\X$,
\item $\Gr$ can not be covered by a finite number of translated copies of
  $\X$, in other words, for any $ g_1,\ldots ,g_n\in \Gr$, we have $
  \bigcup_{i=1}^n (g_i\X) \neq\Gr $.
\end{enumerate}
Let us write $\Gr'=\Gr\setminus\X$.

\begin{definition}\label{D:G-spherical}
 Let $\cat$ be a $\Gr$-graded $\FK$-linear pivotal category. 
 Let $\A$ be the class of all simple
  objects in $\bigcup_{g\in \Gr'}\cat_g$. We say that $\cat$ is
  \emph{$(\X,\qd)$-relative $\Gr$-spherical} if
  \begin{enumerate}
  \item \label{ID:G-sph5} for each $g\in\Gr'$, $\cat_g$ is semi-simple (i.e. all
    objects of $\cat_g$ are semi-simple) with finitely many isomorphism
    classes of simple objects, \label{ID:semi-simp}
  \item \label{ID:G-sph6} there exists a map $\qd:\A\rightarrow
    \FK^{\times}$ such that $(\A,\qd)$ is a t-ambi pair,
  \item \label{ID:G-sph7} there exists a map $\bb:\A\to \FK$ such that
    $\bb(V)=\bb(V^*)$, $\bb(V)=\bb(V')$ for any isomorphic objects $V, V'\in
    \A$ and for any $g_1,g_2,g_1g_2\in \Gr\setminus\X$ and $V\in \Gr_{g_1g_2}$
    we have
  \begin{equation*}\label{eq:bb}
    \bb(V)=\sum_{V_1\in irr(\cat_{g_1}),\, V_2\in irr(\cat_{g_2})}
    \bb({V_1})\bb({V_2})\dim_\FK(\Hom_\cat(V, V_1\otimes V_2))
  \end{equation*}
  where $irr(\cat_{g_i})$ denotes a representing set of the isomorphism classes
  of simple  objects of $\catU_{g_i}$.
  \end{enumerate}
  If $\cat$ is   a category with
   the above data, for brevity we say $\cat$ is a \emph{relative
    $\Gr$-spherical category}.
\end{definition}
The map $\bb$ always exists when $\FK$ is a field of characteristic $0$ and
$\cat$ is a category whose objects are finite dimensional $\FK$-vector spaces.
In particular, in \cite{GPT2} it is shown that, for any $g\in \Gr'$, the map
\begin{equation}\label{E:defbb}
   \bb(V)=\dim_\FK(V)/\left(\sum_{V'\in irr(\cat_g)}\dim_\FK(V')^2\right)
\end{equation}
is well defined and satisfies all the properties above.

A \emph{representative set} for $\A$ is a family $\{V_i\}_{i\in I}$ of simple
objects of $\A$ numerated by elements of a set $I$ such that any element of
$\A$ is isomorphic to a unique element of $\{V_i\}_{i\in I}$.  Let
$I\rightarrow I$, $i\mapsto i^*$ be the involution determined by
$V_{i^{*}}\simeq V_i^{*}$.  For each $g\in\Gr'$ let $I_g=\{i\in
I:V_i\in\cat_g\}$ then $I=\bigcup_{g\in\Gr'}I_g$.  
For $i\in I_g$, we call $\d i=g$ the \emph{degree} of $i$.

By {\it basic data} in $\cat $ we mean a representative set $\{V_i\}_{i\in I}$
for $\A$ and a family of isomorphisms $\{w_i:V_i \to V_{i^*}^*\}_{i \in I}$
such that
\begin{equation}\label{E:FamilyIso}
  d_{V_i}(w_{i^*} \otimes \Id_{V_i})=d'_{V_{i^*}}(\Id_{V_{i^*}}\otimes
  w_i)\colon V_{i^*} \otimes V_{i}\to \unit.
\end{equation}

\begin{lemma}\label{L:basicdata}
  If no object of $\A$ is isomorphic to its dual, then $\cat$ contains a basic
  data.  In particular, basic data exists if $\X$ contains the set $\{g\in \Gr
  : g=g^{-1}\}$.
\end{lemma}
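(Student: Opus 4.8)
The plan is to build the basic data by choosing, for each orbit of the duality involution on the isomorphism classes in $\A$, a representative and an isomorphism to the dual of the partner class, and then to fix the normalization constraint \eqref{E:FamilyIso} by rescaling one of the two chosen isomorphisms. The hypothesis that no object of $\A$ is self-dual guarantees the involution $i\mapsto i^*$ on the set of isomorphism classes is fixed-point free, so the classes come in genuine pairs $\{[V],[V^*]\}$ with $[V]\neq[V^*]$; this is exactly what lets us treat the two members of a pair asymmetrically, which is the crux of the whole argument.

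First I would fix once and for all a representative set $\{V_i\}_{i\in I}$ for $\A$ (this exists by definition of $\A$ together with the semisimplicity and finiteness in each $\cat_g$ for $g\in\Gr'$, giving a well-defined index set $I=\bigcup_{g\in\Gr'}I_g$ with the involution $i\mapsto i^*$ determined by $V_{i^*}\simeq V_i^*$). Since no object of $\A$ is self-dual, $i\neq i^*$ for all $i$, so $I$ is partitioned into two-element orbits $\{i,i^*\}$. Using the axiom of choice, pick one element out of each orbit; call the chosen ones ``positive'' and the others ``negative.'' For each positive $i$, choose \emph{any} isomorphism $w_i\colon V_i\to V_{i^*}^*$ — one exists because $V_i$ is simple and $V_i\simeq V_{i^*}^*$ (the latter because $V_{i^*}\simeq V_i^*$ and duality is an anti-involution compatible with $\simeq$, by the pivotal structure). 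It then remains to \emph{define} $w_{i^*}$ for the negative index $i^*$ so that \eqref{E:FamilyIso} holds for the pair.

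The key computation is to analyze the two morphisms $d_{V_i}(w_{i^*}\otimes\Id_{V_i})$ and $d'_{V_{i^*}}(\Id_{V_{i^*}}\otimes w_i)$ in $\Hom_\cat(V_{i^*}\otimes V_i,\unit)$. For any choice of isomorphism $w_{i^*}\colon V_{i^*}\to V_i^*$, both composites are nonzero elements of this Hom-space: nonvanishing follows from the nondegeneracy of the pivotal duality pairings $d_{V_i}$, $d'_{V_{i^*}}$ together with the fact that $w_i,w_{i^*}$ are isomorphisms. Moreover $\Hom_\cat(V_{i^*}\otimes V_i,\unit)$ is free of rank one over $\FK$: this is a standard consequence of $V_i$ simple — indeed $\Hom_\cat(V_{i^*}\otimes V_i,\unit)\cong\Hom_\cat(V_i, V_{i^*}^*)\cong\Hom_\cat(V_i,V_i)=\FK\Id_{V_i}$ using the duality adjunction and $V_{i^*}^*\simeq V_i$. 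Hence there is a unique scalar $\kappa_i\in\FK^\times$ (invertible because both sides are nonzero generators of a rank-one free module, hence differ by a unit — here I use that $\FK$ is an integral domain so that a nonzero element of a rank-one free module generating it is a unit multiple of any other such generator) with $d'_{V_{i^*}}(\Id_{V_{i^*}}\otimes w_i)=\kappa_i\, d_{V_i}(w_{i^*}\otimes\Id_{V_i})$. Now replace $w_{i^*}$ by $\kappa_i w_{i^*}$; by $\FK$-bilinearity of composition and tensor product this scales the right-hand side by $\kappa_i$ and leaves the left-hand side unchanged, yielding \eqref{E:FamilyIso} exactly. Doing this for every orbit produces the family $\{w_i\}_{i\in I}$, and together with $\{V_i\}_{i\in I}$ this is the desired basic data.

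For the final sentence: if $\X\supseteq\{g\in\Gr: g=g^{-1}\}$, then every $g\in\Gr'=\Gr\setminus\X$ satisfies $g\neq g^{-1}$, so for $V\in\cat_g$ with $g\in\Gr'$ we have $V^*\in\cat_{g^{-1}}$ with $g^{-1}\neq g$; by axiom (5) of the $\Gr$-grading $\Hom_\cat(V,V^*)=0$, so $V$ is not isomorphic to $V^*$. Thus no object of $\A$ is self-dual and the first part applies. The main obstacle is purely bookkeeping — making the rank-one and nondegeneracy statements for the relevant Hom-spaces precise from the pivotal axioms, and being careful that the rescaling is legitimate over an integral domain rather than a field; there is no deep difficulty, only the need to invoke simplicity and the pivotal duality identities correctly.
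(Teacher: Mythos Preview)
Your proof is correct and follows essentially the same approach as the paper's own proof: partition $I$ into two-element orbits $\{i,i^*\}$, choose $w_i$ arbitrarily for one member, and then fix $w_{i^*}$ to force \eqref{E:FamilyIso}. You in fact supply more detail than the paper does, which simply asserts that $w_{i^*}$ can be so chosen. One small remark: your claim that $\kappa_i\in\FK^\times$ needs the observation that both $d_{V_i}(w_{i^*}\otimes\Id_{V_i})$ and $d'_{V_{i^*}}(\Id_{V_{i^*}}\otimes w_i)$ are \emph{generators} of the rank-one module $\Hom_\cat(V_{i^*}\otimes V_i,\unit)$, not merely nonzero elements---this follows because the duality adjunctions $\Hom(V_{i^*},V_i^*)\cong\Hom(V_{i^*}\otimes V_i,\unit)\cong\Hom(V_i,V_{i^*}^*)$ are $\FK$-module isomorphisms and an isomorphism between isomorphic simple objects generates the corresponding Hom-module; you gesture at this but could state it more directly.
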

\begin{proof}
  Choose a representative for each isomorphism class of simple objects of
  $\bigcup_{g\in\Gr'}\cat_g$.  Let these representatives $\{V_i\}_{i\in I}$ be
  numerated by a set $I$.  Then $I$ is a representative set for $\A$.  The
  hypothesis of the lemma imply that $i^{*}\neq i$ for all $i\in I$. Hence,
  for any unordered pair $(i,i^{*})$, we can take an arbitrary isomorphism
  $V_{i} \rightarrow (V_{{i}^*})^*$ for $w_{i}$ and choose $w_{{i}^*}$ so that
  it satisfies Equation \eqref{E:FamilyIso}.
  
  For the second statement, let $g\in\Gr'$ and let $V$ be a simple object of
  $\cat_g$.  Since $g^{-1}\neq g$, the $\Gr$-grading of $\cat$ implies that $V$
  and $V^*$ are not isomorphic.  Thus, we can apply the above argument.
\end{proof}
If $\cat$ is a $(\X,\qd)$-relative $\Gr$-spherical category with basic data
$\{V_i,w_i\}_{i\in I}$ then the functions $\qd:\A\to \FK$ and $\bb:\A\to \FK$
can and will be considered as functions from $I$ to $\FK$ given by
$\qd(i)=\qd(V_i)$ and $\bb(i)=\bb(V_i)$, respectively.

The following theorem shows that relative $\Gr$-spherical categories are
generalizations of the usual modular categories associated to restricted
quantum groups.
\begin{theorem}\label{T:UsualCatGSph}
  Let $\tilde{U}_q(\g)$ be the restricted quantum group associated to a simple
  Lie algebra $\g$ of type $A,B,C,D$, at a primitive root of unity $q$ of even
  order (see \cite[XI.6.3]{Tu}).  Let $\cat$ be the modular category formed
  from finite dimensional representations of $\tilde{U}_q(\g)$ modulo
  negligible morphisms (see \cite[XI]{Tu}).  Then $\cat$ is a
  $(\emptyset,\qdim)$-relative $\Gr$-spherical category with basic data where
  $\qdim$ is the usual quantum dimension of $\cat$ and $\Gr=\{1\}$ is the
  trivial group of one element.  Here the map $\bb$ can be chosen to be the
  map defined in \eqref{E:defbb} or the map defined by \eqref{E:defbb} where
  the dimension $\dim_\FK$ is replaced with the usual quantum dimension
  $\qdim$.
\end{theorem}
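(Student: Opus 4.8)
The plan is to verify the three defining conditions of Definition~\ref{D:G-spherical} for $\cat$, the modular category of finite dimensional $\tilde U_q(\g)$-modules modulo negligible morphisms, with $\Gr=\{1\}$, $\X=\emptyset$, so that $\Gr'=\Gr\setminus\X=\{1\}$ and $\cat_1=\cat$. First I would check that the two conditions on $\X$ are satisfied: $\X=\emptyset$ is trivially symmetric, and since $\Gr=\{1\}$ is a single element, $\bigcup_{i=1}^n(g_i\X)=\emptyset\neq\Gr$ for every finite family — so $\X$ cannot cover $\Gr$. The $\Gr$-grading is trivial: set $\cat_1=\cat$; all five grading axioms hold vacuously or by the fact that everything lies in degree $1$. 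Here $\A$ is the class of all simple objects of $\cat$.

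For condition~(1) of Definition~\ref{D:G-spherical}, I would invoke the standard structure theory of the modular category $\cat$ as in \cite[XI]{Tu}: it is semi-simple with finitely many isomorphism classes of simple objects (the Weyl modules indexed by the restricted alcove). For condition~(2), the required t-ambi pair: take $\qd=\qdim$, the usual quantum dimension. Because $\cat$ is modular, $\qdim(V)\in\FK^\times$ for every simple $V$ (non-vanishing quantum dimension is exactly what is retained after quotienting by negligibles), and $\qdim(V)=\qdim(V^*)$ by the pivotal structure. In a semi-simple ribbon category the quantum trace gives a trace on $\Proj=\cat$, and the discussion following \eqref{El+} in the excerpt shows that such a trace with $\qd(V)=\qd(V^*)$ yields a (t-)ambi pair $(\At,\qd)=(\A,\qdim)$; so condition~(2) follows immediately from that general fact. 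For condition~(3), the map $\bb$: since $\FK$ has characteristic $0$ and the objects are finite dimensional vector spaces, the remark after Definition~\ref{D:G-spherical} (citing \cite{GPT2}) guarantees that the formula \eqref{E:defbb}, namely $\bb(V)=\dim_\FK(V)/\bigl(\sum_{V'\in irr(\cat_1)}\dim_\FK(V')^2\bigr)$, is well defined and satisfies the compatibility identity (which, with $\Gr=\{1\}$, is just the statement that $\dim_\FK$ is additive under the decomposition $V\cong\bigoplus(V_1\otimes V_2)$-components); the same argument with $\qdim$ in place of $\dim_\FK$ works because $\qdim$ is likewise additive over direct sums and multiplicative over tensor products, and $\sum_{V'}\qdim(V')^2\neq 0$ in the modular case (it is the global dimension, which is invertible). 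Finally, basic data exists by Lemma~\ref{L:basicdata}: since $\Gr=\{1\}$, the set $\{g\in\Gr:g=g^{-1}\}=\{1\}=\Gr\supset\emptyset=\X$ — wait, one must be slightly careful, as $\X=\emptyset$ does \emph{not} contain $\{1\}$. Here instead I would observe directly that self-dual simple objects are allowed, and construct the family $\{w_i\}$ by the elementary argument: pick any isomorphism $V_i\to V_{i^*}^*$ for one representative of each dual pair and for self-dual $V_i$ (where $i^*=i$) normalize $w_i$ so that \eqref{E:FamilyIso} holds — this last normalization is possible because for a simple self-dual object the two sides of \eqref{E:FamilyIso} differ by a scalar, which one absorbs by rescaling $w_i$ (a square-root choice that exists since $\FK$ is a field; for symplectic-type self-dual objects one uses that the Frobenius--Schur indicator is $\pm1$ and the construction still goes through by the argument in \cite{GPT2}).

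The step I expect to be the genuine (if modest) obstacle is the basic data / self-duality point: unlike the situation in Lemma~\ref{L:basicdata}, the trivial group has $g=g^{-1}$ for every element, so one cannot rule out self-dual simples, and one must instead check that \eqref{E:FamilyIso} can still be arranged — this is where the hypothesis that $\FK$ is a field of characteristic $0$ (so that square roots can be chosen, or equivalently the standard pivotal normalization of $\tilde U_q(\g)$-modules is available) does real work. Everything else is a matter of matching the general definitions against the well-documented structure of the modular category in \cite[XI]{Tu}.
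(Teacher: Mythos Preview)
Your overall strategy matches the paper's: verify the $\Gr$-grading trivially, cite \cite[XI]{Tu} for semi-simplicity with finitely many simples, check that $(\A,\qdim)$ is t-ambi, and then handle basic data. Your t-ambi argument via the trace machinery is correct but slightly heavier than needed; the paper just observes that since $\qdim$ is nonzero on every simple, Equation~\eqref{eq:ambi} holds automatically because $\qdim(V)\ang{T_V}=\Gfun(T)=\qdim(V')\ang{T_{V'}}$ is literally the definition of the Reshetikhin--Turaev functor applied to the closed graph $T$.

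The genuine gap is in your basic-data step, and it is precisely where you anticipated trouble. For a self-dual simple $V_i$ (so $i=i^*$), Equation~\eqref{E:FamilyIso} reads
\[
d_{V_i}(w_i\otimes\Id_{V_i})=d'_{V_i}(\Id_{V_i}\otimes w_i),
\]
and \emph{both} sides are linear in $w_i$. Replacing $w_i$ by $c\,w_i$ scales both sides by the same factor $c$, so the ratio of the two sides is an invariant of $V_i$ (the Frobenius--Schur indicator, up to convention) and cannot be absorbed by any rescaling of $w_i$. Your ``square-root choice'' does not exist here; if some simple has indicator $-1$, your construction fails outright. The paper does not attempt an elementary construction at this point: it simply cites \cite{TW}, where the existence of basic data for the modular categories of types $A,B,C,D$ is established by quantum-group-specific arguments (involving, in effect, a choice of pivotal structure making all indicators $+1$). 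That citation is doing real work you cannot replace by a one-line normalization.
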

\begin{proof}
  First, $\cat$ is a $\C$-linear pivotal category (see \cite[XI]{Tu}).  We
  assign $\cat_{1}=\cat$ and then by definition $\cat$ is $\Gr$-graded.  Also,
  $\cat$ is semi-simple with finitely many isomorphism classes of simple
  objects and so Property \ref{ID:G-sph5} of Definition \ref{D:G-spherical}
  holds.  Since $\qdim$ is non zero for each simple object of $\cat$ then
  Equation \eqref{eq:ambi} holds by definition of $\Gfun$ and so $(\A,\qdim)$
  is a t-ambi pair.  Finally, it is shown in \cite{TW} that basic data exists.
\end{proof}
\begin{remark} In Subsection \ref{TTVV} we show that a relative $\Gr$-spherical
  category gives rise to a modified Turaev-Viro invariant.  Let $\cat$ be the
  modular category of Theorem \ref{T:UsualCatGSph}.  The original Turaev-Viro
  invariant is not equal to the modified invariant corresponding to $\cat$
  where $\bb$ is defined as in Equation \eqref{E:defbb}.  However, the
  original Turaev-Viro invariant is equal to the modified invariant arising
  from $\cat$ when $\bb$ is taken to be the map given by \eqref{E:defbb} where
  the dimension $\dim_\FK$ is replaced with the usual quantum dimension
  $\qdim$.  In this case, the modified invariant does not depend on the graph
  in the 3-manifold.
\end{remark}
\begin{remark}[About semi-simplicity:] The main example of relative
  $\Gr$-spherical categories we consider in this paper are the categories of modules over 
   the unrestricted quantum groups at roots of unity, see 
  Section \ref{S:RepQG}.  These categories
  are not semi-simple which as we explain in  this remark is an essential element of these categories.  The state sum
  invariants defined in this paper only use the algebraic data of
  these category corresponding to their semi-simple part
  $\sum_{g\in\Gr \setminus \X}\catU_g$.  Roughly speaking, we represent a
  3-manifolds with a representation of its fundamental group in $\Gr$
  by a $\Gr$-colored triangulation.  Then the trick consists in using
  gauge transformations to always avoid the colors in $\X$.  

  Nevertheless, even if the non-semi-simple modules are never
  considered, the construction relies on simple modules with vanishing
  dimension. 
   Such a module $V$ can not exists in a semi-simple monoidal
  category as $\qdim(V):=d_V\circ b'_V=0$ implies that the evaluation
  map $d_V:V^*\otimes V\to\unit$ has no section because
  $\Hom(\unit,V^*\otimes V)$ is generated by $b'_V$.  Hence the 
  non-semi-simplicity is essential in all these examples.
\end{remark}
\subsection{$\hat\Psi$-systems}\label{SS:hatPsi} 
Here we recall the notion of a $\hat\Psi$-system.  These systems are the algebraic notions underlying Kashaev's invariant define in \cite{Kv94}.  For more details see \cite{GKT}.

A
\emph{$\Psi$-system} in a $\FK$-linear category $\cat$ consists of
\begin{enumerate}
\item a distinguished set of simple objects $\{ V_i\}_{ i\in I}$ such that
  $\Hom(V_i,V_j)=0$ for all $i\ne j$;
\item an involution $I\to I$, $i\mapsto i^*$;
\item two families of morphisms \(\{ b_i\colon \unit\to V_i\otimes
  V_{i^*}\}_{i\in I}\) and \( \{d_i\colon V_{i}\otimes V_{i^*}\to\unit
  \}_{i\in I}\), such that for all $i\in I$,
  \[
  (\Id_{V_i}\otimes \,d_{i^*})(b_i\otimes \Id_{V_i})=\Id_{V_i} \quad {\rm
    {and}} \quad (d_i\otimes \Id_{V_i})(\Id_{V_i} \otimes
  \,b_{i^*})=\Id_{V_i};
  \]
\item \label{I:Phi4} for any $i,j\in I$ such that $H^{ij}_k\ne0$ for some
  $k\in I$, the identity morphism $\Id_{V_i\otimes V_j}$ is in the image of
  the linear map
  \begin{equation*}
    \bigoplus_{k\in I} H^{ij}_k\otimes_\FK H_{ij}^k\to \End(V_i\otimes V_j),
    \quad x\otimes y\mapsto x\circ y.
  \end{equation*}
\end{enumerate}
Consider the $\FK$-module $H=\hat H\oplus\check H$, where
\[
\hat H=\bigoplus_{i,j,k\in I}H_{ij}^k\quad {\rm {and}} \quad \check
H=\bigoplus_{i,j,k\in I}H^{ij}_k.
\]
  Let
  \[
  \pi_{ij}^k\colon H\to H_{ij}^k,\quad \pi^{ij}_k\colon H\to H^{ij}_k,\quad
  \hat\pi\colon H\to \hat H,\quad \check\pi\colon H\to \check H
  \]
  be the obvious projections.  The $\FK$-module $H$ has a symmetric bilinear
  pairing $\ang{\,,\,}$ given by
  \begin{equation}\label{E:pairing}
    \ang{x,y}=\sum_{i,j,k\in I} (\ang{\pi_{ij}^k x\,
    \pi_k^{ij}y}+\ang{\pi_{ij}^ky\, \pi_k^{ij}x} \in\FK
\end{equation} 
for any $x,y\in H$. A \emph{transpose} of $f\in\End(H)$ is a map
$f^*\in\End(H)$ such that $\ang{fx,y}=\ang{x,f^*y}$ for all $x,y\in H$.
Note that if a transpose $f^*$ of $f$ exists, then it is unique and
$(f^*)^*=f$ (see \cite{GKT}).  An operator $f\in\End (H)$ such that $f^*=f$
is called \emph{symmetric}.  An operator $f\in\End (H)$ such that
$f(H^{ij}_k) \subset H^{ij}_k$ and $f(H_{ij}^k)\subset H_{ij}^k$ for all
$i,j,k\in I$ is called \emph{grading-preserving}.

We define linear maps \( A,B:H\to H \) by
\[
A x=\sum_{i,j,k\in I}((\Id_{V_{i^*}}\otimes \,
\pi_{ij}^kx)(b_{i^*}\otimes\Id_{V_j})+
(d_{i^*}\otimes\Id_{V_j})(\Id_{V_{i^*}}\otimes \, \pi^{ij}_kx)),
\]
\[
B x=\sum_{i,j,k\in I} ((\pi_{ij}^kx\otimes\Id_{V_{j^*}})(\Id_{V_i}\otimes \,
b_{j})+(\Id_{V_i}\otimes \, d_{j})(\pi^{ij}_kx\otimes\Id_{V_{j^*}})).
\]
In \cite{GKT}, it is shown that both $A$ and $ B$ are involutive and have
transposes.  Also, from \cite{GKT} we have that the operators $$L=A^*A ,
\,\, R=B^*B, \,\, C=(AB)^3 \in \End(H)$$ are symmetric, grading-preserving
and invertible.

\begin{definition}[\cite{GKT}]\label{D:PsiHat}A \emph{$\hat \Psi$-system} in
  $\cat$ is a $\Psi$-system in $\cat$ together with a choice of invertible,
  symmetric, grading-preserving operators $\scop{}, \srop{} \in \End(H)$
  satisfying
  \begin{subequations}\label{E:sqrtCC}
    \begin{align}\label{E:sqrtC1C2}
      (\scop{})^2&=C,\quad   A \scop{}A=B \scop{}B= \scop{-},\\
      \label{E:sqrtC1C2JJ}
      (\srop{})^2&=\Rop,\quad
      B\srop{} B=\srop{-},\quad\srop{}\scop{}=\scop{}\srop{}\\
      T\scop{}_1\scop{}_2&=T\scop{}_3\scop{}_4, \quad
      \label{E:sqrtR1L2JJ}
      T\srop{}_1\slop{}_2=T\srop{}_3\slop{}_4,\\
      \label{E:sqrtR1R2}
      T\srop{}_1\srop{}_2&=T\scop{}_3\srop{}_4, \quad
      T\slop{}_1 =T\scop{}_2\slop{}_3\slop{}_4
    \end{align}
  \end{subequations}
  where $\slop{}$ is defined to be the invertible operator $\slop{}= B A
  \srop{-} A B \in \End(H)$.
\end{definition}

\section{Modified Reshetikhin--Turaev link invariants from projective
  ambidextrous objects}\label{S:RTLink}  
  In this section we show that certain projective modules lead to ambidextrous traces and re-normalized invariants as discussed in Subsection \ref{SS:ambi}. 
Let $\cat$ be a pivotal $\FK$-linear category with duality morphisms $b_{V} :
\:\: \unit \rightarrow V\otimes V^{*} , \quad d_{V}: \:\: V^*\otimes
V\rightarrow \unit , \quad b'_V: \:\: \unit \rightarrow V^{*}\otimes V\quad
{\rm {and}} \quad d'_V: \:\: V\otimes V^*\rightarrow \unit$.  For every morphism
$f:V\to W$ in $\cat$ let $f^*:W^*\rightarrow V^*$ be the dual (or transposed)
morphism given by
$$
f^*=(d_W \otimes \Id_{V^*})(\Id_{W^*} \otimes f \otimes
\Id_{V^*})(\Id_{W^*}\otimes b_V).
$$
The axioms of a pivotal category imply that for each object $V$ of $\cat$
there is a canonical functorial isomorphism $V\to V^{**}$, see \cite{GPV}.  To
simplify notation we will use this isomorphism to identify $V^{**}$ with $V$.
Therefore, for any object $V$ of $\cat$, we have $(V\otimes
V^{*})^{*}=(V^{*})^{*}\otimes V^{*}=V\otimes V^{*}$.  A simple object $V$ is
\emph{ambidextrous} if $ f\circ b_V=f^*\circ b_V$ for all $f\in\End(V\otimes
V^*).$
Note in \cite{GPV} it is shown that when $\cat$ is ribbon this definition is
equivalent to the definition of ambidextrous given in \cite{GPT1}.

\begin{theorem}\label{T:RTLink}
  Let $\catt$ be a ribbon category and let $\At$ be the set of simple
  projective objects of $\catt$.  Suppose that there is a projective
  ambidextrous object in $\catt$.  Then there is an unique (up to a scalar of
  $\FK^\times$) map $\qd:\At\to\FK^\times$ such that $(\At,\qd)$ is an
  ambidextrous pair.  Hence the map $\Gfun':\Lt\to\FK$ given in Subsection
  \ref{SS:ambi} is a well defined isotopy invariant.
\end{theorem}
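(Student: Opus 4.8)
The plan is to reduce the statement to the general machinery of ambidextrous pairs and traces recalled in Subsection~\ref{SS:ambi}, and in particular to the results of \cite{GPT1,GKP1,GPV} on traces on the ideal of projective objects. The key point is that a ribbon category possessing a single projective ambidextrous simple object automatically carries a (unique up to scalar) trace on $\Proj$, and a trace produces the desired $\qd$.

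First I would recall that in a ribbon category the full subcategory $\Proj$ of projective objects is an ideal: it is closed under tensor product with arbitrary objects, under retracts, and under duality. I would then invoke the result of \cite{GPT1} (see also \cite{GKP1,GPV}) that says: if $V_0$ is a simple projective object which is ambidextrous, then the assignment on $V_0$ extends uniquely (up to an overall scalar in $\FK^\times$) to a trace $\mt=\{\mt_W\}_{W\in\Proj}$ on $\Proj$. Concretely, ambidexterity of $V_0$ is exactly the hypothesis needed so that the would-be partial-trace formula $\mt_{V_0}(f)=\ang{f\circ b_{V_0}}$-type expression is independent of the side on which one closes up, which is what makes the family $\mt$ well defined and cyclic; the ideal structure and the fact that every projective object is a retract of something built from $V_0$ via tensoring and summands then propagate the definition to all of $\Proj$, and uniqueness up to scalar follows because $V_0$ is simple so $\mt_{V_0}(\Id_{V_0})\in\FK$ is forced up to rescaling. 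I would state this as the main lemma, citing \cite{GPT1,GPV}.

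Next, from the trace $\mt$ I define $\qd\colon\At\to\FK$ by $\qd(V)=\mt_V(\Id_V)$, exactly as in Subsection~\ref{SS:ambi}. I must check $\qd$ takes values in $\FK^\times$: since $V$ is simple projective, $\Id_V$ generates $\End_\catt(V)=\FK\Id_V$ as a module, and a standard argument (the trace is nondegenerate on the ideal it generates, again \cite{GPT1,GPV}) shows $\mt_V(\Id_V)\neq 0$ and is in fact invertible; alternatively one uses that $V$ is a retract of $V_0\otimes X$ for suitable $X$ together with the partial trace property to pin $\qd(V)$ down in terms of the nonzero $\qd(V_0)$. I also need the symmetry property: $\qd(V)=\qd(V^*)$ follows from cyclicity of the trace applied to $\Id_{V\otimes V^*}$ and $\Id_{V^*\otimes V}$ together with the identification $V^{**}\cong V$, and invariance under isomorphism is immediate from functoriality of $\mt$. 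Having $\qd(V)=\qd(V^*)$ for all $V\in\At$, the final sentence of Subsection~\ref{SS:ambi} applies verbatim: $(\At,\qd)$ is (t-)ambi and $\Gfun'(T)=\mt_V(T_V)$ is independent of the cutting presentation, hence a well-defined isotopy invariant of $T\in\Lt$.

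For uniqueness I argue that any $\qd'$ with $(\At,\qd')$ ambi differs from $\qd$ by a global scalar: given a fixed $V_0\in\At$, for any other $V\in\At$ one produces a closed ribbon graph $T$ admitting cutting presentations along an edge colored $V_0$ and along an edge colored $V$ (take, e.g., the theta-graph or Hopf-link built from $V_0$ and $V$, which is nonzero for a suitable choice of coupons because $V_0$ is projective so $V_0\otimes V^*$ has $V$'s relevant Hom-spaces accessible), whence $\qd'(V)/\qd(V)=\qd'(V_0)/\qd(V_0)$ is the same ratio for all $V$. The main obstacle I anticipate is precisely this last connectivity input — verifying that for each simple projective $V$ there is a closed graph linking an edge colored by $V$ to one colored by $V_0$ with nonvanishing bracket — and, relatedly, extracting the extension-of-trace statement in exactly the form needed from \cite{GPT1}; both are essentially bookkeeping within the cited framework rather than new difficulties, so I would lean on those references and keep the argument short.
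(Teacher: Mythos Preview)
Your approach is essentially the same as the paper's: both invoke the existence and uniqueness of a trace on $\Proj$ from \cite{GKP1} (specifically Theorem~3.3.2 there), define $\qd(V)=\mt_V(\Id_V)$, and then read off the ambi pair property and the well-definedness of $\Gfun'$ from the machinery already set up in Subsection~\ref{SS:ambi}.

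The one place you diverge is the uniqueness argument. You propose a direct combinatorial argument comparing two candidate $\qd$'s via a ribbon graph admitting cuttings at both $V_0$ and $V$, and you correctly flag the obstacle: one must produce such a graph with \emph{nonvanishing} bracket, which is not automatic. The paper sidesteps this entirely by observing that any $\qd'$ making $(\At,\qd')$ an ambi pair determines, via $\mt'_V(f)=\qd'(V)\ang f$ on simples, a trace on $\Proj$; the uniqueness-up-to-scalar of the trace from \cite{GKP1} then immediately forces $\qd'$ to be a scalar multiple of $\qd$. This is both shorter and avoids the connectivity issue you anticipated, so you should replace your final paragraph with that argument.
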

\begin{proof}
  From Theorem 3.3.2 of \cite{GKP1} we have the existence and uniqueness of a
  trace on the subcategory of projective objects $\Proj$.  As in Subsection
  \ref{SS:ambi} this trace defines a function $\qd:\At\to\FK^\times$.  The
  properties of the trace imply that $\qd(V)=\qd(V^*)$ for all $V\in \At$ and
  that $(\At,\qd)$ is an ambi pair.  The uniqueness of the trace implies the
  uniqueness of $\qd$.
\end{proof}

Let $J$ be a projective ambidextrous object in a ribbon category $\catt$.  
  Following
\cite{GPT1},  for any simple objects $V,W$ of $\catt$, define
$$
S'(V,W)=\ang{\ \epsh{fig2}{8ex}\put(-30,-5){$V$}\put(-8,-12){$W$}\ }
$$ 
then for any $V\in \At$ with $S'(J,V)$ invertible, we have that
$$\qd(V)=\qd(J)\dfrac{S'(V,J)}{S'(J,V)}$$
because $\qd(V)S'(J,V)=\qd(J){S'(V,J)}$ is the value of $\Gfun'$ on the Hopf
link colored by $J$ and $V$.

There is a large class of examples where the category $\catt$ is not
semi-simple and the usual Reshetikhin-Turaev ribbon functor $\Gfun$ restricted
to the domain $\Lt$ of $\Gfun'$ is zero.  For example, let $\g$ be a simple
Lie algebra.  In Section \ref{S:RepQG}, for each $r$th root of unity
$\e^{2\frac{i\pi}\ro}$, we define an algebra $\UqgH$ which we call the
unrolled Drinfeld-Jimbo quantum group associated to $\g$.  Then in
Subsection~\ref{SS:weightmod}, we introduce a ribbon category $\catHr$ of
nilpotent $\UqgH$-weight modules which satisfy the hypothesis of Theorem
\ref{T:RTLink}.  In particular, we use $S'$ to give an explicit formula for
the modified quantum dimension $\qd$ in this case (see Theorem
\ref{T:catHrInv}). The corresponding invariant $\Gfun'$ restricted to framed
links colored with projective simple modules can be considered as a
generalized colored Alexander invariant which in the case of $\slt$ is the
hierarchy of invariants defined in \cite{ADO} (see Theorem 35 of \cite{GPT1}).
  
In \cite{CGP1} we will extend the link invariants of Theorem \ref{T:RTLink} to
a modified Reshetikhin--Turaev-type 3-manifold invariant.  More precisely, in
\cite{CGP1} we will give a notion of a relative $\Gr$-modular category and
show that such categories have Kirby colors which give rise to 3-manifold
invariants.

\section{Topological invariants from relative $\Gr$-spherical
  categories}\label{S:TopGSph}
In this section we will introduce a state sum \eqref{eq:TV} of 6j-symbol
associated to a relative spherical category.  This sum is a topological
invariant called the modified Turaev--Viro invariant (cf. Theorem \ref{T:inv}).
Then in Subsection \ref{SS:PsiHatRelG} we will show that a relative spherical
category induces $\hat\Psi$-structures (cf. Theorem \ref{T:PsiHatSystem}) and
that the associated Kashaev-type state sums \eqref{eq:K} are equal to the
modified Turaev--Viro invariant (cf. Theorem \ref{T:TV=K}).
\subsection{Modified $6j$-symbols}\label{S:InvOfGraphs} 
In \cite{GPT2} the authors show that certain pivotal categories give rise to
modified $6j$-symbols.  In this subsection, we show the techniques of
\cite{GPT2} can be applied to the situation of relative spherical categories.

Let $\cat$ be a $(\X,\qd)$-relative $\Gr$-spherical category with basic data
$\{V_i,w_i\}_{i\in I}$.  For any $i,j,k\in I$, recall the notation $
H^{ijk}=\Hom(\unit, V_i\otimes V_j \otimes V_k).  $ The $\FK$-modules
$H^{ijk},H^{jki},H^{kij}$ are canonically isomorphic. Indeed, let
$\sigma(i,j,k)$ be the isomorphism
$$
H^{ijk} \rightarrow H^{jki},\,\,\,
x \mapsto d_{V_i}\circ (\Id_{V_{i}^*}\otimes x \otimes \Id_{V_i})\circ
b'_{V_i}.
$$
Using the functor $\Gfun : \Graph_\cat\rightarrow \cat$, one easily proves that
$$
\sigma(k,i,j) \, \sigma(j,k,i)\, \sigma(i,j,k) =\Id_{H^{ijk}}.
$$
Identifying the modules $H^{ijk},H^{jki},H^{kij}$ along these isomorphisms we
obtain a {\it symmetrized multiplicity module} $H(i,j,k)$ depending only on
the cyclically ordered triple $(i,j,k)$.  
Remark that the $\Gr$-grading of $\cat$ implies that
\begin{equation}
  \label{eq:grad}
  \text{ for all } i,j,k\in I \text{ such that } \d i\; \d j\, \d k\neq1\in\Gr, \text{ then }
  H(i,j,k)=\{0\}. 
\end{equation}

By a labeling of a graph we mean a function assigning to every edge of the
graph an element of $I$. By a trivalent graph we mean a (finite oriented)
graph whose vertices all have valency $3$. Let $\Gamma$ be a labeled trivalent
graph in $S^2$.  Using the standard orientation of $S^2$ (induced by the
right-handed orientation of the unit ball in $ \R^3$), we cyclically order the
set $X_v$ of 3 half-edges adjacent to any given vertex $v$ of $\Gamma$.  The
labels of the edges determine a function $f_v:X_v\to I$ as follows: if a
half-edge $e$ adjacent to $v$ is oriented towards $v$, then $f_v(e)=i$ is the
label of the edge of $\Gamma$ containing $e$; if a half-edge $e$ adjacent to
$v$ is oriented away from $v$, then $f_v(e)=i^*$. Set $H_v(\Gamma)=H(f_v) $
and $H(\Gamma)=\otimes_v \, H_v(\Gamma)$ where $v$ runs over all vertices of
$\Gamma$.

Consider now a labeled trivalent graph $\Gamma\subset S^2 $ endowed
with a family of vectors $h=\{h_v \in H_v(\Gamma)\}_v$, where $v$ runs over
all vertices of $\Gamma$. We thicken $\Gamma$ into a $\cat$-colored ribbon
graph on $S^2$ as follows.  First, we insert inside each edge $e$ of $\Gamma$
a coupon with one edge outgoing from the bottom along $e$ and with one edge
outgoing from the top along $e$ in the direction opposite to the one on
$e$. If $e$ is labeled with $i\in I$, then these two new (smaller) edges are
labeled with $V_i$ and $V_{i^*}$, respectively, and the coupon is labeled with
$w_i:V_i\to V_{i^*}^*$ as in Figure \ref{F:Couponw}.

\begin{figure}[h,t]
  \centering $ \xymatrix{ \:
    \\
    *+[F]\txt{ \; $w_i$ \;} \ar[d]^{V_i} \ar[u]_{V_{i^*}}
    \\
    \: \hspace{40pt} } $
 \caption{}\label{F:Couponw}
\end{figure}

Next, we thicken each vertex $v$ of $\Gamma$ to a coupon so that the three
half-edges adjacent to $v$ yield three arrows adjacent to the top side of the
coupon and oriented towards it. If $i,j,k\in I$ are the labels of these arrows
(enumerated from the left to the right), then we color this coupon with the
image of $h_v$ under the natural isomorphism $H_v(\Gamma)\to H^{ijk}$. Denote
the resulting $\cat$-colored ribbon graph by $\Omega_{\Gamma, h}$. Then
$\mathbb G(\Gamma, h)= \Gfun (\Omega_{\Gamma, h})$ is an isotopy invariant of the
pair $(\Gamma, h)$ independent of the way in which the vertices of $\Gamma$
are thickened to coupons.

The invariant $\Gfun'$ defined in Section \ref{SS:GSpher} can be be extended to a
bigger class of $\cat$-colored ribbon graphs in $S^2$.  We say that a coupon
of a ribbon graph is straight if both its bottom and top sides are incident to
exactly one arrow.  We can remove a straight coupon and unite the incident
arrows into a (longer) edge, see Figure \ref{F:smooth}.  We call this
operation {\it straightening}. A {\it quasi-trivalent ribbon graph} is a
ribbon graph in $S^2$ such that straightening it at all straight coupons we
obtain a trivalent ribbon graph.

\begin{figure}[h,b]
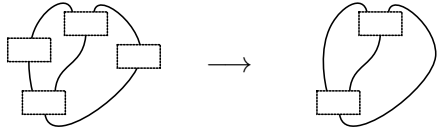

  \centering \hspace{10pt} $\epsh{fig07}{10ex}$ \hspace{10pt}
  $\longrightarrow$ \hspace{10pt} $\epsh{fig08}{10ex} $ \hspace{10pt}
  \caption{Straightening a quasi-trivalent  ribbon  graph}\label{F:smooth}
\end{figure}

\begin{lemma}\label{L:generalized}
  Let $\overline \tet_{I}$ be the
  class of connected quasi-trivalent ribbon graphs in $S^2$ such that the
  colors of all edges belong to the set $\{V_i \}_{i \in I}$ and all straight
  coupons are colored with isomorphisms in~$\cat$.  Then Formula \eqref{El+}
  determines a well-defined function $\Gfun':\overline \tet_{I}\rightarrow \FK$.
\end{lemma}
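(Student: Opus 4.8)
The plan is to reduce the statement for quasi-trivalent graphs to the already-established well-definedness of $\Gfun'$ on the class $\tet_\A$ of connected trivalent ribbon graphs (coming from the t-ambi property, Definition~\ref{D:G-spherical}\eqref{ID:G-sph6}, together with the extension recorded in Lemma~\ref{L:basicdata} and the discussion of basic data). First I would observe that Formula~\eqref{El+} makes sense for any $T\in\overline\tet_I$ as soon as $T$ has at least one edge; since every color $V_i$ lies in $\A$, any cutting presentation $T_V$ has $V\in\A$, and $\ang{T_V}\in\FK$ is defined. The content is therefore \emph{independence of the cutting presentation}: if $T_V$ and $T_{V'}$ are two cutting presentations of the same $T$, then $\qd(V)\ang{T_V}=\qd(V')\ang{T_{V'}}$.

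The key step is to compare a quasi-trivalent graph $T$ with the honest trivalent graph $\Gamma$ obtained by straightening $T$ at all its straight coupons (Figure~\ref{F:smooth}); since every straight coupon of $T$ is colored by an isomorphism in $\cat$, straightening does not change the relevant data up to the functoriality of $\Gfun$. Concretely, I would argue that for any edge $e$ of $T$, a cutting presentation $T_V$ of $T$ at a point of $e$ has the same Reshetikhin--Turaev value as the cutting presentation $\Gamma_V$ of $\Gamma$ obtained by cutting the corresponding edge of $\Gamma$: the straightening moves are isotopies combined with the pivotal axioms used to absorb the isomorphism-colored coupons, so $\Gfun(T_V)=\Gfun(\Gamma_V)$ and hence $\ang{T_V}=\ang{\Gamma_V}$ in $\FK$. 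Then $\qd(V)\ang{T_V}=\qd(V)\ang{\Gamma_V}$, and the right-hand side is independent of the choice of cut precisely because $(\A,\qd)$ is t-ambi and $\Gamma\in\tet_\A$ — here one also uses that $T$, hence $\Gamma$, is connected so that cutting at any edge of $\Gamma$ gives a legitimate cutting presentation in the sense of Subsection~\ref{SS:ambi}. (If $\Gamma$ has no edges at all, i.e.\ $T$ straightens to a graph with a single vertex-free loop or is empty, one handles this degenerate case directly; but a connected quasi-trivalent graph with at least one coupon always has an edge after straightening, and a single circle colored by $V_i$ has $\ang{T_{V_i}}=\Id$ independent of the cut point.)

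A minor subtlety to address is that a straight coupon may be colored by an isomorphism $f:V_i\to V_j$ with $i\ne j$; straightening such a coupon produces an edge whose two halves carry different colors, so strictly speaking the straightened object is a graph with a ``bivalent vertex'' rather than a plain edge. I would handle this by noting that such an isomorphism-colored bivalent vertex can be slid along the edge and, for the purpose of cutting, absorbed into the definition of $\ang{\,\cdot\,}$: cutting the edge on either side of $f$ gives values related by $\Gfun(f)$-conjugation, which multiplies $\ang{\cdot}$ by $1$. Thus every cut point of $T$ yields the same number after applying the t-ambi relation for $\Gamma$, and $\Gfun'(T)$ is well defined.

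\medskip

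\noindent\emph{Main obstacle.} The genuinely delicate point is bookkeeping the straightening operation at the level of cutting presentations: one must check that cutting $T$ at an edge incident to a straight coupon, versus cutting the straightened $\Gamma$ at the merged edge, really do give the same scalar $\ang{\cdot}$ — i.e.\ that the isomorphism-colored coupons can be transported past the cut and contribute trivially. This is a routine but careful application of the pivotal axioms and the functoriality of $\Gfun$ on $\Graph_\cat$; once it is in place, the t-ambi hypothesis for $\tet_\A$ does all the remaining work, exactly as in the corresponding argument of \cite{GPT2}.
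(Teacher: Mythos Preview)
Your approach is correct and is essentially what the paper's citation to \cite[Lemma~2]{GPT2} encodes: reduce to the t-ambi property on $\tet_\A$ by straightening, after checking that the isomorphism-colored straight coupons contribute only scalars that can be pushed through any cut. The paper itself gives no argument here beyond the reference, so you have reconstructed the intended proof.

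Two small remarks. First, your invocation of Lemma~\ref{L:basicdata} is misplaced; that lemma concerns the existence of basic data and plays no role in the well-definedness of $\Gfun'$. Second, the ``minor subtlety'' you raise about a straight coupon colored by an isomorphism $f:V_i\to V_j$ with $i\neq j$ does not in fact occur: since $\{V_i\}_{i\in I}$ is a representative set for $\A$, distinct indices give non-isomorphic objects, so any such isomorphism forces $i=j$ and $f$ is a scalar multiple of $\Id_{V_i}$. This makes the bookkeeping you flag as the ``main obstacle'' genuinely trivial: straightening produces an honestly $V_i$-colored edge and multiplies $\ang{T_V}$ by that scalar, independently of where you cut. (If the two half-edges at a straight coupon carry opposite orientations one picks up a dual, but then the isomorphism is $V_i\to V_j^*$, forcing $j=i^*$, and the same scalar analysis applies via the pivotal structure.)
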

\begin{proof} This is proved in Lemma 2 of \cite{GPT2}.
\end{proof}

We can combine the invariant $\Gfun'$ with the thickening of trivalent graphs to
obtain invariants of trivalent graphs in $S^2$.  Suppose that $\Gamma\subset
S^2$ is a labeled connected trivalent graph.  We define
$$
\mathbb{G'}(\Gamma)\in
H(\Gamma)^\star=\Hom_\FK(H(\Gamma),\FK)
$$
as follows. Pick any family of vectors $h=\{h_v \in H_v(\Gamma)\}_v$, where
$v$ runs over all vertices of $\Gamma$.  The $\cat$-colored ribbon graph
$\Omega_{\Gamma, h}$ constructed above
belongs to the class $\overline \tet_{I}$ defined in Lemma
\ref{L:generalized}.  Set
$$
\mathbb{G}'(\Gamma)(\otimes_v h_v)=\Gfun'(\Omega_{\Gamma, h}) \in \FK.
$$
By the properties of $\Gfun'$, the vector $\mathbb{G}'(\Gamma)\in H(\Gamma)^\star$
is an isotopy invariant of~$\Gamma$.  Both $H(\Gamma)$ and
$\mathbb{G'}(\Gamma)$ are preserved under the reversion transformation
inverting the orientation of an edge of $\Gamma$ and replacing the label of
this edge, $i$, with $i^*$.  This can be easily deduced from Formula
\eqref{E:FamilyIso}.

Let $i,j,k,l,m,n$ be six elements of $I$.  Consider the labeled trivalent
graph $\Gamma=\Gamma(i,j,k,l,m,n)\subset\R^2\subset S^2$ given in Figure
\ref{F:Gamma7777}.  By definition,
$$
H(\Gamma)=  H(m,n^*, i^*) \otimes_\FK H(n,l^*, j^*)\otimes_\FK H(i,j,k^*) \otimes_\FK
H(k,l,m^*).
$$
We define the modified $6j$-symbol of the tuple $(i,j,k,l,m,n)$ to be
\begin{equation}\label{mss}
  \sjtop ijklmn=\mathbb{G}'(\Gamma)\in H(\Gamma)^\star.
\end{equation}
It follows from the definitions that the modified $6j$-symbols have the
symmetries of an oriented tetrahedron.  In particular,
$$
\sjtop ijklmn = \sjtop j{k^*}{i^*}mnl = \sjtop klm{n^*}{i}{j^*}.
$$
These equalities hold because the labeled trivalent graphs in $S^2$ defining
these $6j$-symbols are related by isotopies and reversion transformations
described above.  Also, it follows from 
Equation~\eqref{eq:grad} that
$H(i,j,k^*)=0$ if $\d i\d j\neq \d k$ and so $\sjtop ijklmn = 0$ if one of the
following equalities is not satisfied in $\Gr$:
\begin{equation}
  \label{eq:nul-6j}
  \d i\d j=\d k,\quad \d k\d l=\d m,\quad \d j\d l=\d n \quad
  \text{or}\quad \d i\d n=\d m.
\end{equation}
\begin{figure}[t]
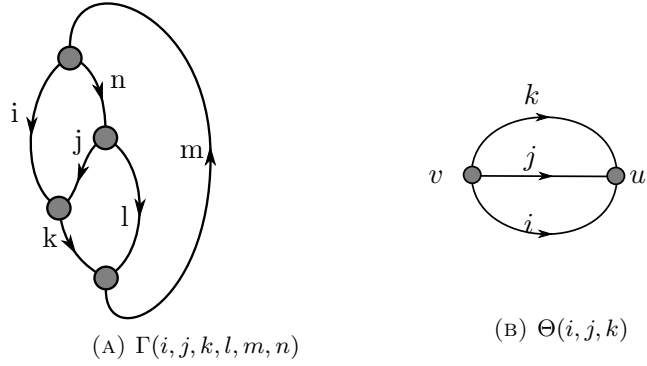

  \centering
  \subfloat[$\Gamma(i,j,k,l,m,n)$]{\label{F:Gamma7777} \hspace{10pt}
    $\epsh{fig02}{120pt}\pic{.8pt}{\put(-83,-49){k}\put(-51,26){n}
      \put(-98,10){i}\put(-68,-3){j}\put(-46,-39){l}\put(-18,-7){m}} $
    \hspace{15ex}}
  \subfloat[$\Theta({i,j,k})$]{\label{F:Thetaijk} \hspace{10pt}
    \begin{minipage}{20ex}\ \\[5ex]
      $\epsh{fig09}{10ex} $
      \put(-38,-19){$i$}\put(-38,7){$j$}\put(-38,30){$k$}
      \put(-74,0){$v$}\put(2,0){$u$}\\[5ex]\
    \end{minipage}}
  \caption{Elementary labeled graphes}  \label{F:el-graph}
\end{figure}
For any indices $i,j,k\in I$, we define a pairing
\begin{equation} 
  \label{E:pairing1-} (,)_{ijk}: H(i,j,k)\otimes_\FK H(k^*,j^*,i^*)\to \FK
\end{equation}
by
\begin{equation}\label{E:Pairing_ijk}
(x,y)_{ijk}=\mathbb G'(\Theta) (x\otimes y)\, ,
\end{equation}
where $x\in H(i,j,k)=H^{ijk}$ and $y\in H(k^*, j^*, i^*)=H^{k^* j^* i^*}$ and
$\Theta=\Theta_{i,j,k} $ is the theta graph with vertices $u,v$ and three
edges oriented from $v$ to $u$ and labeled with $i,j,k$, see\ Figure
\ref{F:Thetaijk}. Clearly,
$$
H_u(\Theta)=H^{ijk}=H(i,j,k)\,\, \,\, {\text {and}} \,\,\,\,
H_v(\Theta)=H^{k^*j^*i^*}=H(k^*,j^*, i^*)
$$ 
so that we can use $x$, $y$ as the colors of $u$, $v$, respectively.  It
follows from the definitions that the pairing $(,)_{ijk}$ is invariant under
cyclic permutations of $i,j,k$ and $(x,y)_{ijk}=(y,x)_{k^*j^*i^*}$ for all
$x\in H(i,j,k) $ and $y\in H(k^*, j^*, i^*)$.

Given indices $i,j,k\in I^3$ and a tensor product of several $\FK$-modules
such that among the factors there is a matched pair $H(i,j,k)$, $ H(k^*, j^*,
i^*)$, we may contract any element of this tensor product using the pairing
\eqref{E:pairing1-}. This operation is called the contraction along $H(i,j,k)$
and denoted by $*_{ijk}$.  For example, an element $x\otimes y \otimes z \in
H(i,j,k)\otimes_\FK H(k^*, j^*, i^*) \otimes_\FK H$, where $H$ is a
$\FK$-module, contracts into $(x,y)z\in H$.

The rest of this section contains properties of the modified $3j$ and
$6j$-symbols defined above.

\begin{lemma} \label{L:pairNondeg} For any elements $i,j,k$ of $I$, the
  pairing \eqref{E:pairing1-} is non-degenerate.
\end{lemma}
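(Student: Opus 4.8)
The plan is to identify the pairing \eqref{E:pairing1-}, up to a unit of $\FK$, with an ordinary composition pairing of multiplicity $\Hom$-modules, and then read off non-degeneracy from semi-simplicity.

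First I would dispose of the degenerate case. If $\d i\,\d j\,\d k\neq 1$ in $\Gr$, then also $\d{k^*}\,\d{j^*}\,\d{i^*}=(\d i\,\d j\,\d k)^{-1}\neq 1$, so by \eqref{eq:grad} both $H(i,j,k)$ and $H(k^*,j^*,i^*)$ vanish and there is nothing to prove. So assume $\d i\,\d j\,\d k=1$. Since $i,j,k\in I$ we have $\d i,\d j,\d k\in\Gr'$; as $\X$, and hence $\Gr'$, is symmetric, this gives $\d j\,\d k=(\d i)^{-1}\in\Gr'$. By Property~\eqref{ID:G-sph5} of Definition~\ref{D:G-spherical} the subcategory $\cat_{\d j\d k}$ is semi-simple, and by the $\Gr$-grading the object $M:=V_j\otimes V_k$ lies in $\cat_{\d j\d k}$; hence $M$ is a semi-simple object, and $V_{i^*}$ is a simple object of $\cat_{\d j\d k}$.

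Next I would rewrite the two modules being paired. Bending the $V_i$-strand and using the isomorphism $V_{i^*}\cong V_i^{*}$ afforded by $w_i$ yields a natural isomorphism $H(i,j,k)=H^{ijk}\xrightarrow{\ \sim\ }\Hom_\cat(V_{i^*},M)$, $x\mapsto\tilde x$; likewise, pushing $y$ through $V_{j^*}\cong V_j^{*}$, $V_{k^*}\cong V_k^{*}$ and bending yields $H(k^*,j^*,i^*)=H^{k^*j^*i^*}\xrightarrow{\ \sim\ }\Hom_\cat(M,V_{i^*})$, $y\mapsto\tilde y$. I would then unwind $\mathbb{G}'(\Theta)$: taking a cutting presentation of $\Omega_{\Theta,x\otimes y}$ at a point of an edge coloured by $V_i$ and reading off the resulting $(1,1)$-ribbon graph — the compatibility \eqref{E:FamilyIso} of the $w$'s being precisely what makes these identifications agree with the $w$-coupons built into $\Omega_{\Theta,x\otimes y}$ — one finds that the cut-open graph equals $\tilde y\circ\tilde x\in\End_\cat(V_{i^*})=\FK\,\Id_{V_{i^*}}$, so that
\[
  (x,y)_{ijk}=\mathbb{G}'(\Theta)(x\otimes y)=\qd(i)\,\ang{\tilde y\circ\tilde x},
\]
with $\qd(i)=\qd(V_i)\in\FK^{\times}$. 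This is the renormalized form of the familiar fact that a theta graph computes the composition pairing of intertwiner spaces; it is the one step requiring an honest (if routine) diagrammatic computation, and I expect this bookkeeping — matching the cut-open theta with $\tilde y\circ\tilde x$ — to be the main obstacle. Everything else is formal.

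Finally I would conclude non-degeneracy directly. Given $0\neq x\in H(i,j,k)$, the morphism $\tilde x\colon V_{i^*}\to M$ is non-zero, hence (as $V_{i^*}$ is simple) a monomorphism, which splits since $M$ is semi-simple; choosing $p\colon M\to V_{i^*}$ with $p\tilde x=\Id_{V_{i^*}}$ and $y$ with $\tilde y=p$ gives $(x,y)_{ijk}=\qd(i)\neq 0$, so the left radical of \eqref{E:pairing1-} is zero. Symmetrically, any $0\neq y\in H(k^*,j^*,i^*)$ gives a split epimorphism $\tilde y\colon M\to V_{i^*}$, and taking $x$ with $\tilde x$ a section of $\tilde y$ gives $(x,y)_{ijk}=\qd(i)\neq 0$; so the right radical is zero as well, and \eqref{E:pairing1-} is non-degenerate.
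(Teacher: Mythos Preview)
Your proposal is correct and follows the same approach as the paper: reduce to the case $\d i\,\d j\,\d k=1$, use the $\Gr$-grading to place the relevant tensor product in a semi-simple $\cat_g$, and then read non-degeneracy off from the identification of the theta pairing with (a unit times) the composition pairing of multiplicity modules. The only difference is presentational: the paper delegates this last step by invoking Lemma~3 of \cite{GPT2} (after checking its ``good pair'' hypothesis), whereas you unwind the diagrammatic identification $(x,y)_{ijk}=\qd(i)\ang{\tilde y\circ\tilde x}$ and argue the splitting directly.
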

\begin{proof}  
  Let us assume that $H(i,j,k)\neq 0$ otherwise the lemma is trivial.  Then by
  Equation \eqref{eq:grad}, $V_i \otimes V_j$ is an object of $\cat_{\d k^{-1}}$
  which is semi-simple because $\d i\,\d j=\d k^{-1}\in\Gr'$.  Then the lemma
  follows from Lemma 3 of \cite{GPT2}.  To show that the hypothesis of the
  lemma in \cite{GPT2} are satisfied it is enough to show that the pair
  $(i,j)$ has a quality called good.  A pair of indices $(l,m)\in I^2$ is
  called good if $V_l\otimes V_m$ splits as a direct sum of some $V_n$'s
  (possibly with multiplicities) such that $n\in I$ and $\qd(V_n)$ is
  invertible in $\FK$.  Hence, by definition of a relative spherical category
   the pair $(i,j)$ is good and the proof follows
  Lemma 3 of \cite{GPT2}.
  \end{proof}

\begin{theorem}[The Biedenharn-Elliott identity]
  \label{T:BEId} Let $j_0,j_1,{\ldots},j_8 $ be elements of $ I $.  Assume
  that $\d j_2\d j_3 \notin \X$, then
  $$
  \sum_{j\in I_{\d j_2\d j_3}} \qd(V_j) \, *_{j_2j_3j^*} *_{jj_4j_7^*}*_{j_1j
    j_6^*} \left(\sjtop {j_1}{j_2}{j_5}{j_3}{j_6}{j}\otimes \sjtop
    {j_1}{j}{j_6}{j_4}{j_0}{j_7}\otimes \sjtop {j_2}{j_3}{j}{j_4}{j_7}{j_8}
  \right)
  $$
  \begin{equation}\label{E:BEId}
    = *_{j_5j_8j_0^*}\left(\sjtop {j_5}{j_3}{j_6}{j_4}{j_0}{j_8}\otimes
      \sjtop {j_1}{j_2}{j_5}{j_8}{j_0}{j_7}\right)\, .
  \end{equation}
  Here both sides lie in the tensor product of the six $\FK$-modules
  $$
  H(j_6,j_3^*,j_5^*), H(j_5,j_2^*,j_1^*), H(j_0,j_4^*,j_6^*),
  H(j_1,j_7,j_0^*), H(j_2,j_8,j_7^*), H(j_3,j_4,j_8^*).
  $$
\end{theorem}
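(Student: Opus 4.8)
The plan is to reduce the Biedenharn--Elliott identity to the pentagon (associativity) axiom of the pivotal category $\cat$, following the strategy of \cite{GPT2} but tracking the $\Gr$-grading carefully so that the relevant hom-spaces are all among the finitely many semi-simple summands $\cat_g$ with $g\in\Gr'$. First I would fix a convenient representative set $\{V_i\}_{i\in I}$ together with the isomorphisms $w_i$ from the basic data, so that every modified $6j$-symbol $\sjtop ijklmn$ is realized concretely as $\mathbb G'(\Gamma(i,j,k,l,m,n))$ applied to a tensor of symmetrized multiplicity vectors. The hypothesis $\d{j_2}\d{j_3}\notin\X$ guarantees, via the grading constraint \eqref{eq:nul-6j} and Equation~\eqref{eq:grad}, that the index $j$ in the sum on the left ranges over the \emph{finite} set $I_{\d{j_2}\d{j_3}}$ and that $\cat_{\d{j_2}\d{j_3}}$ is semi-simple; this is exactly what makes the sum finite and the contractions $*_{j_2j_3j^*}$, $*_{jj_4j_7^*}$, $*_{j_1jj_6^*}$ meaningful (each pairing is non-degenerate by Lemma~\ref{L:pairNondeg}).

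The core geometric step is the standard one: the two sides of \eqref{E:BEId} correspond to two ways of decomposing one and the same $\cat$-colored trivalent ribbon graph in $S^2$ — a graph built on the $1$-skeleton of the boundary of a $4$-simplex with its five tetrahedral faces — into $6j$-building blocks by repeatedly applying the ``fusion/recoupling'' move that expresses $\Id_{V_a\otimes V_b}$ as a sum over $c\in I_{\d a\d b}$ of (thickened) projections and inclusions weighted by $\qd(V_c)$. That fusion move is precisely the content of property~\eqref{ID:G-sph6}–\eqref{ID:G-sph7} together with Lemma~\ref{L:generalized} applied to quasi-trivalent graphs: inserting a straight $w_c$-coupon and summing over $c$ with weights $\qd(V_c)$ leaves $\Gfun'$ unchanged because $(\A,\qd)$ is t-ambi and the pair $(a,b)$ is good. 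Concretely, I would start from the graph $\Gamma$ dual to the $4$-simplex, perform one sequence of fusions corresponding to the order of tetrahedra $(j_1j_2j_5j_3j_6j)$, $(j_1jj_6j_4j_0j_7)$, $(j_2j_3jj_4j_7j_8)$ — producing the left-hand side after contracting along the internal faces $H(j_2,j_3,j^*)$, $H(j,j_4,j_7^*)$, $H(j_1,j,j_6^*)$ and summing over $j$ with weight $\qd(V_j)$ — and a second sequence corresponding to $(j_5j_3j_6j_4j_0j_8)$, $(j_1j_2j_5j_8j_0j_7)$, producing the right-hand side after contracting along $H(j_5,j_8,j_0^*)$. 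Since both expressions compute $\Gfun'$ of isotopic ribbon graphs (the graphs differ only by isotopy and by the reversion transformations that preserve $H(\Gamma)$ and $\mathbb G'(\Gamma)$, as noted after Figure~\ref{F:Gamma7777}), they are equal, and matching the uncontracted multiplicity modules gives exactly the six factors $H(j_6,j_3^*,j_5^*),\dots,H(j_3,j_4,j_8^*)$ listed in the statement.

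The main obstacle is bookkeeping rather than conceptual: one must verify that the single recoupling move underlying both sequences is valid in the non-semisimple setting, i.e.\ that whenever a fusion over an intermediate index $c$ is performed, $c$ necessarily lies in some $\cat_g$ with $g\in\Gr'$ (so that $\cat_g$ is semi-simple, the relevant $V_c$ have invertible $\qd$, and the completeness relation of property~\eqref{I:Phi4}-type holds). This follows by chasing the grading equalities \eqref{eq:nul-6j} through the $4$-simplex: every internal face of the simplex is a triangle whose three edge-degrees multiply to $1$, and the hypothesis $\d{j_2}\d{j_3}\notin\X$ forces the one ``new'' internal edge $j$ to have degree in $\Gr'$, while all other fused edges already carry degrees of the external $j_a$'s which lie in $\Gr'$ by assumption. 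I would also need to check that orientations and the $w_i$-coupons are inserted consistently so that the contractions $*_{ijk}$ in the statement match the pairings $(,)_{ijk}$ of \eqref{E:Pairing_ijk}; this is routine given the cyclic-symmetry and reversion-invariance properties of $\mathbb G'$ established just above, and amounts to the same diagram manipulation as in Lemma~2 and Lemma~3 of \cite{GPT2}, now performed one $\Gr$-graded summand at a time.
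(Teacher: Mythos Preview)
Your proposal is correct and follows essentially the same route as the paper: both reduce to the pentagon argument of \cite[Theorem~7]{GPT2}, with the only new work being to verify that the hypotheses there are met in the relative $\Gr$-spherical setting. The paper, however, handles the ``goodness'' bookkeeping more economically than your degree-chase through the $4$-simplex: instead of tracking which products $\d{j_a}\d{j_b}$ land in $\Gr'$, it observes that if any pair $(l,m)$ needed for a fusion fails to be good then $H(l,m,n)=0$ for every $n\in I$, so by \eqref{eq:nul-6j} the corresponding $6j$-symbol vanishes and both sides of \eqref{E:BEId} are trivially zero. This lets the paper assume outright that all required pairs are good and invoke \cite{GPT2} directly. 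One small slip in your write-up: property~\eqref{ID:G-sph7} (the map $\bb$) plays no role in the Biedenharn--Elliott identity; only the t-ambi pair of~\eqref{ID:G-sph6} and semi-simplicity of~\eqref{ID:G-sph5} are used.
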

\begin{proof}
  The proof follows from Theorem 7 of \cite{GPT2}.  We now explain why the
  hypothesis of this theorem are satisfied.  First, the definitions above
  imply that the set $J$ of \cite{GPT2} is equal to $I_{\d j_2\d j_3}$.
  Second, as in the proof of Lemma \ref{L:pairNondeg} certain pairs of
  elements of $I$ must be good.  As above, if $l,m,n\in I$ with $H(l,m,n)\neq
  0$ then the pair $(l,m)$ is good.  Moreover, if the pair $(l,m)$ is not good
  then for any $n\in I$ the space $H(l,m,n)$ is zero.  Thus, from
  \eqref{eq:nul-6j} it follows that both sides of Equation~\eqref{E:BEId} are
  zero unless all required pairs are good and so the hypothesis of \cite{GPT2}
  are satisfied.
\end{proof}

\begin{theorem}[The orthonormality relation]
  \label{T:orth}
  Let $i,j,k,l,m,p$ be elements of $I$.  Assume
  $\d j\d l\notin \X$, then
  $$
  \qd(V_k)\sum_{n\in I_{\d j\d l}}\qd(V_n) *_{inm^*}*_{jln^*}\left(\sjtop ij{p}lmn
    \otimes \sjtop k{j^*}inml \right)
  $$
  $$
  =\delta_{k,p}\Id(i,j,k^*)\Id(k,l,m^*),
  $$
  where $\delta_{k,p}$ is the Kronecker symbol and $\Id(a,b,c)$ is the
  canonical element of $H(a,b,c)\otimes_\FK H(c^*,b^*,a^*)$ determined by the
  duality pairing.
\end{theorem}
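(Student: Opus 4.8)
The plan is to deduce this from the Biedenharn--Elliott identity \eqref{E:BEId} by a suitable specialization, mirroring the strategy of Theorem~9 of \cite{GPT2}, while checking that the relative $\Gr$-spherical hypotheses supply the goodness conditions needed to apply it. First I would recall that the modified $6j$-symbols admit the tetrahedral symmetries recorded after \eqref{mss} and that contraction $*_{abc}$ along a matched pair $H(a,b,c)$, $H(c^*,b^*,a^*)$ is associative with respect to disjoint index triples; these will be used freely. The key algebraic input is the behaviour of a $6j$-symbol with a trivial (unit) edge: when one of the six labels is the label of the monoidal unit, the corresponding trivalent graph $\Gamma(i,j,k,l,m,n)$ degenerates, and $\mathbb G'$ of it reduces — up to the canonical pairings and a factor $\qd(V_k)^{-1}$ coming from the cutting presentation used to define $\Gfun'$ — to a product of $\Id(a,b,c)$'s. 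I would isolate this degenerate evaluation as a preliminary computation, since it is exactly what produces the right-hand side $\delta_{k,p}\Id(i,j,k^*)\Id(k,l,m^*)$.

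The main steps, in order: (1) In \eqref{E:BEId} specialize the indices $j_0,\dots,j_8$ so that one of the three $6j$-symbols on the left, say $\sjtop{j_2}{j_3}{j}{j_4}{j_7}{j_8}$, acquires a unit edge forcing $j_2 = j_4^*$-type identifications and collapsing that tetrahedron; simultaneously arrange that the right-hand side of \eqref{E:BEId} becomes a single $6j$-symbol times Kronecker data. Concretely I would send $j_8$ (or another corner) to the unit object and match the remaining labels to $i,j,k,l,m,p,n$ so that the surviving sum over $j\in I_{\d j_2 \d j_3}$ becomes the sum over $n\in I_{\d j\d l}$ in the statement. (2) Use \eqref{eq:nul-6j} to see that both sides vanish unless the degree constraints $\d i\d j = \d k$, $\d k\d l = \d m$, $\d j\d l = \d n$ hold, which both legitimizes the index set $I_{\d j\d l}$ and, via $\d j\d l\notin\X$ i.e.\ $\d j\d l\in\Gr'$, puts $V_j\otimes V_l$ in a semisimple component $\cat_{(\d j\d l)}$ by Equation~\eqref{eq:grad}. (3) Apply the degenerate-edge computation from the preliminary step to rewrite the collapsed symbol, and contract; the factor $\qd(V_k)$ in front of the sum in the statement is precisely the one needed to cancel the $\qd(V_k)^{-1}$ produced when $\Gfun'$ is evaluated on the theta-like piece, so the two normalizations match. (4) Read off that the $\delta_{k,p}$ appears because $H(i,j,k^*)$ pairs nontrivially with $H(k,j^*,i^*)$ only when the middle labels agree, using Lemma~\ref{L:pairNondeg} for non-degeneracy of the relevant pairing.

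The hard part will be step~(1): choosing the specialization of the nine indices in \eqref{E:BEId} so that exactly one tetrahedron collapses, the summation index is relabelled correctly, and no spurious extra sums or sign/normalization discrepancies survive. One must be careful that the goodness hypotheses of Theorem~\ref{T:BEId} remain satisfied after specialization — i.e.\ that every pair $(l,m)$ with $H(l,m,n)\neq 0$ appearing in the specialized identity is good, which follows from the definition of a relative $\Gr$-spherical category exactly as in the proof of Lemma~\ref{L:pairNondeg} — and that the degenerate limit of $\mathbb G'$ is taken coherently with the identifications of the symmetrized multiplicity modules $H(i,j,k)$. Once the bookkeeping of indices and the single normalization constant $\qd(V_k)$ is pinned down, the identity drops out of \eqref{E:BEId} with no further analytic input; this is the same mechanism as in \cite[Theorem~9]{GPT2}, and I would cite that proof for the remaining routine verifications.
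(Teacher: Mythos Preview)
Your plan has a genuine gap at its central step. You propose to specialize one index in the Biedenharn--Elliott identity to the label of the monoidal unit $\unit$ so that a tetrahedron collapses. But in the relative $\Gr$-spherical framework, the representative set $\{V_i\}_{i\in I}$ indexes only simple objects lying in $\bigcup_{g\in\Gr'}\cat_g$ with $\Gr'=\Gr\setminus\X$; the unit $\unit$ lives in $\cat_1$, and in the main situations of interest (see Lemma~\ref{L:basicdata} and the quantum-group example, where $\X$ is enlarged to contain all involutions of $\Gr$) one has $1\in\X$. Thus $\unit$ is typically \emph{not} among the $V_i$, there is no ``unit label'' in $I$ to specialize to, and the modified $6j$-symbols $\sjtop{\cdot}{\cdot}{\cdot}{\cdot}{\cdot}{\cdot}$ and the forms $T,\Gfun'$ are simply not defined for such a degenerate edge. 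The Biedenharn--Elliott identity \eqref{E:BEId} is stated only for $j_0,\dots,j_8\in I$, so the degeneration you want is unavailable in this setting, not merely a matter of bookkeeping.

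The paper's own argument avoids this entirely: it does not derive orthonormality from Biedenharn--Elliott at all, but invokes Theorem~8 of \cite{GPT2} directly (a separate result there), and then checks---exactly as in the proof of Theorem~\ref{T:BEId}---that the ``good pair'' hypotheses needed in \cite{GPT2} follow from the relative $\Gr$-spherical structure, namely that $\d j\,\d l\notin\X$ forces $V_j\otimes V_l$ into a semi-simple graded piece with all summands in $\A$. Your step~(2) already contains precisely this verification; if you drop the attempted specialization and instead cite Theorem~8 of \cite{GPT2} with that goodness check, you recover the paper's proof.
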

\begin{proof}
  The proof follows from Theorem 8 of \cite{GPT2}.  Again here, as in the
  proof of Theorem \ref{T:BEId} the definition of a relative $\Gr$-spherical category
  implies the hypothesis of \cite{GPT2}.
\end{proof}

The following lemma is an algebraic analog of the {\bubble} move. 
\begin{lemma}\label{L:algBubble}
  Let $g_1,g_2,g_3,g_4,g_5,g_6\in\Gr\setminus\X$ with $g_3=g_1g_2$,
  $g_6=g_2g_4$ and $g_5=g_1g_6$.  If $ i\in I_{g_1}$, $j \in I_{g_2}$, $k\in
  I_{g_3}$, then
  \begin{align}\label{E:algLbubble}
    \qd(k) \hspace{-2ex}\sum_{\tiny l \in I_{g_4}, m\in I_{g_5} ,
          n\in I_{g_6}}\hspace{-4ex} \qd(n) \bb(l)&\bb(m)
    *_{klm^*}*_{inm^*}*_{jln^*} \left( \sjtop
      ijklmn \otimes \sjtop k{j^*}inml \right) \notag \\
    &=\bb(k)\Id(i,j,k^*)
  \end{align}
\end{lemma}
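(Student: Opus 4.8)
The plan is to deduce \eqref{E:algLbubble} from the orthonormality relation (Theorem~\ref{T:orth}) by specializing it to the case $p=k$ and then applying to both sides of the resulting equation the $\FK$-linear operation ``contract along $H(k,l,m^*)$, multiply by $\bb(l)\bb(m)$, and sum over $l\in I_{g_4}$ and $m\in I_{g_5}$''; the scalar produced on the right will then be recognized as $\bb(k)$ via condition~\ref{ID:G-sph7} of Definition~\ref{D:G-spherical} (the recursion defining $\bb$).

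First I would check that the degree hypotheses make everything compatible. Since $j\in I_{g_2}$ and $l\in I_{g_4}$ we have $\d j\,\d l=g_2g_4=g_6\notin\X$, so Theorem~\ref{T:orth} applies, with its internal sum running over $I_{\d j\d l}=I_{g_6}$. Taking $i,j,k,l,m$ as in the statement and $p=k$, so that $\delta_{k,p}=1$, Theorem~\ref{T:orth} becomes
\begin{equation*}
  \qd(V_k)\sum_{n\in I_{g_6}}\qd(V_n)\,*_{inm^*}*_{jln^*}\left(\sjtop ijklmn\otimes\sjtop k{j^*}inml\right)=\Id(i,j,k^*)\,\Id(k,l,m^*).
\end{equation*}
Using $g_3=g_1g_2$ and $g_5=g_1g_6$ one verifies with \eqref{eq:nul-6j} that the summand on the left vanishes unless $m\in I_{g_5}$ (and then all the remaining degree constraints hold automatically), so that multiplying by $\bb(l)\bb(m)$, applying the contraction $*_{klm^*}$, and summing over $l\in I_{g_4}$, $m\in I_{g_5}$ costs nothing; the left-hand side then becomes exactly the left-hand side of \eqref{E:algLbubble}.

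Next I would evaluate the right-hand side. The factor $\Id(i,j,k^*)$ lives in $H(i,j,k^*)\otimes H(k,j^*,i^*)$ and is untouched by $*_{klm^*}$, which contracts the matched pair $H(k,l,m^*),H(m,l^*,k^*)$ occurring in $\Id(k,l,m^*)$; since this pairing is non-degenerate (Lemma~\ref{L:pairNondeg}), one has $*_{klm^*}\Id(k,l,m^*)=\dim_\FK H(k,l,m^*)\in\FK$. Hence the right-hand side equals
\begin{equation*}
  \Bigl(\,\sum_{l\in I_{g_4},\,m\in I_{g_5}}\bb(l)\bb(m)\,\dim_\FK H(k,l,m^*)\Bigr)\Id(i,j,k^*).
\end{equation*}
To finish I would rewrite the scalar: non-degeneracy of the pairing \eqref{E:pairing1-} gives $\dim_\FK H(k,l,m^*)=\dim_\FK H(m,l^*,k^*)$, and moving $V_{k^*}$ across and using the basic-data isomorphism $w_k\colon V_k\to V_{k^*}^*$ identifies this with $\dim_\FK\Hom_\cat(V_k,V_m\otimes V_{l^*})$. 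The involution $l\mapsto l^*$ is a bijection $I_{g_4}\to I_{g_4^{-1}}$ along which $\bb(l)=\bb(l^*)$, so the scalar equals $\sum_{m\in I_{g_5},\,l'\in I_{g_4^{-1}}}\bb(V_m)\bb(V_{l'})\dim_\FK\Hom_\cat(V_k,V_m\otimes V_{l'})$. Now $\{V_m\}_{m\in I_{g_5}}$ and $\{V_{l'}\}_{l'\in I_{g_4^{-1}}}$ are representative sets for $irr(\cat_{g_5})$ and $irr(\cat_{g_4^{-1}})$, and $g_5\cdot g_4^{-1}=g_3=\d k$ with $g_5,g_4^{-1},g_3\in\Gr\setminus\X$ (symmetry of $\X$ together with the hypotheses on the $g_i$); so condition~\ref{ID:G-sph7} of Definition~\ref{D:G-spherical}, applied to $V=V_k$ with the ordered pair of groups $(g_5,g_4^{-1})$, shows the scalar equals $\bb(V_k)=\bb(k)$, which yields the right-hand side of \eqref{E:algLbubble}.

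The content here is organizational rather than conceptual, and the part I expect to be most delicate is the bookkeeping in the second step: one must confirm that \eqref{eq:nul-6j} forces the triple sum over $l,m,n$ to be supported exactly on $I_{g_4}\times I_{g_5}\times I_{g_6}$ (so enlarging the ranges is harmless), and one must carry out the reindexing $l\mapsto l^*$ and the group identity $g_5g_4^{-1}=g_3$ so that the multiplicity sum matches the precise shape of the $\bb$-recursion. The identity $*_{klm^*}\Id(k,l,m^*)=\dim_\FK H(k,l,m^*)$ and the duality rewriting of multiplicities are routine once this is set up, and use tacitly --- as elsewhere in this section --- that $\dim_\FK$ of the relevant finite-rank multiplicity modules makes sense as an element of $\FK$.
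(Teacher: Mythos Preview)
Your proof is correct and follows exactly the approach the paper indicates: the paper's own proof is a one-line sketch saying the identity follows by ``direct manipulation of the orthonormality relation using the properties of the map $\bb$'' (adapting \cite[Lemma~25]{GPT2} to non-commutative $\Gr$), and you have carried out precisely that manipulation---specializing Theorem~\ref{T:orth} at $p=k$, contracting along $H(k,l,m^*)$, summing against $\bb(l)\bb(m)$, and identifying the resulting scalar with $\bb(k)$ via the recursion in Definition~\ref{D:G-spherical}\eqref{ID:G-sph7}. The degree bookkeeping (including the reindexing $l\mapsto l^*$ and the identity $g_5g_4^{-1}=g_3$) is handled correctly.
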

\begin{proof}
The proof follows from a direct manipulation of the orthonormality relation using the properties of the map $\bb$.  In particular, the proof of   \cite[Lemma 25]{GPT2} can easily be adapted to the case when $\Gr$ is not commutative.  
\end{proof}

\subsection{Topological preliminary}
Let $M$ be an oriented compact smooth 3-manifold.  Here a graph  $\Y$ is a finite
1-dimensional CW-complex disjoint union a finite number of circles without vertices.  We
denote by $\Y_0$ the set of vertices of $\Y$ and $\partial \Y$ the set of
univalent vertices of $\Y$.  By {\em a graph in $M$}, we mean an embedding of
a graph in $M$ such that  $\Y\cap\partial M\subset \partial \Y$ and $\Y$ meets all
connected components of $M$.  We say that $\Y$ is {\em rooted} in $\Y\cap\partial M$.

By a triangulation $\T$ of $M$, we mean a smooth $\Delta$-complex structure on
$M$ (as in \cite{Ha}).  Loosely speaking, a $\Delta$-complex structure is a
quotient space of a collection of disjoint simplices obtained by identifying
certain of their faces.  In particular, the interior of the simplices of $\T$
are embedded in $M$ but their faces are not necessarily distincts and two
different simplices might meet on several faces.  We say that $\T$ is
\emph{quasi-regular} if any simplex of $\T$ is embedded in $M$.  This is
equivalent to requiring that the two endpoints of any edge of $\T$ are
distinct vertices of $\T$.

Let $\Y$ be a graph in $M$.  Let $\T$ be a quasi-regular triangulation of $M$
and $\Tb$ be a triangulation of $\partial M$.  Let $\YY$ be a set of
unoriented edges of $\T$ such that the union of these edges is the graph $\Y$
in $M$.  We say the pair $(\T,\YY)$ is an \emph{$H$-triangulation} of $(M,\Y)$
\emph{relative} to $\Tb$ if the following four conditions hold: (1) the
vertices of $\Y$ are also vertices of $\T$, (2) all the vertices of $\T$ are
contained in $\Y$ (i.e. all the vertices are incident to an edge of $\YY$),
(3) any edge of $\T$ with both endpoints in
$\Y_0$ is contained in $\partial M$ and (4) $\T$ restricts to $\Tb$.
\begin{theorem}\label{L:Toplemma-}
  Any triplet (a compact orientable $3$-manifold $M$, a quasi-regular
  triangulation $\Tb$ of $\partial M$, a graph $\Y$ in $M$ rooted in $\Tb$)
  admits an $H$-triangulation of $(M,\Y)$ relative to $\Tb$.
\end{theorem}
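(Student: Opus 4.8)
The strategy is the standard two-step approach to existence of triangulations with prescribed properties: first establish the result for a closed manifold (or produce *some* triangulation adapted to the graph), then modify it by a finite sequence of local moves until all four $H$-triangulation conditions are met. Concretely, I would start from any smooth triangulation $\T_0$ of $M$ extending the given $\Tb$ on $\partial M$ and containing $\Y$ in its $1$-skeleton; such a $\T_0$ exists by standard PL/smooth triangulation theory applied to the pair $(M,\Y)$ relative to the boundary triangulation $\Tb$. This $\T_0$ may fail quasi-regularity, and more seriously it will typically have many vertices \emph{not} lying on $\Y$, so conditions (2) and (3) will be violated.

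\textbf{Key steps.} The heart of the argument is a process to push the ``bad'' vertices (those off $\Y$) onto $\Y$ while keeping the triangulation quasi-regular. I would proceed as follows. (i) First subdivide to make $\T_0$ quasi-regular: barycentric subdivision, or a single relative stellar subdivision on each non-embedded simplex, makes every simplex embedded and makes the two endpoints of every edge distinct; one checks $\Y$ can be kept in the $1$-skeleton and $\Tb$ is refined to a quasi-regular triangulation of $\partial M$ (or, if one wants $\Tb$ itself fixed, one only subdivides in the interior). (ii) Next, for each vertex $p$ of $\T$ not on $\Y$, choose an edge $e$ of $\T$ joining $p$ to a vertex of $\Y$ (connectedness of $M$ and the fact that $\Y$ meets every component guarantee such a path exists; take the last edge of a shortest edge-path from $p$ to $\Y_0$). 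Perform the move that collapses $e$ — equivalently, a sequence of a $1\to 4$ (or $2\to 3$) Pachner-type bistellar subdivision followed by an edge contraction, or more cleanly the ``lune/bubble''-type move that merges $p$ into its neighbor along $e$ — thereby reducing the number of off-$\Y$ vertices by one. After finitely many such moves, condition (2) holds: every vertex of $\T$ lies on $\Y$. (iii) Finally, arrange condition (3): any edge of $\T$ both of whose endpoints lie in $\Y_0$ but which is not already on $\partial M$ should be eliminated from the interior; one does this by a local subdivision introducing a new vertex in the edge's interior and re-triangulating its star, after which that new vertex is itself on no bad edge — or, since by (2) the new vertex must also lie on $\Y$, one instead chooses $\YY$ to include a subdividing edge. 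Throughout, one keeps $\Y$ as a union of edges and re-declares $\YY$ after each move.

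\textbf{Main obstacle.} The delicate point is step (ii): an edge of a triangulation cannot in general be contracted while preserving the property that the result is still a simplicial/$\Delta$-complex structure on the \emph{same} manifold — contracting $e=pq$ can create a pair of tetrahedra identified along more than a face, or can fail the link condition. The honest way around this is to never literally contract, but instead to first perform a Pachner/bistellar move in the star of $p$ that replaces the star by a configuration in which $p$ has a single neighbor along a short edge meeting $\Y$, and then argue that a subsequent move absorbs $p$; equivalently one works with the ``$H$-moves'' ($H$-bubble, $H$-lune, $H$-Pachner) that this paper will use elsewhere and shows each such move preserves quasi-regularity and the relative boundary condition. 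Verifying that these moves are always available and that the process terminates (a monovariant: number of vertices off $\Y$, then number of interior edges with both ends in $\Y_0$) is the technical core. I expect the cleanest writeup to cite the analogous lemma from \cite{GPT2} (or \cite{BB}) for the closed case and explain the modifications needed to handle the boundary triangulation $\Tb$ and the rooting condition $\Y\cap\partial M\subset\partial\Y$, since those are exactly the new features here.
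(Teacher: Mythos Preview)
Your approach is genuinely different from the paper's, and it has a real gap at the core step.

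The paper does \emph{not} start from an arbitrary triangulation and then try to reduce the vertex set down to $\Y$. Instead it works in the dual picture of skeletons/spines. One removes an open tubular neighborhood $V(\Y)$ of $\Y$, takes a collar spine $S'$ of $(M\setminus V(\Y),\Gamma)$ as in \cite[Theorems~6.4.A,~B]{TV} (where $\Gamma$ is the graph on the new boundary consisting of the dual graph to $\Tb$ together with one meridian circle per edge of $\Y$), caps each meridian circle with a dotted disk to obtain a skeleton $S$ of $M$, and then dualizes. In the dual triangulation, vertices correspond to complementary $3$-balls of $S$, and by construction each such ball is adjacent to a dotted face, i.e.\ each vertex lies on $\Y$. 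Conditions~(1), (2), (3) come out automatically from this construction; only quasi-regularity can fail, and that is repaired by the capping-disk technique of Baseilhac--Benedetti \cite[Proposition~4.20]{BB}, which slides dual {\Pachner} moves over the offending ``bad'' regions. The point is that spines naturally have \emph{few} complementary components, so the hard condition~(2) is satisfied from the outset.

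Your step~(ii) is exactly where your route breaks down. You want to absorb each vertex $p\notin\Y$ into a neighbor on $\Y$ by an edge contraction, but you correctly note that contracting an edge of a triangulation need not yield a $\Delta$-complex structure on the same manifold (the link condition can fail). Your proposed fix --- perform some Pachner moves in $\operatorname{star}(p)$ until $p$ has a single short edge to $\Y$, then absorb --- is not an argument: Pachner $2\leftrightarrow 3$ moves never change the vertex set, and a $1\to 4$ move only \emph{adds} a vertex (necessarily off $\Y$, making the situation worse). There is no elementary bistellar move that deletes a vertex in general position, and the {\bubble} move of this paper only removes a vertex of a very special local form (one sitting inside a single face), which you have no way to arrange. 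So the monovariant ``number of vertices off $\Y$'' does not obviously decrease under any move you have available.

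Your step~(iii) is also wrong as stated. If an interior edge $e$ has both endpoints in $\Y_0$ but $e$ itself is not part of $\Y$, subdividing $e$ introduces a new vertex which does \emph{not} lie on $\Y$, immediately violating condition~(2); and you cannot simply ``choose $\YY$ to include a subdividing edge,'' since $\YY$ is constrained to have union equal to the given graph $\Y$, not some enlargement of it.
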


The proof of Theorem \ref{L:Toplemma-} will be given in Subsection
\ref{SS:proof}.

\subsection{Fundamental groupoid and space of representations in $\Gr$}\label{SS:FGRep}

A groupoid is a small category in which all morphisms are isomorphisms.  Let
us consider two examples of groupoids.  First, every group $\Gr$ can be seen
as a groupoid with only one object whose set of endomorphisms is the group
$\Gr$.  Second, let $W$ be a non empty locally path connected topological
space.  The \emph{fundamental groupoid} $\pi_1(W)$ of $W$ is the small
category whose objects are points of $W$ and whose morphisms are homotopy
classes of paths in $W$, where composition is concatenation of paths.  Let $Z$
be a subset of $W$.  If $Z$ is nonempty, let $\pi_1(W,Z)$ be the full
subcategory of $\pi_1(W)$ whose objects are points of $Z$.  When $Z$ is a
single point $w$ of $W$ then $\pi_1(W,Z)=\pi_1(W,\{w\})$ is the usual
fundamental group.  If $W_1,\ldots, W_n$ are the connected components of $W$
and $Z$ meets all of these component, then
$\pi_1(W,Z)=\bigsqcup_i\pi_1(W_i,Z\cap W_i)$.

When $W$ is connected, let
$$
\MG W=\Hom_{Groups}(\pi_1(W,\{w\}),\Gr)/\Gr
$$ 
where $w$ is any point of $W$ and $g\in\Gr$ acts on
$\rho:\pi_1(W,\{w\})\to\Gr$ by conjugation: $g.\rho:\gamma\mapsto
g\rho(\gamma)g^{-1}$.  The set $\MG W$ is called the \emph{space of
  representations} of $W$.  Remark that for two different choices of the base
point $w$, the corresponding spaces of representations of $W$ are canonically
isomorphic.  We set
$$ \MG {W,Z}=\left\{
\begin{array}[c]{l}
  \MG{W}\text{ if }Z=\emptyset\\
  \Hom_{\text{groupoid}}(\pi_1(W,Z),\Gr)\text{ if }Z\neq\emptyset
\end{array}\right. $$%
We extend this definition for non connected spaces by
$$\MG {\bigsqcup_i W_i,Z}=\prod_i\MG {W_i,Z\cap W_i}.$$
The assignment $(W,Z)\mapsto\MG{W,Z}$ extends to a contravariant functor.  In
particular, if $W'\subset W$ and $Z'\subset Z$ by functoriality we have a map
$\MG{W,Z}\to\MG{W',Z'}$ called the \emph{restriction map}.  Let $E$ be a set
of paths in $\pi_1(W,Z)$, we say that $\rr\in\MG{W,Z}$ is {\em $E$-admissible}
if $\rr(E)\cap\X=~\emptyset$.

Let $W$ be a triangulated manifold.  
A \emph{$\Gr$-coloring} of $W$ is a map $\wp$ from the set of oriented edges
of $W$ to $\Gr$ such that
\begin{enumerate}
\item $\wp(-e)=\wp(e)^{-1}$ for any oriented edge $e$ of $W$, where $-e$ is $e$
 with opposite orientation and
\item if $e_1, e_2, e_3$ are ordered edges of a face of $W$ endowed with
  orientation induced by the order, then $\wp(e_1)\, \wp(e_2)\, \wp(e_3)=1$.
\end{enumerate}
If $W_0$ is the set of vertices of $W$ then $\col\in\MG{W,W_0}$ is equivalent
to a $\Gr$-coloring $\col$ of $W$.
A $\Gr$-coloring of $W$, $\col\in\MG{W,W_0}$ is {\em admissible} if it is
$W_1$-admissible where $W_1$ is the set of oriented edges of $W$.

Let $M$ be an oriented compact smooth 3-manifold and $\Tb$ a quasi-regular
triangulation of its boundary.  Let $\Y$ be a graph in $M$ rooted in $\Tb$.
Let $\Y_0$ be the set of vertices of $\Y$ and $\Tb_1$ be the set of edges of
$\Tb$.  An \emph{admissible representation} of $(M,\Tb,\Y)$ in $\Gr$ is a
$\Tb_1$-admissible element of $\MG{M,\Y_0}$.  If $(\T,\YY)$ is a
$H$-triangulation of $(M,\Y)$ relative to $\Tb$ then a $\Gr$-coloring 
$\col\in\MG{M,\T_0}$ of $\T$ restricts to an element of $\MG{M,\Y_0}$.

In particular, this is true for a closed 3-manifold: If $L$ is a link in
a closed 3-manifold $M$, and if $(\T,\LL)$ is a $H$-triangulation of $(M,L)$, then
any $\Gr$-coloring of $\T$ restricts to a (always admissible) representation
of the fundamental group of $M$ up to conjugation, i.e. an element of $\MG M$.
This restriction map is the image by the contravariant functor $\MG{*}$ of the
inclusion $(M,\emptyset)\subset(M,\T_0)$.

\begin{theorem}\label{L:admColoringsExist}
  Let $(\T,\YY)$ be a $H$-triangulation of $(M,\Y)$ relative to $\Tb$.  Then
  for any admissible representation $\rr$ of $(M,\Tb,\Y)$, there exists an
  admissible $\Gr$-coloring of $\T$ which restricts to $\rr$.
\end{theorem}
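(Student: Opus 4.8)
The plan is to produce the coloring in two moves. First I would lift $\rr$ to \emph{some} $\Gr$-coloring of $\T$, ignoring admissibility; then I would repair it by a gauge transformation that is trivial on $\Y_0$ — so that the restriction to $\rr$ is undisturbed — and that, using property~(2) of $\X$, drives every edge-colour out of $\X$ one vertex of $\T$ at a time.

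For the lift: since $\Y_0\subseteq\T_0$ and $\T_0$ meets every component of $M$, the restriction map $\MG{M,\T_0}\to\MG{M,\Y_0}$ is onto (any functor $\pi_1(M,\Y_0)\to\Gr$ extends over the vertices of $\T_0\setminus\Y_0$ by choosing auxiliary paths; on a component of $M$ meeting no vertex of $\Y$ one first picks a homomorphism in the conjugacy class $\rr$). Fix a lift and view it as a $\Gr$-coloring $\col_0$ of $\T$. Since $\col_0$ restricts to $\rr$ on $\pi_1(M,\Y_0)$, one has $\col_0(e)=\rr(e)$ whenever both endpoints of $e$ lie in $\Y_0$. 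The vertices of $\Tb$ are vertices of $\T$ on $\partial M$ ($\T$ restricts to $\Tb$), hence lie on $\Y$ (condition~(2) of an $H$-triangulation), hence lie in $\Y\cap\partial M\subseteq\partial\Y\subseteq\Y_0$; so every edge of $\Tb$ has both endpoints in $\Y_0$, and thus $\col_0(e)=\rr(e)\notin\X$ for all $e\in\Tb_1$, by $\Tb_1$-admissibility of $\rr$. As $\T$ restricts to $\Tb$, the remaining oriented edges of $\T$ to control are exactly those not contained in $\partial M$.

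The repair is the heart of the matter and uses condition~(3) of an $H$-triangulation — an edge of $\T$ with both endpoints in $\Y_0$ is contained in $\partial M$ — which guarantees that every edge of $\T$ \emph{not} in $\partial M$ has at least one endpoint among the ``free'' vertices $\T_0\setminus\Y_0$. Enumerate $\T_0\setminus\Y_0$ as $u_1,\dots,u_m$, set $h(v)=1$ for $v\in\Y_0$, and choose $h(u_1),\dots,h(u_m)\in\Gr$ inductively. For any choice of $h$, let $\col$ be the $\Gr$-coloring obtained from $\col_0$ by the gauge transformation $h$ (this is again a $\Gr$-coloring: the two defining conditions are stable under the gauge change). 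Given $h(u_1),\dots,h(u_{t-1})$, consider the finitely many edges $e$ of $\T$ joining $u_t$ to a vertex $w\in\Y_0\cup\{u_1,\dots,u_{t-1}\}$; for each, the element $\col(e)$ depends only on $h(w)$ (already fixed) and on $h(u_t)$, and the condition $\col(e)\in\X$ confines $h(u_t)$ to a single translate of $\X$. By property~(2) of $\X$ — sharpened by $\X=\X^{-1}$ to the statement that no finite union of left \emph{or} right translates of $\X$ covers $\Gr$ — I may choose $h(u_t)$ outside all of these, so that $\col(e)\notin\X$ for every such $e$.

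Finally I would check the two claims for this $\col$. Since $h$ is trivial on $\Y_0$, the gauge transformation acts as the identity on $\pi_1(M,\Y_0)$, so $\col$ still restricts to $\rr$; in particular $\col(e)=\col_0(e)\notin\X$ for $e\in\Tb_1$. If $e$ is an edge of $\T$ not contained in $\partial M$, condition~(3) gives an endpoint of $e$ in $\T_0\setminus\Y_0$; let $u_t$ be such an endpoint of largest index and $w$ the other endpoint. Then $w\in\Y_0\cup\{u_1,\dots,u_{t-1}\}$ (if $w$ is itself free, its index is less than $t$ by the choice of $u_t$), so $e$ was one of the edges controlled when $h(u_t)$ was selected, whence $\col(e)\notin\X$; the same holds for the reversed orientation since $\X=\X^{-1}$. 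Thus $\col$ is admissible and restricts to $\rr$, as required. The step I expect to be genuinely delicate is this repair: one must order $\T_0\setminus\Y_0$ so that every edge off $\partial M$ gets its colour pinned down exactly once — at its later free endpoint — and the whole construction hinges on the non-coverability of $\Gr$ by finitely many translates of $\X$.
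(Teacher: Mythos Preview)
Your proposal is correct and follows essentially the same approach as the paper: lift $\rr$ to an arbitrary $\Gr$-coloring of $\T$, then repair admissibility by gauge transformations supported on $\T_0\setminus\Y_0$, invoking condition~(3) of an $H$-triangulation to guarantee every non-boundary edge has a free endpoint and the non-covering property of $\X$ (via $\X=\X^{-1}$) to find a good gauge value. The only organizational difference is that the paper repairs one \emph{bad} vertex $v$ at a time, choosing $g\in\Gr\setminus\bigcup_{e\in E_v}\X\col(e)^{-1}$ so that \emph{all} edges at $v$ simultaneously leave $\X$ (hence the bad-vertex count strictly drops), whereas you process the free vertices in a fixed enumeration and control only edges to earlier vertices; both versions tacitly use quasi-regularity of $\T$ to exclude loops.
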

The proof of the Theorem \ref{L:admColoringsExist} is given in Subsection
\ref{SS:proof}.

\subsection{Modified Turaev-Viro-invariants}\label{TTVV} 
We start from the algebraic data described in the Subsection \ref{S:InvOfGraphs}  and
produce a topological invariant of a quadruple $(M,\Tb,\Y,\rr)$, where
$M$ is a compact oriented 3-manifold, $\Tb$ is a quasi-regular
triangulation of its boundary, $\Y\subset M$ is a graph in $M$ rooted in
$\Tb$, and $\rr$ is an admissible representation of $(M,\Tb,\Y)$.

A {\it state} of an admissible $\Gr$-coloring $\col$ of a triangulated
manifold $W$ is a map $\st$ assigning to every oriented edge $e$ of $W$ an
element $\st (e)$ of $I_{\col(e)}$ such that $\st(-e)=\st (e)^*$ for all $e$.
The set of all states of $\col$ is denoted $\states(\col)$. The identities
$\qd( {{\st(e)}})=\qd( {{\st(-e)}})$ and $\bb( {{\st(e)}})=\bb( {{\st(-e)}})$
allow us to use the notation $\qd(\st(e))$ and $\bb(\st(e))$ for non-oriented
edges.

The representation $\rr$ restricts to an admissible $\Gr$-coloring
$\col^{\partial}$ of the triangulated surface $(\partial M,\Tb)$.  Given a
state $\st$ of the $\Gr$-coloring $\col^{\partial}$ of $\Tb$, the trivalent
graph $\Gamma_\st$ dual to the 1-skeleton of $\Tb$ becomes a
labeled graph in an oriented surface.  We adopt the convention that an
oriented edge $e$ of $\Gamma_\st$ has the same label than the oriented edge of
$\Tb$ crossing $e$ from the left to the right.  We set
\begin{equation}\label{E:DualMultSpace}H(\partial
M,\Tb,\col^{\partial})=\bigoplus_{\st\in\states(\col^{\partial})}H(\Gamma_\st)
\end{equation} 
where  $H(\Gamma_\st)$ is defined in Section~\ref{S:InvOfGraphs}.
If $\partial M=\emptyset$ then $\Tb=\Gamma=\emptyset$ and $H(\partial
M,\Tb,\col^{\partial})=\FK$.

Let $(\T,\YY)$ be an $H$-triangulation of $(M,\Y)$ relative to $\Tb$ and let
$\col$ be an admissible $\Gr$-coloring of $\T$ that restrict to $\rr$.
We now define a certain partition function (state sum) as follows.  For each
tetrahedron $t$ of $\T$, we choose its vertices $v_1$, $v_2$, $v_3$, $v_4$ so
that the (ordered) triple of oriented edges
$(\vect{v_1v_2},\vect{v_1v_3},\vect{v_1v_4})$ is positively oriented with
respect to the orientation of $M$. Here by $\vect{v_1v_2}$ we mean the edge
oriented from $v_1$ to $v_2$, etc. For each $\st\in \states(\col)$, set
$$
|t|_\st=\sjtop ijklmn \text{ where }\left\{
\begin{array}{lll}
  i=\st(\vect{v_2v_1}),& j=\st(\vect{v_3v_2}),&
  k=\st(\vect{v_3v_1}),\\
  l=\st(\vect{v_4v_3}),&
  m=\st(\vect{v_4v_1}),& n=\st(\vect{v_4v_2}).
\end{array}\right.
$$
This $6j$-symbol belongs to the tensor product of $4$ multiplicity modules
associated to the faces of $t$ and does not depend on the choice of the
numeration of the vertices of $t$ compatible with the orientation of $M$.
Note that any face of $\T\setminus\Tb$ belongs to exactly two tetrahedra of
$\T$, and the associated multiplicity modules are dual to each other. The
tensor product of the $6j$-symbols $|t|_\st$ associated to all tetrahedra $t$
of $\T$ can be contracted using this duality.  We denote by $\cntr$ the tensor
product of all these contractions.  Let $\T_1'$ be the set of unoriented edges
of $\T$ that are not in $\Tb$ and let $\T_3$ the set of tetrahedra of
$\T$. Set
\begin{equation}
  \label{eq:TV}
TV(\T,\YY,\col)=\sum_{\st\in \states ( \col)}\,\, \prod_{e\in\T_1'{\setminus
    \YY}} \qd(\st(e))\, \prod_{e\in \YY} \bb(\st(e))\, \, \cntr \left
  (\bigotimes_{t\in\T_3}|t|_{\st}\right ) \in H(\Tb,\col^{\partial}).
\end{equation}

\begin{theorem}\label{T:inv} 
  $TV(\T,\YY,\col)$ depends only on the isotopy class of $\Y$ in $M$, on the
  triangulation $\Tb$ of $\partial M$ and on the admissible 
  representation
  $\rr$ of $(M,\Tb,\Y)$.  We set
  $$TV(M,\Y,\rho)=TV(\T,\YY,\col)\in H(\partial M,\Tb,\col^{\partial}).$$
  Moreover, this invariant extends to a Relative Quantum Field Theory given in
  \eqref{E:FunQ}.
\end{theorem}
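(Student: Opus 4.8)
The plan is to follow the classical Turaev--Viro strategy of reducing invariance under change of triangulation to a finite list of local moves, but carried out relative to $\Tb$ and in the presence of both the graph $\YY$ and the $\Gr$-coloring $\col$. First I would recall that any two $H$-triangulations of $(M,\Y)$ relative to a fixed $\Tb$, carrying admissible $\Gr$-colorings restricting to the same $\rr$, are connected by a finite sequence of elementary moves: the \Pachner{} (bubble) move, the $2$--$3$ \Pachner{} move, and the \lune{} (or ``bigon'') move, together with moves that slide edges of $\YY$ and that change the chosen $\Gr$-coloring within its conjugacy/cohomology class without changing $\rr$. The existence of such a connecting sequence is a topological fact in the spirit of Theorem~\ref{L:Toplemma-} and Theorem~\ref{L:admColoringsExist}, adapted from the corresponding statement in \cite{GPT2}; one must be slightly careful that all intermediate triangulations remain quasi-regular and that admissibility of the coloring is preserved along the way, which is where the genericity hypotheses on $\X$ (it is symmetric and $\Gr$ is not covered by finitely many translates of $\X$) are used. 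Since the target $H(\partial M,\Tb,\col^\partial)$ depends only on $\col^\partial$, which is determined by $\rr$ and $\Tb$, it makes sense to compare state sums for different bulk data.

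Next I would verify that the state sum $TV(\T,\YY,\col)$ is unchanged under each elementary move. Invariance under the $2$--$3$ \Pachner{} move is exactly the Biedenharn--Elliott identity, Theorem~\ref{T:BEId}: summing over the state on the new interior edge, weighted by $\qd(V_j)$ with $j$ ranging over $I_{\d j_2 \d j_3}$, reproduces the contraction of the two tetrahedra by the five remaining ones; the hypothesis $\d j_2 \d j_3 \notin \X$ needed to apply the theorem is guaranteed because the coloring $\col$ is admissible, so every edge colour lies in $\Gr' = \Gr \setminus \X$. Invariance under the \lune{} move follows from the orthonormality relation, Theorem~\ref{T:orth}, again using admissibility to meet the hypothesis $\d j \d l \notin \X$ and the presence of the Kronecker $\delta$ to collapse the sum. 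The bubble (\Pachner) move is handled by Lemma~\ref{L:algBubble}: this is precisely where the weights $\bb$ on the edges of $\YY$ enter, and the defining property~\eqref{eq:bb} of $\bb$ is what makes the two sides match; note that the bubble move creates a new vertex, which by the definition of $H$-triangulation must lie on $\YY$, so it is legitimate to have $\bb$-weighted edges there. One also checks that the $6j$-symbol $|t|_\st$ is independent of the admissible numbering of the vertices of $t$ compatible with the orientation, using the tetrahedral symmetries of $\sjtop ijklmn$ recorded after \eqref{mss}, and that the contraction $\cntr$ is well defined because opposite faces carry dual multiplicity modules via the pairing \eqref{E:pairing1-} (non-degenerate by Lemma~\ref{L:pairNondeg}).

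Then I would treat the remaining sources of ambiguity. Independence of the isotopy class of $\Y$ (as opposed to merely the underlying CW-structure) follows because isotopies of $\Y$ can be realized by sequences of the above moves applied in a neighbourhood of $\Y$, keeping $\T$ quasi-regular. Independence of the chosen admissible $\Gr$-coloring $\col$ restricting to $\rr$ uses that any two such colorings differ by a coboundary supported away from $\Y_0$ and $\partial M$; on the state sum this amounts to a relabelling of the summation variables $\st$ together with the reversion invariance of $\mathbb G'(\Gamma)$ and of the $6j$-symbols noted after \eqref{mss}, so the sum is literally unchanged term by term after the bijection on states. Finally, having shown that $TV(\T,\YY,\col)$ depends only on $(M,\Y,\rr)$ and on $\Tb$, I would set $TV(M,\Y,\rho)$ to be this common value, landing in $H(\partial M,\Tb,\col^\partial)$; the last sentence of the theorem, that this extends to a relative HQFT, I would defer, pointing to the construction of the functor in \eqref{E:FunQ} in Section~\ref{S:HQFT}, since it requires first setting up the cobordism category with embedded graphs and $\Gr$-colorings and then showing gluing compatibility, which is a separate (functorial) argument rather than part of the invariance proof.

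The main obstacle I anticipate is not any single algebraic identity — those are quoted wholesale from \cite{GPT2} — but the bookkeeping needed to keep \emph{admissibility} of the $\Gr$-coloring under control throughout the move sequence: one must argue that the connecting sequence of \Pachner, \lune{} and bubble moves can be chosen so that every intermediate $\Gr$-coloring is still $\T_1$-admissible (so that Theorems~\ref{T:BEId} and~\ref{T:orth} and Lemma~\ref{L:algBubble} actually apply at each step), and this is exactly the point that forces the condition that $\Gr$ not be covered by finitely many translates of $\X$. Making that genericity argument precise, in the relative setting with a fixed boundary triangulation and a fixed graph, is the technical heart of the proof.
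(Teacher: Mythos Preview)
Your overall architecture is correct and matches the paper: reduce to elementary moves via Proposition~\ref{L:Toplemma}, then verify each move using the Biedenharn--Elliott identity, the orthonormality relation, and Lemma~\ref{L:algBubble}. You also correctly single out admissibility along the move sequence as the delicate point.

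However, your treatment of independence of the $\Gr$-coloring $\col$ has a genuine gap. You claim that changing $\col$ by a gauge supported away from $\Y_0$ and $\partial M$ ``amounts to a relabelling of the summation variables $\st$ \dots\ so the sum is literally unchanged term by term after the bijection on states.'' There is no such bijection. A gauge $\delta^{v,g}$ replaces $\col(e)$ by $g\,\col(e)$ on edges $e$ issuing from $v$, so a state $\st(e)\in I_{\col(e)}$ must be replaced by some element of $I_{g\col(e)}$, and there is no canonical (or even non-canonical) map $I_h\to I_{gh}$ identifying the summands; the reversion symmetry $i\leftrightarrow i^*$ you invoke is irrelevant here, since it concerns orientation reversal, not translation of the degree by $g$. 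In the paper this step is Lemma~\ref{L:Coboundary} and its corollary Lemma~\ref{C:ConstCobound}, and the argument is quite different: a single-vertex gauge with $c(v_0)\notin\X$ is realized \emph{topologically} by a positive {\bubble} move on one edge of $\YY$ through $v_0$ (introducing the color $c(v_0)$), followed by a sequence of {\Pachner} moves, and a negative {\bubble} on the other edge. Invariance under gauge thus \emph{reuses} the move-invariance (Lemma~\ref{L:AdmMove}) rather than being an independent combinatorial observation. The general gauge in Lemma~\ref{C:ConstCobound} is then handled one vertex at a time, using the covering hypothesis on $\X$ to pick an auxiliary $g$ so that both the intermediate coloring $\delta^{v_1,g}\act\col$ and the remaining composite stay admissible.

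This also affects how admissibility is maintained globally. You phrase the obstacle as choosing a move sequence through admissible colorings; the paper instead proves the sharper Theorem~\ref{T:one-move}: for a \emph{single} move with admissible colorings on both sides restricting to the same $\rr$, the state sums agree. The proof always runs the move in the negative direction (so the restricted coloring is automatically admissible by Lemma~\ref{L:AdmMove}) and then invokes gauge invariance (Lemma~\ref{C:ConstCobound}) to pass from the restricted coloring to the given one on the smaller triangulation. Induction on the number of moves then finishes. So the genericity of $\X$ enters through the gauge argument, not through engineering the move sequence itself.
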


A proof of this theorem will be given in Section \ref{SS:proof}.

\subsection{Kashaev type 3-manifold invariants} \label{SS:KashInv}
In \cite{GKT}, Geer, Kashaev and Turaev introduce the notion of a $\hat
\Psi$-system in a tensor category (see Definition \ref{D:PsiHat} above).  It is
shown that such a system, plus some algebraic data, leads to the existence of
a Kashaev-type invariant of a link in a 3-manifold.  In this subsection we
briefly recall the construction of this invariant given in \cite{GKT}.

Throughout this subsection, $M$ is a closed connected orientable $3$-manifold
and $L$ is a non-empty link inside $M$.  Here we regard $L$ as a graph in $M$
with no vertices.  Let $(\T,\LL)$ be an $H$-triangulation of $(M,L)$.  Let $t$
be an tetrahedron of $\T$.  An \emph{integral charge} on $t$ is a
$\frac12\Z$-valued map $c$ defined on the edges of $t$ such that $c(e)=c(e')$
for opposite edges $e$ and $e'$, and for any edges $e_1, e_2, e_3$ of $t$
forming the boundary of a face of $t$ we have $c(e_1)+c(e_2)+c(e_3)=1/2$.

Let $E(\T)$ be the set of edges of $\T$ and let $\widehat{E}(\T)$ be the set
consisting of the edges of all the tetrahedra $\{t_i\}$ of $\T$.  Any edge $e$
of $\T$ gives rise to $n$ elements of $\widehat{E}(\T)$ where $n$ is the
number of tetrahedra of $\T$ adjacent to $e$.  Let $\epsilon_\T:
\widehat{E}(\T) \rightarrow E(\T)$ be the natural projection.  An
\emph{integral charge} on an $H$-triangulation $(\T,\LL)$ of $(M,L)$ is a map
$c: \widehat{E}(\T) \rightarrow \frac12\Z$ such that the restriction of $c$ to
any tetrahedron $t$ of $\T$ is an integral charge on $t$, and for each edge
$e$ of $\T$ we have $\sum_{e'\in \epsilon_\T^{-1}(e)}c(e')=c_e$, where $c_e=0$,
if $e$ belongs to $\LL$ and $c_e=1$, otherwise.  Each charge $c$ on $(\T,\LL)$
determines a cohomology class $[c]\in H^1(M; \Z/2\Z)$ as follows.  Let $s$ be
a simple closed curve in $M$ which lies in general position with respect to
$\T$ and such that $s$ never leaves a tetrahedron $t$ of $\T$ through the same
2-face by which it entered.  Thus, each time $s$ passes through $t$, it
determines a unique edge $e$ belonging to both the entering and departing
faces.  The sum of the residues $2 \,c \vert_t(e) (mod \, 2) \in \Z/2\Z$ over
all passages of $s$ through tetrahedra of $\T$ depends only on the homology
class of $s$ and is the value of $[c]$ on $s$.  For any non-empty link
$L\subset M$ and any $\zeta\in H^1(M; \Z/2\Z)$, each $H$-triangulation of
$(M,L)$ has a charge representing $\zeta$.  The theory of integral charges is
based on the work of Neumann \cite{Neu90,Neu98}.

We now describe the algebraic data needed to define the Kashaev-type
invariant.  Let $\cat$ be a $\FK$-linear category where $\FK$ is a field.
Recall the vector spaces $H^{ij}_{k}, H^{i}_{jk}, H,$ etc., from
Section~\ref{S:SphPsi}.  Fix a $\widehat{\Psi}$-system in $\cat$ with
distinguished simple objects $\{V_i\}_{i\in I}$.  Fix a family $\{I_g\}_{g\in
  \Gr}$ of finite subsets of the set $I$ numerated by elements of a group
$\Gr$ and satisfying the following conditions:
\begin{enumerate}
\item \label{I:comp1} for any $g\in \Gr$, if $i\in I_g$, then $i^*\in
  I_{g^{-1}}$;
\item \label{I:comp2} for any $i_1\in I_{g_1}, i_2\in I_{g_2}$, $k\in
  I\setminus I_{g_1g_2}$ with $g_1,g_2\in \Gr$, we have $H^{i_1 i_2}_k=0$;
\item \label{I:comp2+} if $i_1\in I_{g_1}, i_2\in I_{g_2}$ with $g_1,g_2\in
  \Gr$ then either $I_{g_1g_2}= \emptyset$ or there is a $k\in I_{g_1g_2}$
  such that $H^{i_1 i_2}_k\neq 0$;
\item \label{I:comp3} for any finite family $ \{g_r \in \Gr\}_r$, there is
  $g\in \Gr$ such that $I_{gg_r}\neq\emptyset$ for all $r$;
\item \label{I:comp4} we are given a map $\bb:I\to \FK$ such that
  $\bb(i)=\bb(i^*)$ for all $i\in I$, and for any $g_1,g_2 \in \Gr$, $k\in
  I_{g_1g_2}$ such that $I_{g_1} \neq \emptyset$ and $I_{g_2} \neq \emptyset$,
  $\sum_{i_1 \in I_{g_1},\, i_2\in I_{g_2}}
  \bb({i_1})\bb({i_2})\dim_\FK(H^{i_1i_2}_{k}) = \bb(k).$
\end{enumerate}
We define two linear forms \( T, \wb T, \colon H^{\otimes4}\to \FK \)  by
the following diagrammatic formulae: for any $ u,v,x,y\in H$,
\[
T(u\otimes v\otimes x\otimes y)=\sum_{i,\ldots,n\in I} \ang{
  \epsh{fig10}{18ex}\put(-17,29){$u$}\put(-29,10){$v$}\put(-12,-10){$x$}
  \put(-25,-28){$y$} \put(-13,40){\ms{m}}
  \put(-28,21){\ms{k}}\put(-33,-5){\ms{i}}\put(-16,4){\ms{j}}
  \put(-6,10){\ms{l}}\put(-12,-20){\ms{n}}\put(-21,-38){\ms{m}}
} ,\qquad 
\wb T(u\otimes v\otimes x\otimes y)=\sum_{i,\ldots,n\in I} \ang{
    \epsh{fig11}{18ex}\put(-24,29){$u$}\put(-12,10){$v$}\put(-30,-10){$x$}
  \put(-16,-28){$y$}\put(-19,40){\ms{m}}
  \put(-31,16){\ms{i}}\put(-6,-10){\ms{l}}\put(-16,0){\ms{j}}
  \put(-28,-20){\ms{k}}\put(-12,21){\ms{n}}\put(-21,-38){\ms{m}}
}.
\]
For any  $a , b\in\frac12\Z$,  let $T(a,b), \wb T(a,b)\colon
H^{\otimes4}\to \FK$ be linear maps 
\[
T(a,b)=T\sqop{4ab}{}_1\Rop^{b}_1\Rop^{-a}_2\Lop^{-a}_3\Rop^{-b}_3,\qquad
\wb T(a,b)=\wb
T\sqop{-4ab}{}_1\Lop^{-a}_2\Rop^{-b}_2\Rop^{-a}_3\Rop^{b}_4.
\]
Let $\sqop{}{8}\in \End(H)$ be the operator $\sqop{}{8}=\srop{} A \srop{-} A
\slop{-}\scop{-}$.

Let $(\T,\LL)$ be an $H$-triangulation of $(M,L)$, with integral charge $c$.
Let $\col$ be a $\Gr$-coloring of $\T$ such that $\states(\col)\neq
\emptyset$.  As remarked in the previous subsection, the $\Gr$-coloring $\col$
induces the conjugacy class of a representation $[\col]\in \MG M$.  Fix a
total ordering of the vertices of $\T$.  We recall the partition function
defined, in \cite{GKT}, from the above data.

For any tetrahedron $t$ of $\T$, let $v_1$, $v_2$, $v_3$, $v_4$ be its
vertices in increasing order (induced from the total ordering).  We say that
$t$ is {\it right oriented} if the tangent vectors $v_1v_2, v_1 v_3, v_1 v_4$
form a positive basis in the tangent space of~$M$; otherwise $t$ is {\it left
  oriented}.  For each state $\st$ of $\wp$, set
\begin{equation}\label{E:AssignmentOfLabels}
 i=\st(\vect{v_1v_2}),
j=\st(\vect{v_2v_3}),
  k=\st(\vect{v_1v_3}),
  l=\st(\vect{v_3v_4}),
  m=\st(\vect{v_1v_4}), n=\st(\vect{v_2v_4})
\end{equation}
where $\vect{v_iv_j}$ is the oriented edge of $t$ going from $v_i$ to $v_j$.
Then for each right oriented tetrahedron $t$ of $\T$ and state $\st$ the
restriction of $ T(c({v_1v_2}),c({v_2v_3})) $ to the tensor product
$H^m_{kl}\otimes_\FK H^k_{ij}\otimes_\FK H^{jl}_n \otimes_\FK H^{in}_m \subset
H^{\otimes 4}$ gives a vector in the $\FK$-vector space
\begin{equation}\label{E:ambientvectorspace} \Hom_\FK (H^m_{kl}\otimes_\FK 
  H^k_{ij}\otimes_\FK H^{jl}_n \otimes_\FK
  H^{in}_m,\FK)= H_m^{kl}\otimes_\FK H_k^{ij}\otimes_\FK H_{jl}^n \otimes_\FK
  H_{in}^m . 
\end{equation}
This vector is denoted by $|t|_\st^c$ and depends on $\st$, $c$ and the ordering
of the vertices of $\T$.  Similarly, if $t$ a left oriented tetrahedron then
one uses $\wb T(c({v_1v_2}),c({v_2v_3}))$ to assigns $t$ a vector $|t|_\st^c\in
H_m^{in}\otimes_\FK H_n^{jl}\otimes_\FK H_{ij}^k \otimes_\FK H_{kl}^m$.  The
multiplicity modules of $|t|_\st^c$ are associated to the faces of $t$.

Note that any face of $\T$ belongs to exactly two tetrahedra of $\T$, and the
associated multiplicity modules are dual to each other as in Lemma 1 of
\cite{GKT}. The tensor product of the $6j$-symbols $|t|_\st^c$ associated to
all tetrahedra $t$ of $\T$ can be contracted using this duality.  We denote by
$\cntr$ the tensor product of all these contractions.  Set
\begin{equation}
  \label{eq:K}
  \Kas(\T,\LL,\wp,c)=\sum_{\st \in \states(\wp) }\, 
  \left (\prod_{e\in \LL} \bb(\st(e))\right ) \, \cntr \left
    (\bigotimes_{t}|t|_{\st}^c\right ) \in \FK,
\end{equation}
where $t$ runs over all  tetrahedra of $\T$.

\begin{theorem}[\cite{GKT}]\label{T:MainTopInv}
  Suppose that there exists a scalar $\wsqop\in \FK$ such that $\sqop{}{8}$ is
  equal to the operator
  $$\wsqop\Id_{\hat H}\oplus \wsqop^{-1}\Id_{\check H}\in \End(H).$$
  Then, up to multiplication by integer powers of $\wsqop$, the state sum
  $\Kas(\T,\LL,\wp,c)$ depends only on $[\wp] \in \MG M$, the isotopy class of
  $L$ in $M$, and the cohomology class $[c]=\zeta\in H^1(M;\Z/2\Z)$.  We set
  $\Kas(M,L,[\wp],\zeta)=\Kas(\T,\LL,\wp,c)$.
\end{theorem}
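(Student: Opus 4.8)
The plan is to reduce topological invariance to invariance of the state sum \eqref{eq:K} under a short list of local modifications of the data $(\T,\LL,\wp,c)$, and then to match each modification with an algebraic identity supplied by the $\widehat\Psi$-system. Since $M$ is closed, Theorem \ref{L:Toplemma-} (with $\partial M=\emptyset$) furnishes an $H$-triangulation $(\T,\LL)$ of $(M,L)$, and conditions \ref{I:comp2+}--\ref{I:comp3} on the family $\{I_g\}_{g\in\Gr}$ guarantee that $\Gr$-colorings $\wp$ with $\states(\wp)\neq\emptyset$ exist and realize every conjugacy class in $\MG M$. It is therefore enough to connect any two such $H$-triangulations inducing conjugate representations by a finite sequence of: (i) the \Pachner\ move (an ordered-vertex $2$--$3$ Pachner move performed away from $L$); (ii) the \bubble\ move and the \lune\ move (local modifications of $\T$ near a link edge); (iii) a change of the total ordering of the vertices of $\T$; (iv) a change of the integral charge $c$ keeping $[c]\in H^1(M;\Z/2\Z)$ fixed; and (v) a conjugation of $\wp$. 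That such a connecting sequence can be found through colorings with nonempty state sets rests on Pachner's theorem in the refined form needed here, together with the genericity arguments behind Theorems \ref{L:Toplemma-} and \ref{L:admColoringsExist}.

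The algebraic core is the verification of each move. First I would show that the $2$--$3$ move corresponds to a pentagon identity for the tensors $T$ and $\wb T$ of Subsection \ref{SS:KashInv}: expanding over states, this is the Biedenharn--Elliott identity, here written as a relation between $T(a,b)$ and $\wb T(a,b)$ in which the $\Rop,\Lop,C$-conjugations and the $q$-power prefactors record how the charges on the three old tetrahedra recombine into charges on the two new ones. The input is precisely the block of relations \eqref{E:sqrtCC} in Definition \ref{D:PsiHat}, engineered so that these prefactors cancel. The \lune\ move is handled by the orthonormality relation (cf.\ Theorem \ref{T:orth}); the change of vertex ordering is absorbed by the tetrahedral symmetries of $T$ and $\wb T$, themselves coming from the symmetries of the $6j$-symbol and the involutivity of $A$ and $B$; and conjugation of $\wp$ is harmless, since it merely relabels edge colors by the $\Gr$-action under which the index sets $I_g$ are equivariant.

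The remaining and, in my view, hardest point is the normalization. The \bubble\ move changes the state sum by a factor which, after contracting the new tetrahedra, splits into a part governed by the $\bb$-compatibility condition \ref{I:comp4} and a part equal to the operator $\sqop{}{8}=\srop{}A\srop{-}A\slop{-}\scop{-}$ acting on a multiplicity module; the hypothesis $\sqop{}{8}=\wsqop\,\Id_{\hat H}\oplus\wsqop^{-1}\Id_{\check H}$ says exactly that this second part contributes $\wsqop^{\pm1}$, which is the origin of the ambiguity up to integer powers of $\wsqop$. Likewise, changing $c$ within its cohomology class amounts to a sequence of local charge gauge transformations supported near single edges; each multiplies the affected $|t|_\st^c$ by powers of $q$ and conjugates them by $\Rop,\Lop,C$, and the relations \eqref{E:sqrtCC} together with the scalar-operator hypothesis force the net effect on \eqref{eq:K} to be a power of $\wsqop$, so that only $\zeta=[c]$ survives; the cohomological character of this dependence is the Neumann-type theory of \cite{Neu90,Neu98}. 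Putting this together shows that, modulo integer powers of $\wsqop$, the state sum depends only on $[\wp]\in\MG M$, the isotopy class of $L$, and $\zeta$, and one sets $\Kas(M,L,[\wp],\zeta)=\Kas(\T,\LL,\wp,c)$. The obstacle that runs through the whole argument is the bookkeeping of the $q$-powers and of the $\Rop,\Lop,C$-conjugations across the Pachner move: showing that they reassemble correctly is what forces the long list of relations in Definition \ref{D:PsiHat}, and it is the step I expect to occupy most of the work. These details are carried out in \cite{GKT}.
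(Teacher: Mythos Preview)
The paper does not contain a proof of this theorem: it is quoted verbatim from \cite{GKT} (note the attribution in the theorem header) and used as a black box. There is therefore nothing in the present paper to compare your sketch against. Your outline is a reasonable bird's-eye view of the kind of argument one expects---reduce to elementary moves on $H$-triangulations with charge, then verify invariance under each move using the $\widehat\Psi$-system relations and the scalar hypothesis on $\sqop{}{8}$---and you correctly defer the details to \cite{GKT} at the end.

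One caution about the sketch itself: you locate the $\wsqop$-ambiguity primarily in the \bubble\ move, but in the actual argument of \cite{GKT} the powers of $\wsqop$ arise chiefly from changes of the total vertex ordering (which are not covered by the tetrahedral symmetries alone once charges are present) and from charge gauge transformations; the \bubble\ move, with the weights $\bb$ in place via condition \ref{I:comp4}, is arranged to leave the state sum exactly invariant. This does not affect the overall strategy you describe, but the attribution of where the ambiguity enters is slightly off.
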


\subsection{$\hat\Psi$-system in relative $\Gr$-spherical
  categories} \label{SS:PsiHatRelG} 

In this subsection we prove that a relative $\Gr$-spherical category with
basic data has a natural $\hat\Psi$-system for any choice of a square root of
the modified dimension $\qd$.  Moreover, for this situation, we show that
corresponding Kashaev-type and modified TV invariants are equal.

Let $\cat$ be a $(\X,\qd)$-relative $\Gr$-spherical category with basic data
$\{V_i,w_i\}_{i\in I}$ where the ground ring $\FK$ is a field.  For every pair
$(i,i^*)\in I\times I$ define the morphisms $d_i,d_{i^{*}},b_i, b_{i^{*}}$
by\begin{align*} d_i&= d_{V_{i^*}}(w_{i} \otimes \Id_{V_{i^*}}), & d_{i^*}&
  =d'_{V_{i^*}}(\Id_{V_{i^*}}\otimes w_i), & b_i&=(\Id_{V_i}\otimes
  w_{i^{*}}^{-1})\circ b_{V_i}, & b_{i^*}&=(w_{i^*}^{-1}\otimes
  \Id_{V_i})b'_{V_i}.
\end{align*}
It is easy to see that Equation \eqref{E:FamilyIso} implies $(\Id_{V_i}\otimes
w_{i^*}^{-1})b_{V_i}=(w_{i}^{-1}\otimes \Id_{V_{i^*}})b'_{V_{i^*}}$.
Therefore, if $i=i^*$ then $b_i=b_{i^*}$ and $d_i=d_{i^*}$.

Recall the definition of a $\Psi$-system given in Subsection \ref{SS:hatPsi}.

\begin{prop}\label{P:PsiSysCat}
 The collection $\{V_i,b_i,d_i\}_{i\in I}$ is a $\Psi$-system in $\cat$.
\end{prop}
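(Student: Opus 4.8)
The plan is to verify the four defining conditions of a $\Psi$-system (as listed in Subsection~\ref{SS:hatPsi}) for the collection $\{V_i, b_i, d_i\}_{i\in I}$ coming from the basic data $\{V_i, w_i\}_{i\in I}$. Condition (1), that $\{V_i\}_{i\in I}$ are simple with $\Hom(V_i,V_j)=0$ for $i\neq j$, follows directly from the construction of a representative set: distinct $i,j\in I$ give non-isomorphic simple objects of $\A$, and by property~(5) of the $\Gr$-grading together with semisimplicity of each $\cat_g$ with $g\in\Gr'$, non-isomorphic simples have vanishing $\Hom$-spaces. Condition (2), the existence of the involution $i\mapsto i^*$, is part of the definition of a representative set (the involution determined by $V_{i^*}\simeq V_i^*$).

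For condition (3), I would verify the two "zig-zag" (conjugation) identities $(\Id_{V_i}\otimes d_{i^*})(b_i\otimes\Id_{V_i})=\Id_{V_i}$ and $(d_i\otimes\Id_{V_i})(\Id_{V_i}\otimes b_{i^*})=\Id_{V_i}$. Unwinding the definitions $b_i=(\Id_{V_i}\otimes w_{i^*}^{-1})b_{V_i}$ and $d_{i^*}=d'_{V_{i^*}}(\Id_{V_{i^*}}\otimes w_i)$, the first identity becomes a composite that, after cancelling $w_i$ against $w_i^{-1}$ appropriately, reduces to one of the standard duality (zig-zag) axioms of the pivotal category $\cat$ for the morphisms $b_{V_i}$ and $d'_{V_i}$ (respectively $b'$ and $d$ for the second identity). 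The key computational input is precisely the compatibility relation \eqref{E:FamilyIso} satisfied by the basic data, which was already noted in the text to imply $(\Id_{V_i}\otimes w_{i^*}^{-1})b_{V_i}=(w_i^{-1}\otimes\Id_{V_{i^*}})b'_{V_{i^*}}$; this lets one interchange the two forms of the coevaluation so that the $w$'s cancel and the pivotal axioms apply. I expect this to be the main (though still routine) obstacle: keeping track of which duality morphism ($b_V$ vs.\ $b'_V$, $d_V$ vs.\ $d'_V$) appears where, and using \eqref{E:FamilyIso} at the right moment.

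Condition (4) — that for $i,j\in I$ with $H^{ij}_k\neq 0$ for some $k\in I$, the identity $\Id_{V_i\otimes V_j}$ lies in the image of $\bigoplus_{k\in I} H^{ij}_k\otimes_\FK H_{ij}^k\to\End(V_i\otimes V_j)$ — is where the relative-spherical hypotheses really enter. If $H^{ij}_k\neq 0$ for some $k$, then by \eqref{eq:grad} (applied with the degree constraint) and by property~(5) of the $\Gr$-grading, $V_i\otimes V_j$ lies in $\cat_{\ev i\,\ev j}$ with $\ev i\,\ev j\in\Gr'$; by Definition~\ref{D:G-spherical}\eqref{ID:G-sph5} this subcategory is semisimple with finitely many simple classes, all of which lie in $\{V_k\}_{k\in I}$. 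Hence $V_i\otimes V_j\cong\bigoplus_k V_k^{\oplus m_k}$ with $m_k=\dim_\FK H^{ij}_k=\dim_\FK H_{ij}^k$, and the canonical projections and inclusions realizing this direct sum decomposition exhibit $\Id_{V_i\otimes V_j}$ as $\sum_k\sum_{\alpha} \iota_{k,\alpha}\circ p_{k,\alpha}$, which is exactly an element of the stated image once one identifies the $p_{k,\alpha}$ with a basis of $H_{ij}^k$ and the $\iota_{k,\alpha}$ with the dual basis of $H^{ij}_k$. I would also remark that this argument is essentially the one already used in the proof of Lemma~\ref{L:pairNondeg} (via "good pairs"). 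Assembling conditions (1)–(4) then completes the proof.
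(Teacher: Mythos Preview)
Your proposal is correct and follows essentially the same approach as the paper's proof. The paper dispatches conditions (1)--(3) in a single sentence (``By definition $\{V_i,b_i,d_i\}_{i\in I}$ satisfies the first three properties\ldots''), whereas you spell out the routine verifications; your argument for condition (4) via the $\Gr$-grading and semisimplicity of $\cat_{\d k}$ is exactly the paper's argument.
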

\begin{proof}
  By definition $\{V_i,b_i,d_i\}_{i\in I}$ satisfies the first three
  properties in the definition of a $\Psi$-system.  Let $i,j\in I$ and suppose
  $H^{ij}_k\neq0$ for some $k$.  Then Equation \eqref{eq:grad}
  implies $\d i \d j=\d k\in \Gr'$ and so $V_i\otimes V_j$ is an object of
  $\cat_{\d k}$.  Thus, since $\cat_{\d k}$ is semi-simple with a finite
  number of isomorphism classes of simple objects given by $\{V_l:l\in I_{\d
    k}\}$ we have that $\Id_{V_i\otimes V_j}$ has the desired property.
\end{proof}

Recall the operators $A,B, L= A^{*}A, R= B^{*}B:H\to H$ defined in Subsection
\ref{SS:hatPsi}.  Since $\cat$ is pivotal we have $ABA=BAB$ and thus
$C=(AB)^3=\Id_H$.  Let $S=ABA$ then $S^{*}=S=S^{-1}$ and $SAS=B$.

Choose a function $\qee:I\to\FK$ satisfying $\qee(i^{*})=\qee(i)$ and
$\qd(i)=\qee(i)^2$ for all $i\in I$. Define the operator
$$
\qe:H\to H\text{ given by }\qe=\bigoplus_{ijk}\qee(k)\Id_{H_{ij}^{k}}\oplus
\qee(k)\Id_{H^{ij}_{k}}.
$$ 
Let $\qe^{A}=A\qe A\text{ and }\qe^{B}=B\qe B$.

\begin{lemma}\label{L:scio1}
  $\qe,\qe^{A},\qe^{B}$ are symmetric commuting invertible operators
  satisfying
  \begin{align}\label{eq:dS}
    S\qe&=\qe S&S\qe^{A}&=\qe^{B} S\\
    \label{eq:dA}
    A\qe^{B}&=\qe^{B}A&B\qe^{A}&=\qe^{A}B\\
    \label{eq:dB}
    A\qe&=\qe^{A}A&B\qe&=\qe^{B}B
    \\
    \label{eq:RLambi}
    L&=\left(\qe/{\qe^{A}}\right)^{2}&R&=\pp{\qe/{\qe^{B}}}^{2}
    \\
    \label{eq:Td1}
  T\qe_1&=T\qe_4&T\qe^{B}_1&=T\qe_2&T\qe^{A}_1&=T\qe^{A}_3\\
    \label{eq:Td2}
  T\qe^{A}_2&=T\qe^{B}_3&T\qe^{B}_2&=T\qe^{B}_4&T\qe_3&=T\qe^{A}_4
  \end{align}
\end{lemma}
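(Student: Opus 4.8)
The plan is to reduce every assertion to two kinds of facts: (a) combinatorial properties of the operators $A$, $B$, $S$, $T$ acting on the graded pieces $H^{ij}_k$ and $H_{ij}^k$ of $H$, which are all recorded in \cite{GKT}, and (b) the observation that $\qe$ is a scalar operator on each graded piece with scalar $\qee(k)$ depending only on the ``output index'' $k$. I would begin by noting that $\qe$ is manifestly invertible (each $\qee(k)\in\FK^\times$) and symmetric with respect to the pairing \eqref{E:pairing}: the pairing matches $\pi_{ij}^k x$ against $\pi_k^{ij} y$, i.e.\ it pairs $H_{ij}^k$ with $H^{ij}_k$, and $\qe$ acts by the same scalar $\qee(k)$ on both, so $\ang{\qe x,y}=\ang{x,\qe y}$. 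Since $A$, $B$ are involutive and have transposes (by \cite{GKT}), $\qe^A=A\qe A$ and $\qe^B=B\qe B$ are automatically symmetric and invertible as well, and $S=ABA$ is symmetric, so $S\qe S = \qe S^{-1} S$-type identities will follow formally once the basic intertwining relations are in hand.

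The technical core is to understand how $A$ and $B$ move the ``output index.'' Recall $A$ sends a component in $H^{ij}_k$ (resp.\ $H_{ij}^k$) to a component involving $b_{i^*}$ or $d_{i^*}$ glued on the $i$-strand; inspecting the defining formula for $A$ and the thickening conventions, $A$ intertwines $H_{ij}^k$ with a piece whose output index is $j$ (after absorbing the $V_{i^*}$), and dually on the $\check H$ side. Thus conjugating the scalar operator $\qe$ by $A$ replaces the scalar $\qee(k)$ by $\qee$ of whatever index $A$ exposes as the new output; this is exactly the content of \eqref{eq:dA}, \eqref{eq:dB}. More precisely, I would check on each graded summand that $A\qe$ and $\qe^A A$ act by the same scalar — both equal $\qee$ of the index that $A$ leaves in the output slot — and similarly for $B$; the relations $A\qe^B=\qe^B A$ and $B\qe^A=\qe^A B$ then follow because $A$ does not touch the index controlling $\qe^B$ (namely the $j$-slot), and vice versa. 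The relations \eqref{eq:dS} are immediate consequences: $S\qe=ABA\qe=AB\qe^A A=A\qe^A BA = \qe S$ using \eqref{eq:dA}--\eqref{eq:dB} twice, and $S\qe^A=ABA\qe^A = AB\qe A\cdot(\text{reorganize})=\qe^B S$ by the same bookkeeping. For \eqref{eq:RLambi}, recall from \cite{GKT} that $L=A^*A$ and $R=B^*B$; writing $A^*=A$ up to the pairing twist (more precisely, using that $\ang{Ax,y}=\ang{x,A^*y}$ and that on graded pieces $A^*$ differs from $A$ by the ratio of output scalars), one computes that $A^*A$ acts on $H_{ij}^k$ by the square of the ratio $\qee(k)/\qee(k')$ where $k'$ is the output index after applying $A$; this is precisely $(\qe/\qe^A)^2$. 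The same argument with $B$ gives $R=(\qe/\qe^B)^2$.

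Finally, for the six identities \eqref{eq:Td1}--\eqref{eq:Td2} involving the tetrahedral operator $T$: here one uses that $T$ is (up to the braiding/twist insertions that are absorbed into $T$'s definition via the $R,L,C$ operators elsewhere) essentially the $6j$-symbol contraction, whose tensor factors are the four multiplicity modules at the faces of a tetrahedron. The four subscripts $1,2,3,4$ on $\qe$, $\qe^A$, $\qe^B$ refer to which of the four tensor slots the operator acts on. Each of the six displayed equalities says: the scalar that $\qe$ (resp.\ $\qe^A$, $\qe^B$) contributes in slot $p$ equals the scalar that some operator contributes in slot $q$, because the relevant edge-index shared by those two faces of the tetrahedron is the same. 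Concretely, I would draw the tetrahedron with its six edge-labels $i,j,k,l,m,n$ as in the $T$-diagram, identify for each face which index plays the role of ``output'' for $\qe$ versus $\qe^A=A\qe A$ versus $\qe^B=B\qe B$ (the $A$ and $B$ conjugations cyclically rotate which of the three edges at a trivalent vertex is the distinguished one), and read off that, e.g., the output index of slot $1$ under $\qe$ coincides with the output index of slot $4$ under $\qe$ — giving $T\qe_1=T\qe_4$ — and so on for the other five. This last step is the main obstacle: it is entirely a matter of carefully matching the combinatorics of the four faces of a tetrahedron against the index-rotation induced by $A$ and $B$, and getting all six cases right requires a clean picture; none of it is deep, but it is the place where a sign or index error would most easily creep in. All six follow the same pattern, so once one is verified the rest are analogous.
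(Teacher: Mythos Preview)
Your overall strategy---reduce everything to the fact that $\qe$, $\qe^A$, $\qe^B$ act as scalars on each graded piece $H_{ij}^k$, $H^{ij}_k$ and then bookkeep how $A$, $B$, $S$, $T$ permute indices---is exactly the paper's approach for \eqref{eq:dS}--\eqref{eq:dB} and for \eqref{eq:Td1}--\eqref{eq:Td2}. Two minor points: (i) the symmetry of $\qe^A$ and $\qe^B$ does \emph{not} follow formally from ``$A$, $B$ are involutive and have transposes'' (that would give $(\qe^A)^*=A^*\qe A^*$, not $A\qe A$); it follows because $\qe^A$ acts by the \emph{same} scalar $\qee(\cdot)$ on the paired pieces $H_{ij}^k$ and $H^{ij}_k$. (ii) You should say explicitly that $\qe,\qe^A,\qe^B$ commute because each is scalar on every graded summand.

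The genuine gap is in your argument for \eqref{eq:RLambi}. You write that ``on graded pieces $A^*$ differs from $A$ by the ratio of output scalars,'' but this is precisely the content of $L=(\qe/\qe^A)^2$---it is what you are trying to prove, not an input. The operator $L=A^*A$ is determined purely by the $\Psi$-system data and the categorical pairing $\ang{\,,\,}$, while $\qe$ is defined from the modified dimension $\qd$; there is no formal reason these should be related. The bridge is the t-ambi property of $(\A,\qd)$, which you never invoke. Concretely: the pairing $(\,,\,)_{i,j,k^*}$ of \eqref{E:Pairing_ijk} is $\mathbb{G}'$ evaluated on the theta graph, and t-ambi says this is well-defined regardless of which edge you cut. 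Cutting at the $k$-edge gives $\qd(V_k)\ang{\,,\,}$ on $H^{ij}_k\otimes H_{ij}^k$; cutting at a different edge gives $\qd(V_\bullet)$ times the pairing on the $A$- (or $B$-)rotated pieces. Equating these yields $\qd(V_k)\ang{Ax,Ay}=\qd(V_\bullet)\ang{x,y}$, which is exactly $L=(\qe/\qe^A)^2$. Without t-ambi, $L$ need not even act as a scalar on each $H^{ij}_k$. This is the one step in the lemma that has real content beyond index-chasing, and your proposal misses it.
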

\begin{proof}
  First, $\qe$, $\qe^{A}=\bigoplus_{ijk}\qee(i)\Id_{H_{ij}^{k}\oplus
    H^{ij}_{k}}$ and $\qe^{B}=\bigoplus_{ijk}\qee(j)\Id_{H_{ij}^{k}\oplus
    H^{ij}_{k}}$ are obviously commuting.
   Notice that on $H_{ij}^{k}\oplus H^{j^{*}i^{*}}_{k^{*}}$, $\qe$ acts by the
  scalar $\qee(k)=\qee(k^{*})$.  As $S$ stabilizes these summands of $H$, $S$
  and $\qe$ commute which is the first equality of \eqref{eq:dS}.
  Similarly, the other equality in \eqref{eq:dS} and Equations
  \eqref{eq:dA},\eqref{eq:dB} are easily deduced from the relation satisfied
  by $A,B,S$ and the definitions of $\qe,\qe^{A},\qe^{B}$.

  We will now show that the equalities in \eqref{eq:RLambi} are a consequence
  of the fact that $(\A,\qd)$ is a t-ambi pair.  Consider the isomomorphisms
  \begin{align}\label{E:Hiso}
    H^{ij}_k \rightarrow
    H^{ijk^*}&= H(i,j,k^*),\,\, y \mapsto (y \otimes w_{k^*}^{-1})\,
    b_{V_k},\notag \\ 
    H_{ij}^k \rightarrow H^{kj^*i^*}&=H(k,j^*,i^*), \,\, x \mapsto (x\otimes
    w_{j^*}^{-1} \otimes w_{i^*}^{-1})(\Id_{V_i}\otimes b_{V_j} \otimes
    \Id_{V^*_i})b_{V_i}.
  \end{align}
  Using these isomorphisms the pairings $(,)_{ijk}: H(i,j,k)\otimes_\FK
  H(k^*,j^*,i^*)\to \FK$ and $\ang{\, ,\,}:H\otimes H\to \FK$ given in
  Equations \eqref{E:pairing} and \eqref{E:Pairing_ijk}, respectively are
  related by:
   \begin{equation}
    \label{eq:nonAmbiPairing}
    (*,*)_{i,j,k^{*}}=\qd(V_k)\ang{*,*}_{|H^{ij}_k\otimes H_{ij}^{k}}
    =\ang{\qe(*),\qe(*)}_{|H^{ij}_k\otimes H_{ij}^{k}}.
  \end{equation}
  Remark that the isomorphisms of \eqref{E:Hiso} commute with $A$ and $B$ and
  so we have
   $$\qd(V_k)\ang{x,Ly}_{| H_{kj^{*}}^{i}\otimes H^{kj^*}_i}=
  \qd(V_k)\ang{Ax,Ay}_{|H^{ij}_k\otimes H_{ij}^{k}}=(x,y)_{i,j,k^{*}}
  =\qd(V_j)\ang{x,y}_{|H_{i^*k}^{j}\otimes H^{i^*k}_j},$$
  $$\qd(V_k)\ang{x,Ry}_{| H_{kj^{*}}^{i}\otimes H^{kj^*}_i}=
  \qd(V_k)\ang{Bx,By}_{|H^{ij}_k\otimes H_{ij}^{k}}=(x,y)_{i,j,k^{*}}
  =\qd(V_i)\ang{x,y}_{| H_{kj^{*}}^{i}\otimes H^{kj^*}_i}. $$ 
  Thus, $Ly=\qe^{2}{(\qe^{A})}^{-2}(y)$, $R=\qe^{2}{(\qe^{B})}^{-2}(y)$ and
  this implies Equation~\eqref{eq:RLambi}.

  The form $T$ on $H^{\otimes 4}$ is $0$ on most of its summands.  
  In particular, it can only be non-zero if the morphisms are composable which
  mean that the summand has the form
  \begin{equation}\label{E:HHHHsum}
  H^m_{kl}\otimes_\FK H^k_{ij}\otimes_\FK H_n^{jl}\otimes_\FK H_m^{in}
  \end{equation}
  for some $(i,j,k,l,m,n)\in I^{6}$.  Equations \eqref{eq:Td1},\eqref{eq:Td2}
  follow from the fact that on these summand the operators
  $\qe,\qe^{A},\qe^{B}$ act by easily identifiable scalars.  For example, on
  the summand of \eqref{E:HHHHsum} the operators $\qe^{B}_1$ and $\qe_2$
  act as $\qe^{B}_{|H^m_{kl}}=\qee(k)\Id$ and 
  $\qe_{|H^k_{ij}}=\qee(k)\Id$, respectively.
\end{proof}
\begin{theorem}\label{T:PsiHatSystem}
  The $\Psi$-system of Proposition \ref{P:PsiSysCat} extends to a $\hat
  \Psi$-system where $\Re=\frac\qe{\qe^{B}}$ and $\Ce=\Id$.  In this case,
  $\Le=\frac\qe{\qe^{A}}$.
\end{theorem}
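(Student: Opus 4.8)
The plan is to take $\scop{}=\Id$ and $\srop{}=\qe/\qe^{B}$ and to check, relation by relation, the axioms \eqref{E:sqrtCC} in Definition \ref{D:PsiHat}, using only Lemma \ref{L:scio1} and the pivotal identity $ABA=BAB$ recalled just before the statement. No new input is required; the whole argument amounts to tracking how the commuting diagonal operators $\qe,\qe^{A},\qe^{B}$ behave under conjugation by $A$, $B$ and under composition with $T$.

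First I would note that $C=\Id_{H}$: since $\cat$ is pivotal, $ABA=BAB$, so $S=ABA=BAB$ is an involution and $C=(AB)^{3}=(ABA)(BAB)=S^{2}=\Id_{H}$. Thus $\scop{}=\Id$ satisfies $(\scop{})^{2}=C$; and because $A,B$ are involutive it also satisfies $A\scop{}A=B\scop{}B=\scop{-}$, while $T\scop{}_{1}\scop{}_{2}=T\scop{}_{3}\scop{}_{4}$ becomes the tautology $T=T$. Turning to $\srop{}=\qe/\qe^{B}$, Lemma \ref{L:scio1} already tells us $\qe$ and $\qe^{B}$ are symmetric, invertible, grading-preserving and commuting, hence so is $\srop{}$; the identity $(\srop{})^{2}=R$ is exactly \eqref{eq:RLambi}; and $\srop{}\scop{}=\scop{}\srop{}$ is automatic. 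For $B\srop{}B=\srop{-}$ I would use $\qe^{B}=B\qe B$ and $B^{2}=\Id$, which give $B\qe^{B}B=\qe$, so that $B\srop{}B=(B\qe B)(B(\qe^{B})^{-1}B)=\qe^{B}\qe^{-1}=\srop{-}$.

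Next I would compute $\slop{}=BA\srop{-}AB$ and identify it with $\qe/\qe^{A}$, the final claim of the theorem. Writing $\srop{-}=\qe^{B}\qe^{-1}$ and using that $A$ commutes with $\qe^{B}$ (from \eqref{eq:dA}) together with $A\qe A=\qe^{A}$ gives $A\srop{-}A=\qe^{B}(\qe^{A})^{-1}$; conjugating by $B$ and using $B\qe^{B}B=\qe$ and that $B$ commutes with $\qe^{A}$ then yields $\slop{}=B\pp{\qe^{B}(\qe^{A})^{-1}}B=\qe(\qe^{A})^{-1}=\qe/\qe^{A}$.

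Finally, the only remaining axioms are the $T$-relations in \eqref{E:sqrtR1L2JJ}--\eqref{E:sqrtR1R2}; with $\scop{}=\Id$ these reduce to $T\srop{}_{1}\slop{}_{2}=T\srop{}_{3}\slop{}_{4}$, $T\srop{}_{1}\srop{}_{2}=T\srop{}_{4}$ and $T\slop{}_{1}=T\slop{}_{3}\slop{}_{4}$. Each is a two-line manipulation of the six identities \eqref{eq:Td1}--\eqref{eq:Td2}. For example, $T\srop{}_{1}\slop{}_{2}=T\qe_{1}(\qe^{B}_{1})^{-1}\qe_{2}(\qe^{A}_{2})^{-1}=T\qe_{1}(\qe^{A}_{2})^{-1}$ using $T\qe^{B}_{1}=T\qe_{2}$; then $T\qe^{A}_{2}=T\qe^{B}_{3}$ and $T\qe_{1}=T\qe_{4}$ rewrite this as $T\qe_{4}(\qe^{B}_{3})^{-1}$, and the same value is obtained from $T\srop{}_{3}\slop{}_{4}$ after one application of $T\qe_{3}=T\qe^{A}_{4}$; the other two identities are handled identically. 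I expect the main (and essentially only) delicate point to be keeping careful track of which of the four tensor factors of $H^{\otimes4}$ a given operator acts on, so that the relations \eqref{eq:Td1}--\eqref{eq:Td2} are invoked on the correct factor --- the genuine work having already been done in Lemma \ref{L:scio1}, where \eqref{eq:RLambi} encodes the t-ambi hypothesis and \eqref{eq:Td1}--\eqref{eq:Td2} encode the graphical calculus. Once these checks are assembled, all of \eqref{E:sqrtCC} holds, so the $\Psi$-system of Proposition \ref{P:PsiSysCat} extends to a $\hat\Psi$-system with $\srop{}=\qe/\qe^{B}$, $\scop{}=\Id$ and, as computed, $\slop{}=\qe/\qe^{A}$.
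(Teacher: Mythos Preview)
Your proposal is correct and follows essentially the same approach as the paper's own proof: both verify the axioms of Definition~\ref{D:PsiHat} directly using only Lemma~\ref{L:scio1} (in particular \eqref{eq:RLambi} for the squares and \eqref{eq:Td1}--\eqref{eq:Td2} for the $T$-relations) together with the conjugation formulas \eqref{eq:dA}--\eqref{eq:dB}, and both compute $\slop{}$ by conjugating $\srop{-}$ through $A$ then $B$. Your write-up is slightly more explicit (you spell out why $C=\Id$ and why $\srop{}$ is symmetric, invertible, grading-preserving), but the substance and the worked $T$-example are the same as in the paper.
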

\begin{proof}
  The Equations \eqref{E:sqrtC1C2},\eqref{E:sqrtC1C2JJ} are trivial except for
  $B\Re B=\Rop^{-\frac12}$ which is a consequence of the fact that conjugation
  by $B$ exchange $\qe$ and $\qe^{B}$ (see Equations \eqref{eq:dB}). Equations
  \eqref{E:sqrtR1L2JJ},\eqref{E:sqrtR1R2} follow from Equations
  \eqref{eq:Td1},\eqref{eq:Td2}.  For example, $T\Re_1\Le_2
  =T\frac{\qe_1\qe_2}{\qe^{B}_1\qe^{A}_2} =T\frac{\qe_1}{\qe^{A}_2}$ and
  $T\Re_3\Le_4= T\frac{\qe_3\qe_4}{\qe^{B}_3\qe^{A}_4} =
  T\frac{\qe_4}{\qe^{B}_3}$ are equal.  Finally,
  $$\Le=BA\Rop^{-\frac12}AB=BA\frac{\qe^{B}}{\qe^{}}AB=B\frac{\qe^{B}}{\qe^{A}}AAB
  =\frac{\qe}{\qe^{A}}BAAB=\frac\qe{\qe^{A}}.$$
\end{proof}

\begin{theorem}\label{T:TV=K}
  Let $L$ be a non-empty link in a closed connected orientable $3$-manifold
  $M$ (as above we regard $L$ as a graph with no vertices).  Let $\rr\in\MG M$
  and $\zeta\in H^1(M;\Z/2\Z)$.  Let $TV(M,L,\rr)$ be the invariant arising
  from the relative $\Gr$-spherical structure of $\cat$ and let
  $\Kas(M,L,\rr,\zeta)$ be the invariant arising from the $\hat\Psi$-system
  described above.  Then $TV(M,L,\rr)=\Kas(M,L,\rr,\zeta)$.  In particular,
  $\Kas(M,L,\rr,\zeta)$ is a well defined complex number which is independent
  of
  the choice of the cohomology class $\zeta$ and the square root $\qee$ of
  $\qd$.
\end{theorem}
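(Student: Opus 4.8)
The plan is to reduce the theorem to a comparison of the two state sums $TV(\T,\YY,\col)$ from \eqref{eq:TV} and $\Kas(\T,\LL,\wp,c)$ from \eqref{eq:K}, computed on the \emph{same} $H$-triangulation $(\T,\LL)$ of $(M,L)$ with a \emph{common} admissible $\Gr$-coloring $\col$ lifting $\rr$. Both invariants have been shown to be independent of the auxiliary choices (Theorems \ref{T:inv} and \ref{T:MainTopInv}; recall $TV$ does not involve a charge, and $\Kas$ is independent of $[c]=\zeta$), so it suffices to exhibit one choice of $\T$, $\col$, charge $c$, vertex ordering, and square root $\qee$ for which the two sums coincide termwise (or term-by-term after reindexing the states). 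Since both sums are indexed by $\states(\col)$, the real content is to match, state by state, the contracted tensor of $6j$-symbols $\cntr(\bigotimes_{t}|t|_\st)$ against $\cntr(\bigotimes_{t}|t|^c_\st)$ together with the edge-weight prefactors.

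The key algebraic input is that by Proposition \ref{P:PsiSysCat} and Theorem \ref{T:PsiHatSystem} the relative $\Gr$-spherical category $\cat$ carries the $\hat\Psi$-system with $\Re=\qe/\qe^{B}$, $\Le=\qe/\qe^{A}$, $\Ce=\Id$, all expressed through the single operator $\qe$ built from $\qee$. One first checks that the data $\{I_g\}_{g\in\Gr}$ (namely $I_g$ as in the definition of a representative set) and the map $\bb$ satisfy conditions \ref{I:comp1}--\ref{I:comp4} of Subsection \ref{SS:KashInv}: \ref{I:comp1} is the involution $i\mapsto i^*$; \ref{I:comp2} and \ref{I:comp2+} follow from \eqref{eq:grad} and semisimplicity of $\cat_g$ for $g\in\Gr'$; \ref{I:comp3} is exactly the second defining property of $\X$ (no finite union of translates of $\X$ covers $\Gr$); and \ref{I:comp4} is property \ref{ID:G-sph7} of Definition \ref{D:G-spherical}. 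Next one identifies, for a right-oriented tetrahedron $t$, the Kashaev cell $|t|^c_\st = T(c(v_1v_2),c(v_2v_3))$ restricted to the appropriate $H^{\otimes4}$-summand with the modified $6j$-symbol $|t|_\st=\sjtop ijklmn$ of \eqref{mss}: the operator $T$ is the diagrammatic $6j$-symbol of \cite{GKT}, and after conjugating by the $\Rop,\Lop$ insertions in $T(a,b)$ — which via \eqref{eq:RLambi} are monomials in $\qee$ — one gets exactly $\mathbb G'(\Gamma)$ up to a scalar $\prod\qee(\cdot)^{\pm1}$ over the six edge-labels of $t$, with exponents governed by the charge $c$. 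The analogous statement for left-oriented tetrahedra uses $\wb T$. Here the Biedenharn--Elliott identity (Theorem \ref{T:BEId}) and orthonormality (Theorem \ref{T:orth}) — which underlie the $\hat\Psi$-relations \eqref{E:sqrtCC} in the guise verified in Lemma \ref{L:scio1} — guarantee the two normalizations are compatible.

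The main obstacle, and the heart of the proof, is the \textbf{bookkeeping of the $\qee$-powers}: when one contracts $\bigotimes_t |t|^c_\st$ along shared faces, each edge $e$ of $\T$ accumulates a product of $\qee(\st(e))$-powers, one contribution from each tetrahedron around $e$, with exponent dictated by the charge condition $\sum_{e'\in\epsilon_\T^{-1}(e)}c(e')=c_e$. One must show these collapse to exactly $\qd(\st(e))=\qee(\st(e))^2$ for $e\in\T_1'\setminus\YY$ (matching the $TV$ weight) and to $1$ for $e\in\YY=\LL$ (since $c_e=0$ there, and the $TV$ weight on link edges is $\bb(\st(e))$, which is already present in \eqref{eq:K}). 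This is precisely the computation done in \cite{GPT2} relating the ``non-ambidextrous'' pairing $\ang{\,,\,}$ to $(,)_{ijk}$ via \eqref{eq:nonAmbiPairing}, now redistributed along the triangulation using Neumann's charge calculus; the global consistency is what forces independence of the vertex ordering and of $\zeta$, reproving that part of Theorem \ref{T:MainTopInv} as a byproduct. Once the scalar matches, the state sums are literally equal, giving $TV(M,L,\rr)=\Kas(M,L,\rr,\zeta)$; the independence of $\zeta$ and of $\qee$ then follows because the left side manifestly depends on neither.
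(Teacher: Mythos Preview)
Your approach is essentially the paper's: compare the two state sums on a common $H$-triangulation and coloring, identify $T(a,b)|_h$ with $|t|_\st$ times a product of $\qee$-powers via \eqref{eq:RLambi}, and collapse those powers using the charge condition $\sum_{e'\in\epsilon_\T^{-1}(e)}c(e')=c_e$ together with \eqref{eq:nonAmbiPairing} for the face contractions. Two small points: first, your opening claim that ``it suffices to exhibit \emph{one} choice of $c$, vertex ordering, and $\qee$'' is not enough for the stated independence of $\zeta$ and $\qee$---the computation you then sketch in fact works for \emph{every} such choice (and must, since that is what forces the independence), so tighten the framing; second, the paper begins by checking $\sqop{}{8}=\Id_H$ (so $\wsqop=1$), which removes the $\wsqop$-ambiguity in Theorem~\ref{T:MainTopInv}---you should include this, and you can drop the appeal to Biedenharn--Elliott and orthonormality, which play no role in this particular comparison.
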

\begin{proof}  
  First, using Lemma \ref{L:scio1} and Theorem \ref{T:PsiHatSystem} it is easy
  to compute the operator $\sqop{}{8}:H\to H$:
  $$\sqop{}{8}=\Re
  A\Rop^{-\frac12}A\Lop^{-\frac12}C^{-\frac12}=\frac\qe{\qe^{B}}A\frac{\qe^{B}}\qe
  A\frac{\qe^{A}}\qe=
  \frac\qe{\qe^{B}}\frac{\qe^{B}}{\qe^{A}}\frac{\qe^{A}}\qe=\Id_H.$$ 
  Remark that as $q$ is responsible for the ambiguity of $\wsqop$ in $\Kas$,
  we see that in our context, this ambiguity disappears.
  
  From duality and the equality
  $\qd(m)\Id_{H_{kl}^{m}}=\delta^{2}|_{H_{kl}^{m}}$, we have symmetrised
  $T$-forms $T_s, \wb {T}_s:H^{\otimes4}\to\FK$ given by
  $T_s=T\delta_1^2=T\delta_4^2$ and $\wb {T}_s=\wb T\delta_1^2=\wb
  T\delta_4^2$.  Let $h=H^m_{kl}\otimes_\FK H^k_{ij}\otimes_\FK H^{jl}_n
  \otimes_\FK H^{in}_m$ and $h'=H^m_{in}\otimes H^n_{jl}\otimes H^{ij}_k
  \otimes H^{kl}_m$.  After identifying the modules $H^{ij}_{k}$ and
  $H_{j^*i^*}^{k^*}$ and with the $H(i,j,k^*)$ (using the isomorphisms in
  Equation \eqref{E:Hiso}) we have that
  \begin{equation}\label{E:TandMod6j}
  \wb{T}_s|_{h'}=\qd(m)\wb{T}|_{h'}=\sjtop ijklmn, \;\;\; 
  T_s|_h=\qd(m)T|_{h}
  =\sjtop n{j^{*}}lkmi.
 \end{equation}
  Note the 6j-symbol in the last equality correspond to the same spherical
  graph with opposite orientation. 
  
  Let $(\T,\LL)$ be an $H$-triangulation of $(M,L)$.  Let $\col$ be
  an admissible $\Gr$-coloring of
  $\T$ such that $[\col]=\rho \in \MG M$.  Let $c$ be a integral charge on
  $(\T,\LL)$ such that $[c]=\zeta\in H^1(M;\Z/2\Z)$.  For each state $\st$ of
  $\col$ let us write $TV_\st$ (resp. $\Kas_\st$) for the summand of
  $TV(M,L,\rr)$ (resp. $\Kas(M,L,\rr,\zeta)$) corresponding to $\st$.  In
  other words, we have
  $$TV(M,L,\rr)= \sum_{\st\in \states(\col)} TV_\st, \;\; \;\;
  \Kas(M,L,\rr,\zeta)=\sum_{\st\in \states(\col)} \Kas_\st.$$ 
  
  Let $\st$ be a state of $\col$.  We will show that $TV_\st=\Kas_\st$.
  First, notice that the factors of $TV_\st$ and $\Kas_\st$ corresponding to
  $\bb$ are identical.  Let $t$ be a tetrahedron of $\T$ with ordered vertices
  $v_1,v_2,v_3,v_4$.  We will now compute the vector $|t|_\st^c$ which is used
  in the construction of $\Kas$.  Let $a=c({v_1v_2})$, $b=c({v_2v_3})$ and
  $i,j,k,l,m,n$ be the indices given in Equation \eqref{E:AssignmentOfLabels}.
  By definition we have
  $$T(a,b)=Tq_1^{4ab}R_1^{b}R_2^{-a}L_3^{-a}R_3^{-b}=
  T\left(\frac{\qe_{1}}{\qe^{B}_{1}}\right)^{2b}
  \left(\frac{\qe_{2}}{\qe^{B}_{2}}\right)^{-2a}
  \left(\frac{\qe_{3}}{\qe^{A}_{3}}\right)^{-2a}
  \left(\frac{\qe_{3}}{\qe^{B}_{3}}\right)^{-2b}.$$ 
  Using Lemma \ref{L:scio1} and the fact that $\qe$, $\qe^A$ and $\qe^B$ all
  act as scalars on a multiplicity space we have
  $$
  T(a,b)|_{h}=(S^t_1S^t_2)T_s|_h
  $$ 
  where $S^t_1=\left(\qd(i)\qd(l)\right)^a \left(\qd(j)\qd(m)\right)^{b}
  (\qd(k)\qd(n))^{c(v_1v_3)}$ and $S^t_2=(\qee(m)\qee(k)\qee(n)\qee(m))^{-1}$.
  Similarly, $ \wb T(a,b)|_{h'}= (S_1S_2)\wb{T}_s|_{h'}$.  If $t$ is right
  oriented then $|t|_\st^c$ is the vector in the module on right hand side of
  Equation \eqref{E:ambientvectorspace} corresponding to $T(a,b)|_{h}$.
  Similarly, if $t$ is left oriented then $|t|_\st^c$ is the vector in
  $H_m^{in}\otimes_\FK H_n^{jl}\otimes_\FK H_{ij}^k \otimes_\FK H_{kl}^m$
  corresponding to $ \wb T(a,b)|_{h'}$.  Thus, from Equation
  \eqref{E:TandMod6j} we have $|t|_\st^c=S^t_1S^t_2|t|_\st$ where $|t|_\st$ is
  the tensor used in the construction of $TV$.

  Now for each tetrahedron $t$ of $\T$ we have a scalar $S^t_1$ as described
  above.  Each of these scalars is the product $\prod_e\qd(\st(e))^{c(e)}$
  where $e$ runs over the 6 edges of $t$.  Let $e$
  be an edge of $\T$.  The portion of $\prod_{t\in \T_3} S^t_1$ corresponding
  to the edge $e$ is
  $$
  \prod_{e'\in \epsilon_\T^{-1}(e)}\qd(\st(e'))^{c(e')}=\qd(\st(e))^{\sum_{e'
    } c(e')}=\qd(\st(e))^{c_e}
  $$ 
  where $c_e=0$, if $e$ belongs to $\LL$ and $c_e=1$, otherwise (for notation
  see Subsection \ref{SS:KashInv}).  Thus, $\prod_{t\in \T_3}
  S^t_1=\prod_{e\in \T\setminus \LL}\qd(\st(e))$.

  For each $t\in \T_3$, the contribution of $S^t_2$ can be considered as the
  operator $\bigotimes_f\qe^{-1}_f$ where $f$ runs over the set of the 4 faces
  of $t$ and $\qe_f$ is $\qe$ applied to the multiplicity module $H$
  associated to the face $f$.  This factor is exactly what is needed to relate
  the pairs $(,)_{ijk}$ and $\ang{\, ,\,}$, see Equation
  \eqref{eq:nonAmbiPairing}.  Combining all the statements above we see that
  $TV_\st=\Kas_\st$ and so the invariants are equal.
\end{proof}

\section{Relative Homotopy Quantum Field Theory}\label{S:HQFT}
In this section we will extend the invariant $TV$ of 3-manifolds with
triangulated boundary to an invariant of 3-manifolds bounding surfaces with
 dots.  We will also prove the main topological theorems of the paper.

 \subsection{A $2+1$-cobordism category $\Cob$ of triangulated surfaces}
The first step in extending $TV$ is  the introduction of
an ``oscillating path'' in a triangulated surface.

{\em An oscillating path} in a triangulated surface $\SS$ is a 
map $\os:\{\text{edges of }\SS\}\to\{-1,0,1\}$
that satisfies the following conditions: Let $P=\os^{-1}(\{-1,1\})$ be the support of
$\os$ and $|P|$ be the 1-dimensional simplicial complex which is the closure
of $P$.  Then we require that any connected component of $|P|$ is 
homeomorphic to a segment and contains an even number of edges.  We also 
require 
that $\os$ takes alternatively values $+1$ and $-1$ on the sequence of
edges of any of these paths.  

We can represent an oscillating path on a
triangulation by coloring some of its edges with $+$ and $-$ (see Figure
\ref{fig:ospath}).
\begin{figure}[b]
  \centering
  $\epsh{fig01}{18ex}$
  \caption{A triangulated surface with an oscillating path:
    $\os$(edge)=$\os$(color) with $\os(red)=+1$, $\os(green)=-1$,
    $\os(black)=0$}
  \label{fig:ospath}
\end{figure}
We define the number of strands of $(\SS,\os)$ to be the cardinal of the
set of vertices $\SS_0$ of $\SS$ minus the number of edges in the
support of $\os$.  This is the number of connected components of $\SS_0\cup
|P|$.

Let $\SS_1$ and $\SS_2$ be two triangulated oriented surfaces with admissible
$\Gr$-colorings and oscillating paths.  Let $\SS_2^{*}$ be the surface $\SS_2$
with opposite orientation.  An \emph{enriched cobordism} from $\SS_1$ to
$\SS_2$ is a quadruplet $(M,f,\Y,\rr)$ where:
\begin{enumerate}
\item $M$ is an oriented compact $3$-manifold,
\item $f:\SS_1\sqcup \SS_2^{*}\to\partial M$  is a diffeomorphism,  
\item $\Y\subset M$ is a graph in $M$ rooted in the triangulation $\Tb$ of
  $\partial M$ induced from $\SS_1\sqcup \SS_2^{*}$ under $f$ such that 
if  $e$ is an edge of $\SS_1$ (resp. $\SS_2$) whose value under the 
 oscillating path is $+1$ (resp. $-1$) then there exits a connected 
 component $\Y_e$ of $Y$
  and a \emph{trivializing disk} $D^{2}$ embedded in $M$ with
  $\Int(D^{2})\cap\Y=\emptyset$ and $\partial D^{2}= e\cup\Y_e$ (see Figure
  \ref{fig:cob}),
  \begin{figure}[b]
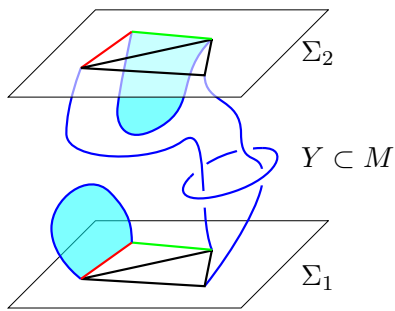

    \centering
    $\epsh{fig03}{24ex}\put(-10,-45){$\SS_1$}\put(-10,40){$\SS_2$}$
    \put(-10,0){$\Y\subset M$} 
    \caption{A cobordism between triangulated surfaces with oscillating paths,
      here the trivializing disks are the shaded regions.} 
    \label{fig:cob}
  \end{figure}
\item $\rr$ is an admissible representation of $(M,\Tb,\Y)$ that restricts to
  both of the admissible $\Gr$-colorings of $\SS_2$ and $\SS_1$.  
\end{enumerate}
The triplet $(M,\Y,\rr)$ is regarded up to diffeomorphism that is trivial
on the boundary (where we use $f$ to identify $\partial M$ and $\SS_1\sqcup
\SS_2^{*}$).

One can glue an enriched cobordism $(M,f,\Y,\rr)$ from $\SS_1$ to $\SS_2$ with
an enriched cobordism $(M',f',\Y',\rr')$ from $\SS_2$ to $\SS_3$.  The result
is an enriched cobordism $(M'',f'',\Y'',\rr'')$ from $\SS_1$ to $\SS_3$ where
$\Y''$ is the union of the graphs $\Y$ and $\Y'$.  The set of vertices of
$\Y''$ is $\Y''_0=(\Y_0\cup\Y'_0)\setminus Z$ where $\Y_0$ and  $\Y'_0$ are the
vertices of $\Y$ and  $\Y'$, respectively and $Z=\Y_0\cap\Y'_0$ is the subset of these vertices
that are on $\Sigma_2\hookrightarrow M''$.
Remark that Van Kampen's theorem for groupoids (see \cite{Br}) imply that
$\pi(M'',Y_0\cup Y'_0)$ is the pushout of the maps
$\pi(\SS_2,Z)\to\pi(M,\Y_0)$ and $\pi(\SS_2,Z)\to\pi(M',\Y'_0)$. Thus, there
exists an unique representation $\wt\rr$ of $(M'',Y_0\cup Y'_0)$ which
restrict to both $\rr$ and $\rr'$.  The representation $\rr''$ is the
restriction of $\wt\rr$ to $(M'',Y_0'')$.  Loosely speaking, a path $\gamma$
in $\pi(M'',Y_0'')$ can be cut to a finite composition of smaller paths in
$\pi(M,\Y_0)$ and $\pi(M,\Y'_0)$, and $\rr''(\gamma)$ is the product of the
images by $\rr$ and $\rr'$ of these smaller paths.

\begin{theorem}
  There is a monoidal category $\Cob$ whose objects are quadruplets
  $(\SS,\T,\os,\col)$ formed by an oriented surface $\SS$ with a quasi-regular
  triangulation $\T$, an oscillating path $\os$ and an admissible
  $\Gr$-coloring $\col$ of $\T$, and whose morphisms are enriched cobordisms.
  The tensor product is given by disjoint union and the composition by gluing
  along the boundaries.
\end{theorem}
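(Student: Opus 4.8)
The plan is to verify the category and monoidal-category axioms, since everything else is already in place: the objects, the morphism sets (enriched cobordisms modulo boundary-trivial diffeomorphism), the composition law (gluing of $3$-manifolds along the middle surface, with the glued representation $\rr''$ extracted from $\rr,\rr'$ via the groupoid Van Kampen theorem, \cite{Br}, as explained above), and the tensor product (disjoint union). So three things remain: (i) associativity of composition; (ii) existence of two-sided identities; (iii) the monoidal associativity and unit constraints together with the Pentagon and Triangle axioms. For (i) I would observe that given three composable enriched cobordisms with underlying manifolds $M,M',M''$, both bracketings of the triple composite have, up to a diffeomorphism fixing the boundary, the same underlying manifold — the one obtained by gluing $M,M',M''$ in a row along $\SS_2$ and $\SS_3$ — and the same inner graph, namely the union $\Y\cup\Y'\cup\Y''$ with the two sets of glued vertices deleted. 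For the representation I would use that $\pi_1$ of the triple gluing, based at the union of the three vertex sets, is the iterated pushout of the three fundamental groupoids along $\pi_1(\SS_2,\cdot)$ and $\pi_1(\SS_3,\cdot)$; since iterated pushouts are canonically associative, the unique representation $\wt\rr$ restricting to all three pieces is the same for the two bracketings, and restricting it to the appropriate vertex set yields $\rr''$ either way. The oscillating-path data and trivializing disks only involve collars of the outermost surfaces $\SS_1$ and $\SS_4$, so they are insensitive to the order of gluing.

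For (ii) I would define $\Id_{(\SS,\T,\os,\col)}$ to be the cylinder $\SS\times[0,1]$ with the evident identification of its boundary with $\SS\sqcup\SS^{*}$. For the graph $\Y$ I would take the vertical arcs $\{v\}\times[0,1]$ over the vertices $v$ of $\T$, together with, for each edge $e=[v_1,v_2]$ in the support of $\os$, a short arc parallel to $e$ at height $\varepsilon$ (when $\os(e)=+1$) or at height $1-\varepsilon$ (when $\os(e)=-1$) joining the vertical arcs over $v_1$ and $v_2$; the trivializing disk for such an $e$ is the thin rectangle $e\times[0,\varepsilon]$ (resp. $e\times[1-\varepsilon,1]$), whose boundary is $e$ together with the up--across--down path $\Y_e$ inside $\Y$. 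Since $\SS\times[0,1]$ deformation retracts onto $\SS\times\{0\}$ and $\col$ is admissible, there is a unique admissible representation of $(\SS\times[0,1],\Tb,\Y)$ restricting to $\col$ on both ends, and I would take it as the representation, so that $\Id_{(\SS,\T,\os,\col)}$ is indeed an enriched cobordism. To check it is a two-sided identity, I would glue it onto an arbitrary enriched cobordism $(M,f,\Y,\rr)\colon\SS_1\to\SS_2$ and observe that the glued manifold is diffeomorphic rel the $\SS_1$-boundary to $M$, that a boundary collar absorbs the cylinder and lets the auxiliary parallel arcs and thin disks be isotoped away, so the graph is recovered up to isotopy rel $\SS_1$, and that by the uniqueness above the glued representation restricting to $\rr$ is $\rr$ again; the symmetric argument handles gluing on the other side.

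For (iii) I would note that disjoint union of oriented triangulated surfaces equipped with oscillating paths and admissible $\Gr$-colorings is again such a surface, that disjoint union of enriched cobordisms is an enriched cobordism (the graph, the trivializing disks and the representation are the obvious disjoint unions, the last using the identification $\MG{W_1\sqcup W_2,Z}=\MG{W_1,Z\cap W_1}\times\MG{W_2,Z\cap W_2}$), and that the empty surface — with empty triangulation, oscillating path and coloring, and with the empty $3$-manifold as its identity cobordism — serves as a unit object. Because disjoint union is strictly associative and unital on the nose, the associativity and left/right unit constraints may be taken to be identities, so the Pentagon and Triangle axioms hold trivially and $\Cob$ is in fact a strict monoidal category.

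I expect the only real obstacle to be step (ii): one must simultaneously arrange that the cylinder graph together with its trivializing disks satisfies all four clauses in the definition of an enriched cobordism \emph{and} that gluing it onto an arbitrary cobordism reproduces that cobordism up to a boundary-trivial diffeomorphism — in particular, the bookkeeping of the parallel arcs and disks attached to the $\os$-support edges, and the convention that $+1$-edges of $\SS_1$ but $-1$-edges of $\SS_2^{*}$ carry disks, has to be set up so that it matches correctly on both ends. The verification that the representations behave correctly throughout (both for associativity and for the identities) reduces, as indicated above, to the associativity and unitality of pushouts of groupoids, and so should cause no difficulty.
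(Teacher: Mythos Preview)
Your overall strategy matches the paper's: it says explicitly that the cobordism axioms are well known and easy to adapt, and that ``the only non-trivial point is to prove that objects have an identity morphism.'' Your treatment of associativity (iterated groupoid pushouts) and of the monoidal structure (disjoint union, empty surface as unit) is fine and is what the paper has in mind.

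The gap is precisely in your construction of the identity, which is the one place the paper singles out as nontrivial. You take $\Y$ to be the union of the vertical arcs $\{v\}\times[0,1]$ together with horizontal bridges at heights $\varepsilon$ and $1-\varepsilon$, and then let $\Y_e$ be the ``up--across--down'' \emph{path} inside $\Y$. But clause~(3) in the definition of an enriched cobordism requires $\Y_e$ to be a \emph{connected component} of $\Y$, with $\partial D^{2}=e\cup\Y_e$ a circle. In your graph, all the vertical arcs and all the bridges over a given oscillating-path component are welded together at trivalent interior vertices into a single connected piece; no individual arc can be extracted as a component, so your cylinder is not an enriched cobordism at all. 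Even ignoring this, gluing your cylinder to a cobordism $(M,f,\Y',\rr)$ would adjoin those trivalent interior vertices to the graph in $M$, and graph vertices cannot be removed by a boundary-trivial diffeomorphism, so it would not act as the identity either.

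The correct construction, which the paper gives, is to make $\Y$ a genuine \emph{tangle} (no interior vertices): for each $+1$ edge $e$ of $\os$ a small cap near $\SS\times\{0\}$ whose two endpoints are the endpoints of $e$, for each $-1$ edge a small cap near $\SS\times\{1\}$, and exactly one through-strand for each connected component of $\SS_0\cup|P|$. Each cap is then a full connected component $\Y_e$, the disk condition $\partial D^{2}=e\cup\Y_e$ is met, and the strand count comes out right. The paper summarizes this as ``the unique tangle (up to isotopy) which is vertical except in a neighborhood of $\SS\times\{0\}$ and $\SS\times\{1\}$ where it is contained in $|P|\times[0,1]$.'' With this $\Y$ your collar-absorption argument for the identity property goes through.
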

\begin{proof} It is well known that cobordisms of surfaces form a monoidal
  category.  It is easy to adapt the proof of this result to enriched
  cobordisms.  The only non-trivial point is to prove that objects have an
  identity morphism.  Let us construct the identity morphism $M=(\SS\times
  [0,1],\Y,\rr)$ of $(\SS,\T,\os,\col)$ as follows.  The underlying 3-manifold
  is the cylinder $\SS\times [0,1]$.  The graph $\Y$ is a tangle as in the
  schematic picture of Figure \ref{fig:id}.
  \begin{figure}[b]
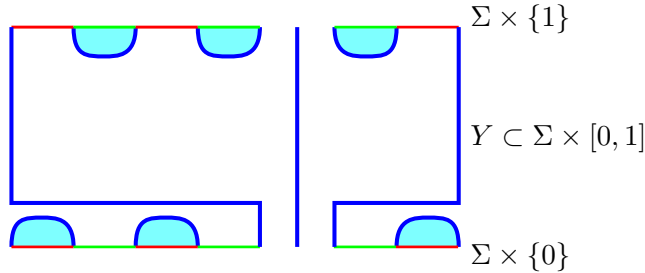

    \centering $\epsh{fig04}{18ex}\put(4,-45){$\SS\times\{0\}$}
    \put(4,45){$\SS\times\{1\}$}\put(4,0){$\Y\subset \SS\times[0,1]$}$
    \caption{The identity cobordism for a surface $\SS$ with 3 strands.}
    \label{fig:id}
  \end{figure}
  $\Y$ is the unique tangle (up to isotopy) which is vertical except in a
  neighborhood of $\SS\times\{0\}$ and $\SS\times\{1\}$ where it is contained
  in $|P|\times[0,1]$ (recall $|P|$ is the union of edges $e$ with
  $\os(e)=\pm1$).  The representation $\rr$ is the pull back of $\col$ by the
  projection $p:\SS\times [0,1]\to \SS$.  One can check that gluing an
  enriched cobordism $M'$ with $M$ along $\SS$ does not change the isomorphism
  class of $M'$.
\end{proof}

Let $\wt\SS=(\SS,\T,\os,\col)$ be an object of $\Cob$. 
The dual object $\wt\SS^{*}$ of $\wt\SS$ in $ \Cob$ 
is defined  as follows.  
  Let $\SS^{*}$ be the surface $\SS$
with opposite orientation.  Let $\T^{*}=\T$ be the triangulation of $\SS^{*}$
and let $\col^{*}=\col$ be the $\Gr$-coloring of $\T^{*}$.
Finally, let $\os^*=-\os$ be the oscillating path of $\T^*$. We define
$\wt\SS^{*}=(\SS^{*},\T^{*},\os^*,\col^{*})$.

Let $\Gamma$ be the trivalent graph dual to $\SS$.  As in Equation
\eqref{E:DualMultSpace} we can define
$H(\wt\SS)=\bigoplus_{\st\in\states(\col)} H(\Gamma_\st)$.  Changing the
orientation of $\SS$ changes all the indexes of $\Gamma_\st$ to their dual and
so the pairing \eqref{E:pairing1-} induces a non degenerate pairing
$\angg{\cdot,\cdot}_0:H(\wt\SS^{*})\otimes H(\wt\SS)\to\FK$.

Consider the operator 
$\qd_{\wt\SS}$ 
on $H(\wt\SS)$ given by
$$\qd_{\wt\SS}=\bigoplus_{\st\in\states(\col)}
\left(\prod_{e\in\T_1}\qd(\st(e))\right)\Id_{H(\Gamma_\st)}.$$  
Define
the non degenerate pairing
\begin{equation}
  \label{eq:pairing2}
  \angg{\cdot,\cdot}:H(\wt\SS^{*})\otimes H(\wt\SS)\to\FK \;\; \text{by} \;\;
  x\otimes y \mapsto \angg{x,\qd_{\wt\SS}(y)}_0=\angg{\qd_{\wt\SS^{*}}(x),y}_0. 
\end{equation}

Let $\wt\SS'=(\SS',\T',\os',\col')$ be another object of $\Cob$.  Since
$H(\wt\SS_i^*)\cong H(\wt\SS_i)^*$, the pairing in \eqref{eq:pairing2} induces an
isomorphism
\begin{equation}\label{E:IsoHSS}
H(\wt\SS^{*}\sqcup\wt\SS') =H(\wt\SS^{*})\otimes
H(\wt\SS') \cong\Hom_\FK(H(\wt\SS),H(\wt\SS')).
\end{equation}  
If $(M,f,\Y,h)$ is an enriched cobordism from $\wt\SS$ to $\wt\SS'$
then using \eqref{E:IsoHSS} we can identify 
 $TV(M,\Tb,\Y,h)$ with an element of
$\Hom_\FK(H(\wt\SS),H(\wt\SS'))$ where $\Tb\simeq{\T}^{*}\sqcup\T'$ is the
triangulation of $\partial M$ induced by $f$.

In \cite[Section 2.3]{TV} a mapping $\cat\to \cat'$ of categories is called a
\emph{semi-functor} if it satisfies the first condition of the definition of a
functor: namely, it sends the composition of morphisms to the composition of
their images.  
If $\cat'$ is abelian, a semi-functor leads to an honest functor:
to each object of $\cat$ assign the coimage in $\cat'$ of the image by the
semi-functor of the identity morphism of this object, then naturally extend
this assignment to morphisms.  With this in mind, we define the following
functor $\QQ$.  The definition of the state sum $TV$ implies that the mapping
$\Cob\to \Vect$ defined by $\wt\SS\mapsto H(\wt\SS)$ and
$(M,\Tb,\Y,h)\mapsto TV(M,\Tb,\Y,h)\in \Hom_\FK(H(\wt\SS),H(\wt\SS'))$
is a monoidal semi-functor.  Thus, we have a functor $\QQ:\Cob\to \Vect$ where
$\QQ(\wt\SS)$ is given by $H(\wt\SS)$ modulo the kernel of the image of
$\Id_{\wt\SS}$ under this semi-functor.

\subsection{A $2+1$-cobordism category $\Cm$ of marked surfaces}
\label{SS:notriang}
In this subsection, we remove the requirement that surfaces are triangulated.

A \emph{marked surface} is a closed oriented surface $\SS$ with a finite set
of points $\ma\subset \SS$ such that $\ma$ has a nonempty intersection with
any connected component of $\SS$.  The elements of $\ma$ are called
\emph{marks}.
We consider marked surfaces equipped with a representation in $\Gr$ that is an
element of $\MG{\SS,\ma}$.

If $\SS_1$ and $\SS_2$ are two marked surfaces with 
representations in $\Gr$,
a \emph{marked cobordism} from $\SS_1$ to $\SS_2$ is a
quadruplet $(M,f,\Y,\rr)$ where:
\begin{enumerate}
\item $M$ is an oriented compact $3$-manifold,
\item $f:\SS_1\sqcup \SS_2^{*}\to\partial M$  is a diffeomorphism,
\item $\Y\subset M$ is a graph rooted in the subset $\ma\subset\partial M$
  formed by the image under $f$ of the marks of $\SS_1$ and $\SS_2$,
\item $\rr$ is an element of $\MG{M,\Y_0}$ that restricts to the
  representations of $\SS_1$ and  $\SS_2$ in $\Gr$.
\end{enumerate}
The triplet $(M,\Y,\rr)$ is regarded up to diffeomorphism which is trivial
with respect to the boundary (where we use $f$ to identify $\SS_1\sqcup
\SS_2^{*}$ with $\partial M$).

Let $\Cm$ be the monoidal category whose objects are marked surfaces with
representation in $\Gr$ and morphisms are marked cobordisms.  There is an
obvious forgetful semi-functor $\Cob\to\Cm$ sending the set of vertices of a
triangulation to the set of marks.  We modify this semi-functor to construct
an equivalence of categories.  The main point here is that the number of marks
must be reduced.

If $(\SS,\T)$ is a triangulated surface with an oscillating path $\os$, we can
associate a set of marks $\ma$ containing one point in each connected
component of the support $|P|$ of $\os$ as follows: $\ma$ is the set of
vertices $v$ of $\T$ such that for any edge $e$ containing $v$, $\os(e)\geq0$.
Hence the number of marks of $(\SS,\T,\os)$ is equal to the number of strands
of $(\SS,\os)$. 
Restricting the representation in $\Gr$ 
of an object $\wt\SS$ of
$\Cob$, we get an object $\Eq(\wt\SS)$ of $\Cm$.  In order to extends this map
to a functor $\Eq:\Cob\to\Cm$ we need to define some special cobordisms.

Let $\wt\SS=(\SS,\T,\os,\col)$ be an object of $\Cob$ and let
$\wb\SS=\Eq(\wt\SS)$.  Consider the enriched cobordism $\SS\times[0,1]$ that
represent the identity of $\wt\SS$ (see Figure \ref{fig:id}).  Using the
forgetful semi-functor $\Cob\to\Cm$ we can view this enriched cobordism as
marked cobordism of $(\SS,\T_0,\col)$.  Cutting this marked cobordism along
$\SS\times\{\frac12\}$, we can see it as the composition of two morphisms
$\Cy_{\wt\SS}^{\wb\SS}:(\SS,\T_0,\col)\to\wb\SS $ and
$\Cy_{\wb\SS}^{\wt\SS}:\wb\SS\to(\SS,\T_0,\col)$ in $\Cm$
such that $\Cy_{\wt\SS}^{\wb\SS}\circ \Cy_{\wb\SS}^{\wt\SS}=\Id_{\wb\SS}$ and
$\Cy_{\wb\SS}^{\wt\SS}\circ \Cy_{\wt\SS}^{\wb\SS}$ is the image of
$\Id_{\wt\Sigma}$ under the forgetful semi-functor $\Cob\to\Cm$.

We define $\Eq$ on morphisms as follows.  Let $\wt\SS'$ be another object of
$\Cob$ and let $\wb\SS'=\Eq(\wt\SS')$.  For $M\in \Hom_\Cob(\wt\SS,\wt\SS')$
define $\Eq(M)=\Cy_{\wt\SS'}^{\wb\SS'}\circ M\circ \Cy_{\wb\SS}^{\wt\SS} \in
\Hom_\Cm(\wb\SS,\wb\SS').$
\begin{lemma}\label{L:equiv}
  The functor $\Eq:\Cob\to\Cm$ is an equivalence of monoidal
  categories.  
\end{lemma}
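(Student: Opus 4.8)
The plan is to check that $\Eq$ is a strict monoidal functor and then that it is fully faithful and essentially surjective. Throughout I write $\Ffun\colon\Cob\to\Cm$ for the forgetful semi-functor and, for an object $\wt\SS$ of $\Cob$, abbreviate $\wb\SS=\Eq(\wt\SS)$ and $\Ffun(\wt\SS)=(\SS,\T_0,\col)$; I will use repeatedly the special cobordisms $\Cy_{\wt\SS}^{\wb\SS}\colon\Ffun(\wt\SS)\to\wb\SS$, $\Cy_{\wb\SS}^{\wt\SS}\colon\wb\SS\to\Ffun(\wt\SS)$ together with the relations $\Cy_{\wt\SS}^{\wb\SS}\circ\Cy_{\wb\SS}^{\wt\SS}=\Id_{\wb\SS}$ and $\Cy_{\wb\SS}^{\wt\SS}\circ\Cy_{\wt\SS}^{\wb\SS}=\Ffun(\Id_{\wt\SS})$ established above. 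Functoriality is then the semi-functor trick of \cite[Section 2.3]{TV}: the first relation gives $\Eq(\Id_{\wt\SS})=\Cy_{\wt\SS}^{\wb\SS}\circ\Ffun(\Id_{\wt\SS})\circ\Cy_{\wb\SS}^{\wt\SS}=\Cy_{\wt\SS}^{\wb\SS}\circ\Cy_{\wb\SS}^{\wt\SS}\circ\Cy_{\wt\SS}^{\wb\SS}\circ\Cy_{\wb\SS}^{\wt\SS}=\Id_{\wb\SS}$, while for composable $M\colon\wt\SS\to\wt\SS'$, $M'\colon\wt\SS'\to\wt\SS''$ the second relation plus the fact that $\Ffun$ preserves composition gives $\Eq(M')\circ\Eq(M)=\Cy_{\wt\SS''}^{\wb\SS''}\circ\Ffun(M')\circ\Ffun(\Id_{\wt\SS'})\circ\Ffun(M)\circ\Cy_{\wb\SS}^{\wt\SS}=\Cy_{\wt\SS''}^{\wb\SS''}\circ\Ffun(M'\circ M)\circ\Cy_{\wb\SS}^{\wt\SS}=\Eq(M'\circ M)$. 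Monoidality is strict and immediate, since the set of marks of a disjoint union of triangulated surfaces is the disjoint union of the sets of marks, the cobordisms $\Cy$ of a disjoint union are the disjoint unions of the $\Cy$'s, and $\Eq$ sends the empty surface to the empty marked surface. So the content of the lemma is that $\Eq$ is fully faithful and essentially surjective.

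For essential surjectivity I would start from a marked surface $(\SS,\ma)$ with a representation $\rr\in\MG{\SS,\ma}$ and first produce a quasi-regular triangulation $\T$ of $\SS$ with $\ma\subseteq\T_0$ carrying an admissible $\Gr$-coloring $\col$ whose restriction to $\pi_1(\SS,\ma)$ is $\rr$: take any triangulation with $\ma$ among its vertices, extend $\rr$ to a $\Gr$-coloring (possible because the inclusion $\pi_1(\SS,\ma)\hookrightarrow\pi_1(\SS,\T_0)$ is an equivalence of groupoids), and then iteratively subdivide each edge whose color lies in $\X$, writing that color as a product of finitely many elements of $\Gr\setminus\X$ — possible since $\Gr$ is not a finite union of translates of $\X$, exactly as in the proof of Theorem \ref{L:admColoringsExist}. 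I equip $\T$ with the trivial oscillating path $\os\equiv 0$, so that $\wt\SS=(\SS,\T,\os,\col)$ is an object of $\Cob$ with $\Eq(\wt\SS)=(\SS,\T_0,\col)$. It then remains to exhibit an isomorphism $(\SS,\T_0,\col)\cong(\SS,\ma,\rr)$ in $\Cm$; for this I would use the cylinder $\SS\times[0,1]$ in which each point of $\ma$ runs along a vertical strand and each vertex of $\T_0\setminus\ma$ runs along a short arc ending at a trivalent vertex of a neighbouring strand, with representation the pull-back of $\col$. Its mirror image is a two-sided inverse: each composite is a vertical strand carrying a cancelling fork--join pair, hence is ambient-isotopic rel boundary to an identity cylinder.

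For faithfulness, suppose $M_1,M_2\colon\wt\SS\to\wt\SS'$ have $\Eq(M_1)=\Eq(M_2)$; composing with $\Cy_{\wb\SS'}^{\wt\SS'}$ on the left and $\Cy_{\wt\SS}^{\wb\SS}$ on the right and applying the two relations above turns this into $\Ffun(M_1)=\Ffun(M_2)$, and since $\Ffun$ only forgets the triangulation of the boundary and ``trivial on the boundary'' has the same meaning in both categories (being the identity on $\partial M$ through the structural identification), this forces $M_1=M_2$. For fullness, given $N\colon\wb\SS\to\wb\SS'$ I would set $P=\Cy_{\wb\SS'}^{\wt\SS'}\circ N\circ\Cy_{\wt\SS}^{\wb\SS}\colon\Ffun(\wt\SS)\to\Ffun(\wt\SS')$ in $\Cm$ and claim $P$ lies in the image of $\Ffun$: the underlying $3$-manifold of $P$ inherits near its two boundary components the product collars of the cylinder pieces $\Cy_{\wt\SS}^{\wb\SS}\subset\SS\times[0,1]$ and $\Cy_{\wb\SS'}^{\wt\SS'}\subset\SS'\times[0,1]$, so $\partial M$ may be triangulated by $\T$ and $\T'$, and with this structure $P$ is a genuine enriched cobordism $\wt\SS\to\wt\SS'$ because the extra requirements — trivializing disks for the $\pm1$-edges of the oscillating paths and admissibility of the representation — only constrain a boundary collar, on which $P$ coincides with pieces of the genuine enriched cobordisms $\Id_{\wt\SS},\Id_{\wt\SS'}$. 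Writing $M'$ for this enriched cobordism, $\Ffun(M')=P$, whence $\Eq(M')=\Cy_{\wt\SS'}^{\wb\SS'}\circ P\circ\Cy_{\wb\SS}^{\wt\SS}=\Cy_{\wt\SS'}^{\wb\SS'}\circ\Cy_{\wb\SS'}^{\wt\SS'}\circ N\circ\Cy_{\wt\SS}^{\wb\SS}\circ\Cy_{\wb\SS}^{\wt\SS}=N$.

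The hard part will be exactly this claim in the fullness argument — that re-triangulating $\partial M$ with the collar structure coming from the $\Cy$-cylinders produces a valid enriched cobordism — which hinges on the enriched-cobordism axioms being boundary-local and inherited from the $\Cob$-identities out of which the $\Cy$'s were cut; the only other technical input, comparatively routine, is the production of an admissible $\Gr$-coloring restricting to a prescribed representation in the essential surjectivity step, handled by the subdivision argument already used for Theorem \ref{L:admColoringsExist}.
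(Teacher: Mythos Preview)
Your argument is correct and follows the same architecture as the paper: the inverse to $\Eq$ on hom-sets is exactly the map $\wb M\mapsto\Cy_{\wb\SS'}^{\wt\SS'}\circ\wb M\circ\Cy_{\wt\SS}^{\wb\SS}$, and you have correctly identified that the only point needing a word is why this composite, a priori a morphism in $\Cm$, is the image under the forgetful semi-functor of an enriched cobordism. The paper treats this as immediate, and your explanation (the $\Cy$-pieces supply the boundary collars in which the trivializing disks and admissibility constraints live) is the right one.

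Where you diverge from the paper is in essential surjectivity. You take the trivial oscillating path $\os\equiv0$, so that $\Eq(\wt\SS)$ has \emph{all} of $\T_0$ as marks, and then build by hand an isomorphism $(\SS,\T_0,\col)\cong(\SS,\ma,\rr)$ in $\Cm$ using fork--join cylinders. The paper instead chooses the oscillating path so that its support absorbs every vertex of $\T_0\setminus\ma$; then the marks of $(\SS,\T,\os)$ are exactly $\ma$, so $\Eq(\wt\SS)=(\SS,\ma,\rr)$ on the nose and no auxiliary isomorphism is needed. This is slicker and avoids your extra cylinder construction. On the other hand, you are more explicit about producing an admissible $\Gr$-coloring restricting to the given $\rr$ (via subdivision, writing each bad color as a product of two good ones using the smallness of $\X$), a point the paper's one-line argument leaves implicit; note that when you subdivide an edge of a surface triangulation you must also cone to the opposite vertices of the adjacent triangles, and the new edges must be kept out of $\X$ as well, so the gauge-type argument of Theorem~\ref{L:admColoringsExist} is really what is being invoked.
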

\begin{proof}
  We need to show that $\Eq$ is a full, faithful and essentially surjective.
  Consider the map $ \Hom_\Cob(\wt\SS,\wt\SS')\to
  \Hom_\Cm(\Eq(\wt\SS),\Eq(\wt\SS'))$ induced by $\Eq$.  This map is
  invertible with inverse 
  $$\Hom_\Cm(\Eq(\wt\SS),\Eq(\wt\SS'))\to
  \Hom_\Cob(\wt\SS,\wt\SS') \text{ given by } \wb M\mapsto
  \Cy_{\wb\SS'}^{\wt\SS'}\circ\wb M\circ \Cy_{\wt\SS}^{\wb\SS}.$$ Therefore, $\Eq$
  is full and faithful.  The functor $\Eq$ is essentially surjective because
  any marked surface has a quasi-regular triangulation $\T$ with oscillating
  path containing those vertices of $\T$ which are not in $\ma$.
\end{proof}

In fact, if two objects of $\Cob$ have the same underlying marked 
surface $\wb \SS$, then they are canonically isomorphic (using the cylinders
of Lemma \ref{L:equiv}) and thus we can define $\wb Q(\wb\SS)$ as the result
of the identification of these spaces along these isomorphisms.  This
construction naturally extends to a functor
\begin{equation}\label{E:FunQ}
\wb Q:\Cm\to \Vect.
\end{equation}
We call this functor a Relative Quantum Field Theory.  

\subsection{Proofs of Theorems \ref{L:Toplemma-},  \ref{L:admColoringsExist}
  and \ref{T:inv}   }\label{SS:proof} 
Throughout this section, we keep notation of Theorem \ref{T:inv}. We begin by
explaining that any two $H$-triangulations of $(M,\T',\Y)$ can be related by
elementary moves adding or removing vertices, edges, etc.  We call an
elementary move positive if it adds edges and negative if it removes edges.

The elementary moves (see \cite{GPT2}) are the so-called {\bubble} moves
(Figure~\ref{F:bubble}), the {\it {\Pachner} $2\leftrightarrow 3$ moves}
(Figure~\ref{F:pachner}) and the {\it {\lune} moves} (Figure~\ref{F:lune}).
\begin{figure}[b]
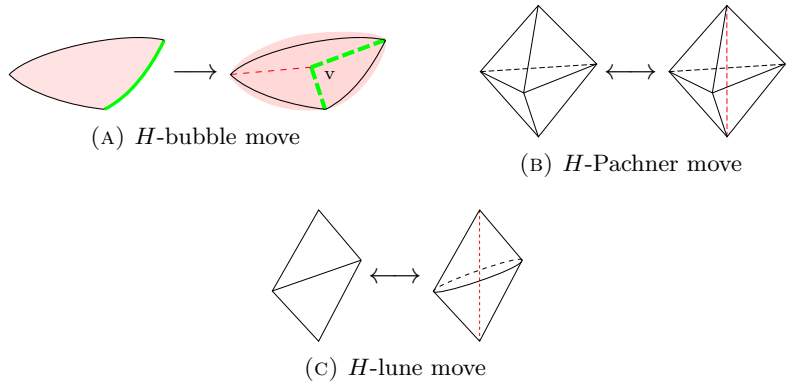

  \centering \subfloat[{\bubble} move]{\label{F:bubble} \hspace{10pt} $
    \epsh{bubblegl}{30pt}\longrightarrow \epsh{bubblebl}{30pt}
    \put(-23,0){\tiny v} $ \hspace{10pt} } \subfloat[{\Pachner}
  move]{\label{F:pachner} \hspace{10pt} $
    \epsh{tetra2}{50pt}\longleftrightarrow\epsh{tetra3}{50pt} $ \hspace{10pt}
  }\\
  \subfloat[{\lune} move]{\label{F:lune} \hspace{10pt}
    $\epsh{tetraL0}{50pt}\longleftrightarrow\epsh{tetraL2}{50pt} $
    \hspace{10pt} }
  \caption{Elementary moves}
  \label{F:moves}
\end{figure}
It is understood that a negative move is only allowed if it is the inverse of
a positive move (hence these moves do not change the graph).  The {\lune} move
may be expanded as a composition of {\bubble} moves and {\Pachner} moves (see
\cite{BB}, Section 2.1), but it will be convenient for us to use the {\lune}
moves directly.

\begin{proof}[Proof of Theorem \ref{L:Toplemma-}]
  As in the statement of the theorem, let $\Tb$ be a quasi-regular
  triangulation of $\partial M$ and let $\Y$ be a graph in $M$ rooted in
  $\Tb$.

  In the proof, we shall use the language of skeletons of 3-manifolds dual to
  the language of triangulations (see, for instance \cite{Tu,BB}).  A skeleton
  of $M$ is a 2-dimensional polyhedron $P$ in $M$ such that $M\setminus P$ is
  a disjoint union of open (half) 3-balls and locally $P$ looks like a plane,
  or a union of 3 half-planes with common boundary line in $\R^3$, or a cone
  over the 1-skeleton of a tetrahedron.  A typical skeleton of $M$ is
  constructed from a triangulation $\T$ of $M$ by taking the union $P_\T$ of
  the 2-cells dual to its edges. This construction establishes a bijective
  correspondence $\T\leftrightarrow P_\T$ between the quasi-regular
  triangulations $\T$ of $M$ and ``quasi-regular'' skeletons $P$ of $M$ such
  that every 2-face of $P$ is a disk adjacent to two distinct components of
  $M-P$.  If $(\T,\YY)$ is an $H$-triangulation of $(M,Y)$, to specify the
  graph $\YY$ in dual skeleton we provide some faces of $P_\T$ with dots such
  that each component of $M-P_\T$ is adjacent to at least two (distinct)
  dotted faces.  These dots correspond to the intersections of $\YY$ with the
  2-faces.  Here the vertices $\Y_0$ of $\Y$ become a set of distinguished
  components of $M\setminus P_\T$.  The elementary moves on the
  $H$-triangulations may be translated to this dual language and give the
  well-known Matveev-Piergallini moves on skeletons adjusted to the setting of
  Hamiltonian graphs, see~\cite{BB}.

  Take a collar spine $S'$ of $(M',\Gamma)$ as in \cite[Theorems 6.4.A \&
  B]{TV} where $M'=M\setminus V(\Y)$ for an open tubular neighborhood $V(\Y)$
  of $\Y$ and $\Gamma$ is a graph in the boundary of $M'$ which is the
  disjoint union of the trivalent graph dual to the triangulation of $\partial
  M$ union one meridian circle for each edge (and circle component) of $\Y$.
  Then we fill the meridian circle of $S'$ with a doted disk and get a
  skeleton $S$ of $M$ in the complement of which a (half) 3-ball has at least
  one region in its boundary with a dot.  Taking the dual of $S$ we obtain an
  $H$-triangulation $(\T,\YY)$ of $(M,Y)$ relative to $\Tb$ which may not be
  quasi-regular.  If $\T$ is not quasi-regular then $S$ contains a region
  which has the same (half) 3-cell on both sides or a region with both sides
  in $\Y_0$.  We call such a region \emph{bad}.
 
  The skeleton $S$ can be modified by the Matveev-Piergallini moves dual to
  the elementary moves to obtain a skeleton which is dual to a quasi-regular
  $H$-triangulation.  In the case when $M$ is closed this was done by
  Baseilhac and Benedetti \cite{BB} in the proof of Proposition 4.20.  In
  particular, for a 3-cell with a bad region they do a dual {\bubble} move on
  $S$ which introduces a new 3-cell which is partially bounded by a two cell
  which they call a capping disk.  Then they use dual {\Pachner} moves to
  slide a portion of the capping disk over the bad region, thus dividing a
  (half) 3-cell with the bad region into two 3-cell (or a 3-cell and a half
  3-cell) with no bad regions.  The proof of \cite{BB} can be applied in the
  case when $M$ has a boundary and $Y$ is a graph.
\end{proof}

\begin{proposition}\label{L:Toplemma}
  Let $M$ be a compact 3-manifold, $\Tb$ a quasi-regular triangulation of
  $\partial M$ and $\Y$ a graph in $M$ rooted in $\Tb$.  Any two
  $H$-triangulations of $(M,\Y)$ relative to $\Tb$ can be related by a finite
  sequence of {\bubble} moves and {\Pachner} moves in the class of
  $H$-triangulations of $(M,\Y)$ relative to $\Tb$.
\end{proposition}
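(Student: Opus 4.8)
The plan is to argue in the dual language of skeletons used in the proof of Theorem~\ref{L:Toplemma-}, where an $H$-triangulation of $(M,\Y)$ relative to $\Tb$ corresponds to a quasi-regular skeleton $P$ of $M$ some of whose $2$-faces are dotted, in such a way that $\Y$ is recovered from the dots, every component of $M\setminus P$ is incident to at least two distinct dotted faces, and the cells dual to the vertices $\Y_0$ behave correctly along $\partial M$. Under this correspondence the {\bubble}, {\Pachner} and {\lune} moves become Matveev--Piergallini-type moves on skeletons, and the assertion to be proved becomes the statement that any two such decorated skeletons of $M$ inducing the fixed boundary pattern dual to $\Tb$ are connected by a finite sequence of dual {\bubble} and {\Pachner} moves through decorated skeletons of the same kind.

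First I would forget the dots, quasi-regularity and the incidence condition and invoke the classical theorem that any two skeletons of $M$ inducing the fixed boundary pattern dual to $\Tb$ are related by Matveev--Piergallini moves (see \cite{Tu} and \cite{BB}); a priori this sequence passes through skeletons that are not quasi-regular, or that violate the two-dots-per-region condition, and it may involve moves of {\lune} type. The heart of the argument is then to upgrade this sequence to one lying entirely in the class of $H$-triangulations of $(M,\Y)$ relative to $\Tb$. Here I would follow the device used by Baseilhac and Benedetti in \cite{BB} (proof of Proposition~4.20), adapted to the graph setting as in \cite{GPT2}: before performing any move that would create a bad region --- a face having the same $3$-cell on both sides, or a face both of whose sides lie in $\Y_0$ --- or that would leave a complementary ball incident to fewer than two dotted faces, one first performs a dual {\bubble} move introducing an auxiliary $3$-cell bounded in part by a fresh dotted capping disk, then slides a portion of this capping disk across the offending region by a sequence of dual {\Pachner} moves so as to repair it, and, once the potentially damaging move has been carried out, removes the auxiliary bubble again if it is no longer needed. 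Throughout, the graph $\Y$ is preserved as a subset of $M$: the {\Pachner} and {\lune} moves leave $\Y$ untouched, and a {\bubble} move at most subdivides an edge of $\Y$ by a new degree-two vertex lying on $\Y$, so that the four conditions defining an $H$-triangulation relative to $\Tb$ hold at every stage. Finally, as recalled above, each {\lune} move is a composition of {\bubble} and {\Pachner} moves performed within the class of $H$-triangulations (Section~2.1 of \cite{BB}); substituting these expansions into the sequence constructed above and translating back through the skeleton--triangulation duality yields the required sequence of {\bubble} and {\Pachner} moves.

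The step I expect to be the main obstacle is the second one: verifying that the ``bubble--slide--unbubble'' procedure can be carried out uniformly for every Matveev--Piergallini move occurring in the abstract connecting sequence, while never breaking quasi-regularity or the two-dotted-faces-per-region condition at an intermediate stage, and checking that nothing pathological happens near $\partial M$ --- where the skeleton must keep restricting to the pattern dual to $\Tb$ --- or near the distinguished cells dual to $\Y_0$. This is exactly where the present relative, graph-decorated situation differs from the closed link case treated in \cite{BB}, so the bookkeeping must be done with care; once it is in place, the remainder is a routine translation of the classical theory.
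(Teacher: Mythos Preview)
Your outline has the right overall shape---pass to the dual skeleton picture, connect the two skeletons by Matveev--Piergallini moves, and repair the intermediate stages using the capping-disk technique of \cite{BB}---but there is a genuine gap in the second step, and the paper handles it by a different reduction.

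When you ``forget the dots'' and connect the underlying undecorated skeletons by an abstract MP-sequence, nothing guarantees that the intermediate skeletons carry the graph $\Y$ as a union of edges of the dual triangulation.  Your assertion that ``the {\Pachner} and {\lune} moves leave $\Y$ untouched'' is true only for $H$-Pachner and $H$-lune moves, which by definition avoid the edges in $\YY$; the classical moves produced by the Matveev--Piergallini theorem carry no such restriction and may well pass through triangulations in which $\Y$ is not contained in the $1$-skeleton at all.  The bubble--slide--unbubble device you cite from \cite[Proposition~4.20]{BB} repairs quasi-regularity and the two-dots-per-cell condition, but it does not by itself turn an arbitrary MP move into an $H$-move, nor does it explain why the dots on the initial skeleton are carried to the prescribed dots on the terminal one.

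The paper sidesteps this by first applying positive {\bubble} moves so that each edge of $\Y$ is realized by the same number of triangulation edges in $\YY_1$ and in $\YY_2$, and then passing to the complement $M'=M\setminus V(\Y)$: in the dual picture each dot is replaced by a meridian circle, converting the dotted skeletons $P_{\T_i}$ into collar spines $S_i$ of $(M',\Gamma_i)$ with $\Gamma_1$ and $\Gamma_2$ isotopic.  Theorem~6.4B of \cite{TV} then connects $S_1$ and $S_2$ by dual Pachner and lune moves \emph{relative to the boundary graph} $\Gamma$, so that $\Y$ is automatically respected because it has been encoded in $\partial M'$.  Finally it is Proposition~4.23 (not~4.20) of \cite{BB} that promotes this sequence to one through $H$-triangulations of $(M,\Y)$.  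The step you flagged as ``the main obstacle'' is thus real, but the fix is not a more careful bubble--slide bookkeeping inside $M$; it is to remove $\Y$ and work relative to the resulting boundary.
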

\begin{proof}
  Let $(\T_1,\YY_1)$ and $(\T_2,\YY_2)$ be $H$-triangulation of $(M,Y)$
  relative to $\Tb$.  By doing positive {\bubble} moves if necessary we can
  assume that each edge of $\Y$ is realized by the same number of edges in
  both $\YY_1$ and $\YY_2$.  Let $P_{T_i}$ be the skeleton dual to $\T_i$
  where each edge of $\YY_i$ corresponds to a dotted face as in the proof of
  Theorem \ref{L:Toplemma-}.  Replacing a neighborhood of each of these dots
  with a circle we obtain a collar spine $S_i$ of $(M\setminus
  V(\Y),\Gamma_i)$ where $\Gamma_i$ is the graph in the boundary of
  $M\setminus V(\Y)$ formed by these circles.  The assumption on the edges of
  $\YY_1$ and $\YY_2$ implies that $\Gamma_1$ and $\Gamma_2$ are isotopic.
  Then Theorem 6.4B of \cite{TV} implies that two spines $S_1$ and $S_2$ are
  related by a sequence of dual {\Pachner} and {\lune} moves.  In Proposition
  4.23 of \cite{BB}, Baseilhac and Benedetti prove that by adding and sliding
  capping disks this sequence of moves leads to a sequence of moves in the
  class of $H$-triangulations connecting $(\T,\YY)$ and $(\T',\YY')$.
\end{proof}

\begin{lemma}\label{L:AdmMove}
  Let $\col$ be an admissible $\Gr$-coloring of $\T$.  Suppose that
  $(\T',\YY')$ is an $H$-triangulation obtained from $(\T,\YY)$ by a negative
  {\Pachner}, {\lune} or {\bubble} move.  Then $\col$ restricts to an
  admissible $\Gr$-coloring $\col'$ of $\T'$ and
\begin{equation}\label{tvt}
  TV(\T,\YY,\col)=TV(\T',\YY',\col').
\end{equation}
\end{lemma}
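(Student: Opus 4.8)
The plan is to establish the two assertions in turn: that $\col$ restricts to an admissible $\Gr$-coloring $\col'$ of $\T'$, and then the equality \eqref{tvt}.

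\emph{Admissibility of $\col'$.} Since $(\T,\YY)$ and $(\T',\YY')$ are both $H$-triangulations of $(M,\Y)$ relative to $\Tb$, we have $\Y_0\subseteq\T'_0\subseteq\T_0$; moreover, by the description of the elementary moves in terms of skeletons and Matveev--Piergallini moves used in the proof of Theorem~\ref{L:Toplemma-}, a negative move only deletes cells --- the negative {\Pachner} $3\to2$ move removes the edge shared by the three tetrahedra, while the negative {\bubble} and {\lune} moves remove the cells interior to a ball (in the {\bubble} case together with one vertex) --- and creates no new ones. Hence every oriented edge of $\T'$ is homotopic rel endpoints to an oriented edge of $\T$. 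The coloring $\col'$ is the restriction of $\col\in\MG{M,\T_0}$ to $\MG{M,\T'_0}$ (which makes sense as $\T'_0\subseteq\T_0$); it assigns to each oriented edge of $\T'$ the $\col$-colour of the corresponding edge of $\T$, in particular a colour outside $\X$, so $\col'$ is a well-defined admissible $\Gr$-coloring of $\T'$, and it still restricts to $\rr$ by functoriality of $\MG{*}$ since $\Y_0\subseteq\T'_0$.

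\emph{The state-sum identity.} The key idea is that the move is local: it changes only finitely many tetrahedra, all contained in a ball $B\subset M$, and $(\T,\Tb,\YY)$ and $(\T',\Tb,\YY')$ coincide outside $B$. Splitting each state $\st\in\states(\col)$ into its restriction to the edges surviving the move --- these range exactly over the states of $\col'$ --- and its values on the edges interior to $B$, the state sum \eqref{eq:TV} factors, for each fixed boundary state, as a tensor built from the tetrahedra and edge weights of $\T$ lying outside $B$, which is the \emph{same} for $\T$ and $\T'$, contracted against a local tensor: the latter is obtained by summing over the states on the edges interior to $B$, with weight $\qd(\st(e))$ for each interior $e\notin\YY$ and $\bb(\st(e))$ for each interior $e\in\YY$, the contraction along the faces interior to $B$ of the product of the $6j$-symbols of the tetrahedra in $B$. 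The free legs of this local tensor are indexed by the faces on $\partial B$, common to $\T$ and $\T'$, so \eqref{tvt} reduces to the equality, for each boundary state, of the local tensor of $\T$ and that of $\T'$. For a {\Pachner} move this is the Biedenharn--Elliott identity (Theorem~\ref{T:BEId}): the sum over the colour $j$ of the central edge, weighted by $\qd(V_j)$, of the product of the three tetrahedral $6j$-symbols equals the product of the two tetrahedral $6j$-symbols. For a {\lune} move it is the orthonormality relation (Theorem~\ref{T:orth}), the Kronecker symbol $\delta_{k,p}$ recording the identification of the two edges merged by the move. For a {\bubble} move it is the algebraic analogue of the {\bubble} move (Lemma~\ref{L:algBubble}), which converts the weight $\qd(k)$ of the surviving interior edge into the weight $\bb(k)$ of the corresponding edge of $\YY'$. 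In every case the admissibility of $\col$ is used precisely to ensure that the grading of each interior edge lies in $\Gr'=\Gr\setminus\X$, which is exactly the hypothesis ($\d j_2\d j_3\notin\X$, resp.\ $\d j\d l\notin\X$, resp.\ $g_1,\ldots,g_6\in\Gr\setminus\X$) under which those three results were proved.

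\emph{The main obstacle.} I expect the delicate step to be matching the combinatorics of a move with the corresponding algebraic identity. For each tetrahedron in $B$ one must fix a vertex numeration compatible with the orientation of $M$, so that its contribution is a modified $6j$-symbol $\sjtop ijklmn$ as in \eqref{eq:TV}; one then has to verify that, with these choices, the product of the local $6j$-symbols together with the contractions \eqref{E:pairing1-} along the interior faces assembles exactly into the relevant side of the identity --- using the tetrahedral symmetries of the modified $6j$-symbol and the reversion transformation of $\mathbb{G}'$ to bring the various orientations and shapes in which a {\Pachner} or {\lune} move can occur to the stated normal forms --- and that the $\qd$- and $\bb$-weights of the interior edges and the pairing conventions agree with those in Theorems~\ref{T:BEId} and~\ref{T:orth} and Lemma~\ref{L:algBubble}. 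This runs in complete parallel to the argument of \cite{GPT2} (the proofs of Lemmas~22--25 there), which carries over once the grading hypotheses furnished by admissibility are available, and it yields $TV(\T,\YY,\col)=TV(\T',\YY',\col')$ term by term in the boundary state.
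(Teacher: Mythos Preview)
Your proposal is correct and follows essentially the same approach as the paper: the paper's proof consists of a reference to \cite[Lemma~18]{GPT2} together with the identification of the {\Pachner}, {\lune}, and {\bubble} moves with the Biedenharn--Elliott identity, the orthonormality relation, and Equation~\eqref{E:algLbubble}, exactly as you do. Your write-up is a faithful expansion of that sketch, including the locality-and-factorization argument and the observation that admissibility supplies the grading hypotheses in Theorems~\ref{T:BEId}, \ref{T:orth} and Lemma~\ref{L:algBubble}.
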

\begin{proof} The proof is exactly the same as the one of \cite[Lemma
  18]{GPT2}.  That is we can translate the {\Pachner}, {\lune}, and {\bubble} move
  into algebraic identities: the Biedenharn-Elliott identity, the
  orthonormality relation and Equation \eqref{E:algLbubble}, respectively.
\end{proof}

Let $(\T,\YY)$ be a $H$-triangulation of $(M,\Y)$.  A \emph{$\Gr$-gauge of
  $\T$} is a map from the set of vertices $\T_0$ of $\T$ to $\Gr$.  The
$\Gr$-gauges of $\T$ form a multiplicative group which acts on the set of
$\Gr$-colorings of $\T$ as follows.  If $\delta$ is a $\Gr$-gauge of $\T$ and
$\wp$ is a $\Gr$-coloring of $\T$, then the $\Gr$-coloring $\delta \act \Phi$
is given by
$$(\delta\act\Phi)(e)=\delta(v^-_e)\,\Phi(e)\, \delta(v^+_e)^{-1},$$
where $v^-_e$ (resp.\@ $v^+_e$) is the initial (resp.\@ terminal) vertex of an
oriented edge $e$.  
Let $\T_0^{i}$ be the set of vertices of $\T$ that are not
vertices of $\YY$ so $\T_0=\Y_0\sqcup\T_0^{i}$.  Let $\Gr^{\T_0^{i}}$ be the
set of $\Gr$-gauges $\delta:\T_0\to \Gr$ such that $\delta(v)=1$ if $v\in
\Y_0$.  

\begin{proof}[Proof of Theorem \ref{L:admColoringsExist}]
  Take any $\Gr$-coloring $\col$ of $\T$ which restricts to $\rr$.
  We say that a vertex $v$ of $\T\setminus \Y_0$ is {\it bad} for $\col$ if
  there is an oriented edge $e$ in $\T$ outgoing from $v$ such that
  $\col(e)\in \X$.  Since $(\T,\YY)$ is an $H$-triangulation any edge not in
  $\partial M$ has at least one endpoint not in $\Y_0$.  Thus, it is clear
  that $\col$ is admissible if and only if $\col$ has no bad vertices. We show
  how to modify $\col$ to reduce the number of bad vertices.  Let $v $ be a
  bad vertex for $\col$ and let $E_v$ be the set of all oriented edges of $\T$
  outgoing from $v$.  Pick any $$g\in \Gr\setminus\left( \bigcup_{e\in
      E_v}(\X\col(e)^{-1})\right).$$

  Let $\delta^{v,g}$ to be the $\Gr$-valued gauge defined by
  \begin{equation}\label{E:DefDeltaGauge}
    \delta^{v,g}(v')=\left\{\begin{array}{ll}g, &  \text{if $v=v'$}\\ 1, &
        \text{else.}\end{array}\right. 
  \end{equation}
  Then $\delta^{v,g}\act\col$ takes values in $\Gr\setminus \X$ on all edges
  of $\T$ incident to $v$ and takes the same values as $\col$ on all edges of
  $\T$ not incident to $v$.  Here we use the fact that the edges of $\T$ are
  not loops which follows from the quasi-regularity of $\T$.  The
  transformation $\col \mapsto \delta^{v,g}\act\col$ decreases the number of
  bad vertices.  Repeating this argument, we find a $\Gr$-coloring without bad
  vertices which restricts to $\rr$.
\end{proof}
Remark that for $\Phi\in \MG{M,\T_0}$, and for $g\in\Gr^{\T_0^{i}}$, the
$\Gr$-colorings $\Phi$ and $\delta_g\act{\Phi}$ restrict to the same
representation in $\MG{M,\Y_0}$.
\begin{lemma}\label{L:gauge}
  If $\Phi_1,\Phi_2\in \MG{M,\T_0}$ restrict to the same representation
  $\rr\in \MG{M,\Y_0}$, then there exists an element $g\in\Gr^{\T_0^{i}}$ with
  $\delta_g\act{\Phi_1}=\Phi_2$.
\end{lemma}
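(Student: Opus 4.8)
The plan is to produce the gauge $g$ explicitly. First I would reduce to the case that $M$ is connected: each of $\Gr^{\T_0^{i}}$, $\MG{M,\T_0}$ and $\MG{M,\Y_0}$ is by definition a product over the connected components of $M$, and both the gauge action and the restriction map $\MG{M,\T_0}\to\MG{M,\Y_0}$ respect these decompositions, so it suffices to build $g$ on each component separately. For a connected $M$ there are then two cases, according to whether $\Y_0$ is empty.

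Suppose first that $\Y_0\neq\emptyset$. Then $\MG{M,\Y_0}=\Hom_{\text{groupoid}}(\pi_1(M,\Y_0),\Gr)$, with no conjugation quotient, so the hypothesis that $\Phi_1$ and $\Phi_2$ restrict to the same $\rr$ says exactly that $\Phi_1(\gamma)=\Phi_2(\gamma)$ for \emph{every} homotopy class of path $\gamma$ in $M$ whose endpoints both lie in $\Y_0$. For each vertex $v\in\T_0$ I would fix a point $b(v)\in\Y_0$ and a path $\alpha_v$ in $M$ from $b(v)$ to $v$, choosing $b(v)=v$ and $\alpha_v$ constant whenever $v\in\Y_0$, and then set
$$g(v)=\Phi_2(\alpha_v)^{-1}\,\Phi_1(\alpha_v)\in\Gr.$$
By construction $g(v)=1$ for $v\in\Y_0$, hence $g\in\Gr^{\T_0^{i}}$. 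Since the gauge action sends $\Gr$-colorings to $\Gr$-colorings and a $\Gr$-coloring of $\T$ is the same datum as an element of $\MG{M,\T_0}$, it is enough to check that $\delta_g\act{\Phi_1}$ and $\Phi_2$ agree on every oriented edge $e$ of $\T$. For an oriented edge $e$ from $u$ to $w$, the path obtained by concatenating $\alpha_u$, then $e$, then the reverse of $\alpha_w$ has both endpoints in $\Y_0$, so $\Phi_1$ and $\Phi_2$ agree on it; expanding this equality and solving for the value on $e$ gives $\Phi_2(e)=g(u)\,\Phi_1(e)\,g(w)^{-1}=(\delta_g\act{\Phi_1})(e)$, which is what we want.

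If instead $\Y_0=\emptyset$ then $\T_0^{i}=\T_0$, and for any vertex $w\in\T_0$ one has $\MG{M,\Y_0}=\MG M=\Hom(\pi_1(M,\{w\}),\Gr)/\Gr$; the hypothesis now only yields an element $h\in\Gr$ with $\Phi_2(\gamma)=h\,\Phi_1(\gamma)\,h^{-1}$ for all loops $\gamma$ based at $w$. I would then fix, for each $v\in\T_0$, a path $\alpha_v$ from $w$ to $v$ (constant for $v=w$) and set $g(v)=\Phi_2(\alpha_v)^{-1}\,h\,\Phi_1(\alpha_v)$. Running the same computation as above, applied to the loops based at $w$ obtained by concatenating $\alpha_u$, $e$ and the reverse of $\alpha_w$, gives $\delta_g\act{\Phi_1}=\Phi_2$; there is nothing further to verify on $g$ since here $\Gr^{\T_0^{i}}=\Gr^{\T_0}$.

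I do not expect a genuine obstacle: the $3$-manifold enters only through the description of $\MG{M,-}$ via the fundamental groupoid and the dictionary between $\Gr$-colorings and groupoid homomorphisms, and the gauge $g$ is written down directly from arbitrary, non-canonical choices of paths. The one point that deserves care, and that I would spell out, is the bookkeeping around $\MG{M,-}$: that restriction to $\Y_0$ is literal equality of functors when $\Y_0\neq\emptyset$ but only equality up to a single global conjugation when $\Y_0=\emptyset$, together with the compatibility of $(\delta\act\Phi)(e)=\delta(v^-_e)\,\Phi(e)\,\delta(v^+_e)^{-1}$ with concatenation of paths, so that checking agreement on edges genuinely suffices.
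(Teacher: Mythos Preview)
Your proof is correct. Both your argument and the paper's rest on the same idea---choose, for each vertex $v\in\T_0$, a path to a basepoint in $\Y_0$ (or to a fixed vertex when $\Y_0=\emptyset$) and define $g(v)$ from the values of $\Phi_1,\Phi_2$ along that path---but you execute it more directly. You invoke the identification of $\Gr$-colorings with groupoid homomorphisms $\pi_1(M,\T_0)\to\Gr$, pick arbitrary paths $\alpha_v$ in $M$, and write down $g(v)=\Phi_2(\alpha_v)^{-1}\Phi_1(\alpha_v)$ (or its conjugated variant) in one stroke. The paper instead works combinatorially inside the triangulation: it enlarges $\YY$ to a spanning forest $\wb\YY$ of edges of $\T$ rooted in $\Y_0$, orients it so each vertex has a unique shortest edge-path $\gamma_v$ to $\Y_0$, and then proceeds by induction, modifying $\Phi_1$ one gauge $\delta^{v,g}$ at a time to increase the number of ``good'' vertices on which $\Phi_1$ and $\Phi_2$ already agree along $\gamma_v$. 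Your closed-form construction avoids the induction and the spanning-forest bookkeeping; the paper's version has the mild advantage of staying entirely at the level of edges of $\T$ and never invoking paths outside the $1$-skeleton, but both yield the same gauge.
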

\begin{proof}
  We assume in the proof, without lose of generality, that $M$ is connected.
  When $M$ is closed and $\Y_0$ is empty, the elements of $\MG M$ bijectively
  correspond to the $\Gr$-colorings of $\T$ considered up to gauge
  transformations (see for example \cite{Pe}).  Therefore, assume $\Y_0$ is
  not empty.

  We first distinguish a set of edge $\wb \YY$.  Let start with $\wb \YY=\YY$.
  Suppose that some vertex of $\T$ is not connected to $\Y_0$ by a path in
  $\wb \YY$.  We say that such a vertex is isolated.  Then there exists an
  edge of $\T$ whose ends are an isolated vertex on one side and a vertex not
  isolated in the other side.  We then add this edge to $\wb \YY$.  We repeat
  this process until there is no more isolated vertex.  One can check that he
  had added one edge of $\T$ for each circle component of $\YY$.  A path of
  length $n$ is a sequence of $n$ oriented edges $e_1,\ldots,e_n$ where the
  target of $e_i$ is equal to the source of $e_{i+1}$.  Choose an orientation
  of the edges of $\Y$.  This orientation can be extended to a compatible
  orientation of the edges of $\wb\YY$ such that for every vertex $v$ of $\T$
  there exists an unique shortest path $\gamma_v$ in $\wb\YY$ from some vertex
  of $\Y_0$ to $v$.  To be more precise, recall that every oriented edge of
  $\Y$ is realized by a path in $\YY$.  So there is an unique compatible
  orientation of the edges of $\YY$.  As for the added edges of
  $\wb\YY\setminus\YY$, we choose the orientation toward the originally
  isolated vertex.
  The union of paths $\gamma_v$ where $v$ is a vertex of $\T$ is a disjoint
  union of trees rooted in $\Y_0$.  We say a vertex $v$ of $\T$ is \emph{good}
  if $\Phi_1(e)=\Phi_2(e)$ for every edge $e$ of $\gamma_v$.  In particular,
  every vertex of $\Y_0$ is good.

  Suppose all vertices of $\T$ are good.  Then $\Phi_1=\Phi_2$ because for any
  oriented edge $e$ of $\T$ from $v_1$ to $v_2$, we
  have $$\Phi_1(e)=\Phi_1(\gamma_{v_1})^{-1}\rr(\gamma_{v_1}e\gamma_{v_2}^{-1})
  \Phi_1(\gamma_{v_2})=\Phi_2(\gamma_{v_1})^{-1}
  \rr(\gamma_{v_1}e\gamma_{v_2}^{-1})\Phi_2(\gamma_{v_2})=\Phi_2(e).$$

  Thus, it remains to show that we can modify $\Phi_1$ by a gauge
  transformation to increase the number of good vertices.  Let $e$ be an edge
  of $\wb\YY$ from $v_1$ to $v_2$ where $v_1$ is good and $v_2$ is not good.
  Then $\gamma_{v_2}=\gamma_{v_1}e$ and so
  $\delta^{v_2,g}\act{\Phi_1}(e)=\Phi_2(e)$ where $g=\Phi_2(e)^{-1}\Phi_1(e)$.
  If $v$ is a good vertex the path $\gamma_v$ can not pass through $v_2$ as
  $\Phi_1(e)\neq\Phi_2(e)$.  Therefore, the set of good vertices strictly
  increases when one replace $\Phi_1$ with $\delta^{v_2,g}\act{\Phi_1}$.
\end{proof}

\begin{lemma}\label{L:Coboundary}
  Let $v_0\in\T_0^{i}$ and $c:\T_0\to \Gr$ be a map such that $c(v)=1$ for all
  $v\neq v_0$ and $c(v_0)\notin \X$.  If $\col$ and $\delta_c\act{\col}$ are
  admissible $\Gr$-colorings of $\T$, then
  $TV(\T,\YY,\col)=TV(\T,\YY,\delta_c\act{\col})$
\end{lemma}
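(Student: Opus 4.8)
The plan is to deduce this single-vertex gauge invariance from the move invariance already established in Lemma~\ref{L:AdmMove}, exploiting the freedom one has in coloring the edges created by a positive {\bubble} move. Since the gauge $\delta_c$ is supported at the single interior vertex $v_0\in\T_0^{i}$, the colorings $\col$ and $\delta_c\act{\col}$ differ only on the edges of $\T$ incident to $v_0$; in particular they agree on every edge of the link $L$ of $v_0$ (the edges of $\T$ joining two vertices of $L$) and on all of $\YY$ except the (exactly two, because $v_0$ lies in the interior of an edge of $\Y$) edges of $\YY$ meeting $v_0$. Thus the whole argument can be localized in the star $\operatorname{St}(v_0)=v_0\ast L$, a triangulated $3$-ball, and the boundary data $\col|_L$ is untouched by $\delta_c$.

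Concretely I would set $g=c(v_0)$, which by hypothesis lies in $\Gr\setminus\X=\Gr'$, and proceed as follows. Using positive {\bubble}, {\Pachner} $2\leftrightarrow3$ and {\lune} moves affecting only a neighborhood of $\operatorname{St}(v_0)$ --- each preserving $TV$ by Lemma~\ref{L:AdmMove} (read for positive moves by invertibility: an admissible extension of $\col$ across a positive move has the same $TV$) and staying in the class of quasi-regular $H$-triangulations relative to $\Tb$, exactly as in the proof of Proposition~\ref{L:Toplemma} --- I would re-triangulate so that $v_0$ is attached to the rest of $M$ only through a configuration combinatorially equivalent to $v_0\ast\partial\Delta^{3}$, carrying along one subdivision vertex of the edge of $\Y$ through $v_0$ so that the result is still an $H$-triangulation. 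In this standard model the coloring near $v_0$ is given by three group elements $x_1,x_2,x_3$ (the colors of the three edges at $v_0$), constrained by the coloring condition on the three faces containing $v_0$; the gauge at $v_0$ replaces each $x_i$ by $gx_i$. This replacement is an admissible coloring change because $g\in\Gr'$ --- and, if the auxiliary interior vertices created in the model produce bad edges, one first corrects by gauge transformations at those vertices chosen to avoid $\X$, exactly as in the proof of Lemma~\ref{L:admColoringsExist}. Undoing the re-triangulation by the inverse moves returns us to $(\T,\YY)$, now carrying the coloring $\delta_c\act{\col}$, since nothing outside $\operatorname{St}(v_0)$ has changed while inside we performed precisely the gauge shift at $v_0$. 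Composing the equalities supplied by Lemma~\ref{L:AdmMove} along this chain yields $TV(\T,\YY,\col)=TV(\T,\YY,\delta_c\act{\col})$.

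The main obstacle is the bookkeeping at the two places where the moves of Lemma~\ref{L:AdmMove} meet the extra data of an $H$-triangulation: keeping track of which new edges must be declared part of $\YY$ (hence weighted by $\bb$ rather than $\qd$ in \eqref{eq:TV}) during the {\bubble} moves near the edge of $\Y$ through $v_0$, and checking that the reduction of $L$ to $\partial\Delta^{3}$ is realizable within the class of $H$-triangulations relative to $\Tb$ together with admissibility of every intermediate coloring. These are precisely the technical points already handled in \cite{GPT2}, and the argument runs in parallel to it. A cleaner alternative that bypasses the re-triangulation is to prove directly that the partial state sum obtained by summing \eqref{eq:TV} only over the states of the edges incident to $v_0$ --- with the $\qd$- and $\bb$-weights of those edges and the contraction over the faces containing $v_0$ --- depends, once the coloring and state on $L$ are fixed, only on that boundary data and not on the colors of the edges at $v_0$; this is a closed identity among the modified $6j$-symbols that follows by iterating the Biedenharn--Elliott identity (Theorem~\ref{T:BEId}) and the orthonormality relation (Theorem~\ref{T:orth}), the very identities underlying the {\Pachner} and {\lune} moves. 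Since $\col|_L$ is fixed by $\delta_c$, Lemma~\ref{L:Coboundary} follows either way.
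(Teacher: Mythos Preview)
Your overall strategy---localize at the star of $v_0$, invoke Lemma~\ref{L:AdmMove} along a chain of elementary moves, and defer the combinatorics to \cite{GPT2}---is the same as the paper's, which simply cites Lemma~27 of \cite{GPT2} and records that the chain begins with a positive {\bubble} move on one $\YY$-edge $e_1$ at $v_0$ (introducing the color $c(v_0)$ on the new edge) and ends with a negative {\bubble} on the other $\YY$-edge $e_2$.

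Where your write-up has a genuine gap is the sentence ``the gauge at $v_0$ replaces each $x_i$ by $gx_i$'' in your standard model, followed by undoing the moves. As written, that replacement is a change of coloring on a fixed triangulation, not an elementary move, and Lemma~\ref{L:AdmMove} does not license it; invariance under exactly this kind of change is what the lemma asserts, so appealing to it here is circular. The point you are missing is \emph{where} the element $g=c(v_0)$ enters the picture through a move: it is the free color one gets to choose on the new edge created by the positive {\bubble} on $e_1$. The subsequent Pachner and lune moves slide this new vertex around the star of $v_0$, and after the final negative {\bubble} on $e_2$ the net effect on the colors of the edges at $v_0$ is precisely left-multiplication by $g$. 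Your phrase ``carrying along one subdivision vertex of the edge of $\Y$ through $v_0$'' is close to this, but the crucial role of the bubble move as the entry point for $g$ should be made explicit rather than buried in a color-swap on a static model. (A minor point: the link of an interior vertex in a $3$-manifold is a $2$-sphere, so at least four, not three, edges meet $v_0$ in any model.)
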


\begin{proof}
  A similar claim is proved in Lemma 27 of \cite{GPT2} when $\Gr$ is a
  commutative group.  Since $v_0$ is in $\T_0^i$ the vertex $v_0$ is the
  endpoint for exactly two edges $e_1,e_2$ of $\Y$.  As shown in the proof of
  Lemma 27 of \cite{GPT2} there is a sequence of elementary moves through
  quasi-regular triangulation with admissible colorings from $(\T,\YY,\col)$
  to $(\T,\YY,\delta_c\act{\col})$.  This sequence starts with a positive
  {\bubble} move at $e_1$ (where we introduce the color $c(v_0)$) and ends
  with a negative {\bubble} move at $e_2$.
\end{proof}

\begin{lemma}\label{C:ConstCobound}   If $\col$ and $\col'$
  are two admissible $\Gr$-colorings of $\T$ that restrict to the same
  representation $\rr\in\MG{M,\Y_0}$, then $TV(\T,\YY,\col)=TV(\T,\YY,\col')$.
\end{lemma}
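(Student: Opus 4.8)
The plan is to combine Lemma \ref{L:gauge} and Lemma \ref{L:Coboundary}, the extra ingredient being property~(2) of $\X$, which lets us keep admissibility throughout. First note that, by the remark preceding Lemma \ref{L:gauge}, every $\Gr$-coloring obtained from $\col$ by a gauge in $\Gr^{\T_0^{i}}$ restricts to the same $\rr\in\MG{M,\Y_0}$, hence induces the same boundary coloring $\col^\partial$ of $\Tb$; thus all the state sums occurring below lie in the same module $H(\partial M,\Tb,\col^\partial)$ and the equalities we write make sense.

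By Lemma \ref{L:gauge}, $\col'=\delta_g\act{\col}$ for some $g\in\Gr^{\T_0^{i}}$. Enumerate $\T_0^{i}=\{v_1,\dots,v_k\}$. One would like to interpolate between $\col$ and $\col'$ by changing $g$ at one vertex at a time and invoking Lemma \ref{L:Coboundary}, but the intermediate colorings so produced need not be admissible; this is the one point requiring care. To get around it I would route both $\col$ and $\col'$ through a common generic coloring, choosing $t_1,\dots,t_k\in\Gr$ inductively. Having chosen $t_1,\dots,t_{j-1}$, let $\col^{(j-1)}$ (resp.\ $\col^{\prime(j-1)}$) be the coloring obtained from $\col$ (resp.\ $\col'$) by applying, at each vertex $v_i$ with $i<j$, the single-vertex gauge $\delta^{v_i,t_i}$ (resp.\ $\delta^{v_i,\,t_ig(v_i)^{-1}}$); these are admissible by the inductive hypothesis, with $\col^{(0)}=\col$ and $\col^{\prime(0)}=\col'$. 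Now pick $t_j\in\Gr$ outside the finite union of subsets $\X$, $\X g(v_j)$, the sets $\X\,\col^{(j-1)}(e)^{-1}$ for $e$ an edge of $\T$ issuing from $v_j$, and the sets $\X\,\col^{\prime(j-1)}(e)^{-1}g(v_j)$ for $e$ an edge of $\T$ issuing from $v_j$. Since $\X=\X^{-1}$, property~(2) of $\X$ prevents $\Gr$ from being covered by finitely many translates of $\X$, so such a $t_j$ exists; and since $\T$ is quasi-regular it has no loop edges, so this is genuinely a finite list of conditions --- the same mechanism used in the proof of Theorem \ref{L:admColoringsExist}. With this choice $t_j\notin\X$ and $t_jg(v_j)^{-1}\notin\X$, and since applying $\delta^{v_j,t_j}$ (resp.\ $\delta^{v_j,\,t_jg(v_j)^{-1}}$) only changes the colors of the edges at $v_j$ and, by the choice of $t_j$, sends each of those out of $\X$, the colorings $\col^{(j)}:=\delta^{v_j,t_j}\act{\col^{(j-1)}}$ and $\col^{\prime(j)}:=\delta^{v_j,\,t_jg(v_j)^{-1}}\act{\col^{\prime(j-1)}}$ are again admissible.

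Because gauges supported at distinct vertices commute, and using $\col'=\delta_g\act{\col}$ together with $t_jg(v_j)^{-1}g(v_j)=t_j$, one checks that $\col^{(k)}=\col^{\prime(k)}$ --- both equal $\delta_t\act{\col}$, where $t\in\Gr^{\T_0^{i}}$ has $t(v_j)=t_j$. Hence we obtain a chain of admissible $\Gr$-colorings of $\T$
$$
\col=\col^{(0)},\ \col^{(1)},\ \dots,\ \col^{(k)}=\col^{\prime(k)},\ \col^{\prime(k-1)},\ \dots,\ \col^{\prime(0)}=\col'
$$
in which each consecutive pair differs by a single-vertex gauge at some $v_j\in\T_0^{i}$ with value outside $\X$. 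Applying Lemma \ref{L:Coboundary} to each consecutive pair gives $TV(\T,\YY,\col^{(j-1)})=TV(\T,\YY,\col^{(j)})$ and $TV(\T,\YY,\col^{\prime(j-1)})=TV(\T,\YY,\col^{\prime(j)})$ for every $j$, and concatenating these equalities yields $TV(\T,\YY,\col)=TV(\T,\YY,\col')$, as desired. As flagged, the genericity argument that keeps every coloring in the chain admissible is the only delicate step.
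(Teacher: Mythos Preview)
Your proof is correct and follows essentially the same strategy as the paper's: use Lemma~\ref{L:gauge} to write $\col'$ as a gauge transform of $\col$, then repeatedly apply Lemma~\ref{L:Coboundary} while using property~(2) of $\X$ to choose each single-vertex gauge value generically enough that all intermediate colorings stay admissible. The paper organizes this as an induction on the number of single-vertex gauges, reducing $n$ to $n-1$ at each step, whereas you unroll the whole chain at once and route both $\col$ and $\col'$ to a common generic coloring; the content is the same, and your version is arguably a bit more explicit (in particular you state the condition $t_j g(v_j)^{-1}\notin\X$ needed for the Lemma~\ref{L:Coboundary} step on the $\col'$ side).
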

\begin{proof}
  Since $\col$ and $\col'$ represent the same element of $\MG{M,\Y_0}$ by
  Lemma \ref{L:gauge} there exists pairs $(v_i,g_i)\in\{\text{vertices of
    $\T$}\}\times \Gr$ such that
  $$
  \Phi'=\delta^{v_n,g_n}\delta^{v_{n-1},g_{n-1}}\cdots\delta^{v_1,g_1}\act\Phi.
  $$
  where $\delta^{v_i,g_i}$ is the $\Gr$-valued gauge, defined as in
  \eqref{E:DefDeltaGauge}, which takes non-trivial values at a single vertex
  $v_i$, for all $i=1,\ldots,n$.  We prove the desired equality by induction on
  $n$.  If $n=0$ then $\wp'=\wp$ and the equality is clear.  Otherwise, let
  $E_{1}$ be the set of (oriented) edges of $\T$ beginning at $v_1$. Pick any
  $$g\in \Gr\setminus\Big[ \X\cup \bigcup_{e\in E_{1}} \big(\X\wp(e)^{-1}\big)
   \cup \bigcup_{e\in E_{1}} \big(\X \wp'(e)^{-1}g_1\big)\Big].$$

   Then $\delta^{v_1,g}\act\wp$ and
   $\delta^{v_n,g_n}\delta^{v_{n-1},g_{n-1}}\cdots\delta^{v_2,g_2}
   \delta^{v_1,g}\act\wp=\delta^{v_1,gg_1^{-1}}\act{\wp'}$ are admissible
   colorings.  Lemma~\ref{L:Coboundary} and the induction assumption imply
   that
  \begin{align*}
  TV(\T,\LL,\wp,c)&=TV(\T,\LL,\delta^{v_1,g}\act\wp,c)\\
  &=TV(\T,\LL,\delta^{v_n,g_n}\delta^{v_{n-1},g_{n-1}}
  \cdots\delta^{v_2,g_2}\delta^{v_1,g}\act\wp,c)\\
  &=TV(\T,\LL,\wp',c).
  \end{align*}

\end{proof}

\begin{theorem}\label{T:one-move}
  Let $(\T,\YY)$ and $(\T',\YY')$ be two $H$-triangulations of $(M,\Y)$
  relative to $\Tb$ such
  that $(\T',\YY')$ is obtained from $(\T,\YY)$ by a single {\Pachner} move,
  {\bubble} move, or {\lune} move.  Then for any admissible $\Gr$-colorings
  $\col$, $\col'$ of $\T$, $\T'$, respectively, restricting to the same
  element of $\MG{M,\Y_0}$, we have
  $$
  TV(\T,\YY,\col)=TV(\T',\YY',\col').
  $$
\end{theorem}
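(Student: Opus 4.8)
The plan is to deduce the theorem from the two facts already at our disposal: Lemma \ref{L:AdmMove}, which handles a single \emph{negative} elementary move provided one uses the restricted coloring on the smaller triangulation, and Lemma \ref{C:ConstCobound}, which guarantees that $TV(\T,\YY,-)$ depends only on the class in $\MG{M,\Y_0}$ determined by a coloring. By definition the move relating $(\T,\YY)$ and $(\T',\YY')$ is either positive or negative, and since the desired equality $TV(\T,\YY,\col)=TV(\T',\YY',\col')$ is symmetric in the two $H$-triangulations we may assume, after exchanging the roles of $(\T,\YY,\col)$ and $(\T',\YY',\col')$ if necessary, that $(\T',\YY')$ is obtained from $(\T,\YY)$ by a negative {\Pachner}, {\bubble}, or {\lune} move; thus $\T$ is the larger triangulation, $\T'_0\subseteq\T_0$, and $\Y_0\subseteq\T'_0$. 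Note that $\Tb$ is unchanged, so both sides lie in the same module $H(\partial M,\Tb,\col^{\partial})$.

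First I would apply Lemma \ref{L:AdmMove} to the admissible $\Gr$-coloring $\col$ of $\T$: it asserts that $\col$ restricts to an admissible $\Gr$-coloring $\widetilde\col$ of $\T'$ and that
\[
  TV(\T,\YY,\col)=TV(\T',\YY',\widetilde\col).
\]
Next I would observe that $\widetilde\col$ and $\col'$ are both admissible $\Gr$-colorings of $\T'$ inducing the same element $\rr\in\MG{M,\Y_0}$. For $\col'$ this is the hypothesis. For $\widetilde\col$ it follows from functoriality of the restriction maps: $\widetilde\col$ is the restriction of $\col$ along $\MG{M,\T_0}\to\MG{M,\T'_0}$, and composing with $\MG{M,\T'_0}\to\MG{M,\Y_0}$ gives the same map as the restriction $\MG{M,\T_0}\to\MG{M,\Y_0}$, under which $\col$ maps to $\rr$ by hypothesis. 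Hence Lemma \ref{C:ConstCobound}, applied on $\T'$, gives $TV(\T',\YY',\widetilde\col)=TV(\T',\YY',\col')$, and combining this with the displayed equality proves the theorem.

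The heavy lifting is entirely contained in the two lemmas quoted, so there is no real computational obstacle in this argument itself; the only point requiring a little attention will be the reduction to a negative move together with the bookkeeping of restriction maps, i.e.\ verifying that the coloring $\widetilde\col$ furnished by Lemma \ref{L:AdmMove} induces the \emph{given} representation $\rr$ on $\Y_0$ and not merely some admissible representation. This is exactly why working in the negative direction is convenient: had we tried to argue directly for a positive move, we would instead have needed to \emph{extend} $\col$ from $\T$ to $\T'$ while preserving admissibility, which is less immediate, whereas restriction of an admissible coloring along a negative move is automatically admissible by Lemma \ref{L:AdmMove}.
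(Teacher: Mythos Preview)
Your proof is correct and follows essentially the same approach as the paper: reduce to a negative move, use Lemma~\ref{L:AdmMove} to restrict $\col$ to an admissible coloring of $\T'$ with the same $TV$, and then invoke Lemma~\ref{C:ConstCobound} to identify this with $TV(\T',\YY',\col')$. Your version is slightly more explicit about the justification for the reduction to a negative move and the functoriality of restriction, but the structure is identical.
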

\begin{proof}
  For concreteness, assume that $\T'$ is obtained from $\T$ by a negative
  move.  The admissible $\Gr$-coloring $\col$ of $\T$ restricts to an
  admissible $\Gr$-coloring $\col''$ of $\T'$ which restrict to the same
  element of $\MG{M,\Y_0}$.  Now Lemma~\ref{L:AdmMove} implies that
  $TV(\T,\YY,\col)=TV(\T',\YY',\col'')$ and Lemma \ref{C:ConstCobound} implies
  that $TV(\T',\YY',\col'')=TV(\T',\YY',\col')$.
\end{proof}

\begin{proof}[Proof of Theorem \ref{T:inv}]
  From Proposition \ref{L:Toplemma} we know that any two $H$-triangulation of
  $(M,Y)$ are related by a finite sequence of elementary moves.  Then the
  result follows from the Theorem \ref{T:one-move} by induction on the number
  of moves.
\end{proof}

\section{Quantum groups at root of unity}\label{S:RepQG}
In this section we first recall some of the deep results established by De
Concini, Kac, Procesi, Reshetikhin and Rosso in the series of papers
\cite{DK}--\cite{DPRR}.  We then prove that the modules studied in these
papers give rise to the topological invariants described in Sections
\ref{S:RTLink}, \ref{S:TopGSph} and \ref{S:HQFT}.
In particular, in Subsections \ref{SS:Uqg(H)} and \ref{SS:PBW} we give definitions and properties of the algebras $\Uqg$ and  $\UqgH$ associated to a simple Lie algebra $\g$.   
In Subsection \ref{SS:weightmod} we discuss the category of weight modules over these algebras.  
Since the constructions in the first three subsections are some what abstract, in Subsection \ref{SS:ExSl2Mod} we give a concrete example when $\g=\sll_2$.    Then we give descriptions of the pivotal structure, braiding and modified dimensions.   The section is concluded with a theorem stating that the category of finite dimensional weight modules over $\Uqg$ is a relative $\Gr$-spherical category.  

\subsection{The quantum groups $\Uqg$ and  $\UqgH$}\label{SS:Uqg(H)}
Let $\g$ be a simple finite-dimensional complex Lie algebra of rank $\rk$ and
dimension $2N+\rk$ with a root system.  Fix a set of simple roots
$\{\alpha_1,\ldots,\alpha_\rk\}$ and let $\roots^{+}$ be the corresponding set
of positive roots.  Also, let $A=(a_{ij})_{1\leq i,j\leq \rk}$ be the Cartan
matrix corresponding to these simple roots.  There exists a diagonal matrix
$D=\diag(d_1,\ldots,d_\rk)$ such that $DA$ is symmetric and positive definite.
Let $\h$ be the Cartan subalgebra of $\g$ generated by the vectors
$H_1,\ldots,H_\rk$ where $H_j$ is determined by $\alpha_i(H_j)=a_{ji}$.  Let
$L_R$ be the root lattice which is the $\Z$-lattice generated by the simple
roots $\{\alpha_i\}$.  Let $\ang{\;,\;}$ be the form on $L_R$ given by
$\ang{\alpha_i,\alpha_j}=d_ia_{ij}$.  Let $L_W$ be the weight lattice which is
the $\Z$-lattice generated by the elements of $\h^*$ which are dual to the
elements $H_i$, $i=1\cdots\rk$.  Let
$\rho=\frac12\sum_{\alpha\in\roots^{+}}\alpha\in L_W$.

Let $\ro$ be an odd integer such that $\ro\geq 3$ (and $\ro\notin 3\Z$ if
$\g=G_2$).  Let $q=\e^{2i\pi/\ro}$ and for $i=1,\ldots,\rk$, let
$q_i=q^{d_i}$.  For $x\in \C$ and $k,l\in \N$ we use the notation:
$$
q^x=\e^{\frac{2i\pi x}\ro},\quad \qn x_q=q^x-q^{-x},\quad \qN x_q=\frac{\qn
  x_q}{\qn 1_q},\quad\qN k_q!=\qN1_q\qN2_q\cdots\qN k_q,\quad{k\brack
  l}_q=\frac{\qN{k}_q!}{\qN{l}_q!\qN{k-l}_q!}.
$$    
Remark for $x\in\C$, 
 $\qn x_q=0$ if and only if $ x\in\frac\ro{2}\Z$.

 Next we consider two quantum groups associated to $\g$.  The Drinfeld-Jimbo
 quantum group $\Uqg$ is defined as the algebra with generators $K_\beta, \,
 X_i,\, X_{-i}$ for $\beta\in \La, \,i=1,\ldots,\rk$ and relations
\begin{eqnarray}\label{eq:rel1}
 & K_0=1, \quad K_\beta K_\gamma=K_{\beta+\gamma},\, \quad K_\beta X_{\sigma
    i}K_{-\beta}=q^{\sigma \ang{\beta,\alpha_i}}X_{\sigma i}, & \\
 \label{eq:rel2} &[X_i,X_{-j}]=
 \delta_{ij}\frac{K_{\alpha_i}-K_{\alpha_i}^{-1}}{q_i-q_i^{-1}}, &  \\
 \label{eq:rel3} & \sum_{k=0}^{1-a_{ij}}(-1)^{k}{{1-a_{ij}} \brack
   k}_{q_i} X_{\sigma i}^{k} X_{\sigma j} X_{\sigma i}^{1-a_{ij}-k} =0,
 \text{ if }i\neq j &
\end{eqnarray}
where $\sigma=\pm 1$.  The algebra $\Uqg$ is a Hopf algebra with coproduct
$\Delta$, counit $\epsilon$ and antipode $S$ defined by
\begin{align*}
  \Delta(X_i)&= 1\otimes X_i + X_i\otimes K_{\alpha_i}, &
  \Delta(X_{-i})&=K_{\alpha_i}^{-1} \otimes
  X_{-i} + X_{-i}\otimes 1,\\
  \Delta(K_\beta)&=K_\beta\otimes K_\beta, & \epsilon(X_i)&=
  \epsilon(X_{-i})=0, &
  \epsilon(K_{\alpha_i})&=1,\\
  S(X_i)&=-X_iK_{\alpha_i}^{-1}, & S(X_{-i})&=-K_{\alpha_i}X_{-i}, &
  S(K_\beta)&=K_{-\beta}.
\end{align*}

The \emph{unrolled quantum group} $\UqgH$ is the algebra generated by
$K_\beta, X_i,X_{-i},H_i$ for $\beta\in \La, \,i=1,\ldots,\rk$ with Relations
\eqref{eq:rel1},\eqref{eq:rel2},\eqref{eq:rel3} plus the relations
\begin{align}\label{eq:relH}
  [H_i,X_{\epsilon j}]=\sigma a_{ij}X_{\sigma j},&&[H_i,H_j]=[H_i,K_\beta]=0
\end{align}
where $\sigma=\pm 1$.  The algebra $\UqgH$ is a Hopf algebra with coproduct
$\Delta$, counit $\epsilon$ and antipode $S$ defined as above on $K_\beta, \,
X_i,\, X_{-i}$ and defined on the elements $H_i$ for $i=1,\ldots,\rk$ by
\begin{align*}
  \Delta(H_i)= 1\otimes H_i + H_i\otimes 1, && \epsilon(H_i)=0, &&S(H_i)=-H_i.
\end{align*}

As a vector space $\UqgH$ is isomorphic to $\UH\otimes_\C\Uqg$ where
$\UH=\C[\h]$ is the symmetric algebra of $\h\otimes_\Z\C$.  The obvious map
$\Uqg\to\UqgH$ is an injective morphism of Hopf algebra.  Using this morphism
we can identify $\Uqg$ with a Hopf subalgebra of $\UqgH$.

\subsection{The PBW basis}\label{SS:PBW}
Let $\alpha_1,\ldots,\alpha_\rk$ be any ordering of the simple positive roots
and let $s_{i_1}s_{i_2}\cdots s_{i_N}$ be a reduced decomposition of the
longest element of the Weyl group.  Then
$\beta_1=\alpha_{i_1},\,\beta_2=s_{i_1}\alpha_{i_2},\ldots,\,
\beta_N=s_{i_1}s_{i_2}\cdots s_{i_{N-1}}\alpha_{i_N}$ is a total ordering of
the set of positive roots $\roots^{+}$.  We call
$\beta_*=(\beta_1,\ldots,\beta_N)$ a {\em convex order} of $\roots^{+}$.  For
each $i=1,\ldots,N$ let $X_{\pm\beta_i}$ be the \emph{positive
  (resp. negative) root vector} of $\Uqg$ (see for example Section 8.1 and 9.1
of \cite{CP}). We say that $(X_{\pm\beta})_{\beta\in\beta_*}$ is a {\em convex
  set of root vectors} which depends of the order of
$\alpha_1,\ldots,\alpha_\rk$ and $s_{i_1},s_{i_2},\ldots,s_{i_N}$.

If $y_1,\ldots,y_m$ is a set of elements in an algebra then let
$Alg\ang{y_i:i=1,\ldots,m}$ and $ Span_\C \ang{y_i:i=1,\ldots,m}$ be the
algebra generated and the complex vector space spanned by these elements,
respectively.  We will use the following notation (for $\Uqg, \UqgH, \UH$ see
Section \ref{SS:Uqg(H)}):
\begin{align*}
  \Uqn+&=Alg \ang{X_i:i=1,\ldots,n},\\
  \Uqn-&=Alg \ang{X_{-i}:i=1,\ldots,n},\\
  \Uqn{++}&=\Uqn+\cap\ker\epsilon,\\
  \Uqn{--}&=\Uqn-\cap\ker\epsilon,\\
  \Uqn{+<}&=Span_\C \ang{X_{\beta_1}^{k_1}\dots X_{\beta_N}^{k_N} : \ 0\leq
    k_i<\ro },\\ 
  \Uqn{-<}&=Span_\C \ang{X_{-\beta_1}^{k_1}\dots X_{-\beta_N}^{k_N} : 0\leq
    k_i<\ro},\\ 
  \ZUo&=Alg \ang{K_{\gamma}^\ro, X^\ro_{ \beta}:\gamma \in L_W,
    \beta\in\beta_*},\\  
  \UK&=Alg \ang{K_{\omega_i} : \{\omega_i\}_{i=1,\ldots,\rk} \text{ is the set
      of fundamental weights}}.
\end{align*}
Remark (1) the algebra $\UK$ is a commutative Hopf subalgebra of $\Uqg$,
(2) the spaces $\Uqn{+<}$ and $\Uqn{-<}$ depend on the convex order
$\beta_*$ of $\roots^{+}$ and (3) it is proven in \cite{DKP} that
$\ZUo=(\ZU\cap\Uqn-)(\ZU\cap\UK)(\ZU\cap\Uqn+)$ and thus $\ZUo$ is independent
of $\beta_*$.

We will need the following weak version of the PBW theorem, for a proof see
for example \cite{DKP2,CP}.
\begin{theorem}\label{T:PBW}
  The multiplication map defines vector space isomorphisms:
  $$
  \Uqn-\otimes \UK\otimes\Uqn+\stackrel\sim\to\Uqg
  \stackrel\sim\leftarrow \Uqn+\otimes \UK\otimes\Uqn-.
  $$
  Furthermore, for $\sigma\in\{+,-\}$, the set 
  $\{X_{\sigma\beta_1}^{k_1}X_{\sigma\beta_2}^{k_2}\cdots
  X_{\sigma\beta_N}^{k_N} : (k_1,\ldots,k_N)\in\N^N\}$ is a basis of
  $\Uqn\sigma$ and the set $\{K_\alpha:\alpha\in L_W\}$ is a basis of $\UK$.
\end{theorem}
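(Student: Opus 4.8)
The plan is to follow the classical route to the Poincar\'e--Birkhoff--Witt theorem for Drinfeld--Jimbo algebras, in the form worked out by De~Concini--Kac--Procesi, keeping track of what survives over $\C$ at the particular root of unity $q=\e^{2i\pi/\ro}$; the references \cite{DKP2,CP} carry this out in detail. \textbf{Step 1 (triangular decomposition as a spanning statement).} First I would show $\Uqg=\Uqn-\UK\Uqn+$. Using \eqref{eq:rel1} one pushes every factor $K_\beta$ past the generators $X_{\pm i}$ at the cost of a power of $q$, hence to the middle; using \eqref{eq:rel2} one pushes every $X_i$ to the right of every $X_{-j}$ at the cost of strictly shorter words, since $[X_i,X_{-j}]$ is a $\C$-linear combination of $1$ and $K_{\alpha_i}^{\pm1}$. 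An induction on word length then yields $\Uqg=\Uqn-\UK\Uqn+$ and, symmetrically, $\Uqg=\Uqn+\UK\Uqn-$. The identities $K_0=1$ and $K_\beta K_\gamma=K_{\beta+\gamma}$ show simultaneously that $\{K_\alpha:\alpha\in\La\}$ spans $\UK$ and that $\UK$ is a quotient of the group algebra $\C[\La]$.

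\textbf{Step 2 (ordered monomials span $\Uqn\pm$).} Fix the convex order $\beta_*$ and the associated root vectors $X_{\pm\beta_i}$, defined via Lusztig's braid-group automorphisms of $\Uqg$; one first records the standard fact that these root vectors actually lie in $\Uqn\pm$. The key technical input is the Levendorskii--Soibelman straightening relations: for $i<j$,
\[
X_{\beta_i}X_{\beta_j}-q^{-\ang{\beta_i,\beta_j}}X_{\beta_j}X_{\beta_i}=\sum_{\mathbf k} c_{\mathbf k}\,X_{\beta_{i+1}}^{k_{i+1}}\cdots X_{\beta_{j-1}}^{k_{j-1}},\qquad c_{\mathbf k}\in\C,
\]
and likewise for the negative root vectors. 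An induction on the number of inversions of a word in the $X_{\sigma\beta_i}$ then shows that the ordered monomials $X_{\sigma\beta_1}^{k_1}\cdots X_{\sigma\beta_N}^{k_N}$ span $\Uqn\sigma$.

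\textbf{Step 3 (linear independence -- the main obstacle).} The hard point is independence, since the representation-theoretic arguments available over $\C(q)$ (nonvanishing of Shapovalov determinants on generic Verma modules) degenerate at a root of unity. I would therefore prove the statement first over the field $\mathbb{Q}(q)$, where independence of the PBW monomials of $\Uqn-$ is obtained by letting them act on a suitably generic Verma module, and then transport it to $q=\e^{2i\pi/\ro}$ by integrality: the $\Z[q,q^{-1}]$-subalgebra of the $\mathbb{Q}(q)$-form generated by the $X_{\pm\beta_i}$ and the $K_\alpha$ is \emph{free} with the PBW monomials as a $\Z[q,q^{-1}]$-basis (this is the De~Concini--Kac integral form, and is precisely what \cite{DKP2,CP} establish), so after base change along $\Z[q,q^{-1}]\to\C$, $q\mapsto\e^{2i\pi/\ro}$, the PBW monomials remain $\C$-linearly independent. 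Linear independence of $\{K_\alpha:\alpha\in\La\}$ is elementary: the adjoint action of the $K$'s grades $\Uqg$ by $\La$, distinct weights give distinct characters, and this forces the $K_\alpha$ to be independent, so $\UK\cong\C[\La]$.

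\textbf{Step 4 (conclusion).} Steps 1--3 together show that multiplication $\Uqn-\otimes\UK\otimes\Uqn+\to\Uqg$ is a linear isomorphism and that the displayed sets are bases of $\Uqn\sigma$ and of $\UK$. The second isomorphism $\Uqn+\otimes\UK\otimes\Uqn-\to\Uqg$ follows by applying the algebra automorphism of $\Uqg$ determined by $X_i\mapsto X_{-i}$, $X_{-i}\mapsto X_i$, $K_\beta\mapsto K_{-\beta}$, which interchanges $\Uqn+$ and $\Uqn-$ while preserving $\UK$. As indicated, the only genuine obstacle is Step~3, which I would meet by this generic-case-plus-integral-form argument rather than by any direct computation at the root of unity.
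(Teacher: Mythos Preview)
The paper does not actually prove this theorem: it states it as ``a weak version of the PBW theorem'' and defers entirely to \cite{DKP2,CP}. Your sketch is precisely the route those references take --- triangular spanning via the defining relations, Levendorskii--Soibelman straightening to order the root vectors, and linear independence by first proving freeness of the De~Concini--Kac $\Z[q,q^{-1}]$-form and then specializing --- so in content you are aligned with what the paper invokes.

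One small flaw: your separate argument for the linear independence of $\{K_\alpha:\alpha\in\La\}$ via the adjoint grading does not work as written. Under the adjoint action of the $K$'s, every $K_\alpha$ sits in weight~$0$ (they all commute with one another), so that grading cannot distinguish them. The clean fix is simply not to argue this separately: once the integral-form argument in Step~3 yields that the full family $\{X_-^{\mathbf k}K_\alpha X_+^{\mathbf l}\}$ is a $\C$-basis of $\Uqg$, restricting to $\mathbf k=\mathbf l=0$ immediately gives the independence of the $K_\alpha$ and hence $\UK\cong\C[\La]$.
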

Note that in Theorem \ref{T:PBW} one can reverse the order of the products for
the monomials of the basis of $\Uqn\pm$ and still obtain a basis.
From Proposition 5.6 of \cite{DKP} we have that $\ZUo$ is a Hopf subalgebra of
$\Uqg$ contained in the center $\Uqg$.  

Let $\wb \Gr$ be the simply connected group associated to $\g$ and let
$T=\exp\h$ be the maximal torus of $\wb \Gr$.  Let $U^+$ and $U^-$ be the
unipotent radicals of the opposite Borel subgroups $B^+$ and $B^-$,
respectively.  Recall from \cite{DKP} that, as a Hopf algebra, $\ZUo$ is the
coordinate
ring of the \emph{dual group} $\Gr$ which is the subgroup of $B^+\times B^-$
given by the kernel of the composition $B^+\times B^-\to T\times T \to T$
where $T\times T \to T$ is the multiplication and $B^+\times B^-\to T\times T$
is the quotient modulo the unipotent radical (also see \cite[Section
6.3]{DPRR}).
As a variety, $\Gr$ can be identified with $U^-\times T \times U^+$ where the
inclusion $U^-\times T \times U^+\to B^+\times B^-$ is given by
$(u,t,v)\mapsto(tu,t^{-1}v)$.  Thus, $\Gr$ can be identified with the group of
ring homomorphisms $g:\ZUo\to \C$ where the group structure is given by
$gh=(g\otimes h)\circ \Delta$.  Moreover, $T$ can be identified with the
subgroup of $\Gr$ consisting of elements $g$ 
which are zero on any element of $\ZUo\cap(\Uqn{++}\cup\Uqn{--})$.

The authors of \cite{DPRR} define a subset of $\Gr$ called the unramified locus.
Let $\X$ be the complement of the unramified locus in $\Gr$.    
The set $\Gr\setminus\X$ is a Zariski dense open subset of $\Gr$.
 
We will use the group $\Gr$ throughout the rest of the paper.  In particular,
we will show $\Gr$ gives a grading on a certain category of $\Uqg$-modules
where graded pieces corresponding to the unramified locus are semi-simple.
With this in mind we recall the following theorem.
\begin{theorem}\label{Th:Ugss} 
  \emph{(}\cite{DK,DKP,DKP2} and \cite[Prop. 5.5]{DPRR}\emph{)}.  For any
  $g\in\Gr$ the following are equivalent:
  \begin{enumerate}
  \item $\Uqg\otimes_{g:\ZUo\to\C}\C$ is a semi-simple algebra.
  \item $g\notin \X$.
  \item There are (at least) $\ro^{\rk}$ non isomorphic irreducible
    $\Uqg$-modules of dimension $\ro^{N}$ on which any element of $z\in\ZUo$
    acts by the scalar $g(z)$.
  \end{enumerate}
\end{theorem}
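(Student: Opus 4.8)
This theorem collects results of \cite{DK,DKP,DKP2} and \cite[Prop.~5.5]{DPRR}, so my plan is to recall the relevant structure theory and explain how it assembles into the three-way equivalence --- organised as the cycle $(1)\Rightarrow(2)\Rightarrow(3)\Rightarrow(1)$ --- rather than to reprove the input. The one point I would set up carefully is a finiteness statement: by Theorem~\ref{T:PBW}, together with the identification of $\ZUo$ with the coordinate ring of $\Gr\cong U^-\times T\times U^+$, the algebra $\Uqg$ is a free $\ZUo$-module of rank $\ro^{2N+\rk}$, a basis being the PBW monomials $X_{-\beta_1}^{k_1}\cdots X_{-\beta_N}^{k_N}\,K_\alpha\,X_{\beta_1}^{l_1}\cdots X_{\beta_N}^{l_N}$ with $0\le k_i,l_i<\ro$ and $\alpha$ ranging over a set of representatives of $\La/\ro\La$. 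Hence, for every $g\in\Gr$, the specialization $U_g:=\Uqg\otimes_{g:\ZUo\to\C}\C$ is a finite-dimensional $\C$-algebra with $\dim_\C U_g=\ro^{2N+\rk}$, and an irreducible $\Uqg$-module on which every $z\in\ZUo$ acts by the scalar $g(z)$ is the same thing as an irreducible $U_g$-module.

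The elementary step is $(3)\Rightarrow(1)$. Suppose $U_g$ has pairwise non-isomorphic simple modules $S_1,\dots,S_\ell$ of dimension $\ro^N$ with $\ell\ge\ro^\rk$. Since $U_g$ is finite-dimensional, Artin--Wedderburn gives
\[
\dim_\C U_g \ge \dim_\C(U_g/\operatorname{rad}U_g) = \sum_{S}(\dim_\C S)^2 \ge \sum_{i=1}^{\ell}(\dim_\C S_i)^2 \ge \ro^\rk\cdot\ro^{2N} = \dim_\C U_g ,
\]
where $S$ runs over the isomorphism classes of simple $U_g$-modules; so $\operatorname{rad}U_g=0$ and $U_g$ is semisimple.

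For $(2)\Rightarrow(3)$ I would quote the Azumaya picture of \cite{DKP,DKP2,DPRR}. Recall that the PI-degree of $\Uqg$ equals $\ro^N$ (the De~Concini--Kac dimension bound), and that over the unramified locus $\Gr\setminus\X$ the finite morphism $\operatorname{Spec}\ZU\to\Gr$ is unramified of degree $\ro^\rk$, so $\ZU\otimes_{g}\C\cong\C^{\ro^\rk}$, while $\Uqg$ is an \emph{Azumaya} algebra of rank $\ro^{2N}$ over its full center $\ZU$. Specializing at $g\notin\X$ then yields $U_g\cong\prod_{i=1}^{\ro^\rk}M_{\ro^N}(\C)$; this algebra is semisimple (so $(2)\Rightarrow(1)$ as well) and its $\ro^\rk$ simple modules are pairwise non-isomorphic, each of dimension $\ro^N$ and with $\ZUo$ acting by $g$, which is exactly $(3)$.

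The genuine obstacle is $(1)\Rightarrow(2)$: that semisimplicity of $U_g$ already forces $g$ into the unramified locus. Equivalently, one must show that the ramification (discriminant) locus of $\Uqg$ over $\ZUo$ is \emph{exactly} $\X$, and that at every point of $\X$ the algebra $U_g$ genuinely degenerates --- acquires a nonzero radical, so that the Artin--Wedderburn estimate in the second paragraph becomes strict. I would not reprove this: it is the content of \cite[Prop.~5.5]{DPRR}, which rests on the detailed analysis of the center $\ZU$ and of the quantum coadjoint action in \cite{DK,DKP,DKP2}, and I would simply invoke it. With $(1)\Rightarrow(2)$ in hand the cycle $(1)\Rightarrow(2)\Rightarrow(3)\Rightarrow(1)$ closes, so the three conditions are equivalent.
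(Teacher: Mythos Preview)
The paper does not give a proof of this theorem: it is stated as a citation of results from \cite{DK,DKP,DKP2} and \cite[Prop.~5.5]{DPRR}, with no argument of its own. Your proposal is therefore not competing with a proof in the paper but rather supplying an expository sketch of how the cited results assemble, and as such it is correct. The dimension count $\dim_\C U_g=\ro^{2N+\rk}$ via the PBW basis, the Artin--Wedderburn argument for $(3)\Rightarrow(1)$, the Azumaya/unramified-locus description for $(2)\Rightarrow(3)$, and the identification of $(1)\Rightarrow(2)$ as the substantive input from \cite[Prop.~5.5]{DPRR} are all accurate and constitute exactly the standard way one would unpack this citation.
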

In Corollary \ref{C:GsNil} we will give a partial description of $\X$.  

\subsection{Weight modules} \label{SS:weightmod} 
A {\em multiplicative weight} is a $\C$-algebra homomorphism $\kappa:\UK \to
\C$.  Given a $\Uqg$-module $V$ and a multiplicative weight $\kappa$, let
$E_\kappa(V)$ be the weight space consisting of elements $v\in V$ such that
any $x\in \UK$ acts on $v$ as the scalar $\kappa(x)$.  A $\Uqg$-module $V$ is
called a {\em weight module} if $V$ splits as a direct sum of its weight 
spaces and if all elements of $\ZUo$ act diagonally on it.
 
On the other hand, an {\em additive weight} is a $\C$-algebra homomorphism
$\lambda:\UH \to \C$.  Given a $\UqgH$-module $V$ and an additive weight
$\lambda$, let $E_\lambda(V)$ be the weight space consisting of elements $v\in
V$ such that any $x\in \UH$ acts on $v$ as the scalar $\lambda(x)$.  A
$\UqgH$-module $V$ is called a {\em weight module} if $V$ splits as a direct
sum of its weight spaces, all elements of $\ZUo$ act diagonally on it, and for
any $\beta=\sum_i b_i\alpha_i\in \La$, the element $K_\beta$ acts on $V$ as
the scalar $\prod_i q_i^{b_iH_i}$.  An additive weight $\lambda$ induces a
multiplicative weight $\kappa=q^{\lambda}$ defined by the rule
$\kappa(K_\beta)=\prod_i q_i^{b_i\lambda(H_i)}=q^{\ang{\beta,\lambda}}$ for
any $\beta=\sum_i b_i\alpha_i\in \La$.  
If $\gamma$ is a multiplicative or additive weight we call a vector $v\in
E_\gamma(V)$ a {\em weight vector} with (multiplicative or additive) weight
$\gamma$.
  The vector space $\UqgH$ with the adjoint action is a weight module over
$\UqgH$.  If $x\in E_\lambda(\UqgH)$ we say $x$ has \emph{weight} $\lambda$.

Let $\catU$ and $\catH$ be the categories of finite dimensional weight modules
over $\Uqg$ and $\UqgH$, respectively.  Let $\catU_T$ be the full subcategory
of $\catU$ whose objects are the modules on which $X_{\pm\beta}^{\ro}$ acts by
$0$, for all $\beta\in\beta_*$.  For $g\in\Gr$, let $\catU_g$ be the full
subcategory of $\catU$ whose objects are the modules for which any $x\in\ZUo$
acts by the scalar $g(x)$.  Then $\catU=\bigoplus_{g\in \Gr}\catU_g$
is $\Gr$-graded.  
Also, if $t\in T\subset \Gr$ we have that $\catU_t\subset \catU_T$ and
$\catU_T=\bigoplus_{t\in T}\catU_t$.

Let $V$ be an object of $\catH$.  The first equality in Equation
\eqref{eq:relH} implies that the action of $X_j$, for $j=1,\ldots,\rk$,
translates the weight spaces of $V$.  Similarly, any
$x\in\Uqn{++}\cup\Uqn{--}$ translate non trivially the weights.  Combining
this with the fact that $V$ is finite dimensional one sees that the operator
$x:V\to V$ is nilpotent.  In particular for any $\beta\in\beta_*$, 
the operator
$X_{\pm \beta}^\ro:V\to V$ is nilpotent and thus the zero operator since
$X_{\pm \beta}^\ro\in \ZUo$.  By forgetting the
action of $\UH$ the module $V$ becomes a weight module over $\Uqg$.  Moreover,
since $X_{\pm \beta}^\ro:V\to V$ is the zero operator, $V$ is an object in
$\catU_T$.  Thus, we have a forgetful functor
\begin{equation}
\label{E:ForgetfulF}
\FF:\catH\to\catU_T
\end{equation} 
and we say that objects of
$\catU_T$ and $\catH$ are {\em nilpotent modules}.  

For any nilpotent module $V$ there exists a vector $v_{\pm}$ such that $X_{\pm i}\,v_\pm=0$ for all $i=1,\ldots,\rk$.  Such vectors $v_+$ and $v_-$ are called  {\em
  highest weight vectors}  and {\em lowest weight vectors}, respectively.  
If $V$ is simple then $v_\pm$ is unique up to a scalar.

\begin{theorem}\label{T:catU_gSS}
  Let 
  $V\in\catU_g$ and $V'\in\catU_{g'}$, for $g,g'\in\Gr$.  Then
  \begin{enumerate}
  \item $V\otimes V'\in\catU_{gg'}$,
  \item $V^{*}\in\catU_{g^{-1}}$,
  \item $\catU_g$ is semi-simple if and only if $g\notin \X$.   
  \end{enumerate}
\end{theorem}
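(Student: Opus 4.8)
The plan is to derive (1) and (2) formally from the Hopf-algebraic structure, and to reduce (3) to Theorem~\ref{Th:Ugss}. For (1), recall from Subsection~\ref{SS:PBW} that $\ZUo$ is a Hopf subalgebra of $\Uqg$ contained in the center, so for $z\in\ZUo$ one has $\Delta(z)\in\ZUo\otimes\ZUo$; writing $\Delta(z)=\sum z_{(1)}\otimes z_{(2)}$, the element $z$ acts on $v\otimes v'\in V\otimes V'$ by the scalar $\sum g(z_{(1)})\,g'(z_{(2)})=(g\otimes g')(\Delta(z))=(gg')(z)$, which is exactly the product in $\Gr$. Since $\Delta(K_\beta)=K_\beta\otimes K_\beta$, a tensor product of $\UK$-weight spaces of $V$ and $V'$ is again a $\UK$-weight space, so $V\otimes V'$ splits into weight spaces and hence lies in $\catU_{gg'}$. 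For (2), $S(\ZUo)\subseteq\ZUo$, so $z\in\ZUo$ acts on $V^{*}$ by $g(S(z))=(g\circ S)(z)$, and $g\circ S=g^{-1}$ in $\Gr$ since $S$ is the antipode of the Hopf algebra; the analogous computation using $S(K_\beta)=K_{-\beta}$ shows $V^{*}$ splits into $\UK$-weight spaces, so $V^{*}\in\catU_{g^{-1}}$.

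For (3), put $A_g=\Uqg\otimes_{g:\ZUo\to\C}\C$, the quotient of $\Uqg$ by the two-sided ideal generated by the central elements $z-g(z)\,1$ for $z\in\ZUo$. By the PBW theorem (Theorem~\ref{T:PBW}) together with the triangular decomposition of $\ZUo$, the algebra $\Uqg$ is a free $\ZUo$-module of finite rank (indeed of rank $\ro^{\dim\g}=\ro^{2N+\rk}$), so $A_g$ is a finite-dimensional $\C$-algebra, and a finite-dimensional $\Uqg$-module on which every $z\in\ZUo$ acts by $g(z)$ is precisely a finite-dimensional $A_g$-module. The key point is that every such module is automatically a weight module: since $K_{\omega_i}^{\ro}=K_{\ro\omega_i}\in\ZUo$, it acts on an $A_g$-module by the nonzero scalar $g(K_{\omega_i}^{\ro})$, so the minimal polynomial of the operator $K_{\omega_i}$ divides $X^{\ro}-g(K_{\omega_i}^{\ro})$, which has $\ro$ distinct roots in $\C$; hence the commuting operators $K_{\omega_1},\dots,K_{\omega_\rk}$ are simultaneously diagonalizable, the module splits into $\UK$-weight spaces, and $\ZUo$ already acts diagonally by the scalar $g$. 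Therefore $\catU_g$ coincides with the category of finite-dimensional $A_g$-modules.

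It then suffices to observe that the category of finite-dimensional modules over a finite-dimensional $\C$-algebra is semi-simple if and only if that algebra is semi-simple (the ``only if'' direction applied to the regular module, together with Artin--Wedderburn; the ``if'' direction being classical), and that by Theorem~\ref{Th:Ugss} the algebra $A_g$ is semi-simple exactly when $g\notin\X$. This gives (3). I expect the main obstacle to be the identification of $\catU_g$ with the category of finite-dimensional $A_g$-modules: one must check that being an object of $\catU_g$ is the same as being a finite-dimensional $A_g$-module so that the weight-module condition costs nothing (this is where the diagonalizability of $K_{\omega_i}$ enters), and that $A_g$ is genuinely finite-dimensional, so that semisimplicity of $A_g$ as an algebra and of its module category coincide and Theorem~\ref{Th:Ugss} becomes applicable.
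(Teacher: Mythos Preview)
Your proof is correct and follows essentially the same approach as the paper's: the paper simply says that (1) and (2) follow from $\ZUo$ being a Hopf subalgebra of $\Uqg$, and that (3) is a consequence of Theorem~\ref{Th:Ugss}. You spell out exactly how those implications work, in particular making explicit the identification of $\catU_g$ with the category of finite-dimensional $A_g$-modules via the diagonalizability of the $K_{\omega_i}$; this is the step the paper leaves implicit, and your argument for it is sound.
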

\begin{proof}
  The first two items follows from the fact that $\ZUo$ is a Hopf subalgebra
  of $\Uqg$. The third is a consequence of Theorem \ref{Th:Ugss}.
\end{proof}

\subsection{Example: $\Uqg=\Uqg(\sll_2)$ simple weight modules}\label{SS:ExSl2Mod}
To illustrate the different type of modules we use in this paper we
give a description of simple $\Uqg$-weight modules when $\g=\sll_2$,
for more details see Chapter 11.1 of \cite{CP}.  For general $\g$ also
see \cite{CP}.  The algebra $\Uqg(\sll_2)$ has four generators
$X_{1}$, $X_{-1}$ and $k$ and $k^{-1}$.  In the notation above, the generator $k$ is $K_\alpha$ where $\alpha$ is the fundamental weight.  

The group $\Gr$ can be identified with the group of ring homomorphisms
$g:\ZUo\to \C$ where $\ZUo=\C[k^{\pm r},X_1^r,X_{-1}^r]$ is the
central sub-Hopf algebra.  As a Lie group, $\Gr$ can be embedded in
$B^+\times B^-$ via the map 
\newcommand{\mat}[4]{{\small
    \left(\begin{array}{cc} #1&#2\\#3&#4
    \end{array}\right)}}
$$g\mapsto\pp{
  \mat{g(k^{-r})}00{g(k^{r})}\times\mat1{g(X_1^r)}01,
  \mat10{g(X_{-1}^r)}1\times\mat{g(k^{r})}00{g(k^{-r})}}.$$ 
It can be shown that $\X$ is the subset of $\Gr$ defined by the equation
$$g\in\X\iff g((\qn1^{2r}X_1^rX_{-1}^r-k^{2r}-k^{-2r})^2)=4.$$
Finally, $T\subset\Gr$ is the subgroup of elements $t$ such that
$t(X_1^r)=t(X_{-1}^r)=0$ and $t\in T\cap\X$ if and only if
$t(k^{4r})=1$.

A $\Uqg(\sll_2)$-weight module is finite dimensional.  
As we explain now, there are two kinds of simple $\Uqg(\sll_2)$-weight module: highest weight modules and cyclic.  
Let $V$ be a
simple $\Uqg(\sll_2)$-weight module of degree $g$.  In \cite{CP}, $V$
is called \emph{cyclic module} if $g(X_1^r)$ and $g(X_{-1}^r)$ are non zero.
This terminology emphasis the fact that $X_+$ and $X_-$ permute the
weight spaces of $V$ in a cyclic way.  A cyclic module has dimension
$r$.  If $g(X_1^r)=g(X_{-1}^r)=0$ then $g\in T$ and $V\in\catU_T$ is a
\emph{highest weight module}.  In this case, $V$ has  a unique (up to a scalar) highest weight
vector $v_0$ such that $k.v_0=q^{\lambda/2} v_0$ ($\lambda\in\C$ is unique
modulo $2r\Z$).  A highest weight module
$V$ has dimension $r$ if and only if $2\lambda\notin
\{0,1,\ldots,r-2\}+r\Z$ (see Proposition \ref{P:typical} below).  This is in particular true if $g\in
T\setminus (\X\cap T)$ that is if $g(k^{4r})\neq1$.

Remark that Theorem \ref{T:catU_gSS} implies that for
$g\in\Gr\setminus\X$, the category $\catU_g$ has $r$ distinct isomorphic classes of
simples modules, all of dimension $r$.  Moreover, the modules in $\catU_g$ are those
isomorphic to a finite direct sum of these $r$-dimensional modules.  For $g\in\X$,
$\catU_g$ is not semi-simple.

For $t\in T$ and $V\in\catU_t$ with highest weight $\kappa\in\C^*$ as
above (i.e. $k.v_0=\kappa v_0$), a lift of $V$ in $\catH$ consists in
choosing a scalar $\lambda\in\C$  by
which $H\in\UqgH$ acts on $v_0$ such that $\kappa=q^{\lambda/2}$.  Finally, a module in $\catH_t$ has the
following alternative description: A module of $\catH$ is in $\catH_t$
if and only if all its additive weights are equal to $\lambda$ modulo
$2\Z$ (i.e. modulo the root lattice).





\subsection{Typical modules}
Let $I^{H}$ be the ideal of $\UqgH$ generated by the central elements $X_{\pm
  \beta}^{\ro}$, $\beta\in\beta_*$.  Let $I=I^{H}\cap\Uqg$ the corresponding
ideal in $\Uqg$.  The ideals $I$ and $I^H$ do not depend on the choice of
$\beta_*$ because they are generated by
$(\ZUo\cap\Uqn{++})\cup(\ZUo\cap\Uqn{--})$.  The set of monomials of the PBW
basis that contain at least one factor $X_{\pm\beta}^{k}$, with $k\geq\ro$, is
a basis of $I$.  Hence $\Uqn{\pm<}$ and $I\cap\Uqn\pm$ are complementary
vector subspaces of $\Uqn\pm$ (note $\Uqn{\pm<}$ is defined in Subsection
\ref{SS:PBW}).  Any nilpotent module is annihilated by $I$.

Let $\kappa\in \Hom_\al(\UK,\C)$ be a multiplicative weight and consider the
one dimensional $\UK\Uqn+$-module $\ang{v_0}$ whose action is determined by
$xv_0=\kappa(x)v_0$ for $x\in \UK$ and $yv_0=\epsilon(y)v_0$ for $y\in \Uqn+$.
This module induces the {\em Verma module} $\wt V_\kappa$ which is an infinite
dimensional weight module over $\Uqg$ with highest weight vector $v_0$ of
multiplicative weight $\kappa$.  The module $\wt V_\kappa$ contains an unique
maximal submodule not containing $v_0$. The quotient of this submodule is an
irreducible $\Uqg$-module $V_\kappa$ with a highest weight vector of weight
$\kappa$.  Similarly, an additive weight $\lambda\in \Hom_\al(\UH,\C)$ defines
an one dimensional module over $\UH\UK\Uqn+$ whose action is given by
$\lambda$ on $\UH$, $q^{\lambda}$ on $\UK$ and $\epsilon$ on $\Uqn+$.  The
same process gives the \emph{Verma module} $\wt V_\lambda$ with irreducible
quotient $V_\lambda$, both having highest weight vectors of weight $\lambda$.

If $\lambda\in \Hom_\al(\UH,\C)$ is an additive weight and
$\kappa=q^{\lambda}\in \Hom_\al(\UK,\C)$ is its associated multiplicative
weight we denote by $|\lambda|=|\kappa|$ the element of
$T\simeq\Hom_\al(\ZUo\cap \UK,\C)$ induced by $\kappa$.

\begin{prop}\label{P:UniqueNilpotent}
  Let $\lambda\in \Hom_\al(\UH,\C)$ be an additive weight and
  $\kappa=q^{\lambda}\in \Hom_\al(\UK,\C)$ its induced multiplicative weight.
  Then $V_\lambda\in Ob(\catH)$ and $V_\kappa\in Ob(\catU_{|\kappa|})$ are the
  unique (up to isomorphism) irreducible
  nilpotent module with highest weights $\lambda$ and $\kappa$, respectively.
  Furthermore, $\FF(V_\lambda)$ is isomorphic to $ V_\kappa$, where $\FF$ is
  the forgetful functor defined in \eqref{E:ForgetfulF}.
\end{prop}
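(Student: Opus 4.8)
The plan is to derive everything from the PBW theorem (Theorem \ref{T:PBW}) together with $\UqgH\cong\UH\otimes_\C\Uqg$ and the fact that every nilpotent module is annihilated by $I$ (resp.\ $I^H$). First I would realize the Verma modules concretely: the PBW factorization gives a $\Uqn{-}$-module isomorphism $\wt V_\lambda\cong\Uqn{-}v_0$, and likewise $\wt V_\kappa\cong\Uqn{-}v_0$ over $\Uqg$. Since $v_0$ is a highest weight vector, $X_\beta v_0=0$ for every positive root $\beta$ (because $X_\beta\in\Uqn{++}$), hence $X_\beta^\ro v_0=0$; and as $X_{-\beta}^\ro$ is central, $\UqgH\cdot X_{-\beta}^\ro v_0=X_{-\beta}^\ro\,\Uqn{-}v_0$, so $I^H\wt V_\lambda=(I\cap\Uqn{-})\,v_0$. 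Using that $\Uqn{-<}$ and $I\cap\Uqn{-}$ are complementary in $\Uqn{-}$, this gives $\wt V_\lambda/I^H\wt V_\lambda\cong\Uqn{-<}v_0$ and $\wt V_\kappa/I\wt V_\kappa\cong\Uqn{-<}v_0$, each of dimension $\ro^N$. Since $V_\lambda$ (resp.\ $V_\kappa$) is the irreducible quotient of $\wt V_\lambda$ (resp.\ $\wt V_\kappa$) and is annihilated by $I^H$ (resp.\ $I$), it is a quotient of the finite-dimensional module just exhibited, hence finite-dimensional and nilpotent.

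I would then check the weight-module structure and the central character. The elements $X_{\pm\beta}^\ro$ act by $0$; on the $\UK$-weight space of weight $\eta$ of the relevant Verma module, $K_\gamma^\ro$ acts by $\eta(K_\gamma)^\ro$, and this equals $\kappa(K_\gamma)^\ro$ independently of $\eta$, because any two weights of the Verma module differ by an element of the root lattice, on which the form takes integer values, and $q^\ro=1$. Reading off $X_{\pm\beta}^\ro\mapsto0$ and $K_\gamma^\ro\mapsto\kappa(K_\gamma^\ro)$ and comparing with the definition of $|\kappa|\in T\subseteq\Gr$ (the homomorphism $\ZUo\to\C$ equal to $\kappa$ on $\ZUo\cap\UK$ and to $0$ on $\ZUo\cap(\Uqn{++}\cup\Uqn{--})$), we obtain $V_\kappa\in Ob(\catU_{|\kappa|})$ and $V_\lambda\in Ob(\catH)$.

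For uniqueness, let $V$ be an irreducible nilpotent $\UqgH$-module with a highest weight vector $v$ of additive weight $\lambda$. Then $V=\UqgH v=\Uqn{-}v$ (since $\Uqn{+}v=\UK v=\UH v=\C v$), and $X_{-\beta}^\ro v=0$ forces $\Uqn{-}v=\Uqn{-<}v$, so $V$ is finite-dimensional; since $V$ is a weight module, $K_\beta$ acts on $v$ by $\kappa(K_\beta)$. Hence $v$ satisfies the relations defining the cyclic generator of $\wt V_\lambda$, so there is a surjection $\wt V_\lambda\twoheadrightarrow V$; it factors through $\wt V_\lambda/M^H=V_\lambda$ (where $M^H$ is the maximal submodule not containing $v_0$), and irreducibility of $V_\lambda$ forces $V\cong V_\lambda$. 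The same argument over $\Uqg$ gives uniqueness of $V_\kappa$.

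Finally, $\FF(V_\lambda)$ is a finite-dimensional nilpotent weight module over $\Uqg$ generated by $v_+$, which is killed by all $X_i$ and has $\UK$-weight $q^\lambda=\kappa$; by the uniqueness just proved it is enough to show $\FF(V_\lambda)$ is irreducible over $\Uqg$. This is the step I expect to be the main obstacle, since a $\Uqg$-submodule of $\FF(V_\lambda)$ need not be $\UH$-stable, so irreducibility does not transfer formally. I would argue directly: if $0\neq W\subseteq\FF(V_\lambda)$ is a $\Uqg$-submodule, then finite-dimensionality and nilpotence of the $X_i$ yield a nonzero $\UK$-weight vector $w\in W$ with $X_iw=0$ for all $i$; decomposing $w$ into its $\UH$-weight components (which all share the same $\UK$-weight) and using that the relations $X_iw=0$ split along $\UH$-weights, each nonzero component is a highest weight vector of the simple module $V_\lambda$, hence a scalar multiple of $v_+$ by the uniqueness (up to scalar) of the highest weight vector of a simple nilpotent module recalled before the statement. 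Thus $w\in\C v_+$, so $v_+\in W$ and $W\supseteq\Uqg v_+=V_\lambda=\FF(V_\lambda)$, i.e.\ $W=\FF(V_\lambda)$. Therefore $\FF(V_\lambda)$ is irreducible, and by uniqueness $\FF(V_\lambda)\cong V_\kappa$.
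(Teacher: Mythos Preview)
Your proof is correct and follows essentially the same approach as the paper: finite-dimensionality via PBW and the quotient by $I$, uniqueness via the universal property of the Verma module, and the final isomorphism via uniqueness of the highest weight vector in a simple nilpotent module. The only cosmetic difference is in the last step: the paper produces a surjection $\FF(V_\lambda)\twoheadrightarrow V_\kappa$ from the Verma level and argues that a nonzero kernel would contain a highest weight vector contradicting simplicity of $V_\lambda$, whereas you prove directly that $\FF(V_\lambda)$ is irreducible over $\Uqg$ and then invoke uniqueness; your explicit $\UH$-weight decomposition of a $\UK$-weight highest weight vector is exactly the detail underlying the paper's appeal to the earlier-stated uniqueness of $v_+$.
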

\begin{proof}
  Let $V$ be the module $V_\lambda$ or $V_\kappa$ with height weight vector
  $v_0$.  Since the ideal $I$ annihilates $V$ we have that
  $V=\Uqn-v_0=\Uqn{-<}v_0$.  Therefore, the set $\{X_{-\beta_1}^{k_1}\cdots
  X_{-\beta_N}^{k_N}v_0 : 0\leq k_i<\ro\}$ contains a finite basis of weight
  vectors.
  The uniqueness of $V$ comes from the universal property of its Verma module
  $\wt V$: $\wt V$ can be mapped to any module $W$ with a highest weight
  vector of the same weight as $v_0$.  If $W$ is irreducible then the kernel
  of this map is a maximal proper submodule and so $W$ is isomorphic to $V$.
  
  Similarly, the map $\FF(\wt
  V_\lambda)\stackrel\sim\to\wt V_\kappa$ induces a surjective map
  $\FF(V_\lambda)\to V_\kappa$.  
  If the kernel of this map was nonzero then it would be a nilpotent
  module with a highest weight vector $w$.   But $w$ would also be
  a highest weight vector of $V_\lambda$ generating a proper submodule of
  $V_\lambda$.  This is not possible because $V_\lambda$ is irreducible.
\end{proof}
Proposition \ref{P:UniqueNilpotent} implies that any irreducible nilpotent
module in $\catU$ is isomorphic to $\FF(V_\lambda)$ for some $\lambda$.
However, it is not clear that every nilpotent $\Uqg$-module $V$ is isomorphic
to $\FF(W)$ for some $\UqgH$ weight module $W$.

\begin{prop}\label{P:typical}\cite[Th. 3.2]{DK} 
  Let $\lambda\in \Hom_\al(\UH,\C)$ be an additive weight and
  $\kappa=q^{\lambda}\in \Hom_\al(\UK,\C)$ its induced multiplicative weight.
  Let $v_+$ and $v_-$ be the highest and lowest weight vectors of $V_\kappa$,
  respectively.  Then the following are equivalent.
  \begin{enumerate}
  \item The linear map $\Uqn{-<}\to V_\kappa$ defined by $x\mapsto xv_+$ is
    bijective.
  \item The linear map $\Uqn{+<}\to V_\kappa$ defined by $x\mapsto xv_-$ is
    bijective.
  \item $q^{2\ang{\lambda+\rho,\beta}+m\ang{\beta,\beta}}\neq1$ for all
    $\beta\in\roots^{+} $ and $m\in \{1,\ldots\ro-1\}$.
\end{enumerate}
\end{prop}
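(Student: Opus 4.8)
The plan is to reduce the whole statement first to a single dimension count and then to the non‑vanishing of a Shapovalov‑type determinant. First I would note that both linear maps in (1) and (2) are surjective with $\ro^N$‑dimensional source, so each is bijective precisely when $\dim_\C V_\kappa=\ro^N$. Indeed, $x\mapsto xv_+$ is surjective because $V_\kappa$ is a highest weight module, so $V_\kappa=\Uqn-v_+$; and since $V_\kappa$ is nilpotent it is annihilated by $I$, while $\Uqn-=\Uqn{-<}\oplus(\Uqn-\cap I)$ by the PBW theorem (Theorem \ref{T:PBW}) and the description of $I$ above, whence $\Uqn-v_+=\Uqn{-<}v_+$. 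Symmetrically, $V_\kappa$ is simple by Proposition \ref{P:UniqueNilpotent}, hence generated by its lowest weight vector, which gives $V_\kappa=\Uqn+v_-=\Uqn{+<}v_-$. As $\dim_\C\Uqn{\pm<}=\ro^N$ by Theorem \ref{T:PBW}, the conditions (1), (2) and the equality $\dim_\C V_\kappa=\ro^N$ are equivalent, and the remaining task is to match this last condition with (3).

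Next I would realize $V_\kappa$ through the reduced Verma module $\wt V_\kappa/I\wt V_\kappa$, which has dimension $\ro^N$ with PBW basis $\{X_{-\beta_1}^{k_1}\cdots X_{-\beta_N}^{k_N}v_0:0\le k_i<\ro\}$ by Theorem \ref{T:PBW}. Using the algebra antiautomorphism $\tau$ of $\Uqg$ that fixes $\UK$ and exchanges $X_i$ with $X_{-i}$, one equips $\wt V_\kappa/I\wt V_\kappa$ with the contravariant (Shapovalov) bilinear form, defined weight space by weight space; since submodules of a weight module are graded by weight, a standard argument identifies the radical of this form with the unique maximal submodule not containing $v_0$, so $V_\kappa$ is its quotient by that radical. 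Hence $\dim_\C V_\kappa=\ro^N$ if and only if the contravariant form is non‑degenerate, i.e.\ if and only if its Gram determinant does not vanish on any weight space.

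The heart of the argument is then the evaluation of this Gram determinant, the quantum Shapovalov determinant at an odd‑order root of unity. Following De Concini--Kac, I would show that on the weight space of weight $\lambda-\nu$ it is a non‑zero scalar times
\[
\prod_{\beta\in\roots^{+}}\;\prod_{m=0}^{\ro-1}\bigl(1-q^{2\ang{\lambda+\rho,\beta}+m\ang{\beta,\beta}}\bigr)^{P(\nu-m\beta)},
\]
where $P$ is a suitable (quantum Kostant) partition function; as every factor $1-q^{2\ang{\lambda+\rho,\beta}+m\ang{\beta,\beta}}$ occurs with positive exponent for some $\nu$, the form is non‑degenerate on all weight spaces simultaneously exactly when none of these factors vanishes, which is precisely condition (3). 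To obtain the factorization I would start from the classical (generic $q$) Shapovalov/Kac--Kazhdan factorization over positive roots, localize the factor attached to a fixed $\beta\in\roots^{+}$ at the rank‑one subalgebra $U_\beta\cong\Uslt$ generated by $X_{\pm\beta}$ and $K_\beta$ — produced from the simple‑root copies of $\Uslt$ by Lusztig's braid operators (Levendorskii--Soibelman straightening) — reducing the local factor to the elementary computation of $X_\beta X_{-\beta}^{m}v_0$ in $\Uslt$ at the root of unity, and finally invoke a degree/multiplicity count (equivalently a Jantzen‑filtration sum formula) to rule out any further factors.

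I expect the main obstacle to be pinning the factorization down to \emph{exactly} condition (3). When (3) fails at a \emph{non‑simple} root $\beta$ no singular vector is visible by inspection, and one must transport the rank‑one $\Uslt$‑picture for $\beta$ by a braid‑group element and check that the corresponding factor genuinely divides the reduced determinant; and for the converse one needs the bookkeeping that these are the only sources of degeneracy. A possible shortcut for the implication from (3) to fullness of $V_\kappa$ would be to deduce $|\kappa|\notin\X$ from (3), apply Theorem \ref{Th:Ugss} to produce the full family of $\ro^{\rk}$ simple nilpotent modules of dimension $\ro^N$ in $\catU_{|\kappa|}$, and then identify $V_\kappa$ among them by a counting argument; but making that count precise is itself delicate, so the Shapovalov‑determinant route is the cleaner one.
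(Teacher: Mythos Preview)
Your reduction of (1) and (2) to the single dimension equality $\dim_\C V_\kappa=\ro^N$, and then to irreducibility of the diagonal module $V'_\kappa=\wt V_\kappa/I\wt V_\kappa$, is exactly what the paper does. The only difference is that the paper simply cites \cite[Th.~3.2]{DK} for the equivalence of that irreducibility with condition~(3), whereas you go further and sketch the Shapovalov--determinant proof of that cited result; your outline of it is accurate.
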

\begin{proof}
  In \cite[Section 3.2]{DK} De Concini and Kac define the so called diagonal
  module $V'_\kappa=\wt V_\kappa/I\wt V_\kappa$ with highest weight vector of
  weight $\kappa$.  They prove that the third condition is equivalent to the
  module $V'_\kappa$ being 
  irreducible (remark they have a misprint with the range of $m$ in
  their Theorem 3.2).
Thus, the proposition follows from the following facts: 1) when
$V'_\kappa$ is irreducible it is isomorphic to $V_\kappa$, 2) the maps
in the proposition are surjective, and 3)
$\dim_\C(\Uqn{-<})=\dim_\C(\Uqn{+<})=\dim_\C(V'_\kappa)=\ro^{N}$.
\end{proof}
If $\lambda$ and $\kappa=q^\lambda$ are weights such that one of the
equivalent conditions of Proposition \ref{P:typical} is satisfied then we say
that $V_\lambda$ and $V_\kappa$ are {\em typical} modules.  
When $\g=\sll_2$ then the typical modules are exactly the simple $r$-dimensional $\Uqg(\sll_2)$-weight modules described above.
 The following
corollary combined with Proposition \ref{P:UniqueNilpotent} implies that any
irreducible $\Uqg$-module $V\in \catU_g$ with $g\in T\setminus \X$ is typical.

\begin{corollary}\label{C:GsNil}
  If $g\in T\subset\Gr$ such that
  $g(K_\beta^{\ro})\neq\pm1$ for all  $\beta\in\roots^{+}$, then $g\notin \X$. 
\end{corollary}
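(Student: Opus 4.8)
The plan is to verify item~(3) of Theorem~\ref{Th:Ugss} for $g$, i.e.\ to produce $\ro^{\rk}$ pairwise non-isomorphic irreducible $\Uqg$-modules of dimension $\ro^{N}$ on which $\ZUo$ acts by $g$; the equivalence (2)$\Leftrightarrow$(3) then gives $g\notin\X$. The modules will be the typical modules $V_\kappa$ attached to the multiplicative weights $\kappa$ with $|\kappa|=g$.

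First I would count those weights. Since $\UK=\C[\La]$ on the generators $K_{\omega_i}$ and $\ZUo\cap\UK$ is generated by the $K_\gamma^{\ro}$, the restriction map $\Hom_\al(\UK,\C)\to\Hom_\al(\ZUo\cap\UK,\C)\simeq T$, $\kappa\mapsto|\kappa|$, is the $\ro$-th power map on each coordinate $\kappa(K_{\omega_i})$ of $(\C^{\times})^{\rk}$; as $\ro$ is odd it is onto with all fibres of size $\ro^{\rk}$. Thus there are exactly $\ro^{\rk}$ multiplicative weights $\kappa$ over $g$, and each is of the form $q^{\lambda}$ for an additive weight $\lambda\in\Hom_\al(\UH,\C)$ (solve $q^{\ang{\alpha_i,\lambda}}=\kappa(K_{\alpha_i})$ for the $\lambda(H_i)$ after choosing logarithms; this is possible since $DA$ is nondegenerate).

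Next I would show each such $\kappa$ is typical. Fix $\kappa$ with $|\kappa|=g$ and a lift $\lambda$. For a positive root $\beta=\sum_j b_j\alpha_j\in\La$,
$$g(K_\beta^{\ro})=\kappa(K_\beta)^{\ro}=q^{\ro\ang{\beta,\lambda}}=\e^{2i\pi\ang{\beta,\lambda}},$$
so the hypothesis $g(K_\beta^{\ro})\neq\pm1$ says precisely that $2\ang{\beta,\lambda}\notin\Z$. Since $\ang{\rho,\beta}=\sum_j b_jd_j$ and $\ang{\beta,\beta}=\ang{\alpha_j,\alpha_j}=2d_j$ (for $\beta$ in the Weyl orbit of $\alpha_j$) are integers, for every $m$ the number $2\ang{\lambda+\rho,\beta}+m\ang{\beta,\beta}=2\ang{\beta,\lambda}+\bigl(2\ang{\rho,\beta}+m\ang{\beta,\beta}\bigr)$ is not an integer, hence not in $\ro\Z$, so $q^{2\ang{\lambda+\rho,\beta}+m\ang{\beta,\beta}}\neq1$ for all $\beta\in\roots^{+}$ and all $m\in\{0,\dots,\ro-1\}$. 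This is item~(3) of Proposition~\ref{P:typical}, so $V_\lambda$ and $V_\kappa=\FF(V_\lambda)$ are typical. By that proposition (and the facts recalled in its proof) $V_\kappa$ coincides with the diagonal module $\wt V_\kappa/I\wt V_\kappa$: it is irreducible, killed by $I$ hence nilpotent, of dimension $\dim_\C\Uqn{-<}=\ro^{N}$, and it lies in $\catU_{|\kappa|}=\catU_g$, so $\ZUo$ acts on it by $g$.

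It remains to see that the $\ro^{\rk}$ modules $V_\kappa$, $|\kappa|=g$, are pairwise non-isomorphic. A $\Uqg$-module isomorphic to both $V_\kappa$ and $V_{\kappa'}$ would, by Proposition~\ref{P:UniqueNilpotent}, be isomorphic to $\FF(V_\lambda)$ (for $\lambda$ a lift of $\kappa$) and hence inherit the $\UqgH$-structure of $V_\lambda$; the isomorphism with $V_{\kappa'}$ would then exhibit in it a $\Uqn+$-annihilated vector of $\UK$-weight $\kappa'$, and replacing it by one of its $\UH$-weight components gives a $\UqgH$-highest weight vector of $V_\lambda$ of some additive weight $\lambda'$ with $q^{\lambda'}=\kappa'$. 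As it generates the irreducible $V_\lambda$, every $\UH$-weight of $V_\lambda$ lies below both $\lambda'$ and $\lambda$ in the dominance order, while $\lambda,\lambda'$ both occur; hence $\lambda=\lambda'$ and $\kappa=q^{\lambda}=q^{\lambda'}=\kappa'$. This produces the $\ro^{\rk}$ modules required by Theorem~\ref{Th:Ugss}(3), whence $g\notin\X$. The steps most in need of care are this non-isomorphism argument and the lattice/weight bookkeeping in the first two paragraphs; once those are in place the arithmetic certifying item~(3) of Proposition~\ref{P:typical} is a single line.
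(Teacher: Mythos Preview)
Your proof is correct and follows the same strategy as the paper: lift $g$ to $\ro^{\rk}$ multiplicative weights, verify the typicality criterion of Proposition~\ref{P:typical}(3) via the arithmetic $2\ang{\beta,\lambda}\notin\Z$, and invoke Theorem~\ref{Th:Ugss}(3). Your non-isomorphism argument is more elaborate than needed---since each $V_\kappa$ has a one-dimensional highest weight space of $\UK$-weight $\kappa$, an isomorphism $V_\kappa\simeq V_{\kappa'}$ immediately forces $\kappa=\kappa'$---and the paper simply takes this for granted.
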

\begin{proof}
  There exist (non unique) additive weights $\lambda_i$, for
  $i=1,\ldots,\ro^{\rk}$, with induced multiplicative weights $\kappa_i$ such
  that $|\kappa_i|=g$, for all $i=1,\ldots,\ro^{\rk}$.  Then for all $\beta\in
  \roots^+ $ and $i=1,\ldots,\ro^{\rk}$ we have
  $g(K_\beta^{\ro})=\kappa_i(K_\beta^{\ro})=q^{\ro\ang{\lambda_i,\beta}}$.
  Combining this with the hypothesis $g(K_\beta^{\ro})\neq\pm1$ we have
  $q^{2\ang{\lambda_i+\rho,\beta}+m\ang{\beta,\beta}}\neq1$ for all
  $\beta\in\roots^{+} $, $m\in \{0,\ldots\ro-1\}$ and
  $i\in\{1,\ldots,\ro^{\rk}\}$.  Thus, Proposition \ref{P:typical} implies
  that for each $i=1,\ldots,\ro^{\rk}$ the module $V_{\kappa_i}$ has dimension $\ro^{N}$ and the result follows from
  Theorem~\ref{Th:Ugss}.
\end{proof}

\subsection{The pivotal structure}\label{SS:PivotalSt}
It is well known that $\ang{2\rho,\alpha_i}=\ang{\alpha_i,\alpha_i}=2d_i$
where $\alpha_i$ is a simple positive root.  Then for $\sigma=\pm1$, Equation
\eqref{eq:rel1} implies that $K_{2\rho} X_{\sigma
  i}K_{2\rho}^{-1}=K_{\alpha_i}X_{\sigma i}K_{\alpha_i}^{-1}$.  Combining this
fact with $S^{2}(X_{\sigma i})=K_{\alpha_i}X_{\sigma i}K_{\alpha_i}^{-1}$ we
have
\begin{equation}
  \label{eq:HopfPivotal}
  S^2(x)=K_{2\rho}^{1-{\ro}} x K_{2\rho}^{{\ro}-1}
\end{equation}
for all $x$ in $\UqgH$.  Equation \eqref{eq:HopfPivotal} is a modification (by the central element $K_{2\rho}^{\ro}$) of a similar well known equation.  We impose this modification to make the modified dimension $\qd$ ``spherical'' and the twist invariant by the antipode (see Subsections \ref{SS:braiding} and \ref{SS:modifiedDim}).  

It is a general
fact  that a category of modules over a Hopf
algebra in which the square of the antipode is equal to the conjugation by a
group-like element is a pivotal (or sovereign) category (see \cite[Proposition 2.9]{Bi}).  
Hence, from Equation \eqref{eq:HopfPivotal} it follows that  $\catU$ and $\catH$ are both pivotal $\FK$-linear categories with ground ring $\C$.  In particular, for any
object $V$ in $\catU$, the dual object and the duality morphisms are defined as
follows: $V^* =\Hom_\C(V,\C)$ and
\begin{align}\label{E:DualityForCat}
  b_{V} :\, & \C \rightarrow V\otimes V^{*} \text{ is given by } 1 \mapsto
  \sum v_j\otimes v_j^*,\notag
  \\
  d_{V}:\, & V^*\otimes V\rightarrow \C \text{ is given by } f\otimes w
  \mapsto f(w),\notag
  \\
  b_V':\, & \C \rightarrow V^*\otimes V \text{ is given by } 1 \mapsto \sum
  v_j^*\otimes K_{2\rho}^{{\ro}-1}v_j,
  \\
  d_{V}':\, & V\otimes V^{*}\rightarrow \C \text{ is given by } v\otimes f
  \mapsto f(K_{2\rho}^{1-{\ro}}v),\notag
\end{align}
where $\{v_j\}$ is a basis of $V$ and $\{v_j^*\}$ is the dual basis
of $V^*$.

\subsection{Ambidextrous modules} 
Recall the definition of an ambidextrous object given in
Section~\ref{S:RTLink}.  In this subsection we show that typical modules over
$\UqgH$ and $\Uqg$ are ambidextrous.  With this in mind we give a general
theorem which allows one to prove certain objects are ambidextrous.

We will now assume that $\cat$ is a category of $H$-modules for some Hopf
$\C$-algebra $H$.  If $V$ is any object of $\cat$ we have the bilinear pairing
$\ang{\;,\;}$ on $V\otimes V^*= (V\otimes V^{*})^*$ given by $
\ang{w,w'}=d_{V\otimes V^*}(w\otimes w').$ Then for any $f\in\End(V\otimes
V^*)$ we have $ \ang{f(w),w'}= \ang{w,f^*(w')}$.
\begin{theorem}\label{T:dualvectorambi}
  Let $V$ be an irreducible $H$-module.  Assume there exists $w_0,w'_0\in
  V\otimes V^*$ and $x,y\in H$ such that $\ang{w_0,w'_0}=1$ and $x.w_0$,
  $y.w'_0$ are nonzero $H$-invariant vectors (i.e. any $h\in H$ acts on
  $x.w_0$ and $y.w'_0$ by the scalar $\varepsilon(h)$).  Then the module $V$
  is ambidextrous if either of the following two conditions hold:
  \begin{enumerate}
  \item[(a)] $\C w_0$ and $\C w'_0$ are invariant lines under the
    action of $\End_\cat(V\otimes V^*)$,
  \item[(b)] $\ker x\subset\ker\ang{\cdot, w'_0}$ and $\ker
    y\subset\ker\ang{w_0,\cdot}$, here $x,y$ are operators on $ V\otimes V^*$.
  \end{enumerate}
\end{theorem}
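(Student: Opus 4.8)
The plan is to leverage simplicity of $V$ through Schur's lemma: since $\End_\cat(V)=\C\,\Id_V$, the space $\Hom_\cat(\unit,V\otimes V^*)$ of $H$-invariants in $V\otimes V^*$ is one dimensional, spanned by $\omega:=b_V(1)=\sum_j v_j\otimes v_j^*$. Under the pivotal identification $(V\otimes V^*)^*=V\otimes V^*$ the transpose $f^*$ of any $f\in\End_\cat(V\otimes V^*)$ is again a morphism of $\cat$, so $f(\omega)$ and $f^*(\omega)$ are invariant; write $f(\omega)=\lambda_f\,\omega$ and $f^*(\omega)=\mu_f\,\omega$. The statement to prove, $f\circ b_V=f^*\circ b_V$ for all such $f$, is then exactly $\lambda_f=\mu_f$. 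The naive approach — feed $f(\omega)=f^*(\omega)$ into $\ang{f(\omega),\omega}=\ang{\omega,f^*(\omega)}$ — yields only $\lambda_f\ang{\omega,\omega}=\mu_f\ang{\omega,\omega}$, and $\ang{\omega,\omega}$ is the quantum dimension $\qdim(V)$, which vanishes for the typical modules we have in mind; this is exactly why the auxiliary data $(w_0,w_0',x,y)$ is needed, and it is the only real subtlety. As a preliminary, I would note that $x.w_0$ and $y.w_0'$, being nonzero $H$-invariant vectors, must be $x.w_0=c\,\omega$ and $y.w_0'=c'\,\omega$ with $c,c'\in\C^\times$, and that $H$-linearity of $f$ and $f^*$ makes them commute with the operators $x,y$ on $V\otimes V^*$.

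For case (a), hypothesis (a) applied to $f$ and to $f^*$ (both lie in $\End_\cat(V\otimes V^*)$) gives scalars with $f(w_0)=\alpha_f\,w_0$ and $f^*(w_0')=\beta_f\,w_0'$. Applying $f$ to $x.w_0=c\,\omega$ and using $f(x.w_0)=x.f(w_0)$ yields $c\,\lambda_f\,\omega=\alpha_f\,c\,\omega$, hence $\lambda_f=\alpha_f$; symmetrically $f^*(y.w_0')=y.f^*(w_0')$ gives $\mu_f=\beta_f$. Finally, from $\ang{w_0,w_0'}=1$ together with $\ang{f(w_0),w_0'}=\ang{w_0,f^*(w_0')}$ we read off $\alpha_f=\ang{f(w_0),w_0'}=\ang{w_0,f^*(w_0')}=\beta_f$, so $\lambda_f=\mu_f$.

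For case (b), the inclusion $\ker x\subset\ker\ang{\cdot,w_0'}$ means the functional $w\mapsto\ang{w,w_0'}$ factors as $\psi\circ x$ for some linear $\psi$ on $V\otimes V^*$, and likewise $\ang{w_0,\cdot}=\chi\circ y$ for some linear $\chi$. Evaluating these at $w_0$ and $w_0'$ and using $x.w_0=c\,\omega$, $y.w_0'=c'\,\omega$, $\ang{w_0,w_0'}=1$ gives $c\,\psi(\omega)=1=c'\,\chi(\omega)$. Then, again by $H$-linearity, $\ang{f(w_0),w_0'}=\psi(x.f(w_0))=\psi(f(x.w_0))=c\,\psi(f(\omega))=c\,\lambda_f\,\psi(\omega)=\lambda_f$, and in the same way $\ang{w_0,f^*(w_0')}=c'\,\mu_f\,\chi(\omega)=\mu_f$; equality of the two left-hand sides gives $\lambda_f=\mu_f$.

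The only steps I expect to require care are the ones where simplicity of $V$ enters — that $b_V(1)$ genuinely spans $\Hom_\cat(\unit,V\otimes V^*)$, via Schur — and keeping track of the identification $(V\otimes V^*)^*=V\otimes V^*$ under which $f^*$ is an endomorphism of $V\otimes V^*$ and the adjunction identity $\ang{f(w),w'}=\ang{w,f^*(w')}$ is valid; once these are in place, the argument is the short, $f$-uniform chain of equalities above, and nothing in it is really a computation.
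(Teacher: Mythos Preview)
Your proof is correct and follows essentially the same route as the paper: use simplicity of $V$ to make $b_V(1)$ span the invariants, define the scalars $\lambda_f,\mu_f$ (the paper's $\alpha,\alpha'$) by the action of $f,f^*$ on $b_V(1)$, and then in each case reduce $\lambda_f=\mu_f$ to the adjunction identity $\ang{f(w_0),w_0'}=\ang{w_0,f^*(w_0')}$ via $H$-linearity together with the hypothesis on $\ker x,\ker y$ (or invariance of the lines). The only cosmetic difference is that in case~(b) you phrase the kernel inclusion as a factorization of the linear functional through $x$ (resp.\ $y$), whereas the paper writes $f(w_0)=\alpha w_0+v$ with $v\in\ker x$ and kills $\ang{v,w_0'}$ directly; these are the same argument.
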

\begin{proof}
  Let $f\in \End_\cat(V\otimes V^*)$.  Since $V$ is simple then any invariant
  vector of $V\otimes V^{*}$ is proportional to $ b_V(1)$ because $b_V$
  generates the one dimensional vector space $\Hom_\cat(\C,V\otimes
  V^*)\simeq\Hom_\cat(V,V)$.  So up to rescaling $x$ and $y$, we may assume
  that $x.w_0=y.w'_0=b_V(1)$.  Also, since $f(x.w_0)$ and $f(y.w'_0)$ are
  invariant vectors there exists $\alpha, \alpha' \in\C$ with $f(x.w_0)=\alpha
  x.w_0$ and $f(y.w'_0)=\alpha' y.w'_0$.  Therefore, there exists $(v,v')\in
  \ker x\times \ker y$ such that $f(w_0)=\alpha w_0+v$ and
  $f^{*}(w'_0)=\alpha'w'_0+v'$.  Also, $f\circ b_V(1)=f(x.w_0)=\alpha
  x.w_0=\alpha b_V(1)$ and $f^{*}\circ b_V(1)=\alpha' b_V(1)$.

  If Condition (a) holds we can assume $v=v'=$0.  Then
  $\alpha=\ang{f(w_0),w'_0}=\ang{w_0,f^{*}(w'_0)}=\alpha'$.  On the other
  hand, if Condition (b) is satisfied, $\ang{v,w'_0}=\ang{w_0,v}=0$ and so
  $\alpha=\ang{f(w_0),w'_0}=\ang{w_0,f^{*}(w'_0)}=\alpha'$.  Thus, in both
  case we have $f^{*}\circ b_V=f\circ b_V$.
\end{proof}

\begin{lemma}\label{L:hwPBW}
  Consider the two following elements of $\Uqg$:
  $$
  X_-=\prod_{\alpha\in \beta_*}X_{-\alpha}^{\ro-1}\quad\text{ and }\quad
  X_+=\prod_{\alpha\in \beta_*}X_\alpha^{\ro-1}.
  $$ 
If $\gamma_*$ is any convex order of $\roots^+$  and  $\{X_\alpha\}_{\alpha\in\gamma_*}$ is the corresponding convex set of root
  vectors then there exists $a,b\in\C^{*}$ such that
  $$
  X_-\equiv a \prod_{\alpha\in \gamma_*}X_{-\alpha}^{\ro-1}\mod I\quad\text{
    and }\quad X_+\equiv b \prod_{\alpha\in \gamma_*}X_\alpha^{\ro-1}\mod I
  $$
\end{lemma}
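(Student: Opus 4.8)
The plan is to identify, for each sign, the relevant top PBW monomial as (up to a nonzero scalar) the unique vector of its weight in $\Uqn-$ (resp.\ $\Uqn+$) modulo $I$, and to observe that this characterization does not depend on the convex order. I will treat $X_-$ in detail; $X_+$ is handled identically, with $\Uqn+$ in place of $\Uqn-$ and the opposite weight. Recall that $\Uqg$ is graded by the root lattice $L_R$ (with $X_{\pm i}$ of weight $\pm\alpha_i$ and every $K_\gamma$ of weight $0$), that $I$ is a graded ideal since it is generated by the homogeneous central elements $X_{\pm\beta}^{\ro}$, and that the negative root vector $X_{-\beta}$ attached to any convex order is homogeneous of weight $-\beta$. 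Because $\sum_{\alpha\in\roots^{+}}\alpha=2\rho$, both $X_-=\prod_{\alpha\in\beta_*}X_{-\alpha}^{\ro-1}$ and $\prod_{\alpha\in\gamma_*}X_{-\alpha}^{\ro-1}$ are homogeneous of weight $-2(\ro-1)\rho$; hence they are congruent modulo $I$ as soon as their images agree in the weight space $\bigl(\Uqn-/(I\cap\Uqn-)\bigr)_{-2(\ro-1)\rho}$.

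The main point is that this weight space is \emph{one-dimensional}. By Theorem~\ref{T:PBW} and the description of $I$ recalled in Subsection~\ref{SS:PBW}, the quotient $\Uqn-/(I\cap\Uqn-)$ has a basis given by the images of the monomials $X_{-\beta_1}^{k_1}\cdots X_{-\beta_N}^{k_N}$ with $0\le k_i<\ro$; such a monomial has weight $-2(\ro-1)\rho$ exactly when $\sum_i\bigl((\ro-1)-k_i\bigr)\beta_i=0$. Since the coefficients $(\ro-1)-k_i$ are nonnegative and the $\beta_i$ are positive roots, expanding in the linearly independent simple roots forces each coefficient to vanish, i.e.\ $k_i=\ro-1$ for all $i$; so $X_-$ is the only basis monomial of that weight, $X_-\notin I$, and $\bigl(\Uqn-/(I\cap\Uqn-)\bigr)_{-2(\ro-1)\rho}=\C\,\overline{X_-}$. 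I would then run the same argument with $\gamma_*$ in place of $\beta_*$, using that Theorem~\ref{T:PBW} and the PBW-monomial description of $I$ hold for every convex order: $\prod_{\alpha\in\gamma_*}X_{-\alpha}^{\ro-1}$ is then a basis monomial of the same quotient, of weight $-2(\ro-1)\rho$, hence nonzero there. Two nonzero vectors of a one-dimensional space are proportional, so $X_-\equiv a\prod_{\alpha\in\gamma_*}X_{-\alpha}^{\ro-1}\bmod I$ for some $a\in\C^{*}$, and the identical computation in $\Uqn+$ at weight $+2(\ro-1)\rho$ gives $X_+\equiv b\prod_{\alpha\in\gamma_*}X_{\alpha}^{\ro-1}\bmod I$ with $b\in\C^{*}$.

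The one genuinely non-formal ingredient is having the PBW theorem and the PBW-basis description of $I$ available for the two convex orders $\beta_*$ and $\gamma_*$ simultaneously; this is precisely the content of the results of De Concini, Kac and Procesi quoted in Subsection~\ref{SS:PBW}, namely that $\ZUo$, hence the ideal $I$, is independent of the chosen convex order, with $I\cap\Uqn\pm$ spanned by the PBW monomials having some exponent $\ge\ro$. If one wishes to avoid this, one can argue instead by evaluation on a typical module: for a generic weight $\kappa=q^{\lambda}$, Proposition~\ref{P:typical} shows that for either convex order the map $\Uqn{-<}\to V_\kappa$, $x\mapsto xv_+$, is bijective, and that the lowest weight vector $v_-$ spans the one-dimensional weight space of weight $\lambda-2(\ro-1)\rho$; hence $X_-v_+$ and $\prod_{\alpha\in\gamma_*}X_{-\alpha}^{\ro-1}v_+$ are nonzero multiples of $v_-$, thus of each other, and since an element of $\Uqn-$ annihilating $v_+$ lies in $I$ (its $\Uqn{-<}$-component being zero), the two congruences follow, and symmetrically for $X_+$.
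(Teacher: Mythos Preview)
Your argument is correct and is essentially the paper's own proof, carried out in more detail: the paper simply states that the weight space of $\Uqn\pm/I$ at weight $\pm 2(\ro-1)\rho$ is one-dimensional (by PBW) and generated by the top monomial for any convex order, which is exactly what you establish via the equation $\sum_i\bigl((\ro-1)-k_i\bigr)\beta_i=0$. Your alternative route through Proposition~\ref{P:typical} is a nice extra, but not needed.
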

\begin{proof}
  This is a consequence of (PBW) Theorem \ref{T:PBW}.  Indeed, the weight space of
  $\Uqn+/I$ with weight $2(\ro-1)\rho$ is one dimensional, generated by
  $\prod_{\alpha\in \gamma_*}X_\alpha^{\ro-1}$ for any convex order
  $\gamma_*$ of $\roots^+$.  This implies the equivalence for $X_-$,
   the equivalence for $X_+$ is  obtained analogously with $\Uqn-/I$.
\end{proof}

\begin{theorem}\label{T:typAmbi}
  Any typical module $V$ over $\UqgH$ or $\Uqg$ is ambidextrous.
\end{theorem}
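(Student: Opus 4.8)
The plan is to apply Theorem \ref{T:dualvectorambi} to $V=V_\lambda$ (the $\UqgH$-case; the $\Uqg$-case is then immediate, or handled identically, since $\FF(V_\lambda)=V_\kappa$ and the relevant structure descends). First I would fix the highest weight vector $v_+$ and lowest weight vector $v_-$ of $V=V_\lambda$, normalized so that $v_-$ lies in the line $\C\cdot X_-v_+$ guaranteed by the typicality condition (Proposition \ref{P:typical}), i.e. the map $\Uqn{-<}\to V$, $x\mapsto xv_+$ is a bijection. The natural candidates for the vectors in $V\otimes V^*$ are $w_0=v_+\otimes v_+^*$ and $w_0'=v_-\otimes v_-^*$, where $v_+^*,v_-^*\in V^*$ are the linear functionals dual to $v_+,v_-$ with respect to a weight basis. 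One checks $\ang{w_0,w_0'}=d_{V\otimes V^*}(w_0\otimes w_0')=v_+^*(v_+)\,v_-^*(v_-)=1$ (up to a harmless rescaling, using the explicit form of $d_{V\otimes V^*}$ coming from \eqref{E:DualityForCat}).

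Next I would produce the elements $x,y\in \UqgH$ and verify that $x\cdot w_0$ and $y\cdot w_0'$ are nonzero invariant vectors. The key point is Lemma \ref{L:hwPBW}: the ``long'' monomials $X_-=\prod_{\alpha\in\beta_*}X_{-\alpha}^{\ro-1}$ and $X_+=\prod_{\alpha\in\beta_*}X_{\alpha}^{\ro-1}$ are, modulo $I$, independent (up to scalar) of the convex order, and $X_-v_+$ is a nonzero multiple of $v_-$ by typicality (dually $X_+v_-$ is a nonzero multiple of $v_+$). On $V\otimes V^*$ the coproduct $\Delta$ lets $X_-$ act; because $v_+^*$ is a \emph{highest}-to-\emph{lowest} type object in the dual module, one computes that an appropriately chosen summand of $\Delta^{(N(\ro-1))}(X_-)$ (the iterated coproduct) sends $v_+\otimes v_+^*$ to a nonzero multiple of $v_-\otimes v_+^*$ or, after one more application with $X_+$ acting on the second tensor factor, to the canonical invariant vector $b_V(1)=\sum_j v_j\otimes v_j^*$. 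Concretely I would take $x$ to be a product of such long monomials (one factor acting through the $V$-leg to push $v_+$ to $v_-$, a matching factor acting through the $V^*$-leg), and check that $x\cdot w_0$ is $\UqgH$-invariant: it is annihilated by every $X_{\pm i}$ (being at the ``bottom'' of the $\otimes$-weight filtration and a multiple of $b_V(1)$, which is the unique invariant line since $V$ is simple), and $\UH,\UK,\ZUo$ act by the counit because the total weight is $0$. Symmetrically one builds $y$ from $X_+$-type monomials with $y\cdot w_0'$ invariant.

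Finally I would verify hypothesis (b) of Theorem \ref{T:dualvectorambi}: $\ker x\subset\ker\ang{\cdot,w_0'}$ and $\ker y\subset\ker\ang{w_0,\cdot}$. The pairing $\ang{\cdot,w_0'}$ on $V\otimes V^*$ is, up to the pivotal twist $K_{2\rho}^{1-\ro}$, the functional picking out the coefficient of $v_+\otimes v_+^*$ (dual to $w_0'=v_-\otimes v_-^*$ under $d_{V\otimes V^*}$). Since $x$ acts injectively on the line $\C(v_+\otimes v_+^*)$ inside $V\otimes V^*$ (this is exactly the typicality bijectivity statement transported to the tensor square), any vector in $\ker x$ has zero $v_+\otimes v_+^*$-component, hence lies in $\ker\ang{\cdot,w_0'}$; the other inclusion is dual. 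Then Theorem \ref{T:dualvectorambi} yields that $V_\lambda$ is ambidextrous. For $\Uqg$: by Proposition \ref{P:UniqueNilpotent} any typical $V_\kappa$ is $\FF(V_\lambda)$, and ambidexterity is a statement about $\End_\cat(V\otimes V^*)$ and the duality morphisms, all of which are preserved by $\FF$, so $V_\kappa$ is ambidextrous as well.

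The main obstacle I anticipate is the bookkeeping in the iterated-coproduct computation: one must select precisely the right summand of $\Delta^{(k)}$ of the long PBW monomials so that the two tensor legs ``cooperate'' — one leg transporting $v_+\mapsto v_-$ in $V$, the other transporting the highest covector to the lowest covector in $V^*$ — and must confirm the resulting vector is genuinely nonzero rather than killed by a $q$-binomial vanishing. Here the oddness of $\ro$ and the explicit nonvanishing condition $q^{2\ang{\lambda+\rho,\beta}+m\ang{\beta,\beta}}\neq 1$ from Proposition \ref{P:typical} are what keeps everything nonzero, so the argument should come down to citing that proposition at the critical step rather than a fresh computation. The convex-order independence from Lemma \ref{L:hwPBW} is what makes it legitimate to compute each leg with whichever convex order is most convenient.
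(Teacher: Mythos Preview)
Your setup has a concrete error that breaks the application of Theorem~\ref{T:dualvectorambi}. With $w_0=v_+\otimes v_+^*$ and $w_0'=v_-\otimes v_-^*$ (dual--basis functionals), the pairing $\ang{w_0,w_0'}$ is \emph{zero}, not $1$: unwinding $d_{V\otimes V^*}$ via $(V\otimes V^*)^*=V^{**}\otimes V^*\cong V\otimes V^*$ one finds $\ang{v\otimes f,\,w\otimes g}$ is (up to the pivotal scalar) $g(v)\,f(w)$, and here $v_+^*(v_-)=0$. The paper instead takes $w_0=v_+\otimes v_+'$ with $v_+'$ the \emph{highest} weight vector of $V^*$ (so $w_0$ has weight $2(\ro-1)\rho$, not $0$), and then $x$ is simply the single long monomial $X_-=\prod_{\alpha}X_{-\alpha}^{\ro-1}$ acting via the coproduct; no ``matching factor on the $V^*$-leg'' is needed. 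The invariance of $x\cdot w_0$ is checked generator by generator, using Lemma~\ref{L:hwPBW} to put the needed $X_{-i}^{\ro-1}$ on the left and the identity $\qN{\ro}_{q_i}=0$.

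Two further gaps. First, your verification of condition~(b) does not work as stated: ``$x$ injective on the line $\C w_0$'' does \emph{not} imply $\ker x$ has vanishing $w_0$-component (there could be $w$ with $x\cdot w=x\cdot w_0$). The paper instead observes that the $\UqgH$-weight grading of $V\otimes V^*$ is orthogonal for $\ang{\cdot,\cdot}$ and that $\ker x$ is contained in the complement of $\C w_0$ for weight reasons. Second, ambidexterity is not obviously preserved by the forgetful functor $\FF$: one needs $f\circ b_V=f^*\circ b_V$ for every $f\in\End_{\catU}(V\otimes V^*)$, and this endomorphism algebra is in general strictly larger than $\End_{\catH}(V\otimes V^*)$. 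This is exactly why the paper invokes condition~(b) (rather than~(a)) in the $\Uqg$-case, using the $\catH$-weight decomposition only to control the kernels of $x$ and $y$ as vector subspaces.
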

\begin{proof}
  Let $v_+,v_+'$ be (the unique up to a scalar) highest weight vectors of $V,
  V^{*}$, respectively.  Choose lowest weight vectors $v_-,v_-'$ of $V, V^{*}$
  so that $\ev_V(v_+'\otimes v_-)=\tev_V(v_+\otimes v_-')=1$ (i.e. choose
  scalars of the vectors).  We want to apply Theorem~\ref{T:dualvectorambi}
  where $w_0=v_+\otimes v_+', w_0'=v_-\otimes v_-'$ and $x,y$ are any elements
  of $\Uqg$ proportional to $\prod_{\alpha\in \gamma_*}X_{-\alpha}^{\ro-1},
  \prod_{\alpha\in \gamma_*}X_\alpha^{\ro-1}$, respectively, for any convex
  ordering $\gamma_*$ of $\roots^{+}$ and corresponding convex set $\{X_{\pm
    \gamma}\}_{\gamma\in \gamma^*}$.  First, we will prove that $x.w_0$ is a
  non-zero invariant vector, the proof for $y.w_0'$ is analogous.
     
  Note that $x.v_+\in\C^{*}v_-$ and the weight of $v_+'$ is the opposite of
  the weight of $v_-$.  Hence, the weight of $w_0$ is $2(\ro-1)\rho$ and
  weight of $x.w_0$ is $0$.  We write $x.w_0=\sum v_i\otimes v'_i$ where
  $v_i,v_i'$ are weight vectors.  The term of $x.w_0$ for which $v_i$ has
  smallest weight is equal to $(x\otimes1). (v_+\otimes v_+')=(x.v_+)\otimes
  v_+'\neq 0$.  Moreover, if $j\neq i$ then the weight of $v_j$ is strictly
  larger than the weight of $v_i$.  Thus, $x.w_0\neq0$.

  Next we show $x.w_0$ is invariant.  It suffices to show that for any
  $i=1,\ldots,\rk$ we have $X_{i}.(x.w_0)=0$, $X_{-i}.(x.w_0)=0$ and $
  H_i.(x.w_0)=0$.  First, the last equality is true because as mentioned above
  $x.w_0$ has weight $0$.  Second, applying Lemma~\ref{L:hwPBW} with a convex
  ordering $\gamma_*$ of $\roots^{+}$ such that $\gamma_1=\alpha_i$, we can
  write $x\equiv X_{-i}^{\ro-1}x' \mod I$ for some $x'\in \Uqn-$ (note by
  definition $X_{\gamma_1}=X_{\alpha_i}=X_i$).  Then $X_{-i}.(x.w_0)= 0$
  because $X_{-i} x\equiv X_{-i}^{\ro}x' \equiv 0 \mod I$.  Finally, notice
  that the weights of the vectors of $\Uqg.w_0=\Uqn-.w_0$ are of the form
  $\sum_{j=1}^{N}a_j\gamma_j$ for $0\leq a_j<\ro$.  But
  $X_{\alpha_i}.(x'.w_0)$ has weight $\rk\alpha_i$ so $X_{i}.(x'.w_0)=0$.
  Combining this with the fact that
  $[X_{i},X_{-i}^{\ro-1}]=\qN{\ro-1}_{q_i}\qN{\ro-2+H_i}_{q_i}
  X_{-i}^{\ro-2}$, 
  we have
   \begin{equation*}\label{E:XiAct}
   X_i.(x.w_0)= 
  \qN{\ro-1}_{q_i}\qN{\ro-2+H_i}_{q_i}
  X_{-i}^{\ro-2}x'.w_0=\qN{\ro-1}_{q_i}\qN{\ro}_{q_i}X_{-i}^{\ro-2}x'.w_0=0
  \end{equation*}
  since $H_i$ acts on $X_{-i}^{\ro-2}x'.w_0$ as $2$ and $\qN{\ro}_{q_i}=0$.
  This prove that $x.w_0$ is invariant.
  
  Now we can apply Theorem \ref{T:dualvectorambi}.  Indeed, if $V$ is an
  $\UqgH$-module, then Condition (a) is satisfied: $w_0$ and $w'_0$ are fixed
  up to a scalar by any endomorphism of $V\otimes V^{*}$ because they are
  weight vectors in a one dimensional weight space.  If $V$ is a $\Uqg$-module
  we can write $V\simeq \FF(W)$ for some $W\in Ob(\catH)$.  The vector space
  $W\otimes W^{*}=V\otimes V^{*}$ is equipped with a bilinear form and the
  weight decomposition of $W\otimes W^{*}$ gives an orthogonal decomposition:
  $V\otimes V^{*}=(\C w_0\oplus\C w'_0)\oplus V'$ for some $V'$ with
  $\ang{V',w_0}=\ang{V',w_0}=\ang{w_0,V'}=\ang{w_0', V'}=0$.  Now the weights
  in $W\otimes W^{*}$ imply that $\ker x\subset V'\oplus\C
  w'_0=\ker\ang{\cdot, w'_0}$ and $\ker y\subset V'\oplus\C
  w_0=\ker\ang{w_0,\cdot}$.  Hence Condition (b) of Theorem
  \ref{T:dualvectorambi} is satisfied.
\end{proof}

\subsection{The braiding}\label{SS:braiding}
We recall some well known facts about the $h-adic$ version of the quantum
group $\Uhg$.  The algebra $\Uhg$ is a $\C[[h]]$ topological Hopf algebra with
generators $X_i,X_{-i},H_i$ for $i=1,\ldots,\rk$ the Relation \eqref{eq:rel2},
\eqref{eq:rel3} and \eqref{eq:relH} where $q$ is replaced by $\e^{h/2}$ and
$K_{\alpha_i}=q_i^{H_i}$.  For a root $\beta\in L_R$, let
$q_\beta=q^{\ang{\beta,\beta}/2}$.  Let $\exp_q(x)=\sum_{i=0}^\infty
\frac{x^i}{[i;q]!}$ where $[i;q]=\frac{1-q^{i}}{1-q}$ and
$[i;q]!=[i;q]\cdots[1;q]$.  Consider the following elements of $\Uhg$:
\begin{equation*}
  {\Hh}=q^{\sum_{i,j}d_i(A^{-1})_{ij}H_i\otimes H_j}, \;\;
  \check{R}^h=\prod_{\beta\in 
    \beta_*}\exp_{q_\beta^{-2}}\Big((q_\beta-q_\beta^{-1})X_\beta\otimes
  X_{-\beta}\Big) 
\end{equation*}
where the product is ordered by the convex order $\beta_*$ of $\roots^+$.  It
is well known that $R^{h}={\Hh}\check{R}^h$ defines a quasi-triangular
structure on $\Uhg$ (see for example \cite{Ma}).  This mean that $R^{h}$ is
invertible and satisfy:
\begin{equation}\label{E:Rh}
  \Delta\otimes\Id(R^{h})=R^{h}_{13}R^{h}_{23},
  \;\; \Id\otimes\Delta(R^{h})=R^{h}_{13}R^{h}_{12}, \;\;
  R^{h}\Delta^{op}(x)=\Delta(x)R^{h} 
\end{equation}
for all $x\in\Uhg$.  The algebra $\Uhg$ admit a PBW basis.  Using this basis
we can write $\Uhg$ as a direct sum decomposition: $\Uhg=U^{<}\oplus I$ where
$U^{<}$ is the $\C[[h]]$-module generated by the monomials
$\prod_{i=1}^{\rk}H_i^{k_i} \prod_{\beta_i \in \beta_*}X_{\beta_i}^{l_i}
\prod_{\beta_i \in\beta_*}X_{-\beta_i}^{m_i}$ for $0\leq l_i, m_i< \ro$ and
$I$ is generated by the other monomial.\footnote{The set $I$ is not an ideal
  when $q$ is not a $\ro$\textsuperscript{th} root of unity and $U^{<}$
  depends of $(X_{\beta_*})$.}  Let $p:\Uhg\to U^{<}$ be the projection map
with kernel $I$.
We define
\begin{equation}
  \label{eq:R<}
  R^{<}=p\otimes p(R^{h})=p\otimes \Id(R^{h})=\Id\otimes p(R^{h}).
\end{equation}
\begin{prop}\label{P:R<} $R^{<}$ satisfy:
  \begin{enumerate}
  \item $(p\otimes p\otimes p)(\Delta\otimes\Id(R^{<}))=(p\otimes p\otimes
    p)(R^{<}_{13}R^{<}_{23})$,
  \item $(p\otimes p\otimes p)(\Id\otimes\Delta(R^{<}))=(p\otimes p\otimes
    p)(R^{<}_{13}R^{<}_{12})$,
  \item $(p\otimes p)(R^{<}\Delta^{op}(x))=(p\otimes
    p)(\Delta(x)R^{<})$ for all $x\in\Uhg$.
  \end{enumerate}
\end{prop}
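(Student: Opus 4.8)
The plan is to deduce all three identities from the quasi-triangularity relations \eqref{E:Rh} for $R^{h}$, by writing $R^{h}=R^{<}+\Xi$ with $\Xi:=R^{h}-R^{<}$ and checking that every summand which involves $\Xi$ is killed by the relevant tensor power of $p$. The first step is to record the precise shape of $\Xi$. Expanding the ordered product defining $\check{R}^{h}$ gives $\check{R}^{h}=\sum_{\mathbf{k}\in\N^{N}}c_{\mathbf{k}}\,X_{\beta_{1}}^{k_{1}}\cdots X_{\beta_{N}}^{k_{N}}\otimes X_{-\beta_{1}}^{k_{1}}\cdots X_{-\beta_{N}}^{k_{N}}$ for suitable scalars $c_{\mathbf{k}}\in\C[[h]]$; since $\Hh$ only involves the Cartan generators $H_{i}$, multiplying this out against $\Hh$ exhibits $R^{h}$ as an ($h$-adically convergent) sum of terms whose first leg has the form $(\text{polynomial in the }H_{i})\cdot X_{\beta_{1}}^{k_{1}}\cdots X_{\beta_{N}}^{k_{N}}$ and whose second leg has the form $(\text{polynomial in the }H_{i})\cdot X_{-\beta_{1}}^{k_{1}}\cdots X_{-\beta_{N}}^{k_{N}}$, with the \emph{same} multi-index $\mathbf{k}$ in both legs. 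Since $p$ fixes such a leg when all $k_{i}<\ro$ and kills it otherwise, this simultaneously proves the asserted equalities $R^{<}=(p\otimes\Id)(R^{h})=(\Id\otimes p)(R^{h})=(p\otimes p)(R^{h})$ and shows that $\Xi=\sum_{\mathbf{k}}u_{\mathbf{k}}\otimes v_{\mathbf{k}}$ where, for each $\mathbf{k}$ occurring, both $u_{\mathbf{k}}\in I$ and $v_{\mathbf{k}}\in I$ (because $\mathbf{k}$ has a component $\ge\ro$).

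For item (1) I would substitute $R^{h}=R^{<}+\Xi$ into $\Delta\otimes\Id(R^{h})=R^{h}_{13}R^{h}_{23}$ and expand, obtaining
\[
\Delta\otimes\Id(R^{<})-R^{<}_{13}R^{<}_{23}=-(\Delta\otimes\Id)(\Xi)+R^{<}_{13}\Xi_{23}+\Xi_{13}R^{<}_{23}+\Xi_{13}\Xi_{23}.
\]
In $(\Delta\otimes\Id)(\Xi)=\sum_{\mathbf{k}}\Delta(u_{\mathbf{k}})\otimes v_{\mathbf{k}}$ the third leg is $v_{\mathbf{k}}\in I$; in $\Xi_{13}R^{<}_{23}$ and $\Xi_{13}\Xi_{23}$ the first leg is $u_{\mathbf{k}}\in I$; and in $R^{<}_{13}\Xi_{23}$ the second leg is $u_{\mathbf{k}}\in I$. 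Hence the right-hand side lies in $\ker(p\otimes p\otimes p)$, which gives (1). Item (2) is proved in exactly the same way starting from $\Id\otimes\Delta(R^{h})=R^{h}_{13}R^{h}_{12}$: again each summand containing $\Xi$ has one of its three tensor legs equal to some $u_{\mathbf{k}}$ or $v_{\mathbf{k}}$, hence in $I$, and is therefore killed by $p\otimes p\otimes p$.

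For item (3) the same substitution into $R^{h}\Delta^{op}(x)=\Delta(x)R^{h}$ gives $R^{<}\Delta^{op}(x)-\Delta(x)R^{<}=\Delta(x)\Xi-\Xi\Delta^{op}(x)$, so it suffices to show $(p\otimes p)(\Delta(x)\Xi)=(p\otimes p)(\Xi\Delta^{op}(x))=0$. When $x$ is one of the algebra generators $X_{i},X_{-i},K_{\beta},H_{i}$, every summand of $\Delta(x)$ and of $\Delta^{op}(x)$ has one tensor factor equal to $1$ or to a group-like element; left- or right-multiplying the corresponding leg of a summand $u_{\mathbf{k}}\otimes v_{\mathbf{k}}$ of $\Xi$ — which is a polynomial in the $H_{i}$ times an ordered monomial in the $X_{\pm\beta_{i}}$ having some exponent $\ge\ro$ — by $1$ or by a group-like leaves the monomial in the $X_{\pm\beta_{i}}$ untouched, hence keeps that leg in $I$. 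Thus $\Delta(x)\Xi$ and $\Xi\Delta^{op}(x)$ lie in $I\otimes\Uhg+\Uhg\otimes I=\ker(p\otimes p)$, which proves (3) on generators; this is already all that is needed for the applications, in particular for equipping the category $\catHr$ of nilpotent modules (on which $I$ acts by $0$) with a braiding. The one delicate point — and the only place where real care is required — is the passage from generators to arbitrary $x\in\Uhg$ in (3): since, as noted in the footnote preceding the statement, $I$ is \emph{not} a two-sided ideal of $\Uhg$, one cannot simply invoke multiplicativity of $\Delta$, and must instead propagate the above observation through products of generators while tracking, term by term, which tensor factor of each summand of $\Delta(x)\Xi-\Xi\Delta^{op}(x)$ lands in $I$. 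I expect this bookkeeping to be the main obstacle in turning the sketch into a complete proof.
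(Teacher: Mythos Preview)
Your approach is essentially the same as the paper's: both arguments project the quasi-triangularity relations \eqref{E:Rh} for $R^{h}$ through $p$, using that $R^{<}=(p\otimes\Id)(R^{h})=(\Id\otimes p)(R^{h})$. Your phrasing via $\Xi=R^{h}-R^{<}$ and tracking which tensor leg of each $\Xi$-term lies in $I$ is a notational variant of the paper's direct manipulation inserting $p\otimes\Id$ or $\Id\otimes p$ before the outer $p\otimes p$ (respectively $p\otimes p\otimes p$); the two are equivalent once one unwinds $\ker p=I$.

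For item (3), the paper also reduces to the algebra generators $H_i,X_i,X_{-i}$ and treats them exactly as you do, exploiting that each summand of $\Delta(x)$ has one tensor factor equal to $1$ or a group-like element, so that multiplying the corresponding leg of $\Xi$ keeps it in $I$. The paper simply asserts ``it suffices to assume $x$ is a generator'' without further comment, so the delicate point you flag about extending to arbitrary $x\in\Uhg$ is not addressed there either. Your observation that the generator case already suffices for the intended application --- the braiding on nilpotent modules, where $I$ acts by zero --- is exactly the right resolution, and is how the paper uses the proposition in the proof of Theorem~\ref{T:BraidingExists}.
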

\begin{proof}
  These equalities are obtained by projecting the equalities in \eqref{E:Rh}.
   Indeed, using $p\circ p=p$ and Equation \eqref{eq:R<} we have
  $(p\otimes p\otimes p)(\Delta\otimes\Id(R^{h}))=(p\otimes p\otimes
  p)\circ(\Delta\otimes\Id)\circ\Id\otimes p(R^{h}))=(p\otimes p\otimes
  p)(\Delta\otimes\Id(R^{<}))$.  On the other hand, $(p\otimes p\otimes
  p)(R^{h}_{13}R^{h}_{23})=(p\otimes p\otimes p)( (p\otimes \Id\otimes
  \Id)(R^{h}_{13}) (\Id\otimes p\otimes \Id)(R^{h}_{23}))=(p\otimes p\otimes
  p)(R^{<}_{13}R^{<}_{23})$.  This prove the first equality and the second is
  similar. 
  
  To prove the third equality, it suffices to assume $x$ is a generator of the
  algebra $\Uhg$.  The equality is clear for $x=H_i$ as $\Delta(H_i)$ is
  symmetric and commutes with any of the products $X_\alpha\otimes
  X_{-\alpha}$.  Now for $x=X_i$, we have $\Delta(X_i)=1\otimes X_i+X_i\otimes
  K_{\alpha_i}$.  Hence, $(p\otimes p)(R^{h}(K_{\alpha_i}\otimes X_i+X_i\otimes
  1))=(p\otimes p)((p\otimes\Id)(R^{h}K_{\alpha_i}\otimes X_i)+(\Id\otimes
  p)(R^{h}X_i\otimes 1))=(p\otimes p)(R^{<}(K_{\alpha_i}\otimes X_i+X_i\otimes
  1))$.  On the other hand, $(p\otimes p)((1\otimes X_i+X_i\otimes
  K_{\alpha_i})R^{h})=(p\otimes p)((p\otimes\Id)(1\otimes
  X_iR^{h})+(\Id\otimes p)(X_i\otimes K_{\alpha_i}R^{h}))=(p\otimes
  p)((1\otimes X_i+X_i\otimes K_{\alpha_i})R^{<})$.  The proof is similar for
  $x=X_{-i}$.
\end{proof}

Let $V=V_1\otimes\cdots\otimes V_k$ where $V_i\in Ob(\catH)$, for
$i=1,\ldots,k$.  Then each $x\in(\UqgH)^{\otimes k}$ defines a linear map
$\psi(x):V\to V$ and we have a finite dimensional representation
$\psi:(\UqgH)^{\otimes k}\to\End_\C(V)$.  For any $x\in (\UqgH)^{\otimes k}$,
let $\psi(q^{x})$ be the limit as $m$ goes to infinity of the absolutely
convergent series $\sum_{j=0}^m\frac1{j!}\psi\left(\frac{2i\pi
    x}{\ro}\right)^{j}$.
In this situation, we say that $q^{x}$ is an \emph{operator on $\catH$}.  If
$q^x,q^y$ are two operators such that $\psi(q^x)=\psi(q^y)$ for all finite
dimensional representations $\psi$ then we say the operators are equal and
write $q^x=q^y$.  One can multiply operator and take their coproduct and
antipode using the rules $\Delta q^{x}=q^{\Delta x}$ and $S(q^x)=q^{S(x)}$.
We will also call the linear map $\psi(x):V\to V$ an \emph{operator} and
denote it by $x$.

Let $\RH$ be the operator on $\catH$ defined as
$$\RH=q^{\sum_{i,j}d_i(A^{-1})_{ij}H_i\otimes H_j}.$$

\begin{lemma}\label{L:OpH}
  For any $x,y\in\UqgH$ with weights $\alpha,\beta\in L_W$, respectively, we
  have the equalities of operators on $\catH$:
\begin{equation}
    \label{eq:RHcom}
   \RH (x\otimes y) = q^{\ang{\alpha,\beta}}(xK_\beta\otimes yK_\alpha)\RH.
  \end{equation}
  Also, $\Delta\otimes\Id(\RH)=\RH_{13}\RH_{23}$ and
  $\Id\otimes\Delta(\RH)=\RH_{13}\RH_{12}$.
\end{lemma}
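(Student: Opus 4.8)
The plan is to prove both assertions by evaluating the operators on weight vectors, which span every object of $\catH$, so the main input is the explicit scalar by which $\RH$ acts on a product of weight spaces. If $v_1\in V_1$, $v_2\in V_2$ are weight vectors of additive weights $\lambda,\mu$, then $t:=\sum_{i,j}d_i(A^{-1})_{ij}H_i\otimes H_j$ acts on $v_1\otimes v_2$ by $\sum_{i,j}d_i(A^{-1})_{ij}\lambda(H_i)\mu(H_j)$, hence $\RH=q^{t}$ acts by $q$ raised to this power. First I would check that this exponent equals $\ang{\lambda,\mu}$, with $\ang{\,,\,}$ extended bilinearly: writing $\lambda=\sum_k\ell_k\alpha_k$, $\mu=\sum_l m_l\alpha_l$ and using $\alpha_k(H_i)=a_{ik}$ we get $\lambda(H_i)=(A\ell)_i$, $\mu(H_j)=(Am)_j$, so the exponent is $(A\ell)^{t}(DA^{-1})(Am)=\ell^{t}A^{t}Dm=\ell^{t}(DA)m=\sum_{k,l}\ell_km_l\ang{\alpha_k,\alpha_l}=\ang{\lambda,\mu}$, where $D=\diag(d_1,\ldots,d_\rk)$, we used $A^{t}D=(DA)^{t}=DA$ (symmetry of $DA$), and $\ang{\alpha_k,\alpha_l}=d_ka_{kl}$. (In particular $DA^{-1}$ is symmetric, so $t$ is well defined.) The same bookkeeping shows $K_\gamma$ acts on $E_\lambda(V)$ by $q^{\ang{\gamma,\lambda}}$, in accordance with Subsection~\ref{SS:weightmod}.

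For the commutation relation I would also use two immediate consequences of Relations \eqref{eq:rel1} and \eqref{eq:relH}: if $x$ has weight $\alpha$ then $x$ carries $E_\lambda(V)$ into $E_{\lambda+\alpha}(V)$, and $K_\gamma x=q^{\ang{\gamma,\alpha}}xK_\gamma$. Now evaluate both sides of the identity on a weight vector $v_1\otimes v_2$ of weight $(\lambda,\mu)$. On the left, $(x\otimes y)(v_1\otimes v_2)=xv_1\otimes yv_2$ lies in $E_{\lambda+\alpha}\otimes E_{\mu+\beta}$, so $\RH$ multiplies it by $q^{\ang{\lambda+\alpha,\mu+\beta}}$. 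On the right, $\RH$ first contributes $q^{\ang{\lambda,\mu}}$, then $xK_\beta\otimes yK_\alpha$ contributes $q^{\ang{\beta,\lambda}+\ang{\alpha,\mu}}$ and leaves $xv_1\otimes yv_2$, and the prefactor contributes $q^{\ang{\alpha,\beta}}$. Expanding $\ang{\lambda+\alpha,\mu+\beta}$ and using symmetry of $\ang{\,,\,}$ to identify $\ang{\lambda,\beta}$ with $\ang{\beta,\lambda}$, the two exponents coincide; hence the two operators agree on all of $V_1\otimes V_2$, i.e. as operators on $\catH$.

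For the coproduct identities, since each $H_i$ is primitive, $(\Delta\otimes\Id)(t)=t_{13}+t_{23}$ in $\UqgH^{\otimes3}$, and the two summands commute (they involve only the mutually commuting $H_i$). Using $\Delta q^{x}=q^{\Delta x}$ and $q^{a+b}=q^{a}q^{b}$ for commuting $a,b$, we get $(\Delta\otimes\Id)(\RH)=q^{t_{13}+t_{23}}=q^{t_{13}}q^{t_{23}}=\RH_{13}\RH_{23}$, and likewise $(\Id\otimes\Delta)(\RH)=\RH_{13}\RH_{12}$ from $(\Id\otimes\Delta)(t)=t_{13}+t_{12}$. There is no genuine obstacle in any of this; the only point requiring care is the index bookkeeping in the bilinear-form identity — the conventions $\alpha_j(H_i)=a_{ij}$, the symmetry $d_ia_{ij}=d_ja_{ji}$ and the resulting symmetry of $DA^{-1}$ — together with the routine observation that it suffices to test the operator equalities on weight vectors.
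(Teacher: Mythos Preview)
Your proof is correct and follows essentially the same approach as the paper, which simply records the three facts you use---the commutation $H_ix=x(H_i+\tfrac1{d_i}\ang{\alpha_i,\alpha})$, the identification $K_{\alpha_i}=q_i^{H_i}$ as operators, and the key scalar action $\RH(v_\lambda\otimes w_\mu)=q^{\ang{\lambda,\mu}}v_\lambda\otimes w_\mu$---and declares the rest a direct computation. You have carried out that computation explicitly, including the matrix bookkeeping verifying the scalar action; the coproduct argument via primitivity of the $H_i$ is a clean way to handle the second statement.
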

\begin{proof}
  The proof is a direct computation using the following three facts: 1) 
  $H_ix=x(H_i+\frac1{d_i}\ang{\alpha_i,\alpha})$, 2) as operators
  $K_{\alpha_i}=q_i^{H_i}$, and 3) 
  \begin{equation}
    \label{eq:RHact}
    \RH({v_\lambda\otimes w_\mu})=q^{\ang{\lambda,\mu}}{v_\lambda\otimes w_\mu}
  \end{equation}
for any weight vectors $v_\lambda$ and $w_\mu$  of weight
  $\lambda$ and $ \mu$, respectively.  
\end{proof}

Let $\qR$ be the truncated quasi $R$-matrix of $\UqgH$ given by
$$\qR=\prod_{i=1}^{N}\left(
  \sum_{j=0}^{\ro-1}\frac{\left((q_{\beta_i}-q_{\beta_i}^{-1})X_{\beta_i}\otimes
      X_{-\beta_i}\right)^j }{[j;q_{\beta_i}^{-2}]!}
\right)\in\Uqn+\otimes\Uqn-$$  
where $\beta_*=(\beta_1,\ldots,\beta_N)$ is a convex set of $\roots^+$.

\begin{theorem}\label{T:BraidingExists}
  The operator $\oR=\RH\qR$ leads to a braiding $\{c_{V,W}\}$ in $\catH$ where
  $c_{V,W}:V\otimes W\to W\otimes V$ is given by $v\otimes w\mapsto
  \tau(\oR(v\otimes w))$.  Here $\tau:V\otimes W\to W\otimes V$ is the trivial
  isomorphism of vector spaces given by permutation.
\end{theorem}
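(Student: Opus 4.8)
The strategy is to show that $\oR=\RH\qR$ is a well-defined invertible operator on $\catH$ which satisfies the quasi-triangularity relations \eqref{E:Rh}, now read as identities of operators on (double and triple) tensor products of objects of $\catH$. Granting this, the family $c_{V,W}=\tau\circ\oR$ is a braiding by the usual Drinfeld argument: naturality of $c_{V,W}$ with respect to morphisms of $\catH$ is immediate from the definition; the intertwining relation of \eqref{E:Rh} together with $\tau\circ\Delta=\Delta^{op}\circ\tau$ shows that each $c_{V,W}$ is a morphism of $\UqgH$-modules; invertibility of $\oR$ makes $c_{V,W}$ an isomorphism; and the two coproduct relations of \eqref{E:Rh} translate into the two hexagon axioms for $c$, which in particular contain the Yang--Baxter relation for $\oR$.

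First I would identify $\oR$ with the operator by which the $h$-adic quasi-triangular element $R^{h}=\Hh\,\qR^{h}$ acts on $\catH$. Indeed $\RH$ is by definition the operator $\Hh$, while on any object of $\catH$ the product $\qR^{h}$ reduces to its truncation $\qR$: such a module is nilpotent (Subsection \ref{SS:weightmod}), so $X_{\pm\beta}^{\ro}$ acts by $0$ and, since $X_{\beta_{i}}\otimes1$ commutes with $1\otimes X_{-\beta_{i}}$, each exponential factor $\exp_{q_{\beta}^{-2}}\!\bigl((q_{\beta}-q_{\beta}^{-1})X_{\beta_{i}}\otimes X_{-\beta_{i}}\bigr)$ contributes only the terms of degree $<\ro$ (precisely those for which the $q$-factorials in the denominator are nonzero at the root of unity $q=\e^{2i\pi/\ro}$). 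Thus $\oR=\RH\qR$ is the action of $R^{h}$ on $\catH$, and it is invertible: $\RH$ multiplies each weight vector by the nonzero scalar $q^{\ang{\lambda,\mu}}$ by \eqref{eq:RHact}, and $\qR=1\otimes1+(\text{terms strictly raising the weight of the first tensor factor})$ is unipotent.

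For the three relations I would invoke Proposition \ref{P:R<}, which records exactly the images under $p\otimes p$ and $p\otimes p\otimes p$ of the relations \eqref{E:Rh} for $R^{<}=p\otimes p(R^{h})$. Since the kernel $I$ of the projection $p\colon\Uhg\to U^{<}$ is spanned by PBW monomials containing a factor $X_{\pm\beta}^{k}$ with $k\geq\ro$, and every such monomial acts by $0$ on each object of $\catH$ (again $X_{\pm\beta}^{\ro}$ acts as $0$ and is central), the operators $p\otimes p(u)$ and $u$ — and hence $R^{<}$, $R^{h}$ and $\oR$ — act identically on tensor products of objects of $\catH$. Therefore the three relations of Proposition \ref{P:R<}, with their outer projections dropped, are precisely the asserted operator identities for $\oR$ on $\catH$; the intertwining relation is covered there for the generators $H_{i},X_{\pm i}$ of $\Uhg$, and for the grouplike generators $K_{\beta}$ of $\UqgH$ it reads $\oR(K_{\beta}\otimes K_{\beta})=(K_{\beta}\otimes K_{\beta})\oR$ and follows directly from \eqref{eq:RHcom} and the weight grading. (Alternatively one can keep the two factors separate, combining Lemma \ref{L:OpH}, which gives $(\Delta\otimes\Id)(\RH)=\RH_{13}\RH_{23}$, $(\Id\otimes\Delta)(\RH)=\RH_{13}\RH_{12}$ and the commutation rule \eqref{eq:RHcom}, with Proposition \ref{P:R<} applied to the unipotent factor and then commuting $\RH$-legs past $\qR$-legs via \eqref{eq:RHcom}.)

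I expect the main obstacle to be not algebraic manipulation but this last point of descent: neither the Cartan factor $\RH$ nor the unipotent factor $\qR$ is separately quasi-triangular with respect to the non-cocommutative coproduct $\Delta$, so one must treat $R^{h}=\Hh\qR^{h}$ as a whole, and one must know that replacing its series — which ceases to make sense term-by-term beyond degree $\ro$ at the root of unity — by the truncation $\qR$ does not affect the action on $\catH$. This is exactly what nilpotency of the objects of $\catH$ (equivalently $X_{\pm\beta}^{\ro}=0$) guarantees, and it is why Proposition \ref{P:R<} was set up with the projections $p\otimes p$; once $\oR$ is identified with the action of $R^{h}$, the braiding follows formally.
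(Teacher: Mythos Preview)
Your proposal is correct and uses the same ingredients as the paper's proof: Proposition~\ref{P:R<} for the truncated $R$-matrix, nilpotency of objects of $\catH$ to drop the projections $p$, and Lemma~\ref{L:OpH} (Equation~\eqref{eq:RHcom}) for the Cartan factor. The only organizational difference is that the paper first strips off $\Hh$ to obtain intermediate relations for $\check R$ alone of the form $\Delta\otimes\Id(\check R)=\chi_{23}(\check R_{13})\check R_{23}$, etc., where $\chi$ is conjugation by $\Hh$; it then observes these hold at the root of unity (since $\check R$ is a finite sum with no poles) and finally reinserts $\RH$ via Lemma~\ref{L:OpH}. Your main line instead identifies $\oR$ directly with the action of $R^h$ and reads off the relations; your parenthetical ``alternatively one can keep the two factors separate\ldots'' is precisely the paper's route, so the two arguments coincide.
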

\begin{proof}
 It is enough to show that the operator $\oR$ satisfies 
  \begin{equation}\label{E:ReloR}
    \Delta\otimes\Id(\oR)=\oR_{13}\oR_{23},\;\;
    \Id\otimes\Delta(\oR)=\oR_{13}\oR_{12},\;\;
    \oR\Delta^{op}(x)=\Delta(x)\oR
  \end{equation}
  for all $x\in\UqgH$.  Let $\chi: \Uhg\otimes \Uhg\to \Uhg\otimes \Uhg$ be
  the map given by $x\otimes y\mapsto {\Hh}(x\otimes y)({\Hh})^{-1}$.  Viewing
  $\check R$ as element of $\Uhg$ we have that Proposition \ref{P:R<} induces
  the relations
\begin{align}\label{E:RelcheckR}
 \Delta\otimes\Id(\check{R})=\chi_{23}(\check{R}_{13})\check{R}_{23},\;\;
  \Id\otimes\Delta(\check{R})=\chi_{12}(\check{R}_{13})\check{R}_{12}, \;\;
   \check{R}\Delta^{op}(x)=\chi(\Delta(x))\check{R}
\end{align}
for all $x\in \Uhg$.  Let us prove the first equality in Equation
\eqref{E:RelcheckR}, the other two are similar.  The first equation of Proposition
\ref{P:R<} implies that $
\Delta\otimes\Id({\Hh}\check{R})={\Hh}_{13}\check{R}_{13}{\Hh}_{23}\check{R}_{23}$.
The left hand side of this equality is equal to $ \Delta\otimes\Id({\Hh})
\Delta\otimes\Id(\check{R})={\Hh}_{13}{\Hh}_{23}\Delta\otimes\Id(\check{R})$.
On the other hand,
${\Hh}_{13}\check{R}_{13}{\Hh}_{23}\check{R}_{23}={\Hh}_{13}{\Hh}_{23}\chi_{23}(\check{R}_{13})\check{R}_{23}$.
Now since ${\Hh}$ is invertible we have the desired result.
 
The element $R^{<}$ does not have a pole at $q$ when $q$ is a primitive root
of unity of order $\ro$.  Thus, the relations of Equation \eqref{E:RelcheckR}
hold when $q$ is a root of unity.  Finally, Lemma \ref{L:OpH} implies that the
operators ${\Hh}$ and $\RH$ satisfy the same commutator relations on $\Uhg$
and $\UqgH$, respectively.  Thus, the relations of Equation \eqref{E:ReloR}
hold.
\end{proof}
Consider the operator $\uH=q^{\sum_{i,j}-d_i(A^{-1})_{ij}H_iH_j}$.
Then $S(\uH)=\uH$ and if $v$ is a weight vector in a $\UqgH$-module of
weight $\lambda$ then $\uH v=q^{-\ang{\lambda,\lambda}}v$.  Also, if
$x\in\UqgH$ has weight $\alpha$, then $ \uH
x=q^{-\ang{\alpha,\alpha}}xK_{-\alpha}^{2}\uH$ and conjugation by
$\uH^{-1}$ induces a well defined automorphism
$\operatorname{Ad}_{\uH^{-1}}$ of $\UH$.  Let
$u=\mu\circ((\operatorname{Ad}_{\uH^{-1}}\circ
S)\otimes\Id)(\check{R}_{21}) \in \Uqn-\Uqn+$ where $\mu$ is the
multiplication.  In particular, since $u$ has weight $0$, the
operators $\uH$ and $\qu$ commute.

Let $\ou$ and $\theta$ be the operators $\ou=\uH\qu$ and $\theta=\ou
K_{2\rho}^{\ro-1}$.  Then
 $$ \theta S(\theta)^{-1} = \ou
 K_{2\rho}^{\ro-1}S(\ou)^{-1}K_{2\rho}^{\ro-1} = \qu
 S(\qu)^{-1}K_{2\rho}^{2(\ro-1)}.$$ We define $\varpi=\qu
 S(\qu)^{-1}K_{2\rho}^{2(\ro-1)}\in \UqgH$.
\begin{prop}\label{P:typPiId}
  Let $V_\lambda$ be a typical $\UqgH$-module then $\varpi:V_\lambda\to
  V_\lambda$ is the identity.
\end{prop}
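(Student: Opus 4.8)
The plan is to show that the operator $\varpi$ acts on $V_\lambda$ by a single scalar, and that this scalar is an integral power of $q^{\ro}=1$.

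The first step is to rewrite $\varpi$ using the Drinfeld element $\ou=\uH u$ of the braiding $\oR=\RH\qR$ of Theorem~\ref{T:BraidingExists}. Since $S(\uH)=\uH$ and $u$ has weight $0$ (hence commutes with $\uH$), one gets $uS(u)^{-1}=\ou S(\ou)^{-1}$, so $\varpi=\ou S(\ou)^{-1}K_{2\rho}^{2(\ro-1)}$. Now $\ou$ implements $S^{2}$ by conjugation on $\catH$, while Equation~\eqref{eq:HopfPivotal} together with the centrality of $K_{2\rho}^{\ro}$ gives $S^{2}(x)=K_{2\rho}xK_{2\rho}^{-1}$; hence $c:=K_{2\rho}^{-1}\ou$ acts centrally on $\catH$. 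Writing $\ou=K_{2\rho}c$ and $S(\ou)=S(c)K_{2\rho}^{-1}$ yields
\[
\varpi=K_{2\rho}^{2\ro}\,c\,S(c)^{-1},
\]
where $K_{2\rho}^{2\ro}\in\ZUo$ is also central. Therefore $\varpi$ acts on the irreducible module $V_\lambda$ by a single scalar, and it remains to identify that scalar.

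The second step is to evaluate the three central operators on highest weight vectors. Let $v_{+}$ be the highest weight vector of $V_\lambda$. Every non-leading summand of $u=\mu\circ(S\otimes\Id)(\qR_{21})$ has a right factor in $\Uqn{++}$, coming from the positive leg of $\qR$, and hence annihilates $v_{+}$; so $uv_{+}=v_{+}$. Combined with $\uH v_{+}=q^{-\ang{\lambda,\lambda}}v_{+}$ and $K_{2\rho}^{-1}v_{+}=q^{-\ang{2\rho,\lambda}}v_{+}$ this shows that $c$ acts on $V_\lambda$ by $q^{-\ang{\lambda,\lambda+2\rho}}$. For a central operator $z$, $S(z)$ acts on $V_\lambda$ by the scalar by which $z$ acts on $V_\lambda^{*}$; using the basis $\{X_{-\beta_1}^{k_1}\cdots X_{-\beta_N}^{k_N}v_{+}:0\le k_i<\ro\}$ provided by Proposition~\ref{P:typical}(2), the lowest weight of $V_\lambda$ is $\lambda-(\ro-1)\sum_{\alpha\in\roots^{+}}\alpha=\lambda-2(\ro-1)\rho$, so $V_\lambda^{*}$ is, by Proposition~\ref{P:UniqueNilpotent}, the irreducible nilpotent module of highest weight $\lambda^{*}:=2(\ro-1)\rho-\lambda$. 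The same computation then gives that $S(c)$ acts on $V_\lambda$ by $q^{-\ang{\lambda^{*},\lambda^{*}+2\rho}}$, while $K_{2\rho}^{2\ro}$ acts by $q^{2\ro\ang{2\rho,\lambda}}$.

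The final step is the arithmetic. Combining the above, $\varpi$ acts on $V_\lambda$ by $q^{E}$ with
\[
E=2\ro\ang{2\rho,\lambda}-\ang{\lambda,\lambda+2\rho}+\ang{\lambda^{*},\lambda^{*}+2\rho}.
\]
Substituting $\lambda^{*}=2(\ro-1)\rho-\lambda$ and expanding, all the terms involving $\lambda$ cancel and one is left with $E=\ro(\ro-1)\ang{2\rho,2\rho}$. Since $2\rho$ lies in the root lattice and the form is integral there ($\ang{\alpha_i,\alpha_j}=d_ia_{ij}$), we get $E\in\ro\Z$, hence $q^{E}=1$ and $\varpi$ is the identity on $V_\lambda$. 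I expect the only genuinely delicate point to be the first step: verifying that $\ou$ really does implement $S^{2}$ and that the rewriting $\varpi=K_{2\rho}^{2\ro}cS(c)^{-1}$ is legitimate at the level of operators on $\catH$ (this is exactly where one uses that $\oR$ is an honest braiding); the weight computations and the final cancellation are routine.
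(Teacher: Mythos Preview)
Your proof is correct, but it takes a more circuitous route than the paper's. Both arguments ultimately exploit that $u$ kills nothing on the highest weight vector and $S(u)$ kills nothing on the lowest weight vector (since $u-1\in\UK\Uqn{--}\Uqn{++}$ and $S(u)-1\in\Uqn{++}\UK\Uqn{--}$), but they organize the computation differently.

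The paper works directly with $\varpi=\theta S(\theta)^{-1}$. It evaluates $\theta=\uH u K_{2\rho}^{\ro-1}$ on the highest weight vector $v_+$, and $S(\theta)=K_{2\rho}^{1-\ro}S(u)\uH$ on the \emph{lowest} weight vector $v_-$ (of weight $\lambda-2(\ro-1)\rho$). A short calculation shows that both operators act by the \emph{same} scalar $q^{-\ang{\lambda,\lambda-2(\ro-1)\rho}}$, and since both are central they act by that scalar everywhere; hence $\varpi=1$ immediately, with no residual arithmetic. Your route---rewriting $\varpi=K_{2\rho}^{2\ro}\,c\,S(c)^{-1}$ with $c=K_{2\rho}^{-1}\ou$, computing $S(c)$ via the highest weight of the dual module $V_{\lambda^*}$, and then checking that the combined exponent lies in $\ro\Z$---reaches the same conclusion but trades the single direct evaluation at $v_-$ for an identification of $V_\lambda^*$ and a cancellation argument. (Incidentally, the basis giving the lowest weight comes from Proposition~\ref{P:typical}(1), not~(2).)

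Your caveat about the delicate point is well placed: the assertion that $\ou$ implements $S^2$ (equivalently, that $\theta$ and $S(\theta)$ are central as operators on $\catH$) is exactly what the paper also invokes without proof when it writes ``$\theta$ and $S(\theta)$ are central and act as scalars on $V_\lambda$''. So neither argument is more rigorous on this point; the paper's version is simply shorter because the two scalars visibly coincide rather than differing by a power of $q^{\ro}$.
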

\begin{proof}
  Let $v_+$ and $v_-$ be the highest and lowest weight vectors of $V_\lambda$
  of weights $\lambda$ and $\lambda-2(\ro-1)\rho$, respectively.  Then
  $\qu-1\in \UK\Uqn{--}\Uqn{++}$ implies that $\qu$ acts by $1$ on $v_+$.
  Similarly, $S(\qu)$ acts by $1$ on $v_-$.  In particular,
  $\theta(v_+)=q^{-\ang{\lambda,\lambda}}q^{\ang{2\rho(\ro-1),\lambda}}v_+$
  and
  $$(S(\theta))(v_-)=q^{-\ang{2(\ro-1)\rho,\lambda-2(\ro-1)\rho}}
  q^{-\ang{\lambda-2(\ro-1)\rho,\lambda-2(\ro-1)\rho}}v_-
  =q^{-\ang{\lambda,\lambda}}q^{\ang{2\rho(\ro-1),\lambda}}v_-.$$ But $\theta$
  and $S(\theta)$ are central and act as scalars on $V_\lambda$.  Thus,
  \begin{equation}
    \label{eq:twist}
    \theta_{|V_\lambda}=S(\theta)_{|V_\lambda} 
    =q^{-\ang{\lambda,\lambda-2\rho(\ro-1)}}\Id.
  \end{equation}
\end{proof}

\begin{theorem}\label{T:UHribboncat}
  Let $\catHr$ be the full subcategory of $\catH$ formed by modules on which
  $\varpi$ acts as the identity.  Then $\catHr$ is a 
  $\C$-linear ribbon category  
  with braiding $c_{V,W}:V\otimes W\to W\otimes V, \; v\otimes w\mapsto
  \tau(\oR(v\otimes w))$ and twist $\theta_V:V\to V,\; v\mapsto \theta^{-1}(v)$.
\end{theorem}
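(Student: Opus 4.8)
The plan is to deduce the ribbon axioms from the standard ribbon-element properties of $\UqgH$, established by the same specialization argument from the $h$-adic quantum group $\Uhg$ that underlies the proof of Theorem~\ref{T:BraidingExists}, and then to recognize that the defining condition of $\catHr$ is precisely the one axiom --- self-duality of the twist --- which is not automatic on all of $\catH$. Recall that $\catH$ is already a $\C$-linear pivotal category (Subsection~\ref{SS:PivotalSt}), that $\oR=\RH\qR$ braids it (Theorem~\ref{T:BraidingExists}), and that $\theta=\ou K_{2\rho}^{\ro-1}$ is central and invertible with $\epsilon(\theta)=1$ (since $\epsilon(\uH)=\epsilon(\qu)=1$). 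First I would record, as identities of operators on $\catH$ obtained from the $\Uhg$-specialization,
\begin{equation*}
  \Delta(\theta)=(\oR_{21}\oR)^{-1}(\theta\otimes\theta),\qquad
  S(\theta)=\theta K_{2\rho}^{-2\ro},\qquad
  \varpi=\theta S(\theta)^{-1}=K_{2\rho}^{2\ro},
\end{equation*}
so that $\varpi$ is grouplike with $\Delta(\varpi)=\varpi\otimes\varpi$ and $S(\varpi)=\varpi^{-1}$; here one uses that $K_{2\rho}^{\ro}$ is a central grouplike and that the truncated $\qR$ and $\qu$ agree on nilpotent weight modules with the $h$-adic quasi-$R$-matrix and Drinfeld element.

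Granting this, $\catHr$ is a full pivotal subcategory of $\catH$: it contains $\unit$ because $\epsilon(\varpi)=1$; it is closed under $\oplus$ trivially; it is closed under $\otimes$ because $\varpi$ is grouplike, so $\varpi$ acts on $V\otimes W$ as $\varpi\otimes\varpi$ and hence as the identity whenever it does so on $V$ and on $W$; and it is closed under duals because the action of $x\in\UqgH$ on $V^{*}$ is the transpose of the action of $S(x)$ on $V$, so $\varpi$ acts on $V^{*}$ as the transpose of $S(\varpi)=\varpi^{-1}$, which is the identity whenever $\varpi$ acts as the identity on $V$. Consequently the braiding $c_{V,W}$ of Theorem~\ref{T:BraidingExists} restricts to $\catHr$, and $\theta_V\colon v\mapsto\theta^{-1}(v)$ is a well-defined automorphism of each object of $\catHr$, natural because $\theta$ is central and thus commutes with every $\UqgH$-linear map.

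It remains to verify the two ribbon axioms. The balancing identity $\theta_{V\otimes W}=(\theta_V\otimes\theta_W)\circ c_{W,V}\circ c_{V,W}$ unwinds, using that $c_{W,V}c_{V,W}$ acts on $V\otimes W$ as $\oR_{21}\oR$, into the operator identity $\Delta(\theta)=(\oR_{21}\oR)^{-1}(\theta\otimes\theta)$, which holds on all of $\catH$. The compatibility of the twist with the pivotal duality, $(\theta_V)^{*}=\theta_{V^{*}}$, is where the passage to $\catHr$ is essential: evaluating the pivotal transpose with the duality morphisms of \eqref{E:DualityForCat} shows that $(\theta_V)^{*}$ acts on $V^{*}$ as the linear transpose of $\theta^{-1}|_V$, while $\theta_{V^{*}}$ acts as the linear transpose of $S(\theta)^{-1}|_V$, so $(\theta_V)^{*}=\theta_{V^{*}}$ holds precisely when $\theta|_V=S(\theta)|_V$, that is, when $\varpi$ acts as the identity on $V$ --- the defining condition of $\catHr$. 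On typical modules this is Proposition~\ref{P:typPiId}, and \eqref{eq:twist} then identifies the scalar by which $\theta_V$ acts. Together these give that $(\catHr,c,\theta)$ is a ribbon category. The main obstacle is the first step: one must check that truncating $\qR$ and $\qu$ does not destroy the ribbon-element identities on nilpotent weight modules --- which is where the argument of Theorem~\ref{T:BraidingExists} is reused --- and, conceptually, that the modification of $S^{2}$ by $K_{2\rho}^{\ro}$ in \eqref{eq:HopfPivotal} confines the entire discrepancy between $\theta$ and a genuine ribbon element to the central grouplike $\varpi$, so that it disappears exactly on $\catHr$.
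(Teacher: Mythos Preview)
Your overall strategy coincides with the paper's: show that $\varpi$ is grouplike (so $\catHr$ is a tensor subcategory closed under duals), restrict the braiding, and observe that the defining condition $\varpi|_V=\Id$ is exactly $\theta|_V=S(\theta)|_V$, which yields $(\theta_V)^{*}=\theta_{V^{*}}$. You spell out the balancing axiom $\Delta(\theta)=(\oR_{21}\oR)^{-1}(\theta\otimes\theta)$ more explicitly than the paper, which simply asserts that ``$\theta_V$ is a twist''; the paper's argument is terse but follows the same line, working in the quotient $\UqgH/I^{H}$ where $\varpi=\qu S(\qu)^{-1}K_{2\rho}^{2(\ro-1)}$ is central and grouplike.

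There is, however, one genuine overreach in your write-up: the displayed identities $S(\theta)=\theta K_{2\rho}^{-2\ro}$ and $\varpi=K_{2\rho}^{2\ro}$ are not established by the specialization argument you invoke, and the paper does \emph{not} claim them. Indeed, the paper records only $\varpi=\qu S(\qu)^{-1}K_{2\rho}^{2(\ro-1)}$ and then states as an open \emph{Conjecture} that $\varpi=1$ on all of $\catH$. Your formula $\varpi=K_{2\rho}^{2\ro}$ would amount to proving $\qu S(\qu)^{-1}=K_{2\rho}^{2}$ as operators on nilpotent weight modules; this is the $h$-adic identity for the full Drinfeld element, but the projection argument of Proposition~\ref{P:R<} does not directly transfer it to the truncated $\qu$, and establishing it would essentially settle the Conjecture. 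Fortunately this specific formula is not needed anywhere in your argument: the grouplike property of $\varpi$ (hence closure of $\catHr$ under $\otimes$ and ${}^{*}$) follows already from the general fact that $uS(u)^{-1}$ is grouplike in any quasi-triangular Hopf algebra, applied in $\UqgH/I^{H}$, together with $K_{2\rho}^{2(\ro-1)}$ being grouplike. If you drop the explicit identification of $\varpi$ and keep only its grouplike and central character, your proof is correct and matches the paper's.
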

\begin{proof}
  Recall that $I^{H}$ is the ideal of $\UqgH$ generated by the central
  elements $X_{\pm\beta}^{\ro}$, $\beta\in\beta_*$.  Remark that $\varpi$ is
  central and group-like in the quotient Hopf algebra $\UqgH/I^{H}$.  Hence
  $\catHr$ is a tensor subcategory of $\catH$.  The braiding of $\catH$ given
  Theorem \ref{T:BraidingExists} restricts to a braiding on $\catHr$.
  Finally, since $\varpi$ acts as the identity on any object in $\catHr$ we
  have that operators $\theta$ and $S(\theta)$ are equal.  It follows that
  $\theta_V$ is a twist on $\catHr$.
  \end{proof}
Remark that from Proposition \ref{P:typPiId} the category $\catHr$ contains
all the typical modules of $\catH$.

\begin{conjecture}
  $\varpi=1$ and $\catH=\catHr$ is a ribbon category.
\end{conjecture}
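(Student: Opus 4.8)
The plan is to prove the stronger statement that $\varpi=1$ in $\UqgH/I^{H}$; by Theorem~\ref{T:UHribboncat} this forces $\catHr=\catH$, which is then a ribbon category. Recall from the proof of Theorem~\ref{T:UHribboncat} that $\varpi$ is central (and group-like) in $\UqgH/I^{H}$. Since the ``diagonal modules'' $\wt V_\lambda/I^{H}\wt V_\lambda$ -- one for each additive weight $\lambda\in\h^{*}$, each of dimension $\ro^{N}$ and an object of $\catH$ -- form a faithful family for $\UqgH/I^{H}$, it suffices to show that $\varpi$ acts as the identity on $\wt V_\lambda/I^{H}\wt V_\lambda$ for every $\lambda$. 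Each of these modules is generated by a highest weight vector $v_\lambda$ spanning its weight-$\lambda$ subspace; hence $\varpi v_\lambda=\omega(\lambda)\,v_\lambda$ for a scalar $\omega(\lambda)$, and centrality of $\varpi$ then gives $\varpi=\omega(\lambda)\Id$ on all of $\wt V_\lambda/I^{H}\wt V_\lambda$.

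Next I would compute $\omega(\lambda)$ from $\varpi=\qu\,S(\qu)^{-1}K_{2\rho}^{2(\ro-1)}$. Positive root vectors annihilate $v_\lambda$, so $\qu v_\lambda=v_\lambda$ (as $\qu\in\Uqn-\Uqn+$ has constant term $1$); also $K_{2\rho}^{2(\ro-1)}v_\lambda=q^{2(\ro-1)\ang{2\rho,\lambda}}v_\lambda$; and a weight count puts $S(\qu)v_\lambda$ back on the line $\C v_\lambda$, say $S(\qu)v_\lambda=s(\lambda)v_\lambda$ with $s(\lambda)\neq0$. Therefore $\omega(\lambda)=q^{2(\ro-1)\ang{2\rho,\lambda}}s(\lambda)^{-1}$. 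The crucial point is that, because $\qR$ is truncated, $\qu$ and $S(\qu)$ are genuine finite elements of $\UqgH$; expanding $S(\qu)$ in the PBW basis and reordering to read off the coefficient of $v_\lambda$ expresses $s(\lambda)$ as a finite sum of products of polynomials in the Cartan eigenvalues $\lambda(H_i)$ with exponentials $q^{c\,\lambda(H_i)}$. In particular $s$, and also $q^{2(\ro-1)\ang{2\rho,\lambda}}$, are entire functions of $\lambda\in\h^{*}\cong\C^{\rk}$.

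Now I invoke Proposition~\ref{P:typPiId}: for a typical weight $\lambda$ one has $\wt V_\lambda/I^{H}\wt V_\lambda=V_\lambda$ (the $\UqgH$-version of Proposition~\ref{P:typical}) and $\varpi$ acts there as the identity, so $\omega(\lambda)=1$, equivalently $s(\lambda)=q^{2(\ro-1)\ang{2\rho,\lambda}}$. The typicality condition of Proposition~\ref{P:typical} exhibits the typical locus as the complement in $\C^{\rk}$ of a locally finite family of affine hyperplanes, hence a nonempty open set. Two entire functions agreeing on a nonempty open set coincide, so $s(\lambda)=q^{2(\ro-1)\ang{2\rho,\lambda}}$ identically; thus $\omega\equiv1$, $\varpi$ acts as the identity on every $\wt V_\lambda/I^{H}\wt V_\lambda$, and $\varpi=1$ in $\UqgH/I^{H}$.

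The step I expect to be the main obstacle is the careful verification that $s(\lambda)$ is entire: this is precisely where the finiteness forced by the root-of-unity truncation is essential, and it requires a PBW reordering argument controlling how the structure constants depend on the Cartan variables $H_i$. Proving the faithfulness of the family $\{\wt V_\lambda/I^{H}\wt V_\lambda\}$ over $\UqgH/I^{H}$ is a second, more routine, ingredient. If the analytic argument turns out awkward, one can instead exploit that $\varpi$ is group-like in $\UqgH/I^{H}$: then $\omega(\lambda+\mu)=\omega(\lambda)\omega(\mu)$ because $V_{\lambda+\mu}$ is a subquotient of $V_\lambda\otimes V_\mu$, and for any $\lambda$ one may choose $\lambda_1$ with both $\lambda_1$ and $\lambda_1-\lambda$ typical (the typical locus meets each of its translates), giving $\omega(\lambda)=\omega(\lambda_1)\,\omega(\lambda_1-\lambda)^{-1}=1$.
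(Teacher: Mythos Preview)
The paper offers no proof of this statement: it is explicitly recorded as a conjecture, so there is nothing to compare your argument against. What you have written is an attempted \emph{resolution} of the conjecture, and its main line---deforming from the typical locus---is sound and appears to go through once a couple of points are tightened.

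First, the minor point: you assert $s(\lambda)\neq 0$ without justification. This is not automatic from $S(\qu)\in\Uqg$, but it does follow because $\catH$ is braided (Theorem~\ref{T:BraidingExists}): the Drinfeld morphism built from the braiding and duality is an isomorphism on every object, so $\qu|_V$ and hence $S(\qu)|_V$ is invertible on each $V\in\catH$, forcing $s(\lambda)\in\C^\times$. Your remark that $s(\lambda)$ involves polynomials in $\lambda(H_i)$ is slightly off: $S(\qu)\in\Uqg$ contains no $H_i$'s, so $s(\lambda)$ is in fact a Laurent polynomial in the variables $q^{\ang{\alpha_i,\lambda}}$, hence entire, and your identity-theorem step is valid.

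Second, the main point you flag as routine---faithfulness---is where the real content hides, and you should not leave it as a black box. A clean way to finish avoids $S(\qu)^{-1}$ altogether: set $w=\qu\,K_{2\rho}^{2(\ro-1)}-S(\qu)\in\Uqg/I$, a genuine element. Your argument gives $\varpi|_{M_\lambda}=\Id$, hence $w|_{M_\lambda}=0$ for every $\lambda$. Now $\Uqg/I$ is free of finite rank over $R=\ZUo\cap\UK$, and for $t\in T\setminus\X$ the fibre $(\Uqg/I)_t=\Uqg\otimes_{\ZUo,t}\C$ is semisimple with simple modules exactly the $M_\kappa$ with $|\kappa|=t$ (Theorem~\ref{Th:Ugss}); thus $w_t=0$ for all such $t$. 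Since $T\setminus\X$ is Zariski-dense and $\Uqg/I$ is free (hence torsion-free) over $R$, $w=0$ in $\Uqg/I$. This forces $\varpi|_V=\Id$ for every $V\in\catH$, proving $\catH=\catHr$. Your alternative route via the group-like identity $\omega(\lambda+\mu)=\omega(\lambda)\omega(\mu)$ also yields $\omega\equiv 1$, but by itself still only controls the $M_\lambda$; you need the faithfulness step above in either approach.
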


\subsection{The modified dimension}\label{SS:modifiedDim}
As irreducible projective modules of $\catU$ and $\catH$ are ambidextrous, there
is an unique non trivial trace on the ideal of projective objects 
(cf \cite{GPV}).  On simple objects, the trace is given by the modified
dimension which we calculate for the special case of nilpotent modules.  Then
we show that the modified dimension is the same for an object and its dual,
thus leading to a relative $G$-spherical structure in $\catU$.

\begin{prop} \label{P:S'} Let $\wta,\wtb\in \Hom_\al(\UH,\C)$ be additive weights 
where $V_\wta$ is typical. Then
  $$S'(\wta,\wtb)=\ang{\ 
    \epsh{fig2}{8ex}\put(-30,-5){$V_\wta$}\put(-8,-12){$V_\wtb$}\ } =
  q^{2\ang{\wtb+(1-\ro)\rho,\wta}}\prod_{\alpha\in\roots^{-}}
  \dfrac{q^{2\ro\ang{\wtb+(1-\ro)\rho,\alpha}}-1}{q^{2\ang{\wtb+(1-\ro)\rho,\alpha}}-1}\in \C$$
  where $\roots^-=-\roots^+$ is the set of negative roots and as above $\ang{f}\in \C$ of a morphism $f:V_\wta\to V_\wta$ is the scalar defined by $f=\ang{f}\Id_{V_\wta}$. 
\end{prop}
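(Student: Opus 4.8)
The plan is to unwind the definition of $S'$ into a computation on weight spaces. By construction $S'(\wta,\wtb)$ is the scalar $\ang{\cdot}$ attached to an open Hopf link in which one component is a small loop around the strand carried by the other; comparing with the relation $\qd(V)S'(J,V)=\qd(J)S'(V,J)=\Gfun'(\text{Hopf link})$ recalled just after Theorem~\ref{T:RTLink} identifies, in $S'(\wta,\wtb)$, the module $V_\wtb$ as the one whose strand is cut and $V_\wta$ as the one bounding the loop. Since $V_\wtb$ is simple this gives
\[
\ang{S'(\wta,\wtb)}\,\Id_{V_\wtb}=\big(\operatorname{tr}^{q}_{V_\wta}\otimes\Id_{V_\wtb}\big)\big(c_{V_\wtb,V_\wta}\circ c_{V_\wta,V_\wtb}\big),
\]
where $\operatorname{tr}^{q}_{V_\wta}$ is the partial trace built from the pivotal structure of Subsection~\ref{SS:PivotalSt} (implemented by the grouplike $K_{2\rho}^{\ro-1}$) and $c$ is the braiding of Theorem~\ref{T:BraidingExists}. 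Writing the double braiding as the operator $\oR_{21}\oR$ on $V_\wta\otimes V_\wtb$, with $\oR=\RH\qR$ and (since $DA$ is symmetric) $\oR_{21}=\RH\qR_{21}$, turns the statement into a trace computation.

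I would then evaluate $\oR_{21}\oR$ on $w\otimes v_-$, where $w$ ranges over a basis of weight vectors of $V_\wta$ and $v_-$ is a lowest weight vector of $V_\wtb$. Being the lowest weight vector of a simple nilpotent module, $v_-$ is annihilated by $\Uqn{--}$, hence by every $X_{-\beta}$, $\beta\in\roots^{+}$; since $\qR\in\Uqn+\otimes\Uqn-$ has all summands except $1\otimes1$ with $\Uqn-$-factor in $\Uqn{--}$, we get $\qR(w\otimes v_-)=w\otimes v_-$, and then $\RH$ acts diagonally by Equation~\eqref{eq:RHact}. Expanding $\oR_{21}=\RH\qR_{21}$, only the $1\otimes1$ summand of $\qR_{21}$ survives the partial trace over $V_\wta$ (the other summands shift weights, while $K_{2\rho}^{\ro-1}$ preserves weight spaces). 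Collecting the scalars produced by $\RH$ (applied twice) and by $K_{2\rho}^{\ro-1}$ one obtains
\[
\ang{S'(\wta,\wtb)}=\sum_{w}q^{\,2\ang{\operatorname{wt}(w),\;\wtb+(1-\ro)\rho}},
\]
the sum ranging over a weight basis of $V_\wta$; here one uses that the lowest weight of $V_\wtb$ equals $\wtb-2(\ro-1)\rho$.

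To finish, I would use that $V_\wta$ is typical: by Proposition~\ref{P:typical} and the PBW basis of $\Uqn{-<}$ from Subsection~\ref{SS:PBW}, for a convex order $\beta_*=(\beta_1,\dots,\beta_N)$ of $\roots^{+}$ the vectors $X_{-\beta_1}^{k_1}\cdots X_{-\beta_N}^{k_N}v_+$ with $0\le k_i<\ro$ form a basis of $V_\wta$, where $v_+$ has weight $\wta$; hence the weights occurring, counted with multiplicity, are $\wta-\sum_i k_i\beta_i$. Substituting, the sum factors as
\[
q^{\,2\ang{\wta,\;\wtb+(1-\ro)\rho}}\prod_{i=1}^{N}\sum_{k_i=0}^{\ro-1}q^{\,-2k_i\ang{\beta_i,\;\wtb+(1-\ro)\rho}},
\]
and evaluating each geometric sum as $\sum_{k=0}^{\ro-1}y^{k}=(y^{\ro}-1)/(y-1)$, re-indexing the product by $\alpha=-\beta_i\in\roots^{-}$, and using the symmetry $\ang{\wta,\wtb+(1-\ro)\rho}=\ang{\wtb+(1-\ro)\rho,\wta}$ yields exactly the claimed formula.

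The hard part is the bookkeeping of conventions: one must pin down, from the precise picture $\epsh{fig2}{8ex}$ and the orientations built into $\Gfun$ and into the duality morphisms~\eqref{E:DualityForCat}, which strand is cut, whether the left or the right partial trace occurs (hence $K_{2\rho}^{\ro-1}$ versus $K_{2\rho}^{1-\ro}$), and whether the relevant operator is $\oR_{21}\oR$ or its inverse --- all of which are forced by the $\qd$-relation quoted above. Apart from this, the only genuine points are the vanishing of the non-weight-preserving contributions to the partial trace and the control of the weights of $V_\wtb$ and $V_\wta$ via Proposition~\ref{P:typical}; the remaining steps are the routine weight-space manipulations indicated above.
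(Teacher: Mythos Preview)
Your approach is essentially the paper's own: evaluate the endomorphism of $V_\wtb$ coming from the open Hopf link on a single weight vector, observe that only the Cartan part $\RH$ of the $R$-matrix contributes to the diagonal of the partial trace over $V_\wta$, and then sum over a weight basis of the typical module $V_\wta$. Your final evaluation via the PBW basis and geometric series is exactly equivalent to the paper's character identity $\sum_i \e^{\wta_i}=\e^{\wta}\prod_{\alpha\in\roots^-}(\e^{\ro\alpha}-1)/(\e^\alpha-1)$.

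There is, however, a genuine gap. You evaluate on the \emph{lowest} weight vector $v_-$ of $V_\wtb$ and then use that its weight is $\wtb-2(\ro-1)\rho$. That equality holds precisely when $V_\wtb$ has dimension $\ro^{N}$, i.e.\ when $V_\wtb$ is typical (Proposition~\ref{P:typical}); but the statement only assumes $V_\wta$ is typical, not $V_\wtb$. For atypical $\wtb$ (e.g.\ $\wtb=0$, where $V_\wtb$ is trivial and $v_-=v_+$ has weight~$0$) your scalar bookkeeping collapses and the displayed sum $\sum_w q^{2\ang{\operatorname{wt}(w),\,\wtb+(1-\ro)\rho}}$ is no longer what your computation produces.

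The fix is exactly the choice the paper makes: evaluate instead on the \emph{highest} weight vector $w_\wtb$, whose weight is $\wtb$ by definition with no hypothesis on $V_\wtb$. Then $\qR_{21}$ (rather than $\qR$) acts trivially on the $V_\wtb$-slot, and the remaining off-diagonal contributions of $\qR$ raise the $V_\wta$-weight and are killed by the partial trace for the same reason you give. With the pivotal element $K_{2\rho}^{1-\ro}$ entering through $d'_{V_\wta}$ one lands directly on $\sum_i q^{2\ang{\wtb+(1-\ro)\rho,\wta_i}}$, and your factorisation argument finishes the job.
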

\begin{proof}

  Let $\{v_{i}\}$ be a basis of $V_\wta$ such that $v_{i}$ is a weight vector
  with additive weight $\wta_{i}$.  Let $w_{\wtb}$ be a highest weight vector of ${V}_\wtb$.  From Equation \eqref{eq:RHact} we have
  \begin{align}
    \label{E:K1}
    \RH(w_{\wtb}\otimes v_{i}) =q^{<\wtb,\wta_{i}>}w_{\wtb}\otimes
    v_{i}&&\text{ and }&& \RH(v_{i}\otimes w_{\wtb})
    =q^{<\wtb,\wta_{i}>}v_{i}\otimes w_{\wtb}.
 \end{align}

 We now give two facts.  Let $v$ be any weight vector of ${V}_\wta$ of
 weight~$\eta$.
 \begin{description}
 \item[Fact 1] $\oR(w_{\wtb}\otimes v)=q^{<\wtb,\eta>}(w_{\wtb}\otimes
   v)$.\\
   \indent This fact follows from the definition of $\check R$, Equation \eqref{E:K1} and
   the property that $E_{\alpha}w_{\wtb}=0$ for $\alpha\in \roots^+$.
 \item[Fact 2] All the pure tensors of the element $(\check{R}-1)(v\otimes
   v_{\wtb})\in V_\wta\otimes V_\wtb$ are of the form $v'
   \otimes w'$ where $w'$ is a weight vector of $V_\wtb$ and $v'$ is a
   weight vector of $V_\wta$ whose weight is of strictly higher
   order than that of the weight of $v$.\\
  \indent Fact 2 is true because $E_{\alpha}^{n}v$ (for $\alpha\in \roots^+$ and
   $n\in \N_{>0}$) is zero or a weight vector whose weight is of strictly higher
   order than the weight of $v$.
 \end{description}

 We will now compute $S'(\wta,\wtb)$ directly.  Let $V$ be a typical module
 and recall that the duality morphisms $\tev_{V}: V\otimes V^{*}\rightarrow
 \C$ is defined as
 \begin{equation}
   \label{E:K2}
   \tev_{V_\wta} (v\otimes f)= f(q^{2(1-\ro)<\eta,\rho>}v)
 \end{equation} where $v$ is a weight vector of
 $V_\wta$ of additive weight $\eta$.  Consider the element
 $S\in \End_{\C}(V_\wtb)$ 
 given by
 $$(\Id_{V_\wtb}\otimes
 \tev_{V_\wta})\circ(c_{V_\wta,V_\wtb}\otimes
 \Id_{V_\wta^{*}})\circ ( c_{V_\wtb,V_\wta}\otimes
 \Id_{V_\wta^{*}})\circ(\Id_{V_\wtb}\otimes
  b_{V_\wta}).$$ 
 To simplify notation set $S=(X_{1})(X_{2})(X_{3})(X_{4})$ where $X_{i}$ is
 the corresponding morphism in the above formula.  The morphism $S$ is
 determined by its value on the highest weight $w_{\wtb}$.  By definition
 $S(w_{\wtb})=S'(\wta,\wtb)w_{\wtb}$, so it suffices to compute $S(w_{\wtb})$.
 \begin{align}
   \label{E:valueS}
   S(w_{\wtb})&=(X_{1})(X_{2})(X_{3})\Big(w_{\wtb}\otimes
   \sum_{i}(v_{i}\otimes v_{i}^{*})\Big)\notag\\
   &= (X_{1})(X_{2})\Big( \sum_{i} q^{<\wtb,\wta_{i}>}
   v_{i}\otimes w_{\wtb}\otimes v_{i}^{*}\Big)\notag \\
   &= (X_{1})\Big( \sum_{i} \big( q^{2<\wtb,\wta_{i}>} w_{\wtb}\otimes
   v_{i}\otimes v_{i}^{*}\big) + \sum_{k} w_{k}' \otimes
   v_{k}' \otimes z_{k} \Big) \notag\\
   &=\sum_{i} q^{2\ang{\wtb +(1-\ro)\rho, \wta_{i}}}w_{\wtb}
 \end{align}
 where $z_{k}= v_{i}^{*}$ (for some $i$), $v'_{k}$ is a weight vector of
 $V_\wta$ whose weight is of strictly higher weight than that of the
 weight of $z_{k}^{*}$ and $w'_{k}$ is a weight vector of $V_\wtb$.
 Moreover, the second equality of the above equation follows from Fact 1, the
 third from \eqref{E:K1} and Fact 2, and finally the fourth from Equation
 \eqref{E:K2} and the fact that $z_{k}(v'_{k})=0$.  The key observation in
 this proof is that Facts 1 and 2 imply that in the above computation the only
 contribution of the action of the operator $\oR$ comes from $\RH$.

 Finally, $V_\lambda$ is typical and so its weights are determined the formal
 sum (i.e. its character):
 $$
 \sum_i \e^{\wta_{i}}= \e^{\wta}\prod_{\alpha\in \roots^{-}}
 \dfrac{\e^{\ro\alpha}-1}{\e^{\alpha}-1}.
 $$
Thus, the proposition follows from Equation~\eqref{E:valueS}.
\end{proof}

We now can apply the results of Section \ref{S:RTLink} to $\catHr$.   From Theorem \ref{T:typAmbi} we know that any typical $\UqgH$-module $J$ is ambidextrous.  
Therefore, we can apply Theorem \ref{T:RTLink} to $\catHr$ and obtain an ambi pair $(\At,\qd)$.  As discussed in Section \ref{S:RTLink} the functions $\qd$ and $\qd_J$ are proportional.  
Now, let
$\lambda\in \Hom_\al(\UH,\C)$ be an additive weight.  Combining the formula in
Proposition \ref{P:S'} with the formula
  $$\prod_{\alpha\in\roots^{-}}(q^{2\ang{\lambda +(1-\ro)\rho, \alpha}} -1) =
  q^{\ang{\lambda +(1-\ro)\rho,-2\rho}}
  \prod_{\alpha\in\roots^{+}}\qn{\ang{\lambda +(1-\ro)\rho, \alpha}}$$ we can
see that the function $\qd$ can be multiplied by a scalar in $\C^\times$ so that it is 
given by the formula
  \begin{equation}\label{E:FormMD}
  \qd(V_\lambda)=\prod_{\alpha\in\roots^{+}}
  \dfrac{\qn{{\ang{\lambda-(\ro-1)\rho,\alpha}}}}
  {\qn{\ro\ang{\lambda-(\ro-1)\rho,\alpha}}}
\end{equation}
 for  $V_\lambda$ with  $|\lambda|\notin\X$. 
Thus we have proved:
 \begin{theorem}\label{T:catHrInv}
 The category $\catHr$ is a ribbon category with a projective ambidextrous object and so gives rise to an ambi pair $(\At,\qd)$ and isotopy invariant $\Gfun'$ where $\qd$ satisfies Equation \eqref{E:FormMD}.
 \end{theorem}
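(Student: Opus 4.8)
The plan is to combine results already proved in the paper and then to pin down the formula for $\qd$ via the open Hopf link. The category $\catHr$ is a $\C$-linear ribbon category by Theorem~\ref{T:UHribboncat}, and by the remark following that theorem (together with Proposition~\ref{P:typPiId}) every typical module of $\catH$ lies in $\catHr$. It therefore suffices to exhibit a \emph{projective} ambidextrous object of $\catHr$: Theorem~\ref{T:typAmbi} already gives that every typical module is ambidextrous, and I would argue that a typical module $V_\lambda$ is projective in $\catHr$ using Proposition~\ref{P:typical}, which says that for typical $\lambda$ the Verma module $\wt V_\lambda$ is irreducible, so that $V_\lambda\cong\wt V_\lambda$ is simultaneously a highest weight module freely generated over $\Uqn{-<}$ and a lowest weight module freely generated over $\Uqn{+<}$; a standard argument on finite-dimensional weight modules over these quantum groups then shows such a module is projective (and injective). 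Fixing one typical module $J$ we thus have a projective ambidextrous object of $\catHr$, and Theorem~\ref{T:RTLink} applies: it yields a map $\qd\colon\At\to\C^\times$, unique up to an overall scalar in $\C^\times$, with $(\At,\qd)$ an ambidextrous pair, whence the renormalised invariant $\Gfun'\colon\Lt\to\C$ of Subsection~\ref{SS:ambi} is defined on $\catHr$.

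To identify $\qd$ with \eqref{E:FormMD} I would use the open Hopf link relation recalled after Theorem~\ref{T:RTLink}: for every typical $V_\lambda$ (for which $S'(J,V_\lambda)$ is invertible, by Proposition~\ref{P:S'} and the typicality inequalities) one has $\qd(V_\lambda)=\qd(J)\,S'(V_\lambda,J)/S'(J,V_\lambda)$. Writing $J=V_\nu$ and substituting the explicit value of $S'$ from Proposition~\ref{P:S'}, the product over $\roots^{-}$ in $S'(V_\lambda,V_\nu)$ depends only on $\nu$ and becomes an overall constant, while the remaining $q$-exponents are either constant, bilinear in $(\lambda,\nu)$, or linear in $\ang{\rho,\lambda}$ (the two prefactors $q^{2\ang{\nu+(1-\ro)\rho,\lambda}}$ and $q^{2\ang{\lambda+(1-\ro)\rho,\nu}}$ differing, up to a $\nu$-constant, by $q^{2(1-\ro)\ang{\rho,\lambda}}$). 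Applying the identity $\prod_{\alpha\in\roots^{-}}(q^{2\ang{\lambda+(1-\ro)\rho,\alpha}}-1)=q^{\ang{\lambda+(1-\ro)\rho,-2\rho}}\prod_{\alpha\in\roots^{+}}\qn{\ang{\lambda-(\ro-1)\rho,\alpha}}$ quoted just before the theorem (and its analogue with $\ro$ inserted throughout) to $S'(J,V_\lambda)$ turns that factor into a $\nu$-constant times the same bilinear and $\ang{\rho,\lambda}$-linear $q$-exponents times $\prod_{\alpha\in\roots^{+}}\qn{\ro\ang{\lambda-(\ro-1)\rho,\alpha}}/\qn{\ang{\lambda-(\ro-1)\rho,\alpha}}$. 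In the ratio $S'(V_\lambda,J)/S'(J,V_\lambda)$ all these $q$-exponents cancel, and one is left with $\qd(V_\lambda)$ equal to a constant in $\C^\times$ times $\prod_{\alpha\in\roots^{+}}\qn{\ang{\lambda-(\ro-1)\rho,\alpha}}/\qn{\ro\ang{\lambda-(\ro-1)\rho,\alpha}}$; absorbing this constant into the scalar ambiguity of $\qd$ gives exactly \eqref{E:FormMD} for $|\lambda|\notin\X$.

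The main obstacle is twofold. Conceptually, the delicate point is the projectivity claim in the first paragraph: one must establish it inside $\catHr$ rather than merely in a generic block of $\catU$, which means keeping track of the forgetful functor $\FF$ and of the central element $\varpi$ that cuts $\catHr$ out of $\catH$. Technically, the $q$-exponent bookkeeping in the second paragraph is the part that needs care: one has to check that every factor depending on the auxiliary module $J=V_\nu$ really does collapse to a single global constant and that the outcome is a well-defined function of $\lambda$ alone, independent of the chosen representative additive weight. Once the cancellation of the bilinear and $\ang{\rho,\lambda}$-linear exponents is observed, the remaining manipulations are routine.
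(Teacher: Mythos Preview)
Your proposal is correct and follows essentially the same route as the paper: invoke Theorem~\ref{T:UHribboncat} for the ribbon structure, Theorem~\ref{T:typAmbi} for ambidextrousness of a typical module, apply Theorem~\ref{T:RTLink}, and then use the open Hopf link relation together with Proposition~\ref{P:S'} and the displayed identity to pin down $\qd$ up to a global scalar. The paper's argument (the paragraph immediately preceding the theorem) is much terser than yours---in particular it simply asserts that Theorem~\ref{T:RTLink} applies without addressing projectivity of the typical module in $\catHr$, and it does not spell out the $q$-exponent cancellation---so your more explicit treatment of both points is an improvement rather than a deviation.
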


It is clear from Formula \eqref{E:FormMD} that $\qd(V_\lambda)$ depends only on the
multiplicative weight $\kappa=q^{\lambda}\in \Hom_\al(\UK,\C)$ induced by
$\lambda$. Thus we can define, $\qd(V_\kappa)=\qd(V_\lambda)$ whenever
$|\kappa|\notin\X$.

Let $\A$ be the set of irreducible $\Uqg$-modules $V$ such that $V\in \catU_g$
for some $ g\in \Gr\setminus\X$.  Theorem~\ref{T:catU_gSS} implies that each
module in $\A$ is projective.  
Let $\Proj$ be the full subcategory of $\catU$ consisting of projective
$\Uqg$-modules. 
Recall the notion of a trace given in Subsection \ref{SS:ambi}.
\begin{lemma}\label{L:traceProjD}
  There exist a trace $\mt = \{\mt_{V}\}$ on $\Proj$ such that
  $\mt_{V_\kappa}(\Id_{V_\kappa})=\qd(V_\kappa)$ for any $V_\kappa$ of
  multiplicative weight $\kappa=q^{\lambda}$ with $|\lambda|\notin\X$.
\end{lemma}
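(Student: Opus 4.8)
The plan is to obtain the trace from the general existence theorem and then to fix its normalisation by transporting the trace of the ribbon category $\catHr$ along the forgetful functor $\FF$.

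First I would record that $\catU$ has enough structure to apply the trace machinery. By Theorem \ref{T:catU_gSS}, whenever $|\kappa|\notin\X$ the module $V_\kappa$ lies in the semi-simple graded piece $\catU_{|\kappa|}$ and is therefore a simple projective object of $\catU$; by Theorem \ref{T:typAmbi} it is moreover ambidextrous. Thus $\catU$ is a pivotal $\C$-linear category containing a simple projective ambidextrous object, so by \cite[Theorem 3.3.2]{GKP1} (see also \cite{GPV}) there is a trace $\mt=\{\mt_V\}_{V\in\Proj}$ on the ideal $\Proj$ of projective objects of $\catU$, unique up to multiplication by an element of $\C^\times$. I write $\qd'(V)=\mt_V(\Id_V)$ for $V$ a simple projective object of $\catU$.

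Next I would compare $\qd'$ on typical modules with the function $\qd$ of \eqref{E:FormMD}. The forgetful functor $\FF\colon\catHr\to\catU$ obtained by restricting \eqref{E:ForgetfulF} to $\catHr\subset\catH$ is $\C$-linear, strict monoidal, and compatible with the duality morphisms \eqref{E:DualityForCat} (the pivotal element $K_{2\rho}^{\ro-1}$ is the same for $\catH$ and $\catU$), hence a pivotal functor; by Proposition \ref{P:UniqueNilpotent} it sends the typical module $V_\lambda$ of $\catHr$ to $V_\kappa\in\catU$, and by the first paragraph $V_\kappa\in\Proj$. One checks that $\FF$ carries the ideal of projective objects of $\catHr$ into $\Proj$, so that $W\mapsto\mt_{\FF(W)}\circ\FF$ is a trace on the ideal of projective objects of $\catHr$. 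By Theorem \ref{T:catHrInv} the ribbon category $\catHr$ also carries a trace, unique up to a scalar, whose value on a simple projective object $V$ is the modified dimension $\qd(V)$ of \eqref{E:FormMD}; comparing the two traces on $\catHr$ produces a constant $c\in\C^\times$ with $\mt_{\FF(W)}(\Id_{\FF(W)})=c\,\qd(W)$ for every typical $W$. In particular $\mt_{V_\kappa}(\Id_{V_\kappa})=c\,\qd(V_\kappa)$ whenever $|\kappa|\notin\X$, and replacing $\mt$ by $c^{-1}\mt$ yields a trace on $\Proj$ with the asserted property.

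The one non-routine point is the compatibility, under $\FF$, between the projective ideals of $\catHr$ and $\catU$, and hence between the two traces; equivalently, one must know that the ratios $\qd'(V_\kappa)/\qd'(V_{\kappa'})$ computed from the trace on $\catU$ match the corresponding ratios of \eqref{E:FormMD}. If the structural statement is awkward to establish directly, I would instead argue with the t-ambi relations: for typical $V_i,V_j,V_k$ with $H(i,j,k)\neq0$ the theta graph $\Theta_{i,j,k}$ admits cutting presentations at each of its three edges and Equation \eqref{eq:ambi} gives $\qd'(V_i)\ang{(\Theta_{i,j,k})_{V_i}}=\qd'(V_j)\ang{(\Theta_{i,j,k})_{V_j}}=\qd'(V_k)\ang{(\Theta_{i,j,k})_{V_k}}$ in $\catU$, the analogous relations hold for $\qd$ in $\catHr$, and the scalars $\ang{(\Theta_{i,j,k})_{V_\bullet}}$ coincide on the two sides because $\FF$ is a pivotal functor acting as the identity on $\C=\End_{\catU}(\unit)=\End_{\catHr}(\unit)$; since the typical modules are joined by a chain of such relations and the relevant theta-values are invertible, $\qd'$ and $\qd$ are proportional on typical modules. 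I expect the verification of this connectivity (and the non-vanishing of those theta-values), equivalently of the projective-ideal compatibility of $\FF$, to be where the real work is.
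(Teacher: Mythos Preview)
Your proof is correct and follows essentially the same route as the paper: obtain the trace on $\Proj$ from the ambidextrous-object machinery of \cite{GKP1,GPV}, then identify its values on typical modules by transporting along the pivotal forgetful functor $\FF$ and invoking uniqueness of traces on $\catHr$. The only cosmetic difference is that the paper normalizes $\mt$ at one chosen typical $U$ from the outset (so your constant $c$ is $1$ by construction), and it then simply asserts $\mt_{\FF(V_\lambda)}(\FF(f))=\mt'_{V_\lambda}(f)$ without discussing the projective-ideal compatibility you correctly flag as the non-routine point.
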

\begin{proof}
  Let $U\in \catU_g$ be a typical projective $\Uqg$-module with $g\notin \X$.
  Consider the linear map $t: \End_{\cat}(U) \to \C$ given by $f\mapsto
  \qd(U)\ang{f}$ where, as above, $\ang{f}$ is defined by $f=\ang{f}\Id_U$.
  From Theorem~\ref{T:typAmbi} we have that $U$ is ambidextrous.  Then from
  \cite{GPV} we have that $t$ determines a unique trace $\mt = \{\mt_{V}\}$ on
  $\Proj$ such that $t=\mt_V$.
 
  On the other hand, $\Ffun'$ defines a trace $\mt' = \{\mt'_{V}\}$ on the
  projective modules of $\catH$ given by $\mt'_{V}(f)=\Ffun'(L_f)$ where $L_f$
  is ribbon graph which is the closure of coupon of $f$.  Then by definition,
  for any additive weight $\lambda$ with $|\lambda|\notin\X$, we have
  $\mt'_{V_\lambda}(\Id_{V_\lambda})=\Ffun'(O_{V_\lambda})=\qd(V_\lambda)$
  where $O_{V_\lambda}$ is the unknot colored with $V_\lambda$.

  By construction of $\mt$, for each $\kappa=q^\lambda$ with
  $|\lambda|\notin\X$, we have $\mt_{V_{\kappa}}=\mt_{\FF(V_\lambda)}$ where
  $\FF:\catH\to \catU$ is the forgetful functor defined above.  Moreover,
  $\mt_{\FF(V_\lambda)}(\FF(f))=\mt'_{V_\lambda}(f)$ for all
  $f\in \End(V_\lambda)$.  Thus,
  $\mt_{V_{\kappa}}(\Id_{V_{\kappa}})=\mt'_{V_\lambda}(\Id_{V_\lambda})
  =\qd(V_\lambda)=\qd(V_\kappa)$.
\end{proof}

Lemma \ref{L:traceProjD} implies the assignment $\qd$ on nilpotent $\Uqg$-modules can be extended to a function
$\qd:\A\to \C$ given by $\qd(V)=\mt_V(\Id_V)$.  Since $V\in \A$ is projective it follows from \cite{GPV}  that $\qd(V)\neq 0$. 
  Let $s:\A\to \C$ be the {\em slope} given
by $s(V)=\qd(V)/\qd(V^{*})$.  It satisfies
\begin{equation}\label{E:slope} 
s(W)=s(U)s(V)
\end{equation}
whenever $W$ is a direct summand of $U\otimes V$ (see \cite{GPV}).

\begin{lemma}\label{L:slope1}
  For any $V\in\A$, we
  have $s(V)=1$ and so $\qd(V)=\qd(V^{*})$.
\end{lemma}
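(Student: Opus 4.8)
The plan is to show that the slope $s$ is identically $1$ on $\A$ by exploiting the multiplicativity \eqref{E:slope} together with the fact that $\Gr\setminus\X$ generates enough of $\Gr$ that no proper ``character'' of $\Gr$ can detect it. First I would observe that, by \eqref{E:slope}, whenever $W$ is a direct summand of $U\otimes V$ with $U,V,W\in\A$ we have $s(W)=s(U)s(V)$; combined with $s(V^*)=1/s(V)$ (which follows since $\qd(V)=\qd(V^{**})$ via the canonical identification $V^{**}\simeq V$, and from the definition of $s$), this makes $s$ behave like a homomorphism on the portion of the Grothendieck-style ``fusion'' structure supported on $\A$. The key point is that by Theorem \ref{T:catU_gSS}, if $V\in\catU_g$ and $V'\in\catU_{g'}$ then $V\otimes V'\in\catU_{gg'}$, so $s$ is really controlled by the group $\Gr$.

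Next I would pin down the value of $s$ using the explicit computation of $\qd$ for nilpotent (typical) modules. From Lemma \ref{L:traceProjD} and Formula \eqref{E:FormMD}, for a typical module $V_\lambda$ we have an explicit rational-function expression for $\qd(V_\lambda)$, and the dual $V_\lambda^*$ is again a nilpotent module whose multiplicative weight is the ``negative'' of $\kappa=q^\lambda$ (concretely, $V_\lambda^*\simeq V_{\lambda'}$ for the additive weight $\lambda'$ determined by $\lambda' = -w_0\lambda - 2(\ro-1)\rho$ or a similar explicit shift coming from the pivotal structure \eqref{E:DualityForCat} and the highest/lowest weight identification in the proof of Theorem \ref{T:typAmbi}). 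Plugging both into \eqref{E:FormMD} and simplifying — using that $\roots^-=-\roots^+$ and that $\qn{-x}=-\qn{x}$ — one checks that the product defining $\qd(V_\lambda^*)$ equals that defining $\qd(V_\lambda)$, so $s(V_\lambda)=1$ for every typical nilpotent module. The factors of $-1$ cancel in pairs between numerator and denominator since both run over all of $\roots^+$, and the $\rho$-shifts are compatible with the Weyl-group symmetry of the expression.

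Then I would upgrade from typical nilpotent modules to all of $\A$. Every module in $\A$ lies in some $\catU_g$ with $g\in\Gr\setminus\X$; by Theorem \ref{T:catU_gSS}(3) such $\catU_g$ is semisimple, and by the structure theory of Section \ref{S:RepQG} (Theorem \ref{Th:Ugss} and Corollary \ref{C:GsNil}) every simple object of $\catU_g$ is, up to the grading, ``built from'' typical-type modules: more precisely, for $g$ in the torus part $T\setminus\X$ the simple objects are exactly the typical $\Uqg$-modules by Proposition \ref{P:UniqueNilpotent} and Corollary \ref{C:GsNil}, for which $s=1$ is already established; and a general $g\in\Gr\setminus\X$ can be connected to $T$ by tensoring with modules from the semisimple graded pieces. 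Using \eqref{E:slope} and the fact that $s$ takes values in $\C^\times$, and that $\Gr\setminus\X$ is Zariski dense (so products of such $g$'s reach all relevant group elements), one propagates $s=1$ from the typical case to all simple objects in $\bigcup_{g\in\Gr'}\catU_g=\A$.

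The main obstacle I expect is the second step: verifying the explicit cancellation $\qd(V_\lambda)=\qd(V_\lambda^*)$ from \eqref{E:FormMD}, which requires correctly identifying the multiplicative weight of the dual module $V_\lambda^*$ through the nonstandard pivotal structure \eqref{E:DualityForCat} (with its $K_{2\rho}^{\ro-1}$ twist) and then matching the two products over $\roots^+$ term by term, tracking the $(\ro-1)\rho$ shifts and the signs from $\qn{-x}=-\qn{x}$. Once that identity is in hand, the promotion to all of $\A$ is a soft argument using multiplicativity of the slope and the density/generation properties of $\Gr\setminus\X$, together with $\qd(V)=\qd(V^*)$ being exactly the conclusion needed to feed back into verifying that $(\At,\qd)$—equivalently $(\A,\qd)$—is a (t-)ambi pair giving the relative $\Gr$-spherical structure on $\catU$.
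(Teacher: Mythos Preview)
Your step~2 --- verifying $\qd(V_\lambda)=\qd(V_\lambda^*)$ directly from \eqref{E:FormMD} for typical nilpotent modules --- is correct and is essentially how the paper obtains $T\subset\ker s$. The genuine gap is step~3. You cannot propagate $s=1$ from $T$ to a general $g\in\Gr\setminus\X$ by ``tensoring with modules from the semisimple graded pieces'': $T$ is a \emph{subgroup} of $\Gr$, so multiplicativity of $s$ along tensor products only tells you $s$ is trivial on products of torus elements, which stay in $T$. To reach a non-torus $g$ you would have to tensor with some $V'\in\catU_{h}$ with $h\notin T$, and then $s(V')$ is precisely the quantity you do not yet know. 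Density of $\Gr\setminus\X$ does not help here, since you have no continuity statement for $s$ and, more to the point, $T\cap(\Gr\setminus\X)$ is dense only in $T$, not in $\Gr$.

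The paper closes this gap with two additional ingredients you omit. First, it uses the structure result of \cite{DPRR} (for $g,g',gg'\in\Gr\setminus\X$, the tensor product of any two simples contains \emph{every} simple of $\catU_{gg'}$ with the same multiplicity) to show that $s$ is constant on each $\catU_g$, so $s$ descends to a multiplicative map $\Gr\setminus\X\to\C^\times$; it then extends this to a character of all of $\Gr$. Second --- and this is the idea missing from your outline --- it observes that any character of the Borel $B_+$ trivial on its Cartan $T$ must be trivial on $B_+$ (the normal closure of $T$ in $B_+$ is $B_+$ itself), and similarly for $B_-$; since $\Gr=B_+B_-$, the character is trivial everywhere. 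In other words, the passage from $T$ to $\Gr$ is a group-theoretic statement about $\Gr$, not a tensor-product argument inside $\catU$.
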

\begin{proof}
  First, we will show that for an irreducible $V\in \catU_g$ the slope $s(V)$
  depends only of the degree $g\in\Gr$.  In \cite{DPRR} it is shown that for
  any $g,g',gg'\in \Gr\setminus \X$ and any irreducible modules $V\in \catU_g,
  V'\in\catU_{g'}$ we have
  $$V\otimes V'\simeq\bigoplus_{W\in irr(\catU_{gg'})}W^{\oplus \ro^{N-\rk}}$$
  where $irr(\catU_{gg'})$ denote a representing set of the isomorphism
  classes of irreducible modules of $\catU_{gg'}$.  This implies that if
  $W_1,W_2\in\catU_g$ are irreducible modules then there exists projective
  irreducible modules $U,V$ such that both $W_1$ and $W_2$ are direct summands
  of $U\otimes V$.  Then by Equation \eqref{E:slope} we have
  $s(W_1)=s(W_2)=s(U)s(V)$.  Thus, the slope factor through a map on
  $\Gr\setminus\X$ also denoted by $s$ which satisfies $s(gg')=s(g)s(g')$ and
  $s(g^{-1})=1/s(g)$.

  Next we show that $s$ extend to a character on the whole group $\Gr$: If
  $x\in \X$, choose $g\in \Gr\setminus(\X\cup \X x^{-1})$ then we can define
  $s(x)$ as the ratio $s(gx)/s(g)$.  This is well defined: if $h\in
  \Gr\setminus(\X\cup \X x^{-1})$ then there exists $k\in \Gr\setminus(\X \cup
  \X x^{-1}\cup \X g\cup \X h)$ for which $k,kx,kg^{-1},kh^{-1}\notin\X$ and
  thus $s(kx)/s(gx)=s(kg^{-1})=s(k)/s(g)$, $s(kx)/s(hx)=s(kh^{-1})=s(k)/s(h)$
  which imply that $s(gx)/s(g)=s(kx)/s(k)=s(hx)/s(h)$.

  But now Equation \eqref{E:FormMD} implies that $T\subset\ker s$.  As the
  normal subgroup of the borel $B_+\subset\Gr$ generated by its Cartan
  subgroup $T$ is $B_+$ itself, this implies that $B_+\subset\ker s$.
  Similarly $B_-\subset\ker s$ thus $\ker s\supset B_+B_-=\Gr$.
\end{proof}

\begin{lemma}\label{UqgT-Ambi}
The pair $(\A,\qd)$ is t-ambi.
\end{lemma}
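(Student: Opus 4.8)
The plan is to obtain the t-ambi property of $(\A,\qd)$ directly from the trace $\mt$ produced in Lemma~\ref{L:traceProjD}, using the symmetry $\qd(V)=\qd(V^*)$ established in Lemma~\ref{L:slope1}. First I would collect the structural inputs: by Theorem~\ref{T:catU_gSS} every object of $\A$ is a simple projective object of $\catU$, so $\A\subseteq\At$; by Lemma~\ref{L:traceProjD} there is a (nonzero) trace $\mt=\{\mt_{V}\}_{V\in\Proj}$ on the ideal $\Proj$ of projective objects with $\qd(V)=\mt_{V}(\Id_V)$ for all $V\in\A$; the trace axioms make $\qd$ invariant under isomorphism, it takes values in $\C^\times$ as noted above, and by Lemma~\ref{L:slope1} we have $\qd(V)=\qd(V^*)$ for all $V\in\A$. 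Thus $\qd\colon\A\to\C^\times$ already satisfies the symmetry and invariance conditions that are part of the definition of a t-ambi pair; what remains is Equation~\eqref{eq:ambi} for graphs in $\tet_{\A}$.

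Next I would take a connected trivalent $\catU$-colored ribbon graph $T$ in $S^2$ all of whose edges are colored by objects of $\A$, together with two cutting presentations $T_V$ and $T_{V'}$ whose cut edges are colored by $V,V'\in\A$. Since $V$ is projective, $\mt_{V}$ is defined on $\End_\catU(V)$, and from $\Gfun(T_V)=\ang{T_V}\Id_V$ and $\FK$-linearity of $\mt_{V}$ we get $\mt_{V}(\Gfun(T_V))=\ang{T_V}\,\mt_{V}(\Id_V)=\qd(V)\ang{T_V}$; likewise $\mt_{V'}(\Gfun(T_{V'}))=\qd(V')\ang{T_{V'}}$. So Equation~\eqref{eq:ambi} for $T$ is equivalent to the assertion that the scalar $\mt_{V}(\Gfun(T_V))$ is independent of which edge of $T$ is cut. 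This cut-independence is the substantive point and the step I expect to be the main obstacle to make fully rigorous: it is proved by sliding the cut from an edge to an adjacent one, which is an instance of the cyclicity and partial-trace axioms of the modified trace together with the equalities $\qd(U)=\qd(U^*)$ for the (projective) colors $U$ of $T$; since $T$ is connected, iterating this moves the cut between any two edges. This is precisely the mechanism behind the claim in Subsection~\ref{SS:ambi} that $\Gfun'$ is ``determined by the trace'', and I would invoke \cite{GKP1,GPV} for it rather than reproduce the argument.

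The one point requiring care is that the general statement recalled in Subsection~\ref{SS:ambi} yields a t-ambi pair $(\At,\qd)$ under the hypothesis that $\qd(U)=\qd(U^*)$ for \emph{all} simple projective objects $U$, whereas Lemma~\ref{L:slope1} only provides this for objects of $\A$. This is harmless, because the definition of t-ambi only tests graphs in $\tet_{\A}$, whose edges are colored exclusively by objects of $\A$; hence only the symmetry of $\qd$ on $\A$ ever enters the cut-moving argument, and the restriction of the general construction to $\tet_{\A}$ goes through verbatim. Putting these pieces together gives $\qd(V)\ang{T_V}=\qd(V')\ang{T_{V'}}$ for every $T\in\tet_{\A}$, i.e.\ $(\A,\qd)$ is a t-ambi pair.
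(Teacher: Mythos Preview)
Your argument is correct and follows essentially the same route as the paper: both derive the t-ambi property from the trace of Lemma~\ref{L:traceProjD} together with the symmetry $\qd(V)=\qd(V^{*})$ from Lemma~\ref{L:slope1}, invoking \cite{GPV} (equivalently \cite{GKP1}) for the cut-independence of $\mt_V(\Gfun(T_V))$. The paper's proof is terser and asserts the stronger conclusion that $(\At,\qd)$ is t-ambi (with $\qd(V)=\qd(V^{*})$ for all $V\in\Proj$), whereas you more cautiously restrict to $\A$ and explain why that suffices; your extra care on this point is justified, since Lemma~\ref{L:slope1} is only stated for $\A$.
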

\begin{proof}
  From Lemmas \ref{L:traceProjD} and \ref{L:slope1} there exists a trace $\mt
  = \{\mt_{V}\}_{V\in \Proj}$ such that $\qd(V)=\qd(V^*)$ for all $V\in
  \Proj$.  Then from \cite{GPV} we have that $(\At,\qd)$ is t-ambi.
\end{proof}
  
Let us increase $\X$ by adding all $g\in \Gr$ with $g^2=1$.  Then
$\Gr\setminus\X$ is still a Zariski open dense subset of $\Gr$ and by Lemma
\ref{L:basicdata} there exists basic data for $\catU$.
\begin{theorem}
  The category $\catU$ of finite dimensional weight modules over $\Uqg$ is a
  $(\X,\qd)$-relative $\Gr$-spherical category which admit basic data.
\end{theorem}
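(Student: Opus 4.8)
The plan is to verify, one by one, the clauses of Definition \ref{D:G-spherical} for $\cat=\catU$, with ground ring $\FK=\C$, group $\Gr$ the dual group of Subsection \ref{SS:PBW}, modified dimension $\qd$ the function constructed in Lemmas \ref{L:traceProjD}--\ref{L:slope1}, and $\X$ the subset fixed in the paragraph immediately preceding the statement (the complement of the unramified locus, enlarged by all $g\in\Gr$ with $g^{2}=1$). Essentially everything needed has already been proved in this section, so the proof is an assembly.

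First I would recall that $\catU$ is a $\C$-linear pivotal category: this is the content of Subsection \ref{SS:PivotalSt}, coming from Equation \eqref{eq:HopfPivotal} together with the general fact that a module category over a Hopf algebra whose squared antipode is conjugation by a group-like element is pivotal. Next I would check that $\catU$ is $\Gr$-graded, via the decomposition $\catU=\bigoplus_{g\in\Gr}\catU_g$ of Subsection \ref{SS:weightmod}. Clauses (1), (2), (4) of the definition of a $\Gr$-graded category are immediate: (1) is that decomposition, and (2) and (4) are parts (2) and (1) of Theorem \ref{T:catU_gSS}. For (3) and (5) I would use that $\ZUo$ is central in $\Uqg$ and acts on each object of $\catU_g$ by the scalar $g(\cdot)$: an isomorphic object carries the same scalar, giving (3); and a nonzero morphism between an object of $\catU_g$ and one of $\catU_{g'}$ has nonzero image, on which $\ZUo$ acts both by $g$ and by $g'$, forcing $g=g'$, which gives (5).

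Then I would treat the three numbered conditions of Definition \ref{D:G-spherical}. Condition (1): for $g\notin\X$, $\catU_g$ is semisimple by Theorem \ref{T:catU_gSS}(3), and it has finitely many isomorphism classes of simple objects because its objects are modules over the finite-dimensional algebra $\Uqg\otimes_{g:\ZUo\to\C}\C$, whose simple modules are described in Theorem \ref{Th:Ugss}. Condition (2), that $(\A,\qd)$ is a t-ambi pair, is Lemma \ref{UqgT-Ambi}; here $\qd$ lands in $\C^{\times}$ since its values are trace-values of projective objects, nonzero by \cite{GPV}, and $\qd(V)=\qd(V^{*})$ by Lemma \ref{L:slope1}. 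Condition (3), the existence of $\bb$, follows from the remark after Definition \ref{D:G-spherical}: $\catU$ is a category of finite-dimensional $\C$-vector spaces and $\C$ has characteristic zero, so the formula \eqref{E:defbb}, $\bb(V)=\dim_\C(V)/\bigl(\sum_{V'\in irr(\catU_g)}\dim_\C(V')^{2}\bigr)$ for $V\in\catU_g$ with $g\in\Gr\setminus\X$, is well defined on $\A$ and has all the required properties by \cite{GPT2}. Finally, since $\X$ now contains $\{g\in\Gr:g=g^{-1}\}$, the second assertion of Lemma \ref{L:basicdata} provides basic data for $\catU$.

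There is no genuine difficulty in the statement itself; the substance lies in the results it invokes, above all Lemma \ref{L:slope1} (triviality of the slope) and the structure theorem \ref{Th:Ugss}. The one point needing a line of care is that enlarging $\X$ shrinks $\Gr\setminus\X$, hence the class $\A$: one must note that the t-ambi property and the defining identity for $\bb$, verified for the larger $\A$, remain valid for the smaller one, and that the enlarged $\X$ still meets the two standing requirements on such a subset — it is symmetric, and, being contained in a proper Zariski-closed subset of the irreducible variety $\Gr$, cannot cover $\Gr$ by finitely many translates.
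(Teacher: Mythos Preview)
Your proof is correct and follows essentially the same approach as the paper's own proof, invoking Subsection \ref{SS:PivotalSt} for the pivotal structure, Subsection \ref{SS:weightmod} and Theorem \ref{T:catU_gSS} for the $\Gr$-grading and semi-simplicity, Lemma \ref{UqgT-Ambi} for the t-ambi pair, formula \eqref{E:defbb} with \cite{GPT2} for $\bb$, and Lemma \ref{L:basicdata} for basic data. Your write-up is in fact more careful than the paper's, explicitly verifying the five $\Gr$-grading axioms, the finiteness of simples in each $\catU_g$, and the point that the enlarged $\X$ still satisfies the standing hypotheses on $\X$.
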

\begin{proof}
  The category $\catU$ is a $\C$-linear pivotal category where the pivotal
  structure is given in Subsection \ref{SS:PivotalSt}.  The $\Gr$-grading of
  $\catU$ is defined in Subsection \ref{SS:weightmod}.  Item \eqref{ID:G-sph5}
  of Definition \ref{D:G-spherical} follows from Theorem \ref{T:catU_gSS}.
  Lemma \ref{UqgT-Ambi} implies Item \eqref{ID:G-sph6}.  Finally, since the
  objects of $\catU$ are finite dimensional vector spaces it follows from
  \cite{GPT2} that the map $\bb:\A\to \C$ defined in Equation \eqref{E:defbb}
  is well defined and satisfies all the properties of Item \eqref{ID:G-sph7}.
\end{proof}

\linespread{1}

\end{document}